\newtheorem{lemma}{{\sc Lemma}}[section]
\newtheorem{corollary}[lemma]{{\sc Corollary}}
\newtheorem{proposition}[lemma]{{\sc Proposition}}
\newtheorem{theorem}[lemma]{{\sc Theorem}}
\theoremstyle{definition}
\newtheorem{remark}[lemma]{{\sc Remark}}
\newtheorem{definition}[lemma]{{\sc Definition}}
\newtheorem{conjecture}[lemma]{{\sc Conjecture}}
\numberwithin{equation}{section}
\def\Gb{{\mathfrak{b}}}
\def\Gg{{\mathfrak{g}}}
\def\Gh{{\mathfrak{h}}}
\def\Gl{{\mathfrak{l}}}
\def\Gn{{\mathfrak{n}}}
\def\Gp{{\mathfrak{p}}}
\def\Gz{{\mathfrak{z}}}
\def\GA{{\mathfrak{A}}}
\def\GD{\bm{\mathfrak{D}}}
\def\GH{{\mathfrak{H}}}
\def\GJ{\bm{\mathfrak{J}}}
\def\GK{{\mathfrak{K}}}
\def\GM{{\mathfrak{M}}}
\def\GN{{\mathfrak{N}}}
\def\GO{{\bm{\mathfrak{O}}}}
\def\GU{\bm{\mathfrak{U}}}
\def\GZ{\bm{\mathfrak{Z}}}
\def\BA{{\mathbb{A}}}
\def\BB{{\mathbb{B}}}
\def\BC{{\mathbb{C}}}
\def\BF{{\mathbb{F}}}
\def\BP{{\mathbb{P}}}
\def\BQ{{\mathbb{Q}}}
\def\BR{{\mathbb{R}}}
\def\BZ{{\mathbb{Z}}}
\def\BFI{{\mathbf{I}}}
\def\BFJ{{\mathbf{J}}}
\def\CA{{\mathcal A}}
\def\CB{{\mathcal B}}
\def\DD{{\mathcal D}}
\def\CE{{\mathcal E}}
\def\CF{{\mathcal F}}
\def\CO{{\mathcal O}}
\def\CI{{\mathcal I}}
\def\CL{{\mathcal L}}
\def\CM{{\mathcal M}}
\def\CP{{\mathcal P}}
\def\CR{{\mathcal R}}
\def\CS{{\mathcal S}}
\def\CU{{\mathcal U}}
\def\CV{{\mathcal V}}
\def\CX{{\mathcal X}}
\def\CY{{\mathcal Y}}
\def\Ad{\mathop{\rm Ad}\nolimits}
\def\ad{{\mathop{\rm ad}\nolimits}}
\def\deru{\partial}
\def\End{\mathop{\rm{End}}\nolimits}
\def\eq{\mathop{\rm eq}\nolimits}
\def\For{{\mathop{\rm For}\nolimits}}
\def\Hom{\mathop{\rm Hom}\nolimits}
\def\hotimes{{\;\mathop{\widehat{\otimes}}\nolimits\;}}
\def\HHom{\mathcal{H}om}
\def\id{\mathop{\rm id}\nolimits}
\def\Id{\mathop{\rm Id}\nolimits}
\def\Ind{{\mathop{\rm Ind}\nolimits}}
\def\inte{{\mathop{\rm int}\nolimits}}
\def\Image{\mathop{\rm Im}\nolimits}
\def\Ker{\mathop{\rm Ker\hskip.5pt}\nolimits}
\def\modu{\mathop{\rm mod}\nolimits}
\def\Mod{\mathop{\rm Mod}\nolimits}
\def\nil{{\rm{nil}}}
\def\Proj{\mathop{\rm Proj}\nolimits}
\def\Res{{\mathop{\rm Res}\nolimits}}
\def\Spec{{\rm{Spec}}}
\def\St{{\mathrm{St}}}
\def\Tor{{\rm{Tor}}}
\def\ur{{\rm{ur}}}
\def\wt{\mathop{\rm wt}\nolimits}
\def\CEnd{\mathop{{\mathcal{E}}nd}\nolimits}
\def\Har{{\rm{Har}}}
\def\Fr{{\rm{Fr}}}
\def\Loc{{\CL{oc}}}
\def\sD{{}^\sharp\GD}
\def\spi{{}^\sharp\pi}
\def\wsD{{}^\sharp\widehat{\GD}}
\def\tomega{{\widetilde{\omega}}}
\def\tW{{\tilde{W}}}\def\tS{{\widetilde{S}}}
\def\tBB{{\tilde{\BB}}}
\def\wU{{\widehat{U}}}
\def\wGamma{{\widehat{\Gamma}}}
\def\wCV{{\widehat{\CV}}}
\def\wCB{{\widehat{\CB}}}
\def\wCP{{\widehat{\CP}}}
\def\wpi{{\widehat{\pi}}}
\def\wGK{{\widehat{\GK}}}
\def\wGU{{\widehat{\GU}}}
\def\wGD{{\widehat{\GD}}}
\def\wK{{\widehat{K}}}
\def\wCO{{\widehat{\CO}}}
\def\wCE{{\widehat{\CE}}}
\def\tCE{{\widetilde{\CE}}}
\def\wE{{\widehat{E}}}
\def\ex{{\mathrm{ex}}}
\def\st{{\mathrm{st}}}
\def\reg{{\mathrm{reg}}}
\begin{document}
\title[Quantized flag manifolds]
{
Quantized flag manifolds and 
non-restricted modules over quantum groups at roots of unity}
\author{Toshiyuki TANISAKI}
\subjclass[2020]{Primary: 20G42, Secondary: 17B37}

\begin{abstract}
We give a proof of Lusztig's conjectural multiplicity formula  for non-restricted modules over the De Concini-Kac type quantized enveloping algebra at the $\ell$-th root of unity, where $\ell$ is an odd prime power satisfying certain reasonable conditions.
\end{abstract}

\vspace{1cm}
\maketitle
\section{{Introduction}}
\subsection{}

Let $G$ be a connected, simply-connected, simple algebraic group over the complex number field $\BC$, and let $\Gg$ be its Lie algebra.
The flag manifold $\CB$ of $G$ plays a crucial role in the geometric representation theory.
By the Borel-Weil theory one can construct a finite-dimensional rational $G$-module as the space of global sections of a $G$-equivariant line bundle on $\CB$.
By the works of Beilinson-Bernstein \cite{BB1} and 
Brylinsky-Kashiwara \cite{BK} around 1980 one can also construct infinite-dimensional $\Gg$-modules 
using $D$-modules on $\CB$ instead of line bundles.
More recently, this construction was generalized by 
Bezurukavnikov-Mirkovi\'{c}-Rumynin \cite{BMR}, \cite{BMR2} and 
Bezurukavnikov-Mirkovi\'{c} \cite{BM}
to the situation where the base field $k$ is of positive characteristic.
In particular, it was proved in \cite{BM} as a consequence that 
Lusztig's conjecture on non-restricted $\Gg_k$-modules
holds true, where $\Gg_k$ is the counterpart of $\Gg$ over $k$.

\subsection{}
The present paper arose out of the effort 
to give analogues for quantum groups of the above mentioned results for  Lie algebras.

Let $U_q(\Gg)$ be the quantized enveloping algebra of $\Gg$ over the rational function field $\BQ(q)$.
It is a $q$-analogue of the enveloping algebra $U(\Gg)$.
The first task is to construct the quantized flag manifold $\CB_q$ for $U_q(\Gg)$.
Note that the ordinary flag manifold $\CB$ is a projective algebraic variety whose homogeneous coordinate algebra $A$ is  a certain subalgebra of the affine coordinate algebra $\CO(G)$ of $G$.
One can easily define a $q$-analogue $A_q$ of $A$ using $U_q(\Gg)$; 
however, $A_q$ is a non-commutative ring (except for the case $G=SL_2$).
Hence in order to give a geometric meaning to $\CB_q$ 
we need the language of non-commutative algebraic geometry, 
developed in 
\cite{AZ}, \cite{V}, \cite{R} following Manin's idea \cite{M}.
Using $A_q$ we can define as in 
\cite{R}, \cite{LR} (see also \cite{Jo0}) the abelian category $\modu(\CO_{\CB_q})$ which is regarded as the category of ``coherent $\CO$-modules'' on the virtual space $\CB_q=\Proj\,(A_q)$.
In general for a not necessarily commutative graded ring $S$ 
we have a virtual space $\Proj\,(S)$ (non-commutative projective scheme) endowed with an (actual) abelian category 
$\Mod(\CO_{\Proj\,(S)})$ of ``quasi-coherent $\CO$-modules'' on it.
If $S$ is noetherian, we have also a smaller category  
$\modu(\CO_{\Proj\,(S)})$ of ``coherent $\CO$-modules'' on it.

In \cite{LR} Lunts and Rosenberg defined for a  general graded ring $S$ a subring $D'_S$ of $\End(S)$ from which they defined a certain 
abelian category which is regarded as the category of ``$\DD$-modules'' on the virtual space $\Proj\,(S)$".
Moreover, 
they formulated a Beilinson-Bernstein type correspondence for the quantized flag manifolds $\CB_q$ as a conjecture.
Namely, they conjectured that there is an equivalence 
between the category of certain $\DD$-modules on $\CB_q$ and that of certain $U_q(\Gg)$-modules.

This conjecture was settled in \cite{T0} by modifying the definition of the category of $\DD$-modules on $\CB_q$.
In \cite{T0} we used a subring $D_{A_q}$ of $\End(A_q)$ generated by a set of standard generators related to $U_q(\Gg)$
to define an abelian category of ``$\DD$-modules on $\CB_q$''.
Our definition of $D_{A_q}$ cannot be generalized to general graded rings $S$.
Moreover, our $D_{A_q}$ is presumably much smaller than $D'_{A_q}$ of Lunts and Rosenberg.
Nevertheless, we believe that our ad-hoc choice of  $D_{A_q}$ is natural in considering $
\CB_q$.

\subsection{}
Let us focus our attention on the quantum
analogues of the results \cite{BMR}, \cite{BMR2}, \cite{BM} 
on Lie algebras in positive characteristics.
It is known that phenomena in positive characteristics in the ordinary world resemble those at roots of unity in the quantum world.
Hence we consider the situation where the parameter $q$ is specialized to a complex number $\zeta$ which is an $\ell$-th root of unity.
By the specialization $q\mapsto\zeta$
we obtain the De Concini-Kac type quantized enveloping algebra $U_\zeta(\Gg)$ and the non-commutative graded ring $A_\zeta$.
We can also construct an abelian category 
$\modu(\CO_{\CB_\zeta})$ which is regarded as the category of ``coherent $\CO$-modules'' on the virtual space $\CB_\zeta=\Proj\,(A_\zeta)$ 
(by \cite{TA} the virtual space $\CB_\zeta$ turns out to be  a non-commutative scheme in the sense of \cite{R}).
The aim of the series \cite{T1}, \cite{T2}, \cite{T3} and the present paper is to give analogues of  \cite{BMR}, \cite{BMR2}, \cite{BM} 
using the quantized enveloping algebras and the quantized flag manifolds at roots of unity
instead of the ordinary enveloping algebras and the ordinary flag manifolds  in positive characteristics.

Our arguments are basically parallel to those in 
\cite{BMR}, \cite{BMR2}, \cite{BM}
although some new ideas are needed.
Sometimes an obvious geometric fact in positive characteristics requires additional algebraic arguments 
in the quantized situation
since we rely on our ad-hoc choice of the ring of differential operators on the quantized flag manifold.
For example, the ring of crystalline differential operators on a general algebraic variety $X$ in positive characteristic can be localized on a twisted cotangent bundle $T^*X^{(1)}$, and it turns out to be an Azumaya algebra over $T^*X^{(1)}$.
However, the corresponding fact for the quantized flag manifold $\CB_\zeta$ is a fairly non-trivial fact proved in
our earlier  work \cite[Theorem 6.1]{T1}
(see also Section \ref{subsec:Azumaya} below for a brief account of it).
For another type of  difficulty arising from the fact that the ring of differential operators on $\CB_q$ is more complicated than that 
of crystalline differential operators on an algebraic variety  in positive characteristic,
see Section \ref{sec:EV} below where 
the $R$-matrices are crucially used.

In the works \cite{TR}, \cite{TC}, accomplished after the present work  was completed, we also obtained some results on the category of $D$-modules on the quantized flag manifold.
In particular, in \cite{TR} the
 local structure of the ring of differential operators on the quantized flag manifold is intensively investigated.
We hope that we would be able to simplify some of the arguments in our preceding works using these results.

\subsection{}
For some reasons we assume 
\begin{itemize}
\item[(a1)]
$\ell>1$ is odd,
\item[(a2)]
$\ell$ is prime to the order of the center of $G$,
\item[(a3)]
$\ell$ is prime to 3 if $G$ is of type $G_2$
\end{itemize}
in the following.
In general the situation becomes more complicated when the parameter $q$ is specialized to a root of unity.
However, a good news in the case $\zeta$ is a root of unity is that we can relate the non-commutative scheme $\CB_\zeta$ with the ordinary scheme $\CB$ using 
Lusztig's quantum Frobenius homomorphism.
In fact, we have a morphism
$\Fr:\CB_\zeta\to\CB$ of non-commutative schemes, 
by which we obtain an actual sheaf of rings 
$\GO=\Fr_*\CO_{\CB_\zeta}$ on the ordinary flag manifold $\CB$.
The category $\modu(\CO_{\CB_\zeta})$ is naturally equivalent to the category 
$\modu(\GO)$ of coherent $\GO$-modules.
In the case $G=SL_2$ we have $\CB_\zeta=\CB=\BP^1$, and $\Fr:\BP^1\to\BP^1$ is given by
$z\mapsto z^{\ell}$ for $z\in\BC=\BA^1\subset\BP^1$.
In this case $\GO=\Fr_*\CO_{\BP^1}$ is commutative; however, in other cases where $G\ne SL_2$ the $\CO_\CB$-algebra $\GO$ turns out to be non-commutative.

\subsection{}
Let us consider $D$-modules on $\CB_\zeta$.
We have a category $\modu(\DD_{\CB_\zeta})$ of ``coherent $\DD_{\CB_\zeta}$-modules'', defined in the framework of non-commutative geometry; however, in our situation we can use $\GD=\Fr_*\DD_{\CB_\zeta}$, which is an actual sheaf of rings on $\CB$ so that $\modu(\DD_{\CB_\zeta})$ is naturally equivalent to the category $\modu(\GD)$ of coherent $\GD$-modules on $\CB$.
We have ring homomorphisms 
$\GO\to\GD$, $U_\zeta(\Gg)\to\GD$, $\CO(H)\to\GD$, 
where $\CO(H)$ is the affine coordinate algebra of a maximal torus $H$ of $G$.
Moreover, $\CO(H)\to\GD$ is a central embedding, and for $t\in H$ the specialization 
$\GD_t=\BC\otimes_{\CO(H)}\GD$ 
of $\GD$ with respect to $\CO(H)\xrightarrow{t}\BC$ is regarded as an analogue  of the ring of twisted differential operators in the ordinary case.

Let $Z_\Har(U_\zeta(\Gg))$ be the Harish-Chandra center of $U_\zeta(\Gg)$.
It is a central subalgebra of $U_\zeta(\Gg)$ isomorphic to 
$\CO(H/W\circ)$, where $W$ is the Weyl group and $H/W\circ$ denotes the quotient of $H$ with respect to a twisted action $W\times H\ni(w,t)\mapsto w\circ t\in H$ of $W$ on $H$.
For $t\in H$ we denote by $[t]$ its image in $H/W\circ$.
For $[t]\in H/W\circ$ we denote by 
$\xi_{[t]}:Z_\Har(U_\zeta(\Gg))\to\BC$ 
the
algebra homomorphism given by 
$\CO(H/W\circ)\xrightarrow{[t]}\BC$.
We set 
$U_\zeta(\Gg)_{[t]}=
\BC\otimes_{Z_\Har(U_\zeta(\Gg))}U_\zeta(\Gg)$ 
with respect to 
$\xi_{[t]}$.
Then the ring homomorphism $U_\zeta(\Gg)\to\GD$ induces $U_\zeta(\Gg)_{[t]}\to\GD_t$ for $t\in H$.

In general, for a ring 
(resp.\ a sheaf of rings on a topological space) $R$ we denote by 
$\modu(R)$ 
 the category of 
 finitely-generated 
 (resp.\ coherent)
 $R$-modules.
 For $t\in H$ we denote by 
 $\modu_t(\GD)$ 
 (resp.\  $\modu_{[t]}(U_\zeta(\Gg))$ 
 the category of coherent $\GD$-modules 
 (resp.\ finitely generated 
$U_\zeta(\Gg)$-modules) killed by some power of 
 the image of $\Ker(t:\CO(H)\to\BC)$ in $\GD$
 (resp.\ 
 $\Ker(\xi_{[t]})$ in $U_\zeta(\Gg)$).
Then we have functors
\begin{align}
\label{eq:I1}
&R\Gamma:
D^b(\modu(\GD_t))\to D^b(\modu(U_\zeta(\Gg)_{[t]})),
\\
\label{eq:I1a}
&R\Gamma:
D^b(\modu_t(\GD))\to D^b(\modu_{[t]}(U_\zeta(\Gg))).
\end{align}
Here, for  an abelian category $\GA$ we denote its bounded derived category by $D^b(\GA)$.
We conjecture that \eqref{eq:I1} (and hence \eqref{eq:I1a}) give equivalences of triangulated categories if $t\in H$ is regular in the sense that $|W\circ t|=|W|$.
In \cite{T2} and \cite{T3} we proved this conjecture in some cases.
In particular, it holds if the following conditions are satisfied:
\begin{itemize}
\item[(p)]
$\ell>1$ is a power of a prime,
\item[(b)]
$t^\ell$ has finite order which is prime to $\ell$.
\end{itemize}
The condition (b) is harmless from the view point of the representation theory
since we are reduced to the situation where (b) is satisfied using the parabolic induction (see \cite{DK2}).
But the assumption (p) should not be necessary.
In fact in the case $G=SL_n$ we can show the conjecture without assuming (p) (see \cite{TK}).
\subsection{}
Let us recall basics on the representations of $U_\zeta(\Gg)$.
Take Borel subgroups $B^+$, $B^-$ of $G$ such that $B^+\cap B^-=H$, and denote by $N^\pm$ the unipotent radical of $B^\pm$.
Let $Z_\Fr(U_\zeta(\Gg))$ be the Frobenius center of $U_\zeta(\Gg)$.
It is a central subalgebra of $U_\zeta(\Gg)$ isomorphic to the coordinate algebra $\CO(K)$ of the affine algebraic group
\[
K=\{(g_+h, g_-h^{-1})\mid
g_\pm\in N^\pm, h\in H\}
\subset B^+\times B^-
\]
(see \cite{DK1}, \cite{DP}). 
For $k\in K$ 
we denote by 
$\xi^k:Z_\Fr(U_\zeta(\Gg))\to\BC$ 
the
algebra homomorphism given by 
$\CO(K)\xrightarrow{k}\BC$.
We set 
$U_\zeta(\Gg)^k=
\BC\otimes_{Z_\Fr(U_\zeta(\Gg))}U_\zeta(\Gg)$ 
with respect to 
$\xi^k$, and consider $U_\zeta(\Gg)^k$-modules for each $k\in K$ 
(by a version of Schur's lemma any irreducible $U_\zeta(\Gg)$-module is a  $U_\zeta(\Gg)^k$-module for some $k\in K$).
Define $\eta:K\to G$ by $\eta(x_1,x_2)=x_1x_2^{-1}$.
By \cite{DKP} $U_\zeta(\Gg)^k$ is isomorphic to $U_\zeta(\Gg)^{k'}$ if 
$\eta(k)$ and $\eta(k')$ belong to the same conjugacy class of $G$.
Hence for each conjugacy class $O$ of $G$ one can choose one $k\in K$ satisfying $\eta(k)\in O$.
Set
\[
\CX=K\times_{H/W}(H/W\circ), 
\qquad
\CY=K\times_{H/W}H.
\]
Here, 
$K\to H/W$ is the composite of $\eta:K\to G$ with the Steinberg map $G\to H/W$, and 
$H/W\circ\to H/W$
(resp.\ 
$H\to H/W$)
 is given by 
 $[t]\mapsto [t^\ell]$
 (resp.\ 
$t\mapsto [t^\ell]$).
By \cite{DK1}, \cite{DP} 
(see also \cite{TZ}) the total center 
$Z(U_\zeta(\Gg))$ of $U_\zeta(\Gg)$ is generated by 
$Z_\Har(U_\zeta(\Gg))$ and $Z_\Fr(U_\zeta(\Gg))$.
 Furthermore, the algebra homomorphisms
 $\xi_{[t]}:Z_\Har(U_\zeta(\Gg))\to\BC$ ($[t]\in H/W\circ$) 
 and  
 $\xi^k:Z_\Fr(U_\zeta(\Gg))\to\BC$ ($k\in K$) 
are compatible if and only if $(k,[t])\in\CX$.
For $(k,[t])\in \CX$ we denote by 
$\xi^k_{[t]}:Z(U_\zeta(\Gg))\to\BC$ 
the algebra homomorphism satisfying 
$\xi^k_{[t]}|_{Z_\Har(U_\zeta(\Gg))}=\xi_{[t]}$, 
$\xi^k_{[t]}|_{Z_\Fr(U_\zeta(\Gg))}=\xi^k$.
We set
$U_\zeta(\Gg)_{[t]}^k=
\BC\otimes_{Z(U_\zeta(\Gg))}U_\zeta(\Gg)$ 
with respect to 
$\xi^k_{[t]}$.
We denote by
$\wU_\zeta(\Gg)^{\dot{k}}_{[t]}$ the completion of 
$U_\zeta(\Gg)$ at the maximal ideal 
$\Ker(\xi^{\dot{k}}_{[t]})$ of $Z(U_\zeta(\Gg))$.

\subsection{}
We will use the identification $\CB=B^-\backslash G$ in the following.
Set
\[
\CV=
\{(B^-g,k,t)\in\CB\times K\times H\mid
g\eta(k)g^{-1}\in t^\ell N^-\}.
\]
Let 
$p^{\CV}_H:\CV\to H$, 
$p^{\CV}_\CB:\CV\to \CB$ and 
$p^{\CV}_\CY:\CV\to \CY$ be the projections.
By \cite{T1} $\GD$ contains a central $\CO_\CB$-subalgebra isomorphic to $(p^\CV_\CB)_*\CO_\CV$.
Since $p^\CV_\CB$ is an affine morphism, we can consider the localization $\sD$ of $\GD$ on $\CV$.
It is an $\CO_\CV$-algebra satisfying $(p^\CV_\CB)_*\sD=\GD$.
For $(k,t)\in\CY$ we define a closed subvariety $\CV^k_t$ of $\CV$ by $\CV^k_t=(p^\CV_\CY)^{-1}(k,t)$, 
and denote by $\widehat{\CV}^k_t$ the formal neighborhood of $\CV^k_t$ in $\CV$.
We set
\[
\sD^k_t=\CO_{\CV^k_t}\otimes_{\CO_\CV}\sD,
\quad
\wsD^k_t=\CO_{\widehat{\CV}^k_t}\otimes_{\CO_\CV}\sD.
\]
They are
an $\CO_{\CV^k_t}$-algebra and 
an $\CO_{\widehat{\CV}^k_t}$-algebra
respectively.
We denote by 
$\modu^k_t(\sD)$
the category of 
coherent $\sD$-modules  supported on $\CV^k_t$.
We also denote by 
$\modu^k_{[t]}({U}_\zeta(\Gg))$
the category of 
finitely-generated 
${U}_\zeta(\Gg)$-modules
killed by some power of
$\Ker(\xi^k_{[t]})$.
Then we have embeddings 
\begin{align}
\label{eq:II1}
&\modu(\sD^k_t)
\hookrightarrow 
\modu^k_t(\sD)
\hookrightarrow 
\modu(\wsD^k_t),
\\
\label{eq:II2}
&\modu(U_\zeta(\Gg)^k_{[t]})
\hookrightarrow 
\modu^k_{[t]}(U_\zeta(\Gg))
\hookrightarrow 
\modu(\widehat{U}_\zeta(\Gg)^k_{[t]})
\end{align}
of abelian categories.
Moreover, \eqref{eq:II1} and \eqref{eq:II2} induce isomorphisms 
\begin{align}
\label{eq:III1}
&K(\modu(\sD^k_t))
\cong
K(\modu^k_t(\sD)),
\\
\label{eq:III2}
&K(\modu(U_\zeta(\Gg)^k_{[t]}))
\cong
K(\modu^k_{[t]}(U_\zeta(\Gg)))
\end{align}
of the Grothendieck groups.
Here, 
for an abelian category $\GA$ we denote its Grothendieck group by $K(\GA)$.
Note that \eqref{eq:I1} and  \eqref{eq:I1a} induce functors
\begin{align}
\label{eq:I2}
&R\Gamma:D^b(\modu(\sD^k_t))\to D^b(\modu(U_\zeta(\Gg)^k_{[t]})),
\\
\label{eq:I2aa}
&R\Gamma:D^b(\modu^k_t(\sD))\to D^b(\modu^k_{[t]}(U_\zeta(\Gg))),
\\
\label{eq:I2b}
&R\Gamma:D^b(\modu(\wsD^k_t))\to D^b(\modu(\widehat{U}_\zeta(\Gg)^k_{[t]})).
\end{align}

The main result of \cite{T1} is the split Azumaya property of 
$\sD^k_t$ and
$\wsD^k_t$
for $(k,t)\in\CY$ under a certain condition on $\ell$ depending on $(k,t)\in\CY$.
The condition is void if $\eta(k)$ is unipotent.
In order that the property holds for any $(k,t)\in\CY$ we need to assume 
\begin{itemize}
\item[(c1)]
$\ell$ is prime to 3 if $G$ is of type $F_4$, $E_6$, $E_7$, $E_8$;
\item[(c2)]
$\ell$ is prime to 5 if $G$ is of type $E_8$.
\end{itemize}
If (c1), (c2) are satisfied, then 
there exist 
a locally free $\CO_{\CV^k_t}$-module $\GK^k_t$ 
and
a locally free $\CO_{\widehat{\CV}^k_t}$-module $\widehat{\GK}^k_t$ 
of finite rank such that 
$\sD^k_t\cong \CEnd_{\CO_{\CV^k_t}}(\GK^k_t)$ and
$\wsD^k_t\cong \CEnd_{\CO_{\widehat{\CV}^k_t}}(\widehat{\GK}^k_t)$.
Hence we have equivalences
\begin{gather*}
\modu(\sD^k_t)
\cong
\modu(\CO_{\CV^k_t}),
\quad
\modu(\wsD^k_t)
\cong
\modu(\CO_{\widehat{\CV}^k_t})
\end{gather*}
by the Morita theory.
By the second equivalence 
we have also
\[
\modu^k_t(\sD)
\cong
\modu^k_t(\CO_{\CV}),
\]
where
$\modu^k_t(\CO_{\CV})$
denotes the category of coherent $\CO_{\CV}$-modules
supported on $\CV^k_t$.

We assume in the following that $\ell$ satisfies (p) in addition to (a1), (a2), (a3).
As noted above, \eqref{eq:I1} is an equivalence if $t$ is regular and satisfies (b).
In this case we can also show that 
\eqref{eq:I2aa} and \eqref{eq:I2b} give equivalences of triangulated categories.
Therefore, we obtain the following equivalences
of triangulated categories 
\begin{align}
\label{eq:I3b}
D^b(\modu^k_t(\CO_{{\CV}}))
&\cong
D^b(\modu^k_{[t]}({U}_\zeta(\Gg))),
\\
\label{eq:I3}
D^b(\modu(\CO_{\widehat{\CV}^k_t}))
&\cong
D^b(\modu(\widehat{U}_\zeta(\Gg)^k_{[t]}))
\end{align}
if $t$ is regular and the conditions (b), (c1), (c2) are satisfied.

\subsection{}
The aim of the present paper is to give a more refined version of \eqref{eq:I3} in terms of abelian categories. 
We fix $\dot{k}\in K$ such that 
 $\dot{x}=\eta(\dot{k})$ is a unipotent element of $G$,
 and consider the abelian category $\modu(U_\zeta(\Gg)^{\dot{k}}_{[t]})$ and its variants 
for $t\in H$ such that $(\dot{k},t)\in\CY$.
We do not need to assume (c1), (c2) since 
$\eta(\dot{k})$ is unipotent.
Moreover, 
we have $(\dot{k},t)\in\CY$  for $t\in H$  if and only if  $t^\ell=1$.
Hence any $t$ such that $(\dot{k},t)\in\CY$ satisfies the condition (b).
Denote by $H_\ell$ the set of $t\in H$ satisfying $t^\ell=1$.
The set of regular elements in $H_\ell$ is denoted by $H_\ell^\reg$.
Note that for $t\in H_\ell$ the variety $\CV^{\dot{k}}_t$ is naturally isomorphic to the Springer fiber
$\CB^{\dot{x}}=\{B^-g\in\CB\mid g\dot{x}g^{-1}\in N^-\}$, 
and 
$\widehat{\CV}^{\dot{k}}_t$ is naturally isomorphic to the formal neighborhood
$\wCB^{\dot{x}}$  of $\CB^{\dot{x}}$ in $\tilde{G}=\{(B^-g,x)\in\CB\times G\mid
gxg^{-1}\in B^-\}$.
Hence we have an equivalence 
\begin{equation}
\label{eq:I4}
D^b(\modu(\widehat{U}_\zeta(\Gg)^{\dot{k}}_{[t]}))
\cong
D^b(\modu(\CO_{\widehat{\CB}^{\dot{x}}}))
\qquad(t\in H^\reg_\ell)
\end{equation}
of derived categories.
Denote by
$\modu_{\CB^{\dot{x}}}(\CO_{\tilde{G}})$ 
the category of coherent $\CO_{\tilde{G}}$-modules 
(set-theoretically) supported on $\CB^{\dot{x}}$.
We have 
$\modu^{\dot{k}}_t(\CO_{\CV})
\cong
\modu_{\CB^{\dot{x}}}(\CO_{\tilde{G}})$, and hence
\begin{equation}
\label{eq:I4a}
D^b(\modu^{\dot{k}}_{[t]}({U}_\zeta(\Gg)))
\cong
D^b(\modu_{\CB^{\dot{x}}}(\CO_{\tilde{G}}))
\qquad(t\in H^\reg_\ell).
\end{equation}
Note that to ensure $H^\reg_\ell\ne\emptyset$ we need the condition 
\begin{itemize}
\item[(a4)]
$\ell\geqq h_G$,
\end{itemize}
where $h_G$ is the Coxeter number.
We assume (a4) in the following.

Note that we have a non-standard $t$-structure of 
$D^b(\modu(\CO_{\wCB^{\dot{x}}}))$ called the exotic $t$-structure (see \cite{Be}, \cite{BM}).
We denote its heart by $\modu^\ex(\CO_{\wCB^{\dot{x}}})$.
Our main result is the following.
\begin{theorem}
\label{thm:I1}
The equivalence \eqref{eq:I4} of the triangulated categories induces an equivalence
\begin{equation}
\label{eq:I5}
\modu(\wU_\zeta(\Gg)^{\dot{k}}_{[t]})
\cong
\modu^\ex(\CO_{\wCB^{\dot{x}}})
\qquad(t\in H^\reg_\ell).
\end{equation}
of abelian categories.
\end{theorem}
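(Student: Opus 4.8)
The plan is to show that the equivalence \eqref{eq:I4} carries the abelian category $\modu(\wU_\zeta(\Gg)^{\dot{k}}_{[t]})$ onto the heart $\modu^\ex(\CO_{\wCB^{\dot{x}}})$ of the exotic $t$-structure. The strategy parallels the characteristic-$p$ argument of Bezrukavnikov--Mirkovi\'c \cite{BM}: one transports the tautological $t$-structure on $D^b(\modu(\wU_\zeta(\Gg)^{\dot{k}}_{[t]}))$ across \eqref{eq:I4} to a $t$-structure on $D^b(\modu(\CO_{\wCB^{\dot{x}}}))$, and then identifies it with the exotic one by verifying the defining properties of the latter. First I would set up the complexes carefully. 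Recall from \cite{T1} and the discussion above that $\wsD^{\dot{k}}_t\cong\CEnd_{\CO_{\wCB^{\dot{x}}}}(\widehat{\GK}^{\dot{k}}_t)$ for a locally free sheaf $\widehat{\GK}=\widehat{\GK}^{\dot{k}}_t$ of finite rank, so the Morita bimodule $\widehat{\GK}$ implements $\modu(\wsD^{\dot{k}}_t)\cong\modu(\CO_{\wCB^{\dot{x}}})$, and \eqref{eq:I4} is the composite of this Morita equivalence with $R\Gamma$ from \eqref{eq:I2b} (which is an equivalence by the hypotheses, $t$ regular and (b) automatic here). Thus the desired $t$-structure on $D^b(\modu(\CO_{\wCB^{\dot{x}}}))$ has heart $\{M\mid R\Gamma(\widehat{\GK}\otim^L M)\in\modu(\wU_\zeta(\Gg)^{\dot{k}}_{[t]})\text{ sitting in degree }0\}$; equivalently, using the inverse functor, the heart consists of those $M$ whose image is concentrated in cohomological degree $0$ and, since $R\Gamma$ over the formal/affine Springer situation has no higher cohomology on the relevant generators, this is controlled by a vanishing/generation condition on $\Ext$-groups.

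The key steps, in order, are as follows. Step 1: Produce on $D^b(\modu(\CO_{\wCB^{\dot{x}}}))$ the ``$\wU$-exact'' $t$-structure $(D^{\le 0},D^{\ge 0})$ obtained by transport of structure from \eqref{eq:I4}; by construction its heart is equivalent to $\modu(\wU_\zeta(\Gg)^{\dot{k}}_{[t]})$, so it only remains to recognize it. Step 2: Recall the intrinsic characterization of the exotic $t$-structure on $D^b(\modu(\CO_{\wCB^{\dot{x}}}))$ in terms of a suitable exceptional (or quasi-exceptional) collection — indexed, as in \cite{Be}, \cite{BM}, by the Weyl group $W$ via the affine braid group / geometric Steinberg action — together with the normalization determined by the line bundles $\CO(\lambda)$ pulled back from $\CB$. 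Step 3: Show that the images under the inverse of \eqref{eq:I4} of the standard and costandard objects of the exotic $t$-structure are acyclic for $R\Gamma$ (away from degree $0$) and land in $\modu(\wU_\zeta(\Gg)^{\dot{k}}_{[t]})$; and conversely that the baby Verma / dual baby Verma type modules on the $\wU$-side go to exotic standard/costandard objects. This matching is done by computing the action of the affine braid group, i.e.\ of the shift functors and the convolution by $\CO(\lambda)$, on both sides and observing it is intertwined by $R\Gamma$ — here one uses the quantum-group analogue of translation functors together with the compatibility of $\Fr$ with the $\CO(H)$- and $\CO(H/W\circ)$-actions established in \cite{T1}, \cite{T2}. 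Step 4: Since the exotic $t$-structure is the unique $t$-structure for which these standard objects lie in the heart and are, in the appropriate sense, the ``standardly stratified'' objects, conclude $(D^{\le 0},D^{\ge 0})$ equals the exotic one, whence \eqref{eq:I5}.

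The main obstacle I anticipate is Step 3: matching the affine-braid-group action on $D^b(\modu(\CO_{\wCB^{\dot{x}}}))$ (the geometric action through shift functors and tensoring by line bundles, which defines the exotic $t$-structure) with the corresponding action on $D^b(\modu(\wU_\zeta(\Gg)^{\dot{k}}_{[t]}))$ (coming from wall-crossing / intertwining functors for $U_\zeta(\Gg)$). In positive characteristic this intertwining is delivered by the localization theorem together with the compatibility of Frobenius with the relevant geometry; in the quantum setting the analogous statement requires care because, as the introduction warns, obvious geometric facts in characteristic $p$ need extra algebraic input at roots of unity — one must verify that the quantum Frobenius morphism $\Fr:\CB_\zeta\to\CB$ intertwines the quantized shift functors with tensoring by the bundles $\GO(\lambda)$, and that the resulting braid relations hold on the nose (not merely up to isomorphism). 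A secondary technical point is the passage from the derived equivalence \eqref{eq:I3} over $\CV$/$\CB\times K$ to the formal-neighbourhood statement \eqref{eq:I4}, ensuring that the exotic $t$-structure is compatible with completion; this should follow from flatness of the completion and the fact that $\CB^{\dot{x}}$ is a proper subvariety, but it needs to be stated. Once the braid action is matched, the identification of $t$-structures is formal, following the template of \cite{BM}.
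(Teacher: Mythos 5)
Your proposal takes essentially the same approach as the paper: the core step, identifying the affine braid group action on $D^b(\modu(\wU_\zeta(\Gg)^{\dot{k}}_{[t]}))$ with the wall-crossing functors and matching it with the geometric action $\BFJ_b$ via $R\Gamma$ (Proposition \ref{prop:IJ} in the text), together with the matching of $R\varrho^{\dot{x}}_*$ with the translation-to-the-most-singular-wall functor (Proposition \ref{prop:Tp} for $J=I$), is exactly what you outline in Step 3, and the conclusion then follows the template of \cite[1.6.5]{BM} as in your Step 4. One ingredient you do not flag explicitly, but which the paper must develop (Appendix, Theorems \ref{thm:Azumaya-J} and \ref{thm:BBJ}), is singular localization for the quantized partial flag manifolds $\CP_{J,\zeta}$: without a Beilinson--Bernstein-type equivalence at singular Harish-Chandra characters, one cannot compare $R\Gamma$ after translation to a wall with $R\wpi^{\dot{x}}_{J*}$, so your "intertwined by $R\Gamma$" step secretly rests on this.
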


The exotic $t$-structure is defined using an affine braid group action on $D^b(\modu(\CO_{\wCB^{\dot{x}}}))$.
Theorem \ref{thm:I1} is proved by showing that the corresponding affine braid group action on $D^b(\modu(\wU_\zeta(\Gg)^{\dot{k}}_{[t]}))$ is 
given by the wall-crossing functor.

\subsection{}
To give an application to the representation theory we also need an equivariant version of \eqref{eq:I5}.
By replacing $\dot{k}$ with suitable $\dot{k}'\in K$ such $\eta(\dot{k}')$ is conjugate to $\eta(\dot{k})$ we may assume from the beginning that there exists a maximal torus $C$ of the centralizer $Z_G(\dot{x})$ such that $C\subset H$.
Let $\Lambda_C$ be the character group of $C$.
The natural grading of $U_\zeta(\Gg)$ by weights induces a $\Lambda_C$-grading of $\wU_\zeta(\Gg)^{\dot{k}}_{[t]}$.
We fix $t\in H_\ell^\reg$ in the following.
Then \eqref{eq:I5} induces an equivalence
\begin{equation}
\label{eq:I6}
\modu(\wU_\zeta(\Gg)^{\dot{k}}_{[t]};C)
\cong
\modu^\ex(\CO_{\wCB^{\dot{x}}};C)
\end{equation}
of abelian categories.
Here, 
$\modu(\wU_\zeta(\Gg)^{\dot{k}}_{[t]};C)$
denotes the category of finitely generated $\Lambda_C$-graded 
$\wU_\zeta(\Gg)^{\dot{k}}_{[t]}$-modules
satisfying a certain compatibility condition, 
and 
$\modu^\ex(\CO_{\wCB^{\dot{x}}};C)$
denotes the category of $C$-equivariant exotic coherent $\CO_{\wCB^{\dot{x}}}$-modules.

 \subsection{}
 Let us describe application of \eqref{eq:I6} to the representation theory.
We denote by $\modu_{[t]}(U_\zeta(\Gg)^{\dot{k}})$ 
the category of finitely generated $U_\zeta(\Gg)^{\dot{k}}$-modules 
killed by some power of the image of 
$\Ker(\xi_{[t]})$ in $U_\zeta(\Gg)^{\dot{k}}$.
We also denote by 
$\modu_{[t]}(U_\zeta(\Gg)^{\dot{k}};C)$
(resp.\
$\modu^{\dot{k}}_{[t]}(U_\zeta(\Gg);C)$)
the category of $\Lambda_C$-graded objects in  
$\modu_{[t]}(U_\zeta(\Gg)^{\dot{k}})$
(resp.\
$\modu^{\dot{k}}_{[t]}(U_\zeta(\Gg))$)
satisfying a certain compatibility condition. We have embeddings
\[
\modu(U_\zeta(\Gg)_{[t]}^{\dot{k}};C)
\subset
\modu_{[t]}(U_\zeta(\Gg)^{\dot{k}};C)
\subset
\modu_{[t]}^{\dot{k}}(U_\zeta(\Gg);C)
\subset
\modu(\wU_\zeta(\Gg)^{\dot{k}}_{[t]};C)
\]
of abelian categories.
Denote by $\modu(\CO_{\CB^{\dot{x}}};C)$ and 
$\modu_{\CB^{\dot{x}}}(\CO_{\tilde{G}};C)$ 
the category of $C$-equivariant objects in 
$\modu(\CO_{\CB^{\dot{x}}})$ and 
$\modu_{\CB^{\dot{x}}}(\CO_{\tilde{G}})$ respectively.
Then we have embeddings
\[
\modu(\CO_{\CB^{\dot{x}}};C)
\subset
\modu_{\CB^{\dot{x}}}(\CO_{\tilde{G}};C)
\subset
\modu(\CO_{\wCB^{\dot{x}}};C)
\]
of abelian categories.
Moreover, we have the following commutative diagram of functors:
\[
\xymatrix{
D^b(\modu(\CO_{\CB^{\dot{x}}};C))
\ar@{^{(}->}[r]^{}
\ar[d]
&
D^b(
\modu_{\CB^{\dot{x}}}(\CO_{\tilde{G}};C))
\ar@{^{(}->}[r]^{}
\ar[d]^{\cong}
&
D^b(
\modu(\CO_{\wCB^{\dot{x}}};C))
\ar[d]^{\cong}
\\
D^b(
\modu(U_\zeta(\Gg)_{[t]}^{\dot{k}};C))
\ar@{^{(}->}[r]^{}
&
D^b(
\modu_{[t]}^{\dot{k}}(U_\zeta(\Gg);C))
\ar@{^{(}->}[r]^{}
&
D^b(
\modu(\wU_\zeta(\Gg)^{\dot{k}}_{[t]};C)
).
}
\]

Let $\{L_\sigma\}_{\sigma\in\Theta}$ be the set of irreducible objects of $\modu(U_\zeta(\Gg)_{[t]}^{\dot{k}};C)$.
We denote by $E_\sigma$ (resp.\ $\wE_\sigma$) the projective cover of $L_\sigma$ in 
$\modu_{[t]}(U_\zeta(\Gg)^{\dot{k}};C)$ 
(resp.\ $\modu(\wU_\zeta(\Gg)^{\dot{k}}_{[t]};C)$).
Lusztig's conjecture gives a formula 
expressing $[E_\sigma]$ as a $\BZ$-linear combination of the elements $[L_\tau]$ for $\tau\in\Theta$ 
in the Grothendieck group
\[
K(\modu(U_\zeta(\Gg)_{[t]}^{\dot{k}};C))
\cong
K(\modu_{[t]}(U_\zeta(\Gg)^{\dot{k}};C))
\cong
K(\modu_{[t]}^{\dot{k}}(U_\zeta(\Gg);C)).
\]
For $\sigma\in\Theta$ we denote by $\CL_\sigma$, 
$\CE_\sigma$, $\wCE_\sigma$ the objects of 
$\modu^\ex(\CO_{\wCB^{\dot{x}}};C)$
corresponding to 
$L_\sigma$, $E_\sigma$, $\wE_\sigma$ respectively.
By $L_\sigma, E_\sigma\in \modu_{[t]}^{\dot{k}}(U_\zeta(\Gg);C)$
we have
$\CL_\sigma, \CE_\sigma\in D^b(
\modu_{\CB^{\dot{x}}}(\CO_{\tilde{G}};C))$.
Our problem is 
to give a  formula 
expressing $[\CE_\sigma]$ as a $\BZ$-linear combination of the elements $[\CL_\tau]$ for $\tau\in\Theta$ 
in the Grothendieck group
\[
K(\modu(\CO_{\CB^{\dot{x}}};C))
\cong
K(\modu_{\CB^{\dot{x}}}(\CO_{\tilde{G}};C)).
\]

Note that 
 $\{\CL_\sigma\}_{\sigma\in\Theta}$ is the set of irreducible objects of $\modu^\ex(\CO_{\wCB^{\dot{x}}};C)$ contained in 
 $D^b(\modu_{\CB^{\dot{x}}}(\CO_{\tilde{G}};C))$, 
 and $\wCE_\sigma$ is the projective cover of $\CL_\sigma$ in 
 $\modu^\ex(\CO_{\wCB^{\dot{x}}};C)$.
 Moreover, by 
 \[
 E_\sigma=\BC\otimes_{Z_\Fr(U_\zeta(\Gg))}\wE_\sigma
 =
 \BC\otimes^L_{Z_\Fr(U_\zeta(\Gg))}\wE_\sigma
 \] 
 with respect to $\xi^{\dot{k}}$, we have 
 $\CE_\sigma=\CO_{\{\dot{x}\}}\otimes^L_{\CO_G}\wCE_\sigma$, 
 where $\wCB^{\dot{x}}\to G$ is induced by the projection $\tilde{G}\to G$.
 From these facts together with the result of \cite{BM} we obtain the desired formula 
expressing $[\CE_\sigma]$ as a $\BZ$-linear combination of the elements $[\CL_\tau]$ for $\tau\in\Theta$ 
 in terms of a certain geometrically defined bilinear form and Lusztig's conjectural canonical bases of certain equivariant $K$-groups (whose existence is proved in \cite{BM}).
See Section \ref{sec:Lusztig} below for more details.

In the case $\dot{x}$ is a regular unipotent element of the reductive part of a parabolic subgroup of $G$, this formula  implies a combinatorial character formula of $L_\sigma$ in terms of baby Verma modules (see \cite{LP} and \cite[Section 10]{LK}).
We note that a recent work of Bezrukavnikov-Losev \cite{BL} should shed light to the character formula in the general situation.

In the special case $\dot{x}=1$ we obtain a new proof of the character formula of the irreducible highest weight modules over the Lusztig type quantized enveloping algebra for $\ell$ satisfying (a1), (a2), (a3), (a4),  and (p),
which was originally proved through the combination of 
\cite{KL}, \cite{LM} and \cite{KT} (see also \cite{ABG}).

We also note that our result gives a proof of a conjecture of Bezurukavnikov-Mirkovi\'{c} (see 
\cite[1.7.1]{BM}) for $\ell$ satisfying (a1), (a2), (a3), (a4), (p).'

\subsection{}
As described above we proved, under certain conditions on the multiplicative order $\ell$ of $\zeta$, Lusztig's conjectural multiplicity formula for regular blocks in $\modu(U_\zeta(\Gg)^k)$  in the case where  
$\eta(k)\in G$ is unipotent.
In general we should consider the Jordan decomposition 
$\eta(k)=\eta(k)_s\eta(k)_u$ so that $\eta(k)_u$ is the unipotent element of the centralizer $G'$ of $\eta(k)_s$ in $G$, and we can formulate a similar conjectural multiplicity formula 
using $G'$ instead of $G$.
This general formula follows from the unipotent case by the theory of parabolic induction 
(see \cite{DK2}) 
only in the case where $G'$ is a Levi subgroup of some parabolic subgroup of $G$.
We do not know whether it is true in general.

In the case of Lie algebras in positive characteristics the centralizer of a semisimple element is always a Levi subgroup of a parabolic subgroup and we do not meet the above mentioned situation.
In \cite{BMR2} Bezrukavnikov, 
Mirkovi\'{c} and 
Rumynin established a framework working in the case where the Harish-Chandra central character is singular  using partial flag manifolds.
This cannot be generalized to the quantum case by the same reason.

\subsection{}
The organization of this paper is as follows.
In Section 2 we introduce basic notation on the quantized enveloping algebras and 
the quantized coordinate algebras.
In Section 3 and 4 we recall fundamental results on the quantized flag manifold and $D$-modules on it in the situation where the parameter $q$ is specialized to an arbitrary non-zero complex number.
We assume that $q$ is specialized to an $\ell$-th root of 1 starting from Section 5.
In Section 5 we recall the structure theorem for the center of the quantized enveloping algebra. 
We also give a formula concerning the action the Harish-Chandra center on the tensor product of a $U_\zeta(\Gg)$-module and an integrable module over the Lusztig type quantized enveloping algebra.
This is an 
analogue of Kostant's result on the tensor product of a $\Gg$-module and a finite-dimensional $\Gg$-module.
This plays essential role in the investigation of the translation functor.
In Section 6 we use Frobenius morphism $\Fr:\CB_\zeta\to\CB$  to reformulate results on $D$-modules on $\CB_\zeta$ using 
$\GD$-modules and $\sD$-modules.
In Section 7 we recall the Beilinson-Bernstein type correspondence given in \cite{T2}, \cite{T3}.
In Section 8 we formulate a torus equivariant version of the correspondence.
In Section 9 after recalling the notion of the exotic sheaves 
we give a proof of the main theorem.
An essential role is played by the wall crossing functor which is closely related to the translation functor.
In Section 10 we explain how we can deduce  Lusztig's conjecture on non-restricted representations from the main theorem.

\subsection{}
If $R$ is a ring,  we denote its center by $Z(R)$.
We also denote by
$\Mod(R)$ 
(resp.\ $\Mod^r(R)$) 
the category of left
(resp.\ right) 
$R$-modules.
Its subcategory consisting of 
finitely generated left
(resp.\ right) 
$R$-modules
is denoted by $\modu(R)$
(resp.\ $\modu^r(R)$).

If $\Gamma$ is a free abelian group of finite rank and $R$ is a $\Gamma$-graded ring,  we denote by 
$\Mod_\Gamma(R)$ 
(resp. $\modu_\Gamma(R)$) 
the category of $\Gamma$-graded left $R$-modules
(resp. $\Gamma$-graded finitely generated left $R$-modules).

If $\CR$ is a sheaf of rings on a topological space, we denote by $\Mod(\CR)$ the category of quasi-coherent left $\CR$-modules.
Its subcategory consisting of coherent $\CR$-modules is denoted by $\modu(\CR)$.

The structure sheaf of a scheme $X$ is denoted by $\CO_X$.
If $X$ is an affine algebraic variety, its coordinate algebra $\Gamma(X,\CO_X)$ is denoted by $\CO(X)$.

If $U$ is a Hopf algebra, we sometimes use Sweedler's notation
\[
\Delta^{(n)}(u)
=\sum_{(u)}u_{(0)}\otimes\cdots\otimes u_{(n)}
\qquad(u\in U)
\]
for the iterated comultiplication 
$\Delta^{(n)}:U\to U^{\otimes n+1}$.

\section{{Quantum groups}}
\subsection{Simple Lie algebra}
Let $G$ be a connected, simply-connected, simple algebraic group 
over $\BC$ with maximal torus $H$, 
and let $\Gg$ and $\Gh$ be the Lie algebras of $G$ and $H$ respectively.
We denote by $\Delta\subset \Gh^*$ the root system.
Let $W\subset GL(\Gh^*)$ be the Weyl group, and let
\begin{equation}
\label{eq:bil}
(\;,\;):\Gh^*\times\Gh^*\to\BC
\end{equation}
be the $W$-invariant symmetric bilinear form such that $(\alpha,\alpha)=2$ for short roots $\alpha$.
For $\alpha\in\Delta$ the corresponding coroot is given by $\alpha^\vee=2\alpha/(\alpha,\alpha)\in\Gh^*$.
We denote by $Q$ and $\Lambda$ the root lattice and the weight lattice respectively.
We also set $Q^\vee=\sum_{\alpha\in \Delta^\vee}\BZ\alpha^\vee$.
Fix a set $\{\alpha_i\}_{i\in I}$ of simple roots, and denote by $\Delta^+$ the set of positive roots.
We set 
\[
Q^+=\sum_{\alpha\in\Delta^+}\BZ\alpha,
\qquad
\Lambda^+=\{\lambda\in\Lambda
\mid
(\lambda,\alpha^\vee)\geqq0\;\;(\alpha\in\Delta^+)\}.
\]
For $i\in I$ we denote by $s_i\in W$ the simple reflection corresponding to $i$.
For $\lambda\in\Lambda$ we denote the corresponding character of $H$ by
\begin{equation}
\label{eq:theta}
\theta_\lambda:H\to\BC^\times.
\end{equation}
We define subalgebras
 $\Gn^+$, $\Gn^-$,  $\Gb^+$, $\Gb^-$ of $\Gg$ by 
\[
\Gn^\pm=
\bigoplus_{\alpha\in\Delta^+}\Gg_{\pm\alpha}
,
\qquad
\Gb^\pm=\Gh\oplus\Gn^\pm,
\]
where $\Gg_\alpha$ for $\alpha\in\Delta$ denotes the root subspace.
The corresponding connected closed subgroups of $G$ are denoted as 
$N^+$, $N^-$,  $B^+$, $B^-$ respectively.
The variety 
\[
\CB=B^-\backslash G
\]
is called the flag manifold for $G$.

For $J\subset I$
we set 
\begin{align*}
\Delta_J=&\left(\sum_{j\in J}\BZ\alpha_j\right)\cap\Delta,
\qquad
\Delta_J^+=\Delta_J\cap\Delta^+,
\end{align*}
\begin{align*}
\Lambda^J=\{\lambda\in \Lambda\mid
(\lambda,\alpha_j^\vee)=0
\;\;(j\in J)\},
\qquad
\Lambda^{J+}=\Lambda^J\cap\Lambda^+.
\end{align*}
We define subalgebras
 $\Gl_J$,  $\Gn_J^\pm$, $\Gp_J^-$, $\Gb_J^-$ of $\Gg$ by 
\begin{gather*}
\Gl_J=\Gh\oplus\left(
\bigoplus_{\alpha\in\Delta_J}\Gg_{\alpha}
\right),
\qquad
\Gn_J^\pm=\bigoplus_{\alpha\in\Delta^+\setminus\Delta_J}
\Gg_{\pm\alpha},
\\
\Gp_J^-=\Gl_J\oplus\Gn_J^-,
\qquad
\Gb_J^-=\Gh\oplus\left(
\bigoplus_{\alpha\in\Delta_J^+}\Gg_{-\alpha}
\right)\subset\Gl_J.
\end{gather*}
The corresponding connected closed subgroups of $G$ are denoted as 
$L_J$, $N_J^\pm$, $P_J^-$, $B_J^-$ respectively.
The variety 
\[
\CP_J=P_J^-\backslash G
\]
is called the partial flag manifold.
For $J_1\subset J_2\subset I$ we denote by 
\[
\pi^{J_2J_1}:\CP_{J_1}\to\CP_{J_2}
\]
the natural morphism of algebraic varieties.
We write 
\[
\pi^J=\pi^{J\emptyset}:\CB\to\CP_{J}.
\]

\subsection{Quantized enveloping algebra}
For $i\in I$ we set $d_i=(\alpha_i,\alpha_i)/2\in\BZ$, and 
for $i, j\in I$ we set $a_{ij}=(\alpha_i^\vee,\alpha_j)\in\BZ$.
Let $\BF=\BQ(q^{1/|\Lambda/Q|})$ be the field of rational functions over $\BQ$ in the variable $q^{1/|\Lambda/Q|}$.
We define $U_\BF(\Gg)$ to be the $\BF$-algebra with $1$ generated by the elements 
$k_\lambda$ $(\lambda\in \Lambda)$, 
$e_i$, $f_i$ $(i\in I)$ satisfying the fundamental relations
\begin{align}
&
k_0=1, \qquad
k_\lambda k_{\mu}=k_{\lambda+\mu}
&(\lambda, \mu\in\Lambda),
\\
&k_\lambda e_i=q^{(\lambda,\alpha_i)}e_ik_\lambda,
\qquad
k_\lambda f_i=q^{-(\lambda,\alpha_i)}f_ik_\lambda
&(i\in I, \;\lambda\in\Lambda),
\\
&
e_if_j-f_je_i=
\delta_{ij}
\frac{k_i-k_i^{-1}}{q_i-q_i^{-1}}
&(i, j\in I),
\\
&
\sum_{r+s=1-a_{ij}}
(-1)^{s}e_i^{(r)}e_je_i^{(s)}=
\sum_{r+s=1-a_{ij}}
(-1)^{s}f_i^{(r)}f_jf_i^{(s)}=0
&(i, j\in I, i\ne j),
\end{align}
where
$q_i=q^{d_i}$, $k_i=k_{\alpha_i}$ for $i\in I$, and 
$e_i^{(r)}=e_i^r/[r]_{q_i}!$, $f_i^{(r)}=f_i^r/[r]_{q_i}!$ for $i\in I$, $r\in\BZ_{\geqq0}$.
Here,
\[
[r]_t!=\prod_{n=1}^r\frac{t^n-t^{-n}}{t-t^{-1}}
\in\BZ[t,t^{-1}].
\]
Then $U_\BF(\Gg)$ turns out to be a Hopf algebra 
by
\begin{align}
&
\Delta(k_\lambda)=k_\lambda\otimes k_\lambda,
\quad
\Delta(e_i)=e_i\otimes1+k_i\otimes e_i,
\quad
\Delta(f_i)=f_i\otimes k_i^{-1}+1\otimes f_i,
\\
&\varepsilon(k_\lambda)=1,
\quad
\varepsilon(e_i)=
\varepsilon(f_i)=0,
\\
&
S(k_\lambda)=k_{-\lambda},
\quad
S(e_i)=-k_i^{-1}e_i,
\quad
S(f_i)=-f_ik_i
\end{align}
for $\lambda\in\Lambda$, $i\in I$.
We define a subalgebra $U_\BF(\Gh)$ 
of $U_\BF(\Gg)$ 
by
\[
U_\BF(\Gh)=\langle
k_\lambda\mid\lambda\in\Lambda\rangle.
\]
Then we have
$
U_\BF(\Gh)=\bigoplus_{\lambda\in\Lambda}\BF k_\lambda.
$
For $\lambda\in\Lambda$ we define a character
\begin{equation}
\label{eq:chi1}
\chi_\lambda:U_\BF(\Gh)\to\BF
\end{equation}
by 
$\chi_\lambda(k_\mu)=q^{(\lambda,\mu)}
=(q^{1/|\Lambda/Q|})^{(\lambda,\mu)|\Lambda/Q|}$ 
($\lambda, \mu\in\Lambda$).
Note $(\Lambda,\Lambda)\subset\frac1{|\Lambda/Q|}\BZ$ by our choice of \eqref{eq:bil}.

Set $\BA=\BZ[q^{1/{|\Lambda/Q|}},q^{-1/{|\Lambda/Q|}}]$.
We define $\BA$-forms $U_\BA(\Gg)$ and $U_\BA^L(\Gg)$ of 
$U_\BF(\Gg)$ as follows.
The De Concini-Kac form $U_\BA(\Gg)$ is the $\BA$-subalgebra of $U_\BF(\Gg)$ generated by 
$k_\lambda$\;($\lambda\in \Lambda$), 
$e_i$, $f_i$ ($i\in I$).
Set 
\[
U_\BA^L(\Gh)=\{f\in U_\BF(\Gh)\mid
\chi_\lambda(h)\in\BA\;(\forall\lambda\in\Lambda)\}.
\]
The Lusztig form 
$U_\BA^L(\Gg)$ is the $\BA$-subalgebra of $U_\BF(\Gg)$
generated by
$U_\BA^L(\Gh)$, 
$e_i^{(r)}$, $f_i^{(r)}$ $(i\in I, r\in\BZ_{\geqq0})$.
Then 
$U_\BA(\Gg)$ and $U_\BA^L(\Gg)$ are Hopf algebras over $\BA$.

Now assume that we are given $\zeta\in\BC^\times$ together with a choice of $\zeta^{1/{|\Lambda/Q|}}\in\BC^\times$ satisfying 
$(\zeta^{1/{|\Lambda/Q|}})^{|\Lambda/Q|}=\zeta$.
We define Hopf algebras 
$U_\zeta(\Gg)$ and $U_\zeta^L(\Gg)$ over $\BC$ by
\begin{equation}
U_\zeta(\Gg)=\BC\otimes_\BA U_\BA(\Gg),
\qquad
U_\zeta^L(\Gg)=\BC\otimes_\BA U_\BA^L(\Gg)
\end{equation}
with respect to $\BA\to\BC$ ($q^{1/{|\Lambda/Q|}}\mapsto\zeta^{1/{|\Lambda/Q|}}$).

\subsection{Harish-Chandra center}
Define subalgebras $U_\zeta(\Gh)$,
$U_\zeta(\Gn^+)$, 
$U_\zeta(\Gn^-)$ of 
$U_\zeta(\Gg)$ by
\[
U_\zeta(\Gh)=\langle k_\lambda\mid\lambda\in\Lambda\rangle,
\qquad
U_\zeta(\Gn^+)=\langle
e_i\mid
i\in I\rangle,
\qquad
U_\zeta(\Gn^-)=\langle
f_i\mid
i\in I\rangle.
\]
Then we have an isomorphism
\[
U_\zeta(\Gg)
\cong
U_\zeta(\Gn^-)
\otimes
U_\zeta(\Gh)
\otimes
U_\zeta(\Gn^+)
\qquad
(xyz\leftrightarrow x\otimes y\otimes z)
\]
of $\BC$-modules.
Moreover, we have
$
U_\zeta(\Gh)=
\bigoplus_{\lambda\in \Lambda}
\BC k_\lambda.
$
We set
\begin{equation}
{}^eU_\zeta(\Gh)
=
\bigoplus_{\lambda\in \Lambda}
\BC k_{2\lambda},
\end{equation}
and define a twisted action of $W$ on 
${}^eU_\zeta(\Gh)$ by
\begin{equation}
w\circ k_{2\lambda}=
\zeta^{2(w\lambda-\lambda,\rho)}k_{2w\lambda}
\qquad(w\in W, \lambda\in\Lambda),
\end{equation}
where $\rho\in\Lambda$ is defined by 
$(\rho,\alpha_i^\vee)=1$ for any $i\in I$.

We define $Z_\Har(U_\zeta(\Gg))$ 
to be the $\BC$-subalgebra of
$Z(U_\zeta(\Gg))$ generated by the image of 
$
Z(U_\BA(\Gg))\to U_\zeta(\Gg)
$.
We call
$Z_\Har(U_\zeta(\Gg))$
the Harish-Chandra center of $U_\zeta(\Gg)$.
Define 
\[
\Xi:Z_\Har(U_\zeta(\Gg))\to 
U_\zeta(\Gh)
\]
as the composite of 
\[
Z_\Har(U_\zeta(\Gg))
\hookrightarrow
U_\zeta(\Gg)
\cong
U_\zeta(\Gn^-)
\otimes
U_\zeta(\Gh)
\otimes
U_\zeta(\Gn^+)
\xrightarrow{\varepsilon\otimes1\otimes\varepsilon}
U_\zeta(\Gh).
\]
The following is well-known.
\begin{proposition}
The linear map $\Xi$ is an injective algebra homomorphism whose image is equal to
\[
{}^eU_\zeta(\Gh)^{W\circ}
=
\{x\in {}^eU_\zeta(\Gh)\mid
w\circ x=x\;(\forall w\in W)\}.
\]
\end{proposition}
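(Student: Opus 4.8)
The plan is to prove this in two stages: first establish that $\Xi$ is an injective algebra homomorphism whose image lies in ${}^eU_\zeta(\Gh)^{W\circ}$, and then prove surjectivity onto ${}^eU_\zeta(\Gh)^{W\circ}$ by a dimension/specialization argument from the classical Harish-Chandra isomorphism for $U_\BA(\Gg)$. The map $\Xi$ is a composite of an algebra inclusion with the linear projection $\varepsilon\otimes 1\otimes\varepsilon$ relative to the triangular decomposition $U_\zeta(\Gg)\cong U_\zeta(\Gn^-)\otimes U_\zeta(\Gh)\otimes U_\zeta(\Gn^+)$. This projection is in general only linear, so multiplicativity of $\Xi$ must be checked specifically on central elements: if $z,z'$ are central, write $z=\sum f_a\otimes k_{\mu_a}\otimes e_a$ and $z'=\sum f_b\otimes k_{\nu_b}\otimes e_b$ in the triangular decomposition, where the only terms with $f_a,e_a$ both equal to a scalar contribute to $\Xi(z)$; using that $z$ is central (so in particular commutes with $U_\zeta(\Gh)$, forcing a weight condition, and that $U_\zeta(\Gn^+)_{>0}$, $U_\zeta(\Gn^-)_{<0}$ are two-sided ideals of $U_\zeta(\Gb^\pm)$ respectively) one sees that modulo $U_\zeta(\Gn^-)_{<0}U_\zeta(\Gg)+U_\zeta(\Gg)U_\zeta(\Gn^+)_{>0}$ the product $zz'$ reduces to $\Xi(z)\Xi(z')\in U_\zeta(\Gh)$. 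This is the standard argument; I would record it carefully since the projection is not a ring map in general.

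For injectivity, suppose $z\in Z_\Har(U_\zeta(\Gg))$ with $\Xi(z)=0$. Since $z$ is central it is annihilated by — or rather, it acts on — every highest weight module, and its action on the Verma module $M(\lambda)$ of highest weight $\lambda$ is the scalar $\chi_\lambda(\Xi(z))$; indeed $\Xi$ is precisely the ``evaluation on highest weight vectors'' map. If $\Xi(z)=0$ then $z$ acts as zero on every Verma $M(\lambda)$, $\lambda\in\Lambda$; since for generic $\lambda$ the Verma module is simple and these exhaust enough of the representation theory to separate points of $Z_\Har$, we get $z=0$. Concretely, one can argue at the level of $U_\BA(\Gg)$: $Z(U_\BA(\Gg))$ embeds into functions on $\Lambda$ via $\lambda\mapsto$ (central character of $M_\BA(\lambda)$), and this is compatible with specialization $q^{1/|\Lambda/Q|}\mapsto\zeta^{1/|\Lambda/Q|}$. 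So injectivity of $\Xi$ follows once we know the weight functions $\chi_\lambda\circ\Xi$ separate points of $Z_\Har(U_\zeta(\Gg))$.

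To identify the image with ${}^eU_\zeta(\Gh)^{W\circ}$: containment of $\Image\Xi$ in ${}^eU_\zeta(\Gh)$ follows from the weight-lattice constraint on central elements (a central element has weight $0$, and conjugation by $k_\mu$ forces the $U_\zeta(\Gh)$-component into the span of $k_{2\lambda}$, $\lambda\in\Lambda$ — this is the quantum analogue of the fact that $\rho$-shifted Harish-Chandra images involve only the even lattice), and $W\circ$-invariance follows from the Harish-Chandra principle: $M(\lambda)$ and $M(w\circ\lambda)$ (dot action) have the same central character whenever they share a composition factor, which holds for the appropriate linkage, giving $\chi_\lambda(\Xi(z))=\chi_{w\circ\lambda}(\Xi(z))$ for all $\lambda$, i.e.\ $w\circ\Xi(z)=\Xi(z)$. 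For the reverse containment — the genuinely substantive point — I would invoke the classical result over $\BF$ (De Concini–Kac, or Joseph–Letzter): the Harish-Chandra homomorphism for $U_\BF(\Gg)$ is an isomorphism onto $({}^eU_\BF(\Gh))^{W\circ}$, and then pass to the $\BA$-form and specialize. The key technical step is that $Z(U_\BA(\Gg))$ is an $\BA$-lattice in $Z(U_\BF(\Gg))$ mapping onto an $\BA$-form of $({}^eU_\BF(\Gh))^{W\circ}$, and that this $\BA$-form specializes correctly at $q^{1/|\Lambda/Q|}=\zeta^{1/|\Lambda/Q|}$; one needs the explicit $\BA$-basis of $({}^eU_\BF(\Gh))^{W\circ}$ given by (renormalized) orbit sums $\sum_{\mu\in W\circ\lambda}(\text{coeff})\,k_{2\mu}$ for $\lambda\in\Lambda^+$, and the fact that the corresponding central elements (built from traces on Weyl modules / the Rosso form) lie in $Z(U_\BA(\Gg))$ and reduce mod $\zeta$ to a spanning set of ${}^eU_\zeta(\Gh)^{W\circ}$.

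The main obstacle is precisely this last integrality-and-specialization point: the classical statement is over the function field, but $Z_\Har(U_\zeta(\Gg))$ is \emph{defined} as the image of $Z(U_\BA(\Gg))$, so one must check that base change $\BC\otimes_\BA(-)$ is exact/surjective where needed and that no central elements are ``lost'' at the root of unity. Under the running hypotheses on $\ell$ (odd, prime to $|Z(G)|$, prime to $3$ in type $G_2$) the relevant orbit sums remain a basis after specialization — the conditions on $\ell$ are exactly what guarantees the twisted $W\circ$-action on ${}^eU_\zeta(\Gh)$ has the expected invariants, matching $\CO(H/W\circ)$. Since the proposition is stated as ``well-known,'' I would present the above as the structure of the argument and cite De Concini–Kac and Joseph–Letzter for the function-field version, spending the real effort only on the specialization bookkeeping.
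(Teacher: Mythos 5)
The paper offers no proof of this proposition (it is stated as ``well-known''), so there is nothing in the text to compare your argument against directly; I will assess the proposal on its own terms.

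Your overall scheme — multiplicativity via the standard triangular-decomposition argument, image constraints via weight considerations and linkage, surjectivity via specialization of the generic Harish-Chandra isomorphism from $\BF$ through the $\BA$-form — is the right framework, and you correctly put your finger on the genuinely delicate point (integrality of the $\BA$-form and its behavior under $\BC\otimes_\BA(-)$). However, the injectivity paragraph as written does not close. You argue: $\Xi(z)=0$ implies $z$ acts by zero on every Verma module $M(\lambda)$, and then ``since for generic $\lambda$ the Verma module is simple and these exhaust enough of the representation theory to separate points of $Z_\Har$'' you conclude $z=0$. Over the function field $\BF$ this is fine, but at a root of unity $\zeta$ it fails at both steps: the Verma modules of $U_\zeta(\Gg)$ are never simple (they surject onto baby Vermas, and the huge Frobenius center acts freely on them), and ``Verma modules separate points of $Z_\Har$'' is \emph{precisely} the assertion $\Ker\Xi=0$ that you are trying to prove, so the sentence is circular. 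Your follow-up suggestion — ``concretely, one can argue at the level of $U_\BA(\Gg)$'' — points in the right direction but then silently hands you back the exact specialization issue you flag as the main obstacle.

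The clean way to proceed makes the Verma detour unnecessary. Establish directly the $\BA$-form Harish-Chandra isomorphism
\[
\Xi_\BA:\;Z(U_\BA(\Gg))\;\xrightarrow{\;\sim\;}\;\bigl({}^eU_\BA(\Gh)\bigr)^{W\circ},
\]
where $\bigl({}^eU_\BA(\Gh)\bigr)^{W\circ}$ is free over $\BA$ with basis the normalized $W\circ$-orbit sums (this is in De Concini--Kac/De Concini--Procesi; the nontrivial content is that the left-hand side, defined as the center of the $\BA$-form, is saturated in $U_\BA(\Gg)$ so that $U_\BA(\Gg)/Z(U_\BA(\Gg))$ is $\BA$-torsion-free). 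Once you have an isomorphism of free $\BA$-modules compatible with the PBW splitting, applying $\BC\otimes_\BA(-)$ simultaneously gives: (a) $\BC\otimes_\BA Z(U_\BA(\Gg))\hookrightarrow U_\zeta(\Gg)$ is injective, so $Z_\Har(U_\zeta(\Gg))\cong\BC\otimes_\BA Z(U_\BA(\Gg))$; (b) $\Xi$ is an injective algebra homomorphism; (c) its image is $\BC\otimes_\BA\bigl({}^eU_\BA(\Gh)\bigr)^{W\circ}={}^eU_\zeta(\Gh)^{W\circ}$, the orbit sums remaining linearly independent because the leading coefficients are units in $\BA$. All three claims of the proposition come out together, and the Verma-module faithfulness — which you would otherwise have to justify separately and which is false in the naive form at a root of unity — never has to be invoked. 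I would also note that your ``conditions on $\ell$'' remark is a red herring here: the twist $w\circ k_{2\lambda}=\zeta^{2(w\lambda-\lambda,\rho)}k_{2w\lambda}$ is defined over $\BA$ for any $\zeta$, and the orbit-sum basis survives specialization without hypotheses on $\ell$; the constraints (a1)--(a3) in the paper are needed elsewhere (e.g.\ for the Frobenius center and the quantum Frobenius), not for this proposition.

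Your multiplicativity argument and the ``even lattice'' observation (that the $U_\zeta(\Gh)$-component of a central element lands in $\bigoplus_\lambda\BC k_{2\lambda}$) are correct and I would keep them, though the latter deserves a sentence of justification: it comes from the adjoint-invariance of central elements combined with the form of the quantum Casimirs / the $k_{2\rho}$ twist in the Rosso form, not just from ``weight zero.''
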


We identify the coordinate algebra $\CO(H)$  of $H$ with 
${}^eU_\zeta(\Gh)$ via $\theta_\lambda\leftrightarrow k_{2\lambda}$.
Then ${}^eU_\zeta(\Gh)^{W\circ}$ is identified with 
$\CO(H)^{W\circ}$ with respect to the twisted action of $W$ on $H$ given by
\begin{equation}
\theta_\lambda(w\circ t)
=
\zeta^{2(w^{-1}\lambda-\lambda,\rho)}
\theta_{w^{-1}\lambda}(t)
\qquad(t\in H,\; w\in W,\; \lambda\in\Lambda).
\end{equation}
Therefore, the characters of 
$Z_\Har(U_\zeta(\Gg))$ are in one-to-one correspondence with the points of $H/W\circ$.
For $[t]\in H/W\circ$ represented by $t\in H$ we denote by
\begin{equation}
\xi_{[t]}:Z_\Har(U_\zeta(\Gg))\to\BC
\end{equation}
the corresponding character, and set
\begin{equation}
\label{eq:Ut}
U_\zeta(\Gg)_{[t]}
=U_\zeta(\Gg)/U_\zeta(\Gg)\Ker(\xi_{[t]}).
\end{equation}

\subsection{Quantized coordinate algebra}
We say that 
a left
(resp.\ right) 
$U_\BF(\Gg)$-module $M$ is integrable if it is a direct sum of weight spaces
\[
M_\mu=\{m\in M
\mid
am=\chi_\mu(a)m
\;(
\text{resp.}\;
ma=\chi_\mu(a)m)
\;\;
\text{for}\: a\in U_\BF(\Gh)\}
\]
for $\mu\in\Lambda$, and if for any $m\in M$ there exists $n>0$ such that
$
e_i^{n}m=f_i^{n}m=0
$
(resp.\
$
me_i^{n}=mf_i^{n}=0
$)
for $i\in I$.
For $\lambda\in\Lambda^+$ we denote by $\Delta_\BF(\lambda)$ the irreducible (left) $U_\BF(\Gg)$-module with highest weight $\lambda$.
It is known that integrable irreducible 
$U_\BF(\Gg)$-modules are in one-to-one correspondence with $\Lambda^+$ via $\Delta_\BF(\lambda)\leftrightarrow\lambda$.
Define an $\BA$-form $\Delta_\BA(\lambda)$ of 
$\Delta_\BF(\lambda)$ by
$\Delta_\BA(\lambda)=U^L_\BA(\Gg)v_\lambda$, where 
$v_\lambda$ is the highest weight vector of 
$\Delta_\BF(\lambda)$.

Set
$
U_\BF(\Gg)^*
=
\Hom_\BF(U_\BF(\Gg),\BF)
$.
It is a 
$U_\BF(\Gg)$-bimodule by
\begin{equation}
\label{eq:bim}
\langle
u_1\varphi u_2,u\rangle
=
\langle
\varphi, u_2uu_1
\rangle
\qquad(
\varphi\in U_\BF(\Gg)^*,\;
u, u_1, u_2\in
U_\BF(\Gg))
.
\end{equation}
We denote by $\CO_\BF(G)$ the 
subspace of 
$U_\BF(\Gg)^*$ consisting of 
$\varphi\in U_\BF(\Gg)^*$ 
satisfying the following equivalent conditions:
\begin{itemize}
\item[(1)]
the left $U_\BF(\Gg)$-module 
$U_\BF(\Gg)\varphi$ is integrable,
\item[(2)]
the right $U_\BF(\Gg)$-module 
$\varphi U_\BF(\Gg)$ is integrable.
\end{itemize}
We have a natural Hopf algebra structure of
$\CO_\BF(G)$ such that
the canonical pairing $U_\BF(\Gg)\times\CO_\BF(G)\to\BF$ turns out to be a Hopf pairing.

We set 
\[
\CO_\BA(G)=
\{
\varphi\in\CO_\BF(G)
\mid
\langle\varphi,U^L_\BA(\Gg)\rangle\subset\BA\}.
\]
It is a Hopf algebra over $\BA$ such that $\BF\otimes_\BA\CO_\BA(G)\cong\CO_\BF(G)$.
Moreover, it is a $U^L_\BA(\Gg)$-bimodule
(see e.g.\ \cite{T1}).

We define a subalgebra $U_\zeta^L(\Gh)$ of $U^L_\zeta(\Gg)$
to be the linear span of $\Image(U_\BA^L(\Gh)\to U_\zeta^L(\Gg))$.
For $J\subset I$ we also define a subalgebra $U_\zeta^L(\Gp_J^-)$ 
of 
$U^L_\zeta(\Gg)$ by 
\[
U_\zeta^L(\Gp_J^-)=
\langle U_\zeta^L(\Gh), e_j^{(r)}, f_i^{(r)}\mid
j\in J, i\in I, r\geqq0\rangle.
\]
We write $U^L_\zeta(\Gb^-)=U^L_\zeta(\Gp_\emptyset^-)$.
For $\lambda\in\Lambda$ the restriction $\chi_\lambda|_{U_\BA^L(\Gh)}:U_\BA^L(\Gh)\to\BA$ induces a character
\begin{equation}
\label{eq:chi2}
\chi_\lambda:U^L_\zeta(\Gh)\to\BC
\end{equation}
of $U^L_\zeta(\Gh)$.
For $\lambda\in\Lambda^J$ the character  
$\chi_\lambda:U^L_\zeta(\Gh)\to\BC$ is extended to
the character
\begin{equation}
\chi_{J,\lambda}:U^L_\zeta(\Gp^-_J)\to \BC
\end{equation}
of $U^L_\zeta(\Gp^-_J)$ 
by $\chi_{J,\lambda}(e_j^{(r)})=\chi_{J,\lambda}(f_i^{(r)})=0$ for
$j\in J$, $i\in I$, $r\in\BZ_{>0}$.
By abuse of notation we write
\begin{equation}
\label{eq:chi3}
\chi_\lambda:=\chi_{\emptyset,\lambda}:U^L_\zeta(\Gb^-)\to\BC
\end{equation}
for $\lambda\in\Lambda$.

We say that 
a left 
$U^L_\zeta(\Gp_J^-)$-module
$M$ is integrable if it is a direct sum of weight spaces
\[
M_\mu=\{m\in M
\mid
am=\chi_\mu(a)m
\;\;
\text{for}\;\; a\in U^L_\zeta(\Gh)\}
\quad(\mu\in\Lambda),
\]
and if for any $m\in M$ 
there exists $N>0$ such that
$
e_j^{(n)}m=f_i^{(n)}m=0
$
for $i\in I$, $j\in J$, $n>N$.
The notion of an integrable right $U^L_\zeta(\Gp_J^-)$-module is defined similarly.
We denote by $\Mod_\inte(U^L_\zeta(\Gp_J^-))$ 
(resp.\ $\Mod^r_\inte(U^L_\zeta(\Gp_J^-))$)
the category of left 
(resp.\ right) integrable
$U^L_\zeta(\Gp_J^-)$-modules.
Its full subcategory consisting of finite-dimensional objects is denoted by $\modu_\inte(U^L_\zeta(\Gp_J^-))$
(resp.\
$\modu^r_\inte(U^L_\zeta(\Gp_J^-))$).
Set $
U^L_\zeta(\Gp_J^-)^*=
\Hom_\BC(U^L_\zeta(\Gp_J^-),\BC)
$.
It is a $U^L_\zeta(\Gp_J^-)$-bimodule similarly to \eqref{eq:bim}.
We define 
$\CO_\zeta(P_J^-)$
to be the subspace of 
$U^L_\zeta(\Gp_J^-)^*$ 
consisting of $\varphi\in U^L_\zeta(\Gp_J^-)^*$ satisfying the following equivalent conditions:
\begin{itemize}
\item[(1)]
the left $U^L_\zeta(\Gp_J^-)$-module $U^L_\zeta(\Gp_J^-)\varphi$ is integrable,
\item[(2)]
the right $U^L_\zeta(\Gp_J^-)$-module $\varphi U^L_\zeta(\Gp_J^-)$ is integrable.
\end{itemize}
Then $\CO_\zeta(P_J^-)$  is naturally a Hopf algebra and a $U_\zeta^L(\Gp_J^-)$-bimodule.
We write $\CO_\zeta(G)=\CO_\zeta(P_I^-)$, $\CO_\zeta(B^-)=\CO_\zeta(P_\emptyset^-)$.
It is known that 
$\CO_\zeta(G)\cong\BC\otimes_\BA\CO_\BA(G)$ with respect to $q^{1/|\Lambda/Q|}\mapsto\zeta^{1/\Lambda/Q|}$
(see e.g.\ \cite[Proposition 3.2]{T3}).

For $\lambda\in\Lambda^+$ we set 
$\Delta_\zeta(\lambda)
=\BC\otimes_\BA\Delta_\BA(\lambda)$.
It is a finite-dimensional integrable $U^L_\zeta(\Gg)$-module, called the Weyl module.

\subsection{Induction functor}
Let 
$
J_1\subset J_2\subset I
$.
We have the obvious restriction functor
\begin{align}
\label{eq:Res}
\Res^{J_1J_2}:
\Mod_\inte(U^L_\zeta(\Gp_{J_2}^-))
\to
\Mod_\inte(U^L_\zeta(\Gp_{J_1}^-)).
\end{align}
The induction functor
\begin{align}
\label{eq:Ind}
\Ind^{J_2J_1}:
\Mod_\inte(U^L_\zeta(\Gp_{J_1}^-))
\to
\Mod_\inte(U^L_\zeta(\Gp_{J_2}^-)),
\end{align}
which is right adjoint to \eqref{eq:Res} is given as follows.
For $M\in \Mod_\inte(U^L_\zeta(\Gp_{J_1}^-))$ we
define a left $U^L_\zeta(\Gp_{J_1}^-)$-module structure of 
$\CO_\zeta(P_{J_2}^-)\otimes M$ by 
\[
y\star
(\varphi\otimes m)=
\sum_{(y)}\varphi\cdot (S^{-1}y_{(0)})\otimes y_{(1)}m
\qquad
(y\in U^L_\zeta(\Gp_{J_1}^-),
\;
\varphi\in \CO_\zeta(P_{J_2}^-),
\;
m\in M),
\]
and set
\[
\Ind^{J_2J_1}M=
\{z\in \CO_\zeta(P_{J_2}^-)\otimes M
\mid
y\star z=\varepsilon(y)z
\quad(y\in U^L_\zeta(\Gp_{J_1}^-))\}.
\]
Then the left $U^L_\zeta(\Gp_{J_2}^-)$-module structure of 
$\CO_\zeta(P_{J_2}^-)\otimes M$  given by
\[
u(\varphi\otimes m)=u\varphi\otimes m
\qquad
(u\in U^L_\zeta(\Gp_{J_2}^-),
\;
\varphi\in \CO_\zeta(P_{J_2}^-),
\;
m\in M)
\]
induces a left integrable $U^L_\zeta(\Gp_{J_2}^-)$-module structure of
$\Ind^{J_2J_1}M$.

For a left 
(resp.\ right) $U^L_\zeta(\Gp_J^-)$-module $M$ we denote by  $M^{[r]}$ 
(resp.\ $M^{[l]}$) the right 
(resp.\ left) $U^L_\zeta(\Gp_J^-)$-module with the same underlying vector space as $M$ and the right 
(resp.\ left) action of $U^L_\zeta(\Gp_J^-)$ given by
\[
my=(Sy)\cdot m
\quad(\text{resp.}\;
ym=m\cdot(S^{-1}y))
\qquad(m\in M,\; y\in U^L_\zeta(\Gp_J^-)).
\]
We have $M^{[r][l]}=M$ and $M^{[l][r]}=M$.

We have also the right module versions 
\begin{align}
\label{eq:rRes}
\Res^{rJ_1J_2}:
\Mod^r_\inte(U^L_\zeta(\Gp_{J_2}^-))
\to
\Mod^r_\inte(U^L_\zeta(\Gp_{J_1}^-)),
\\
\label{eq:rInd}
\Ind^{rJ_2J_1}:
\Mod^r_\inte(U^L_\zeta(\Gp_{J_1}^-))
\to
\Mod^r_\inte(U^L_\zeta(\Gp_{J_2}^-))
\end{align}
of \eqref{eq:Res} and \eqref{eq:Ind} respectively.
Here, $\Res^{rJ_1J_2}$ is the obvious restriction functor, and
$\Ind^{rJ_2J_1}$ is given by 
$\Ind^{rJ_2J_1}(M)=(\Ind^{J_2J_1}(M^{[l]}))^{[r]}$.

\subsection{$R$-matrix}
Define subalgebras $U_\BF(\Gb^+)$ and $U_\BF(\Gb^-)$ 
of $U_\BF(\Gg)$ by
\[
U_\BF(\Gb^+)=\langle U_\BF(\Gh), \; e_i\mid i\in I\rangle,
\qquad
U_\BF(\Gb^-)=\langle U_\BF(\Gh), \; f_i\mid i\in I\rangle.
\]
For $\gamma\in Q^+$ we denote by 
$U_\BF(\Gn^+)_\gamma$ (resp.\ $U^L_\BA(\Gn^+)_\gamma$) 
the $\BF$-submodule 
(resp.\ $\BA$-submodule) of $U_\BF(\Gb^+)$ 
spanned over $\BF$ (resp.\ $\BA$) by the elements of the form
$e_{i_1}^{(m_1)}\cdots e_{i_r}^{(m_r)}$ 
with $i_1,\dots, i_r\in I$, $\sum_{s=1}^rm_s\alpha_{i_s}=\gamma$.
We similarly define 
the $\BF$-submodule 
$U_\BF(\Gn^-)_{-\gamma}$ (resp.\ $\BA$-submodule  $U^L_\BA(\Gn^-)_{-\gamma}$) of $U_\BF(\Gb^-)$ 
by replacing $e$ with $f$.
We denote by $U^L_\zeta(\Gn^\pm)_{\pm\gamma}$ the linear span of the image of 
$U^L_\BA(\Gn^\pm)_{\pm\gamma}\to U_\zeta(\Gg)$.

There exists a bilinear form
\begin{equation}
\label{eq:Dpairing}
\tau:U_\BF(\Gb^+)\times U_\BF(\Gb^-)\to \BF
\end{equation}
called the Drinfeld pairing which is characterized by the properties:
\begin{align*}
&\tau(x,y_1y_2)
=(\tau\otimes\tau)(\Delta(x),y_1\otimes y_2)
\quad&
(x\in U_\BF(\Gb^+), \; y_1, y_2\in U_\BF(\Gb^-)),
\\
&\tau(x_1x_2,y)
=(\tau\otimes\tau)(x_2\otimes x_1,\Delta(y))
\quad&
(x_1, x_2\in U_\BF(\Gb^+), \; y\in U_\BF(\Gb^-)),
\\
&\tau(k_\lambda, k_\mu)
=
q^{-(\lambda,\mu)}
\quad&
(\lambda, \mu\in\Lambda),
\\
&\tau(e_i,f_j)=
-\delta_{ij}(q_i-q_i^{-1})^{-1}
\quad&
(i, j\in I),
\\
&\tau(e_i,k_\lambda)=
\tau(k_\lambda,f_i)=0
\quad&
(i\in I, \;\lambda\in\Lambda).
\end{align*}
It is known that  the restriction 
$\tau|_{U_\BF(\Gn^+)_{\gamma}\times U_\BF(\Gn^-)_{-\gamma}}$ is non-degenerate for any $\gamma\in Q^+$.
We denote by $\CR_\gamma\in
U_\BF(\Gn^+)_{\gamma}\otimes_\BF U_\BF(\Gn^-)_{-\gamma}$ the corresponding canonical element.
Then we have 
\[
\CR_\gamma\in
U^L_\BA(\Gn^+)_{\gamma}\otimes_\BA U^L_\BA(\Gn^-)_{-\gamma}
\subset
U_\BF(\Gn^+)_{\gamma}\otimes_\BF U_\BF(\Gn^-)_{-\gamma}.
\]
We denote by 
$\CR_{\zeta,\gamma}\in
U^L_\zeta(\Gn^+)_{\gamma}\otimes_\BC U^L_\zeta(\Gn^-)_{-\gamma}$ 
the specialization of $\CR_\gamma$.

Let $M, N\in\Mod_\inte(U_\zeta^L(\Gg))$.
We regard $M\otimes N$ as an object of $\Mod_\inte(U_\zeta^L(\Gg))$ via the comultiplication 
$\Delta:U_\zeta^L(\Gg)\to U_\zeta^L(\Gg)\otimes U_\zeta^L(\Gg)$.
We define a linear map
\begin{equation}
\label{eq:R}
R^{M,N}:M\otimes N\to N\otimes M
\end{equation}
by
\[
R^{M,N}(m\otimes n)
=\zeta^{-(\lambda,\mu)}\sum_{\gamma\in Q^+}
P(\CR_{\zeta,\gamma}(m\otimes n))
\qquad(m\in M_\lambda,\; n\in N_\mu),
\]
where $P:M\otimes N\to N\otimes M$ is the transposition.

The following fact is standard.
\begin{proposition}
\label{prop:R}
For $M, N\in\Mod_\inte(U_\zeta^L(\Gg))$ the linear map \eqref{eq:R} gives an isomorphism in $\Mod_\inte(U_\zeta^L(\Gg))$.
\end{proposition}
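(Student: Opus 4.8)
The plan is to verify the three requirements for $R^{M,N}$ to be a morphism in $\Mod_\inte(U_\zeta^L(\Gg))$: that it is well-defined (the sum is finite), that it is $U_\zeta^L(\Gg)$-linear, and that it is invertible. Since $M,N$ are integrable, each is a direct sum of finite-dimensional weight spaces and every element is killed by $e_i^{(n)},f_i^{(n)}$ for $n$ large; hence for fixed $m\in M_\lambda$, $n\in N_\mu$ only finitely many $\gamma\in Q^+$ contribute to $\CR_{\zeta,\gamma}(m\otimes n)$, so $R^{M,N}$ is a well-defined $\BC$-linear map. Moreover it preserves weights, since $\CR_{\zeta,\gamma}\in U^L_\zeta(\Gn^+)_\gamma\otimes U^L_\zeta(\Gn^-)_{-\gamma}$ shifts the first tensor factor by $+\gamma$ and the second by $-\gamma$, and $P$ swaps them; combined with the scalar $\zeta^{-(\lambda,\mu)}$ one checks $R^{M,N}$ sends $(M\otimes N)_\nu$ to $(N\otimes M)_\nu$.

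The heart of the matter is $U_\zeta^L(\Gg)$-linearity, i.e. $R^{M,N}\circ(u\cdot)=(u\cdot)\circ R^{M,N}$ for $u\in U_\zeta^L(\Gg)$, where the action on tensor products is via $\Delta$. It suffices to check this on the algebra generators $k_\lambda$, $e_i$, $f_i$ (equivalently $e_i^{(r)},f_i^{(r)}$). The generator-by-generator verification is the standard quasi-triangularity computation: one works over $\BF$ with the full $\CR=\sum_\gamma\CR_\gamma$ twisted by the diagonal factor $q^{-(\cdot,\cdot)}$ encoding $k_\lambda$-weights, uses the characterizing properties of the Drinfeld pairing $\tau$ listed above (bimultiplicativity with respect to $\Delta$, the values on $k_\lambda$, on $e_i,f_j$, and the vanishing $\tau(e_i,k_\lambda)=\tau(k_\lambda,f_i)=0$) to derive the commutation relations between $\CR$ and $\Delta(e_i)$, $\Delta(f_i)$, $\Delta(k_\lambda)$, and then specializes $q^{1/N}\mapsto\zeta^{1/N}$; since $\CR_\gamma$ and the relevant coproducts all lie in the $\BA$-forms, the specialized identity $R^{M,N}(u\cdot v)=u\cdot R^{M,N}(v)$ follows. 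The weight factor $\zeta^{-(\lambda,\mu)}$ in the definition of $R^{M,N}$ is exactly what absorbs the action of the Cartan part so that the $k_\lambda$-equivariance comes out automatically.

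For invertibility, I would produce an explicit two-sided inverse. The opposite canonical element $\overline{\CR}_\gamma\in U^L_\zeta(\Gn^-)_{-\gamma}\otimes U^L_\zeta(\Gn^+)_{\gamma}$ (obtained from the non-degenerate pairing with the roles of $U_\BF(\Gn^+)_\gamma$ and $U_\BF(\Gn^-)_{-\gamma}$ interchanged, or equivalently applying $S\otimes S$ to $\CR_\gamma$) gives a map $\overline{R}^{N,M}\colon N\otimes M\to M\otimes N$ by the analogous formula with $\zeta^{-(\lambda,\mu)}$ replaced by $\zeta^{(\mu,\lambda)}$; the standard identity $\CR\cdot\overline{\CR}$ collapsing to the identity (a consequence of $\tau$ being non-degenerate and the hexagon-type relations) shows $\overline{R}^{N,M}\circ R^{M,N}=\id_{M\otimes N}$ and symmetrically on the other side. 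Again everything is defined over $\BA$ and the identity specializes. The main obstacle is purely bookkeeping: organizing the quasi-triangularity computation for $e_i$ and $f_i$ cleanly, keeping track of the Cartan twist, and making sure all elements involved lie in the integral forms so that specialization at $\zeta$ is legitimate; none of this is conceptually hard, and as the paper notes the statement is standard, so I would cite the relevant computation (e.g. from Lusztig's book or the $R$-matrix literature) rather than reproduce it in full.
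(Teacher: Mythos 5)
The paper offers no proof here: it simply records the proposition with the remark ``The following fact is standard.'' Your sketch correctly identifies the three things one must check (finiteness of the sum, $U^L_\zeta(\Gg)$-linearity via quasi-triangularity over $\BF$ with specialization through the $\BA$-forms, and invertibility), and the overall plan is exactly the standard argument one would cite from Lusztig's book, so in that sense you and the paper are aligned.

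One technical slip in your description of the inverse: applying $S\otimes S$ to the quasi-$R$-matrix does \emph{not} produce its inverse. For a quasi-triangular pair $(H,\CR)$ one has $(S\otimes S)\CR=\CR$, whereas the inverse is $(S\otimes\id)\CR$; and for the quasi-$R$-matrix $\Theta=\sum_\gamma\CR_\gamma$ of Lusztig's book the correct two-sided inverse is the bar-conjugate $\overline{\Theta}$ (obtained by applying the bar involution to each tensor factor), which satisfies $\Theta\overline{\Theta}=\overline{\Theta}\Theta=1$. Also note that $\overline{\Theta}_\gamma$ again lies in $U^L_\zeta(\Gn^+)_\gamma\otimes U^L_\zeta(\Gn^-)_{-\gamma}$, not in $U^L_\zeta(\Gn^-)_{-\gamma}\otimes U^L_\zeta(\Gn^+)_\gamma$ as you wrote; the flip $P$ is already built into $R^{M,N}$, so the inverse map $N\otimes M\to M\otimes N$ is built from the same ``$+$/$-$'' ordering. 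Since you explicitly defer the computation to the literature this does not invalidate the sketch, but if you were to flesh it out you would need to use $\overline{\Theta}$ (or equivalently $(S\otimes\id)$ of the full $R$-matrix including the Cartan twist), not $S\otimes S$.
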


\section{{$\CO$-modules on the quantized flag manifolds}}
\subsection{Quantized partial flag manifold}
Define a subalgebra $A_\BF$ of 
$\CO_\BF(G)$ by
\[
A_{\BF}=
\{\varphi\in\CO_\BF(G)
\mid
\varphi f_i=0\;\;(i\in I)
\}.
\]
The $U_\BF(\Gg)$-bimodule structure of $\CO_\BF(G)$ given by \eqref{eq:bim} induces a 
$(U_\BF(\Gg),U_\BF(\Gh))$-bimodule structure of $A_\BF$.
Setting
\[
A_\BF(\lambda)=
\{\varphi\in A_\BF
\mid
\varphi a=\chi_\lambda(a)\varphi\;
(a\in U_\BF(\Gh))\}
\qquad
(\lambda\in\Lambda^+),
\]
we have
\[
A_\BF=
\bigoplus_{\lambda\in\Lambda^+}A_\BF(\lambda).
\]

Set $A_\BA=\CO_\BA(G)\cap A_\BF$.
Then 
$A_\BA$ is an $\BA$-subalgebra of $\CO_\BA(G)$ satisfying 
\[
A_{\BA}=
\bigoplus_{\lambda\in \Lambda^{+}}
A_{\BA}(\lambda),
\qquad
A_{\BA}(\lambda)=
A_\BF(\lambda)\cap A_\BA.
\]
Moreover,
it is a $(U^L_\BA(\Gg),U^L_\BA(\Gh))$-bimodule.

Setting
\[
A_\zeta=\BC\otimes_\BA A_\BA,
\qquad
A_\zeta(\lambda)=\BC\otimes_\BA A_\BA(\lambda)
\quad(\lambda\in\Lambda^+)
\]
with respect to $\BA\to\BC$ ($q^{1/|\Lambda/Q|}\mapsto\zeta^{1/|\Lambda/Q|}$)
we have 
\[
A_{\zeta}=
\bigoplus_{\lambda\in \Lambda^{+}}
A_{\zeta}(\lambda).
\]
Not that $A_\zeta$ is a $\Lambda$-graded $\BC$-algebra as well as a 
$(U^L_\zeta(\Gg),U^L_\zeta(\Gh))$-bimodule 
satisfying 
\[
A_\zeta(\lambda)=
\{\varphi\in A_\zeta
\mid
\varphi a=\chi_\lambda(a)\varphi\;
(a\in U_\zeta^L(\Gh))\}
\qquad
(\lambda\in\Lambda^+).
\]

For $J\subset I$ we define a $\BC$-subalgebra $A_{J,\zeta}$ of $A_\zeta$ by
\[
A_{J,\zeta}=
\bigoplus_{\lambda\in \Lambda^{J+}}
A_{\zeta}(\lambda).
\]
It is a $\Lambda^J$-graded $\BC$-algebra.
We denote by $\Tor_{\Lambda^{J+}}(A_{J,\zeta})$ the full subcategory of $\Mod_{\Lambda^J}(A_{J,\zeta})$
consisting of $M\in \Mod_{\Lambda^J}(A_{J,\zeta})$
satisfying
\[
\forall m\in M,\;\;\exists N\;\;\text{s.t.}\;\;
\lambda\in\Lambda^{J+}, \;\;
\langle\lambda,\alpha_i^\vee\rangle>N\;\;(\forall i\in I\setminus J)
\;\;
\Longrightarrow
\;\;
A_{\zeta}(\lambda)m=0,
\]
and set
\begin{equation}
\Mod(\CO_{\CP_{J,\zeta}})
=\Mod_{\Lambda^J}(A_{J,\zeta})/\Tor_{\Lambda^{J+}}(A_{J,\zeta}).
\end{equation}
By definition $\Mod(\CO_{\CP_{J,\zeta}})$ is the localization of the category $\Mod_{\Lambda^J}(A_{J,\zeta})$ with respect to the multiplicative system 
consisting of morphisms in $\Mod_{\Lambda^J}(A_{J,\zeta})$ whose kernel and cokernel belong to $\Tor_{\Lambda^{J+}}(A_{J,\zeta})$.
\begin{remark}
The notations $\CP_{J,\zeta}$ and $\CO_{\CP_{J,\zeta}}$ do not have actual meanings; however, $\Mod(\CO_{\CP_{J,\zeta}})$ is an actual abelian category. 
\end{remark}

By a general result the  natural exact functor 
\begin{equation}
\label{eq:O1}
\omega_{J}^*:\Mod_{\Lambda^J}(A_{J,\zeta})\to\Mod(\CO_{\CP_{J,\zeta}})
\end{equation}
admits a right adjoint
\begin{equation}
\label{eq:O2}
\omega_{J*}:\Mod(\CO_{\CP_{J,\zeta}})\to
\Mod_{\Lambda^J}(A_{J,\zeta}),
\end{equation}
which is left exact, and we have
$
\omega_{J}^*\circ\omega_{J*}
=\Id
$ (see \cite[Ch4]{P}).
We define a left exact functor 
\begin{equation}
\label{eq:gammaO}
\Gamma(\CP_{J,\zeta},\bullet):\Mod(\CO_{\CP_{J,\zeta}})\to
\Mod(\BC)
\end{equation}
by 
$\Gamma(\CP_{J,\zeta},N)=(\omega_{J*}N)(0)$.

We define a full subcategory 
$\modu(\CO_{\CP_{J,\zeta}})$
of 
$\Mod(\CO_{\CP_{J,\zeta}})$ by
\begin{equation}
\modu(\CO_{\CP_{J,\zeta}})
=\modu_{\Lambda^J}(A_{J,\zeta})/
(\Tor_{\Lambda^{J+}}(A_{J,\zeta})
\cap
\modu_{\Lambda^J}(A_{J,\zeta}))
.
\end{equation}
The functors \eqref{eq:O1}, \eqref{eq:O2} induce
\begin{equation}
\omega_{J}^*:\modu_{\Lambda^J}(A_{J,\zeta})\to\modu(\CO_{\CP_{J,\zeta}}),
\end{equation}
\begin{equation}
\omega_{J*}:\modu(\CO_{\CP_{J,\zeta}})\to
\modu_{\Lambda^J}(A_{J,\zeta}).
\end{equation}

For $\mu\in\Lambda^J$ we define and exact functor
\begin{equation}
\label{eq:shift2}
(\bullet)[\mu]:
{\Mod}(\CO_{\CP_{J,\zeta}})
\to
{\Mod}(\CO_{\CP_{J,\zeta}})
\qquad
(M\mapsto M[\mu])
\end{equation}
to be the functor induced by 
\[
(\bullet)[\mu]:
\Mod_{\Lambda^J}(A_{J,\zeta})
\to
\Mod_{\Lambda^J}(A_{J,\zeta})
\qquad
(M\mapsto M[\mu]),
\]
where
\[
(M[\mu])(\lambda)
=M(\lambda+\mu)
\qquad(\lambda\in\Lambda^J).
\]
\begin{remark}
\label{rem:P1}
In the case 
$\zeta=1$ the category $\Mod(\CO_{\CP_{J,1}})$
(resp.\ $\modu(\CO_{\CP_{J,1}})$)  
is naturally identified with the category $\Mod(\CO_{\CP_{J}})$ 
(resp.\ $\modu(\CO_{\CP_{J}})$)  
consisting of quasi-coherent 
(resp.\ coherent)
$\CO_{\CP_J}$-modules.
\end{remark}

We write 
\[
\Mod(\CO_{\CB_{\zeta}})=
\Mod(\CO_{\CP_{\emptyset,\zeta}}),
\qquad
\modu(\CO_{\CB_{\zeta}})=
\modu(\CO_{\CP_{\emptyset,\zeta}}),
\]
and
\begin{equation}
\label{eq:OB1}
\omega^*=\omega^*_{\emptyset}:\Mod_{\Lambda}(A_{\zeta})
\to\Mod(\CO_{\CB_{\zeta}}),
\qquad
\omega^*=\omega^*_{\emptyset}:\modu_{\Lambda}(A_{\zeta})\to\modu(\CO_{\CB_{\zeta}}),
\end{equation}
\begin{equation}
\label{eq:OB2}
\omega_{*}=\omega_{\emptyset*}:\Mod(\CO_{\CB_{\zeta}})\to
\Mod_{\Lambda}(A_{\zeta}), 
\quad
\omega_{*}=\omega_{\emptyset*}:\modu(\CO_{\CB_{\zeta}})\to
\modu_{\Lambda}(A_{\zeta}),
\end{equation}
\begin{equation}
\label{eq:Gamma}
\Gamma=\Gamma(\CP_{\emptyset,\zeta},\bullet):\Mod(\CO_{\CB_{\zeta}})\to
\Mod(\BC).
\end{equation}

\subsection{Another description of quantized partial flag manifold}

Following \cite{BK1} we give another description of the quantized flag manifolds.
\begin{definition}
We define an abelian category 
$\widetilde{\Mod}(\CO_{\CP_{J,\zeta}})$ 
as follows.
An object of $\widetilde{\Mod}(\CO_{\CP_{J,\zeta}})$
is a vector space $M$ equipped with 
a left $\CO_\zeta(G)$-module structure
and a right integrable $U^L_\zeta(\Gp_J^-)$-module structure satisfying 
\[
(fm)y=\sum_{(y)}(fy_{(0)})(my_{(1)})
\qquad(f\in \CO_\zeta(G), \; m\in M, \; y\in U^L_\zeta(\Gp_J^-)).
\]
A morphism in 
$\widetilde{\Mod}(\CO_{\CP_{J,\zeta}})$ is a linear map 
preserving the left $\CO_\zeta(G)$-module structure and the 
right $U^L_\zeta(\Gp_J^-)$-module structure.
\end{definition}

\begin{remark}
\label{rem:CC}
In the case $\zeta=1$ the category
$\widetilde{\Mod}(\CO_{\CP_{J,1}})$ is naturally identified with the category $\widetilde{\Mod}(\CO_{\CP_{J}})$ consisting of quasi-coherent $\CO_G$-modules equivariant with respect to the action of $P_J^-$ on $G$ given by the left multiplication.
Hence it is also equivalent to the category $\Mod(\CO_{\CP_J})$ of quasi-coherent $\CO_{\CP_J}$-modules.
\end{remark}

We define a functor
\begin{equation}
\label{eq:tomega}
\widetilde{\omega}_{J*}:
\widetilde{\Mod}(\CO_{\CP_{J,\zeta}})\to\Mod_{\Lambda^J}(A_{J,\zeta})
\end{equation}
by
\[
\widetilde{\omega}_{J*}M
=
\sum_{\lambda\in\Lambda^J}
(\widetilde{\omega}_{J*}M)(\lambda)
=
\bigoplus_{\lambda\in\Lambda^J}
(\widetilde{\omega}_{J*}M)(\lambda)
\subset M,
\]
\[
(\widetilde{\omega}_{J*}M)(\lambda)
=\{m\in M\mid 
my=\chi_{J,\lambda}(y)m\;\;(y\in U_\zeta^L(\Gp_J^-))\}
\qquad(\lambda\in\Lambda^J)
\]
for $M\in\widetilde{\Mod}(\CO_{\CP_{J,\zeta}})$.

For $\mu\in\Lambda^J$ we denote by $\BC_\mu$ the one-dimensional 
right $U^L_\zeta(\Gp_J^-)$-module  corresponding to $\chi_{J,\mu}:U^L_\zeta(\Gp_J^-)\to\BC$, and define an exact functor 
\begin{equation}
\label{eq:tshift}
(\bullet)[\mu]:
\widetilde{\Mod}(\CO_{\CP_{J,\zeta}})\to
\widetilde{\Mod}(\CO_{\CP_{J,\zeta}})
\end{equation}
by 
$
M[\mu]=M\otimes\BC_{-\mu}
$.
Here, the left $\CO_\zeta(G)$-module structure of $M[\mu]$ is given by that of the first factor $M$, and 
the right $U^L_\zeta(\Gp_J^-)$-module structure  is given by the tensor product of 
the right $U^L_\zeta(\Gp_J^-)$-modules $M$ and $\BC_{-\mu}$.

We define a functor 
\begin{equation}
\label{eq:to}
\widetilde{\omega}_J^*:\Mod_{\Lambda^J}(A_{J,\zeta})\to
\widetilde{\Mod}(\CO_{\CP_{J,\zeta}})
\end{equation}
by
$
\widetilde{\omega}_J^*K=\CO_\zeta(G)\otimes_{A_{J,\zeta}}K$.
The left $\CO_\zeta(G)$-module structure of $\CO_\zeta(G)\otimes_{A_{J,\zeta}}K$ is given by the left multiplication on the first factor.
The right $U^L_\zeta(\Gp_J^-)$-module structure
of $\CO_\zeta(G)\otimes_{A_{J,\zeta}}K$
is given by the tensor product of two 
right $U^L_\zeta(\Gp_J^-)$-modules, 
where
$K$ is regarded as a right $U^L_\zeta(\Gp_J^-)$-module by
\[
ky=\chi_{J,\lambda}(y)k
\qquad(k\in K(\lambda),\; y\in U^L_\zeta(\Gp_J^-)).
\]
We write
\begin{gather*}
\widetilde{\Mod}(\CO_{\CB_{\zeta}})=
\widetilde{\Mod}(\CO_{\CP_{\emptyset,\zeta}}),
\\
\widetilde{\omega}_*=\widetilde{\omega}_{\emptyset*}:
\widetilde{\Mod}(\CO_{\CB_{\zeta}})
\to\Mod_{\Lambda}(A_{\zeta}),
\qquad
\widetilde{\omega}^*=\widetilde{\omega}_\emptyset^*:
\Mod_{\Lambda}(A_{\zeta})\to
\widetilde{\Mod}(\CO_{\CB_{\zeta}}).
\end{gather*}

The following result for $J=\emptyset$ is due to Backelin and Kremnizer \cite{BK1}.
As in \cite{BK1} it is a consequence of  the general theory due to Artin and Zhang \cite{AZ}.
Here, we only give a sketch of the proof.
\begin{proposition}
\label{prop:1equiv2}
The functor $\omega_J^*\circ\widetilde{\omega}_{J*}$ gives an equivalence 
\begin{equation}
\label{eq:1equiv2}
\widetilde{\Mod}(\CO_{\CP_{J,\zeta}})\cong
\Mod(\CO_{\CP_{J,\zeta}})
\end{equation}
of abelian categories compatible with the shift functors \eqref{eq:shift2}, \eqref{eq:tshift}.
Moreover, identifying 
$\widetilde{\Mod}(\CO_{\CP_{J,\zeta}})$
with 
$\Mod(\CO_{\CP_{J,\zeta}})$
via \eqref{eq:1equiv2}
we have
\begin{equation}
\label{eq:1equiv-a2}
\widetilde{\omega}_J^*
=\omega_J^*,
\qquad
\widetilde{\omega}_{J*}
=\omega_{J*}.
\end{equation}
\end{proposition}
\begin{proof}
By an obvious generalization of \cite[Theorem 4.5]{AZ} we can show that
$\omega_J^*\circ\widetilde{\omega}_{J*}$
gives an equivalence which is compatible with the shift functors.
Moreover,  we have
\begin{equation}
\label{eq:1equiv-b2}
\omega_{J*}\circ
\omega_J^*\circ\widetilde{\omega}_{J*}
=
\widetilde{\omega}_{J*}.
\end{equation}
Here, in order to show that the assumption of 
\cite[Theorem 4.5]{AZ} is satisfied, we need some properties of the derived induction functors for quantized enveloping algebras.
More specifically, we use the Kempf type vanishing theorem \cite[Corollary 5.7]{APW} and the vanishing of cohomologies in sufficiently large degrees
\cite[Theorem 5.8]{APW}.
Then we obtain $\widetilde{\omega}_{J*}
=\omega_{J*}$ by \eqref{eq:1equiv-b2}.
The remaining 
$\widetilde{\omega}_J^*
=\omega_J^*$ 
follows from the fact that 
$\widetilde{\omega}_J^*$, 
$\omega_J^*$
are the left adjoint functors to
$\widetilde{\omega}_{J*}$, 
$\omega_{J*}$ respectively.
\end{proof}
\begin{corollary}
\label{cor:oo}
We have
\[
\omega_{J*}\omega_J^*A_{J,\zeta}
\cong
A_{J,\zeta}.
\]
\end{corollary}

\begin{proof}
By Proposition \ref{prop:1equiv2} it is sufficient to show
$\widetilde{\omega}_{J*}\widetilde{\omega}_J^*A_{J,\zeta}
\cong
A_{J,\zeta}$.
This is obvious from definition.
\end{proof}

\subsection{Equivariant $\CO$-modules}
Similarly to \cite[Section 4]{T2} we define  
$\Mod_{\Lambda^J}^{\eq}(A_{J,\zeta})$ to be the category consisting of 
$N\in\Mod_{\Lambda^J}(A_{J,\zeta})$ 
equipped with a left integrable $U^L_\zeta(\Gg)$-module structure satisfying 
\[
U^L_\zeta(\Gg) N(\lambda)\subset N(\lambda)
\qquad(\lambda\in\Lambda^J),
\]
\[
u(\varphi n)=\sum_{(u)}(u_{(0)}\varphi)\cdot(u_{(1)}n)
\qquad
(u\in U^L_\zeta(\Gg),\;
\varphi\in A_{J,\zeta},\;
n\in N), 
\]
and set
\begin{equation}
\Mod^{\eq}(\CO_{\CP_{J,\zeta}})
=
\Mod_{\Lambda^J}^{\eq}(A_{J,\zeta})/
\Mod_{\Lambda^J}^{\eq}(A_{J,\zeta})\cap
\Tor_{\Lambda^{J+}}(A_{J,\zeta}).
\end{equation}

\begin{remark}
In the case $\zeta=1$ the category
$\Mod^{\eq}(\CO_{\CP_{J,1}})$ is naturally identified with the category consisting of $G$-equivariant quasi-coherent 
$\CO_{\CP_{J}}$-modules.
\end{remark}
Similarly to \cite[Section 4]{T2} we define  
$\widetilde{\Mod}^{\eq}(\CO_{\CP_{J,\zeta}})$ to be the category consisting of 
 an object $M$ of 
$\widetilde{\Mod}(\CO_{\CP_{J,\zeta}})$ 
equipped with a left integrable $U^L_\zeta(\Gg)$-module structure satisfying
\[
(um)y=u(my)
\qquad(m\in M,\; u\in U^L_\zeta(\Gg),\; y\in U^L_\zeta(\Gp_J^-)),
\]
\[
u(\varphi m)=\sum_{(u)}(u_{(0)}\varphi)\cdot(u_{(1)}m)
\qquad (u\in U^L_\zeta(\Gg),\; \varphi\in\CO_\zeta(G),\; m\in M).
\]
\begin{remark}
In the case $\zeta=1$ the category
$\widetilde{\Mod}^{\eq}(\CO_{\CP_{J,1}})$ is naturally identified with the category consisting of quasi-coherent 
$\CO_{G}$-modules equivariant under the left action of $G$ via the left multiplication and the right action of $P_J^-$ via the right multiplication.
\end{remark}

Similarly to \cite[Section 4]{T2} we have the following equivalences of abelian categories:
\begin{equation}
\label{eq:equivariant}
\Mod_\inte(U^L_\zeta(\Gp_J^-))
\cong
\widetilde{\Mod}^{\eq}(\CO_{\CP_{J,\zeta}})
\cong
{\Mod}^{\eq}(\CO_{\CP_{J,\zeta}}).
\end{equation}
Here, the first equivalence  is given by associating 
$M\in\widetilde{\Mod}^{\eq}(\CO_{\CP_{J,\zeta}})$
to $(M^{U^L_\zeta(\Gg)})^{[l]}$,
where $M^{U^L_\zeta(\Gg)}$ denotes the right $U^L_\zeta(\Gp_J^-)$-submodule
\[
\{m\in M\mid um=\varepsilon(u)m\;\;(u\in U^L_\zeta(\Gg))\}
\]
of $M$.
The opposite correspondence is given by associating $K\in\Mod_\inte(U^L_\zeta(\Gp_J^-))$ to 
$\CO_\zeta(G)\otimes K\in
\widetilde{\Mod}^{\eq}(\CO_{\CP_{J,\zeta}})$.
Here, the left $\CO_\zeta(G)$-module structure and the left $U^L_\zeta(\Gg)$-module structure of $\CO_\zeta(G)\otimes K$ are induced by those on the first factor $\CO_\zeta(G)$, and 
the right $U^L_\zeta(\Gp_J^-)$-module structure is given by
\[
(\varphi\otimes k)y=\sum_{(y)}\varphi y_{(0)}\otimes (Sy_{(1)})\cdot k
\qquad
(\varphi\in \CO_\zeta(G),\;
k\in K, \;
y\in U^L_\zeta(\Gp_J^-)).
\]
The second equivalence in \eqref{eq:equivariant} is induced by
\eqref{eq:1equiv2}.
We write
\begin{equation}
\label{eq:equivariant2}
\CF_J:
\Mod_\inte(U^L_\zeta(\Gp_J^-))
\to
{\Mod}^{\eq}(\CO_{\CP_{J,\zeta}})
\end{equation}
for the composite of \eqref{eq:equivariant}.

\subsection{Inverse and direct images}
For 
$
J_1\subset J_2\subset I
$ 
we define functors 
\begin{align*}
({\pi}^{J_2J_1}_\zeta)^*:
{\Mod}(\CO_{\CP_{J_2,\zeta}})
&\to
{\Mod}(\CO_{\CP_{J_1,\zeta}}),
\\
({\pi}^{J_2J_1}_\zeta)_*:
{\Mod}(\CO_{\CP_{J_1,\zeta}})
&\to
{\Mod}(\CO_{\CP_{J_2,\zeta}})
\\
(\widetilde{\pi}^{J_2J_1}_\zeta)^*:
\widetilde{\Mod}(\CO_{\CP_{J_2,\zeta}})
&\to
\widetilde{\Mod}(\CO_{\CP_{J_1,\zeta}}),
\\
(\widetilde{\pi}^{J_2J_1}_\zeta)_*:
\widetilde{\Mod}(\CO_{\CP_{J_1,\zeta}})
&\to
\widetilde{\Mod}(\CO_{\CP_{J_2,\zeta}})
\end{align*}
in the following.

The definition of 
\begin{equation}
\label{eq:invt}
(\widetilde{\pi}^{J_2J_1}_\zeta)^*:
\widetilde{\Mod}(\CO_{\CP_{J_2,\zeta}})
\to
\widetilde{\Mod}(\CO_{\CP_{J_1,\zeta}})
\end{equation}
is simple.
For 
$M\in \widetilde{\Mod}(\CO_{\CP_{J_2,\zeta}})$
we have
$(\widetilde{\pi}^{J_2J_1}_\zeta)^*M=M$
as a left 
$\CO_\zeta(G)$-module and 
$(\widetilde{\pi}^{J_2J_1}_\zeta)^*M=
\Res^{rJ_1J_2}M$
as a right
$U^L_\zeta(\Gp_{J_1}^-)$-module.
This is obviously an exact functor.
By definition we have the following commutative diagram:
\begin{equation}
\label{eq:inv-diag}
\xymatrix@C=70pt{
\Mod_{\Lambda^{J_2}}(A_{J_2,\zeta})
\ar[r]^{A_{J_1,\zeta}\otimes_{A_{J_2,\zeta}}(\bullet)}
\ar[d]_{\tomega_{J_2}^*}
&
{\Mod}_{\Lambda^{J_1}}(A_{J_1,\zeta})
\ar[d]^{\tomega_{J_1}^*}
\\
\widetilde{\Mod}(\CO_{\CP_{J_2,\zeta}})
\ar[r]_{(\widetilde{\pi}^{J_2J_1}_\zeta)^*}
&
\widetilde{\Mod}(\CO_{\CP_{J_1,\zeta}}).
}
\end{equation}

We next give the definition of the functor 
\begin{equation}
\label{eq:dirt}
(\widetilde{\pi}^{J_2J_1}_\zeta)_*:
\widetilde{\Mod}(\CO_{\CP_{J_1,\zeta}})
\to
\widetilde{\Mod}(\CO_{\CP_{J_2,\zeta}}).
\end{equation}
Let 
$M\in \widetilde{\Mod}(\CO_{\CP_{J_1,\zeta}})$.
As a right
$U^L_\zeta(\Gp_{J_2}^-)$-module 
we have
$(\widetilde{\pi}^{J_2J_1}_\zeta)_*M=\Ind^{rJ_2J_1}M$.
The left $\CO_\zeta(G)$-module structure of 
$\Ind^{rJ_2J_1}M$ is given as follows.
Let $
\alpha:\CO_\zeta(G)\otimes M\to M
$ be the left $\CO_\zeta(G)$-module structure of $M$.
Since $\alpha$ is a homomorphism of right
$U^L_\zeta(\Gp_{J_1}^-)$-modules 
we have a homomorphism 
\[
\Ind^{^rJ_2J_1}\alpha:
\Ind^{^rJ_2J_1}(\CO_\zeta(G)\otimes M)\to \Ind^{^rJ_2J_1}M
\]
of right
$U^L_\zeta(\Gp_{J_2}^-)$-modules.
Note that 
$\CO_\zeta(G)$ is 
an integrable right $U^L_\zeta(\Gp_{J_2}^-)$-module.
Hence by the standard property of the induction functor called the tensor identity
we have
$
\Ind^{rJ_2J_1}(\CO_\zeta(G)\otimes M)
\cong
\CO_\zeta(G)\otimes \Ind^{rJ_2J_1}M
$.
Hence we obtain a homomorphism 
\[
\tilde{\alpha}:
\CO_\zeta(G)\otimes \Ind^{rJ_2J_1}M\to \Ind^{rJ_2J_1}M
\]
of right 
$U^L_\zeta(\Gp_{J_2}^-)$-modules.
This gives the desired left $\CO_\zeta(G)$-module structure of 
$\Ind^{rJ_2J_1}M$.

Define a functor
\[
F^{J_2J_1}_\zeta:
\Mod_{\Lambda^{J_1}}(A_{J_1,\zeta})
\to
{\Mod}_{\Lambda^{J_2}}(A_{J_2,\zeta})
\]
by
\[
F^{J_2J_1}_\zeta(M)
=
\bigoplus_{\lambda\in\Lambda^{J_2}}
M(\lambda).
\]
\begin{lemma}
\label{lem:dir}
The following diagram is commutative:
%
\begin{equation}
\label{eq:dir-diag}
\xymatrix@C=50pt@R=30pt{
\Mod_{\Lambda^{J_1}}(A_{J_1,\zeta})
\ar[r]^{F^{J_2J_1}_\zeta}
&
{\Mod}_{\Lambda^{J_2}}(A_{J_2,\zeta})
\\
\widetilde{\Mod}(\CO_{\CP_{J_1,\zeta}})
\ar[r]_{(\widetilde{\pi}^{J_2J_1}_\zeta)_*}
\ar[u]^{\tomega_{J_1*}}
&
\widetilde{\Mod}(\CO_{\CP_{J_2,\zeta}})
\ar[u]_{\tomega_{J_2*}}.
}
\end{equation}
\end{lemma}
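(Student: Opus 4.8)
The plan is to build, for each $M\in\widetilde{\Mod}(\CO_{\CP_{J_1,\zeta}})$, a natural isomorphism $\tomega_{J_2*}(\widetilde{\pi}^{J_2J_1}_\zeta)_*M\cong F^{J_2J_1}_\zeta(\tomega_{J_1*}M)$ in $\Mod_{\Lambda^{J_2}}(A_{J_2,\zeta})$, computed degree by degree. First I would fix $\lambda\in\Lambda^{J_2}$. By the definition of $\tomega_{J_2*}$ and of $(\widetilde{\pi}^{J_2J_1}_\zeta)_*$, the degree-$\lambda$ component of $\tomega_{J_2*}(\widetilde{\pi}^{J_2J_1}_\zeta)_*M$ is $\{z\in\Ind^{rJ_2J_1}M\mid zy=\chi_{J_2,\lambda}(y)z\ (y\in U^L_\zeta(\Gp^-_{J_2}))\}$, which is exactly $\Hom_{\Mod^r_\inte(U^L_\zeta(\Gp^-_{J_2}))}(\BC_\lambda,\Ind^{rJ_2J_1}M)$, where $\BC_\lambda$ carries the right character $\chi_{J_2,\lambda}$. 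Applying the adjunction $\Res^{rJ_1J_2}\dashv\Ind^{rJ_2J_1}$ furnished by the very definition of the induction functor, this becomes $\Hom_{\Mod^r_\inte(U^L_\zeta(\Gp^-_{J_1}))}(\Res^{rJ_1J_2}\BC_\lambda,M)$. Since $\lambda\in\Lambda^{J_2}\subset\Lambda^{J_1}$ and the character $\chi_{J_2,\lambda}$ restricts along $U^L_\zeta(\Gp^-_{J_1})\subset U^L_\zeta(\Gp^-_{J_2})$ to $\chi_{J_1,\lambda}$, we have $\Res^{rJ_1J_2}\BC_\lambda=\BC_\lambda$ with character $\chi_{J_1,\lambda}$, so this $\Hom$-space is $\{m\in M\mid my=\chi_{J_1,\lambda}(y)m\ (y\in U^L_\zeta(\Gp^-_{J_1}))\}=(\tomega_{J_1*}M)(\lambda)$. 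Summing over $\lambda\in\Lambda^{J_2}$ yields the required isomorphism of underlying $\Lambda^{J_2}$-graded vector spaces, and naturality in $M$ is clear since every step is natural.

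The point that requires care is that this degree-wise identification respects the left $\CO_\zeta(G)$-module structures — equivalently the $A_{J_2,\zeta}$-module structures, as $A_{J_2,\zeta}\subset\CO_\zeta(G)$ acts through them. On the $F^{J_2J_1}_\zeta(\tomega_{J_1*}M)$ side the action of $\varphi\in A_{J_2,\zeta}$ is simply the $\CO_\zeta(G)$-action on $M$; on the other side it is the $\CO_\zeta(G)$-action on $\Ind^{rJ_2J_1}M$, which by the construction of $(\widetilde{\pi}^{J_2J_1}_\zeta)_*$ is obtained from the $\CO_\zeta(G)$-action on $M$ by applying $\Ind^{rJ_2J_1}$ and the tensor identity $\Ind^{rJ_2J_1}(\CO_\zeta(G)\otimes M)\cong\CO_\zeta(G)\otimes\Ind^{rJ_2J_1}M$. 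So the task is to verify that the counit $\Res^{rJ_1J_2}\Ind^{rJ_2J_1}M\to M$ — which implements the above identification when one unwinds it — intertwines the $\CO_\zeta(G)$-actions, the source carrying the action transported through the tensor identity. This is a diagram chase using naturality of the counit, naturality of the tensor identity isomorphism, and the fact that the structure map $\CO_\zeta(G)\otimes M\to M$ is a morphism of right $U^L_\zeta(\Gp^-_{J_1})$-modules; I expect this bookkeeping, though purely formal, to be the main obstacle.

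Alternatively — and perhaps more cleanly — one may deduce the lemma by a \emph{mates} argument from the already established commutativity of the square \eqref{eq:inv-diag}. The four functors occurring there, namely $A_{J_1,\zeta}\otimes_{A_{J_2,\zeta}}(-)$, $\tomega_{J_1}^*$, $\tomega_{J_2}^*$ and $(\widetilde{\pi}^{J_2J_1}_\zeta)^*$, admit as right adjoints respectively $F^{J_2J_1}_\zeta$, $\tomega_{J_1*}$, $\tomega_{J_2*}$ and $(\widetilde{\pi}^{J_2J_1}_\zeta)_*$: the first adjunction is a direct check using the grading inclusion $\Lambda^{J_2}\subset\Lambda^{J_1}$, the middle two are recorded in the text, and the last is the $\CO_\zeta(G)$-equivariant form of $\Res^{rJ_1J_2}\dashv\Ind^{rJ_2J_1}$. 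Passing to right adjoints in the isomorphism $\tomega_{J_1}^*\circ(A_{J_1,\zeta}\otimes_{A_{J_2,\zeta}}(-))\cong(\widetilde{\pi}^{J_2J_1}_\zeta)^*\circ\tomega_{J_2}^*$ then gives $F^{J_2J_1}_\zeta\circ\tomega_{J_1*}\cong\tomega_{J_2*}\circ(\widetilde{\pi}^{J_2J_1}_\zeta)_*$, which is precisely the commutativity of \eqref{eq:dir-diag}. Along this route the content is again concentrated in checking that $(\widetilde{\pi}^{J_2J_1}_\zeta)^*\dashv(\widetilde{\pi}^{J_2J_1}_\zeta)_*$ holds in the categories $\widetilde{\Mod}(\CO_{\CP_{J,\zeta}})$, i.e. that the unit and counit of $\Res^{rJ_1J_2}\dashv\Ind^{rJ_2J_1}$ are compatible with the $\CO_\zeta(G)$-actions — the same point as before.
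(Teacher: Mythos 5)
Your first argument is exactly the paper's proof: for each $\lambda\in\Lambda^{J_2}$ one identifies $(\tomega_{J_2*}(\widetilde{\pi}^{J_2J_1}_\zeta)_*M)(\lambda)$ with $\Hom_{\Mod^r_\inte(U^L_\zeta(\Gp_{J_2}^-))}(\BC_\lambda,\Ind^{rJ_2J_1}M)$ and applies Frobenius reciprocity to land on $(\tomega_{J_1*}M)(\lambda)$. The paper leaves the $\CO_\zeta(G)$-module compatibility you flag implicit (it records the adjointness $(\widetilde{\pi}^{J_2J_1}_\zeta)^*\dashv(\widetilde{\pi}^{J_2J_1}_\zeta)_*$ separately in Lemma A.7), and your mates reformulation is an equivalent, slightly more conceptual packaging of the same content.
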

\begin{proof}
For 
$M\in \widetilde{\Mod}(\CO_{\CP_{J_1,\zeta}})$, 
$\lambda\in\Lambda^{J_2}$ we have
\begin{align*}
(\tomega_{J_2*}(\widetilde{\pi}^{J_2J_1}_\zeta)_*M)(\lambda)
\cong&
\Hom_{\Mod^r_\inte(U^L_\zeta(\Gp_{J_2}^-))}
(\BC_\lambda,(\widetilde{\pi}^{J_2J_1}_\zeta)_*M)
\\
\cong&
\Hom_{\Mod^r_\inte(U^L_\zeta(\Gp_{J_2}^-))}
(\BC_\lambda,\Ind^{J_2J_1}(M))
\\
\cong&
\Hom_{\Mod^r_\inte(U^L_\zeta(\Gp_{J_1}^-))}
(\BC_\lambda,M)
\\
\cong&
(\tomega_{J_1*}M)(\lambda)
\end{align*}
by the Frobenius reciprocity.
\end{proof}

\begin{lemma}
\label{lem:inv-dir-tadjoint}
The functor $(\widetilde{\pi}^{J_2J_1}_\zeta)_*$ is right adjoint to 
$(\widetilde{\pi}^{J_2J_1}_\zeta)^*$.
\end{lemma}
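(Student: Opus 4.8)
The statement to prove is Lemma~\ref{lem:inv-dir-tadjoint}: that $(\widetilde{\pi}^{J_2J_1}_\zeta)_*$ is right adjoint to $(\widetilde{\pi}^{J_2J_1}_\zeta)^*$. The natural strategy is to reduce this to the adjunction between $\Res^{rJ_1J_2}$ and $\Ind^{rJ_2J_1}$, which is the Frobenius reciprocity built into the very definition of the induction functor \eqref{eq:Ind}, \eqref{eq:rInd}. Concretely, given $M\in\widetilde{\Mod}(\CO_{\CP_{J_2,\zeta}})$ and $N\in\widetilde{\Mod}(\CO_{\CP_{J_1,\zeta}})$, I would construct a bijection
\[
\Hom_{\widetilde{\Mod}(\CO_{\CP_{J_1,\zeta}})}((\widetilde{\pi}^{J_2J_1}_\zeta)^*M,N)
\;\cong\;
\Hom_{\widetilde{\Mod}(\CO_{\CP_{J_2,\zeta}})}(M,(\widetilde{\pi}^{J_2J_1}_\zeta)_*N),
\]
functorial in both variables. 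Recall that $(\widetilde{\pi}^{J_2J_1}_\zeta)^*M$ has the same underlying $\CO_\zeta(G)$-module as $M$ with the restricted right $U^L_\zeta(\Gp_{J_1}^-)$-action, and that $(\widetilde{\pi}^{J_2J_1}_\zeta)_*N=\Ind^{rJ_2J_1}N$ as a right $U^L_\zeta(\Gp_{J_2}^-)$-module with $\CO_\zeta(G)$-action induced via the tensor identity.

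\textbf{Main steps.} First I would verify that a morphism $f\colon(\widetilde{\pi}^{J_2J_1}_\zeta)^*M\to N$ in $\widetilde{\Mod}(\CO_{\CP_{J_1,\zeta}})$ is precisely a $\BC$-linear map $M\to N$ that is simultaneously $\CO_\zeta(G)$-linear and right $U^L_\zeta(\Gp_{J_1}^-)$-linear (for the restricted action on the source). Next, by the adjunction $(\Res^{rJ_1J_2},\Ind^{rJ_2J_1})$ applied to the right $U^L_\zeta(\Gp_{J_2}^-)$-module $M$ and the right $U^L_\zeta(\Gp_{J_1}^-)$-module $N$, such an $f$ corresponds bijectively to a right $U^L_\zeta(\Gp_{J_2}^-)$-linear map $\tilde f\colon M\to\Ind^{rJ_2J_1}N$. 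The key point remaining is to check that, under this correspondence, $f$ is $\CO_\zeta(G)$-linear if and only if $\tilde f$ is $\CO_\zeta(G)$-linear for the induced structure on $\Ind^{rJ_2J_1}N$. This is where one must unwind the definition of $\tilde\alpha\colon\CO_\zeta(G)\otimes\Ind^{rJ_2J_1}N\to\Ind^{rJ_2J_1}N$ through the tensor identity $\Ind^{rJ_2J_1}(\CO_\zeta(G)\otimes N)\cong\CO_\zeta(G)\otimes\Ind^{rJ_2J_1}N$: one applies $\Ind^{rJ_2J_1}$ to the $\CO_\zeta(G)$-structure $\alpha_N\colon\CO_\zeta(G)\otimes N\to N$ and to the composite $\mathrm{id}\otimes f$, and compares with $\alpha_M$ on the source, all maps being right $U^L_\zeta(\Gp_{J_2}^-)$-linear so that $\Ind^{rJ_2J_1}$ applies. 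Naturality of the tensor identity isomorphism and of the $(\Res,\Ind)$ adjunction then forces the two compatibilities to match. Finally I would record functoriality in $M$ and $N$, which is automatic from naturality of all the isomorphisms used.

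\textbf{Expected obstacle.} The only genuinely delicate step is the compatibility of the $\CO_\zeta(G)$-module structures under the tensor identity — i.e.\ checking that the induced $\CO_\zeta(G)$-action $\tilde\alpha$ on $\Ind^{rJ_2J_1}N$ is exactly what makes the adjunction bijection respect $\CO_\zeta(G)$-linearity. This requires care because the tensor identity isomorphism is not the obvious "swap" map; it is the composite built from the comultiplication and antipode, and one must confirm that the left $\CO_\zeta(G)$-action was defined precisely so that this works (which is why the definition of $(\widetilde{\pi}^{J_2J_1}_\zeta)_*$ in the text invokes $\Ind^{rJ_2J_1}\alpha$ and the tensor identity in exactly that way). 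Everything else — the Hopf-algebraic bookkeeping, the integrability conditions, and the reduction to $(\Res,\Ind)$ adjunction — is routine. An alternative, perhaps cleaner, route would be to deduce the adjunction diagrammatically from the commutative squares \eqref{eq:inv-diag} and \eqref{eq:dir-diag} together with the adjunction $(A_{J_1,\zeta}\otimes_{A_{J_2,\zeta}}(\bullet),\,F^{J_2J_1}_\zeta)$ on the graded-module side and the equivalences of Theorem~\ref{thm:1equiv2}; I would mention this as a remark but carry out the direct Frobenius-reciprocity argument as the main proof.
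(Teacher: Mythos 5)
Your proposal matches the paper's own proof in essence: the paper computes both Hom-sets as intersections of right integrable $U^L_\zeta(\Gp^-)$-module homomorphisms and $\CO_\zeta(G)$-module homomorphisms, then applies the $(\Res,\Ind)$ Frobenius reciprocity to identify the first factor while the $\CO_\zeta(G)$-compatibility — the step you correctly flag as delicate and which the paper leaves implicit — is exactly the point where the tensor-identity construction of the $\CO_\zeta(G)$-action on $\Ind^{rJ_2J_1}N$ enters. Your plan is somewhat more explicit about that verification, but it is the same argument.
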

\begin{proof}
For 
$M\in \widetilde{\Mod}(\CO_{\CP_{J_1,\zeta}})$, 
$N\in \widetilde{\Mod}(\CO_{\CP_{J_2,\zeta}})$
we have
\begin{align*}
\Hom((\widetilde{\pi}^{J_2J_1}_\zeta)^*N,M)
=&\Hom_{\widetilde{\Mod}(\CO_{\CP_{J_1,\zeta}})}
(N,M)
\\
=&
\Hom_{\Mod^r_\inte(U^L_\zeta(\Gp_{J_1}^-))}(N,M)
\cap
\Hom_{\Mod(\CO_\zeta(G))}(N,M),
\\
\Hom(N,(\widetilde{\pi}^{J_2J_1}_\zeta)_*M)
=&
\Hom_{\Mod^r_\inte(U^L_\zeta(\Gp_{J_2}^-))}(N,\Ind^{J_2J_1}M)
\cap
\Hom_{\Mod(\CO_\zeta(G))}(N,\Ind^{J_2J_1}M)
\\
\cong&
\Hom_{\Mod^r_\inte(U^L_\zeta(\Gp_{J_1}^-))}(N,M)
\cap
\Hom_{\Mod(\CO_\zeta(G))}(N,M).
\end{align*}
\end{proof}

We define functors 
\begin{align}
\label{eq:inv}
({\pi}^{J_2J_1}_\zeta)^*:
{\Mod}(\CO_{\CP_{J_2,\zeta}})
\to&
{\Mod}(\CO_{\CP_{J_1,\zeta}}),
\\
\label{eq:dir}
({\pi}^{J_2J_1}_\zeta)_*:
{\Mod}(\CO_{\CP_{J_1,\zeta}})
\to&
{\Mod}(\CO_{\CP_{J_2,\zeta}})
\end{align}
as the functors corresponding to \eqref{eq:invt}, 
\eqref{eq:dirt} under the category equivalence 
in Proposition \ref{prop:1equiv2}.

By \eqref{eq:inv-diag} and Proposition \ref{prop:1equiv2} we have the following.
\begin{lemma}
\label{lem:inv2}
The functor 
\[
A_{J_1,\zeta}\otimes_{A_{J_2,\zeta}}(\bullet):
\Mod_{\Lambda^{J_2}}(A_{J_2,\zeta})\to\Mod_{\Lambda^{J_1}}(A_{J_1,\zeta})
\qquad
(K\mapsto
A_{J_1,\zeta}\otimes_{A_{J_2,\zeta}}K)
\]
induces \eqref{eq:inv}.
Namely we have the following commutative diagram:
%
\[
\xymatrix@C=70pt@R=30pt{
\Mod_{\Lambda^{J_2}}(A_{J_2,\zeta})
\ar[r]^{A_{J_1,\zeta}\otimes_{A_{J_2,\zeta}}(\bullet)}
\ar[d]_{\omega_{J_2}^*}
&
{\Mod}_{\Lambda^{J_1}}(A_{J_1,\zeta})
\ar[d]^{\omega_{J_1}^*}
\\
{\Mod}(\CO_{\CP_{J_2,\zeta}})
\ar[r]_{({\pi}^{J_2J_1}_\zeta)^*}
&
{\Mod}(\CO_{\CP_{J_1,\zeta}}).
}
\]
\end{lemma}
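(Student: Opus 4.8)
The plan is to deduce Lemma \ref{lem:inv2} as a purely formal consequence of the commutative diagram \eqref{eq:inv-diag}, which is available by the definitions of the functors on the tilde-side, together with the identifications furnished by Theorem \ref{thm:1equiv2}. Write $\Phi_J=\omega_J^*\circ\tomega_{J*}$ for the equivalence $\widetilde{\Mod}(\CO_{\CP_{J,\zeta}})\xrightarrow{\ \cong\ }\Mod(\CO_{\CP_{J,\zeta}})$ of Theorem \ref{thm:1equiv2}. Two facts about it are what I would use. First, the identity $\tomega_J^*=\omega_J^*$ in \eqref{eq:1equiv-a2} says precisely that $\Phi_J\circ\tomega_J^*\cong\omega_J^*$ as functors $\Mod_{\Lambda^J}(A_{J,\zeta})\to\Mod(\CO_{\CP_{J,\zeta}})$. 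Second, $(\pi^{J_2J_1}_\zeta)^*$ was \emph{defined} to be the functor corresponding to $(\widetilde{\pi}^{J_2J_1}_\zeta)^*$ under $\Phi_{J_1}$ and $\Phi_{J_2}$, i.e.\ $\Phi_{J_1}\circ(\widetilde{\pi}^{J_2J_1}_\zeta)^*\cong(\pi^{J_2J_1}_\zeta)^*\circ\Phi_{J_2}$.

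With this in hand the verification is a short diagram chase. Starting from the target, I would compute
\[
\omega_{J_1}^*\circ\bigl(A_{J_1,\zeta}\otimes_{A_{J_2,\zeta}}(\bullet)\bigr)
\cong
\Phi_{J_1}\circ\tomega_{J_1}^*\circ\bigl(A_{J_1,\zeta}\otimes_{A_{J_2,\zeta}}(\bullet)\bigr)
\cong
\Phi_{J_1}\circ(\widetilde{\pi}^{J_2J_1}_\zeta)^*\circ\tomega_{J_2}^*,
\]
where the first isomorphism is $\Phi_{J_1}\circ\tomega_{J_1}^*\cong\omega_{J_1}^*$ and the second is the commutativity of \eqref{eq:inv-diag}. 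The last expression equals $(\pi^{J_2J_1}_\zeta)^*\circ\Phi_{J_2}\circ\tomega_{J_2}^*\cong(\pi^{J_2J_1}_\zeta)^*\circ\omega_{J_2}^*$ by the definition of $(\pi^{J_2J_1}_\zeta)^*$ followed again by $\Phi_{J_2}\circ\tomega_{J_2}^*\cong\omega_{J_2}^*$. This is exactly the asserted commutativity of the square in the statement. (One only checks commutativity up to natural isomorphism, which is all that is claimed and all that is needed later; there is no need to arrange strict equality.)

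I do not expect a genuine obstacle here: the content of Lemma \ref{lem:inv2} is just the transport of \eqref{eq:inv-diag} along the equivalence $\Phi_J$, and the substantive work lies elsewhere — in establishing \eqref{eq:inv-diag} (where the explicit description of $(\widetilde{\pi}^{J_2J_1}_\zeta)^*$ as "restrict the right $U^L_\zeta(\Gp_{J_1}^-)$-action" together with $\tomega_J^*K=\CO_\zeta(G)\otimes_{A_{J,\zeta}}K$ makes the square commute on inspection) and in Theorem \ref{thm:1equiv2} itself, whose proof invokes the Artin--Zhang formalism and the Kempf-type vanishing and finiteness statements for quantized induction. If one prefers not to appeal to the bare definition of $(\pi^{J_2J_1}_\zeta)^*$, the same conclusion can be reached by unwinding everything on the tilde-side directly and then passing to the quotient categories, but this merely reproves \eqref{eq:inv-diag} en route and is no shorter; the diagram chase above is the cleanest presentation.
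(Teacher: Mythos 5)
Your proof is correct and takes essentially the same route as the paper: the paper simply asserts the lemma ``By \eqref{eq:inv-diag} and Theorem \ref{thm:1equiv2}'' without writing out the diagram chase, and your argument supplies exactly the intended verification, namely transporting the commutativity of \eqref{eq:inv-diag} along the equivalence $\Phi_J=\omega_J^*\circ\tomega_{J*}$ using $\Phi_J\circ\tomega_J^*\cong\omega_J^*$ and the defining property $\Phi_{J_1}\circ(\widetilde{\pi}^{J_2J_1}_\zeta)^*\cong(\pi^{J_2J_1}_\zeta)^*\circ\Phi_{J_2}$.
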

\begin{lemma}
\label{lem:dir2}
We have
\[
(\pi^{J_2J_1}_\zeta)_*
=
\omega_{J_2}^*\circ F^{J_2J_1}_\zeta\circ\omega_{J_1*}.
\]
\end{lemma}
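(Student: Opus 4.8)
The plan is to deduce this directly from Lemma~\ref{lem:dir} and Theorem~\ref{thm:1equiv2}, so that essentially all of the substance has already been carried out upstream. First I would transport the commutative diagram \eqref{eq:dir-diag} of Lemma~\ref{lem:dir} across the equivalence $\widetilde{\Mod}(\CO_{\CP_{J_i,\zeta}})\cong\Mod(\CO_{\CP_{J_i,\zeta}})$ of Theorem~\ref{thm:1equiv2} for $i=1,2$. Under this identification \eqref{eq:1equiv-a2} gives $\tomega_{J_i*}=\omega_{J_i*}$; the functor $F^{J_2J_1}_\zeta$ is unchanged, being a functor between the $\Lambda^{J}$-graded module categories; and by the defining prescription \eqref{eq:dir} the functor $(\widetilde{\pi}^{J_2J_1}_\zeta)_*$ of \eqref{eq:dirt} becomes $(\pi^{J_2J_1}_\zeta)_*$. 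Hence Lemma~\ref{lem:dir} yields the identity of functors
\[
\omega_{J_2*}\circ(\pi^{J_2J_1}_\zeta)_*
=
F^{J_2J_1}_\zeta\circ\omega_{J_1*}
\colon
\Mod(\CO_{\CP_{J_1,\zeta}})\to\Mod_{\Lambda^{J_2}}(A_{J_2,\zeta}).
\]

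Next I would compose with $\omega_{J_2}^*$ on the left and invoke $\omega_{J_2}^*\circ\omega_{J_2*}=\Id$ (valid for all $J$, as recorded just after \eqref{eq:O2}); this gives at once
\[
(\pi^{J_2J_1}_\zeta)_*
=\omega_{J_2}^*\circ\omega_{J_2*}\circ(\pi^{J_2J_1}_\zeta)_*
=\omega_{J_2}^*\circ F^{J_2J_1}_\zeta\circ\omega_{J_1*},
\]
which is the assertion.

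There is essentially no obstacle at this final step: the only point requiring care is the bookkeeping of the identifications of Theorem~\ref{thm:1equiv2}, i.e.\ verifying that transporting \eqref{eq:dir-diag} really produces the displayed identity with $\omega$ in place of $\tomega$ and $(\pi^{J_2J_1}_\zeta)_*$ in place of $(\widetilde{\pi}^{J_2J_1}_\zeta)_*$. The genuine content lies in the proof of Lemma~\ref{lem:dir}, where the commutativity of \eqref{eq:dir-diag} rests on the Frobenius reciprocity for the induction functors $\Ind^{rJ_2J_1}$ together with the compatibility of $(\widetilde{\pi}^{J_2J_1}_\zeta)_*$ with $\Ind^{rJ_2J_1}$ via the tensor identity. (An alternative, slightly longer route would be to note that $(\pi^{J_2J_1}_\zeta)_*$ is right adjoint to $(\pi^{J_2J_1}_\zeta)^*$ by Lemma~\ref{lem:inv-dir-tadjoint}, to insert the description of $(\pi^{J_2J_1}_\zeta)^*$ from Lemma~\ref{lem:inv2}, and to compute the right adjoint of that composite termwise; I expect the direct route above to be cleaner and would present that one.)
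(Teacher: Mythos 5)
Your proposal is correct and reproduces the paper's own argument: transport \eqref{eq:dir-diag} across Theorem~\ref{thm:1equiv2} using \eqref{eq:1equiv-a2} to get $\omega_{J_2*}\circ(\pi^{J_2J_1}_\zeta)_*=F^{J_2J_1}_\zeta\circ\omega_{J_1*}$, then post-compose with $\omega_{J_2}^*$ and use $\omega_{J_2}^*\circ\omega_{J_2*}=\Id$. The paper's proof is the same two lines; your bookkeeping remarks and the noted alternative via adjunction are extra but consistent.
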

\begin{proof}
By \eqref{eq:dir-diag}
and 
Proposition \ref{prop:1equiv2} we have
\[
\omega_{J_2}^*\circ F^{J_2J_1}_\zeta\circ\omega_{J_1*}
=
\omega_{J_2}^*\circ \omega_{J_2*}\circ
(\pi^{J_2J_1}_\zeta)_*
=(\pi^{J_2J_1}_\zeta)_*.
\]
\end{proof}

By Lemma \ref{lem:inv-dir-tadjoint} we have the following.
\begin{lemma}
\label{lem:inv-dir-adjoint}
The functor $({\pi}^{J_2J_1}_\zeta)_*$ is right adjoint to 
$({\pi}^{J_2J_1}_\zeta)^*$.
\end{lemma}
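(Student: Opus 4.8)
The statement to prove is Lemma \ref{lem:inv-dir-adjoint}, asserting that $(\pi^{J_2J_1}_\zeta)_*$ is right adjoint to $(\pi^{J_2J_1}_\zeta)^*$.

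The plan is to deduce this directly from the already-established tilde-version, Lemma \ref{lem:inv-dir-tadjoint}, together with the equivalence of categories in Theorem \ref{thm:1equiv2}. First I would recall that, by definition (see the sentence immediately preceding \eqref{eq:inv}--\eqref{eq:dir}), the functors $(\pi^{J_2J_1}_\zeta)^*$ and $(\pi^{J_2J_1}_\zeta)_*$ are precisely the functors that correspond to $(\widetilde{\pi}^{J_2J_1}_\zeta)^*$ and $(\widetilde{\pi}^{J_2J_1}_\zeta)_*$ under the equivalences
\[
\widetilde{\Mod}(\CO_{\CP_{J_1,\zeta}})\cong\Mod(\CO_{\CP_{J_1,\zeta}}),
\qquad
\widetilde{\Mod}(\CO_{\CP_{J_2,\zeta}})\cong\Mod(\CO_{\CP_{J_2,\zeta}})
\]
supplied by Theorem \ref{thm:1equiv2}. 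Since an equivalence of categories carries an adjoint pair to an adjoint pair — that is, if $F$ is left adjoint to $G$ and $\Phi$, $\Psi$ are equivalences with quasi-inverses $\Phi^{-1}$, $\Psi^{-1}$, then $\Psi\circ F\circ\Phi^{-1}$ is left adjoint to $\Phi\circ G\circ\Psi^{-1}$ — the adjunction $\Hom((\widetilde{\pi}^{J_2J_1}_\zeta)^*N,M)\cong\Hom(N,(\widetilde{\pi}^{J_2J_1}_\zeta)_*M)$ of Lemma \ref{lem:inv-dir-tadjoint} transports to the desired natural isomorphism
\[
\Hom_{\Mod(\CO_{\CP_{J_1,\zeta}})}((\pi^{J_2J_1}_\zeta)^*N,M)
\cong
\Hom_{\Mod(\CO_{\CP_{J_2,\zeta}})}(N,(\pi^{J_2J_1}_\zeta)_*M).
\]
This is the entire content of the proof; it is a one-line formal consequence.

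There is essentially no obstacle here: the work was all done in establishing Lemma \ref{lem:inv-dir-tadjoint} (via the Frobenius reciprocity for the induction/restriction pair on $U^L_\zeta(\Gp^-)$-modules) and in Theorem \ref{thm:1equiv2}. The only point requiring a word of care is that the equivalences of Theorem \ref{thm:1equiv2} are genuinely quasi-inverse to each other, so that hom-sets on both sides match up compatibly with the functors; this is exactly what is recorded there. Thus the proof I would write is simply: ``By definition $(\pi^{J_2J_1}_\zeta)^*$ and $(\pi^{J_2J_1}_\zeta)_*$ correspond to $(\widetilde{\pi}^{J_2J_1}_\zeta)^*$ and $(\widetilde{\pi}^{J_2J_1}_\zeta)_*$ under the equivalences of Theorem \ref{thm:1equiv2}, and an equivalence of categories preserves adjunctions; hence the assertion follows from Lemma \ref{lem:inv-dir-tadjoint}.'' Alternatively, if one prefers to avoid invoking the abstract transport-of-adjunction principle, one can spell it out by the same chain of isomorphisms as in the proof of Lemma \ref{lem:inv-dir-tadjoint}, using that for $\CN\in\Mod(\CO_{\CP_{J_2,\zeta}})$ and $\CM\in\Mod(\CO_{\CP_{J_1,\zeta}})$ the hom-groups agree with those computed in the tilde-categories after applying $\omega^*_J\circ\widetilde\omega_{J*}$; but the abstract argument is cleaner and is what I would include.
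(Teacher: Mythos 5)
Your proposal is correct and matches the paper's approach exactly: the paper states Lemma \ref{lem:inv-dir-adjoint} with the one-line justification ``By Lemma \ref{lem:inv-dir-tadjoint} we have the following,'' which is precisely the transport-of-adjunction argument you spell out using the equivalences of Theorem \ref{thm:1equiv2} and the definitions of $(\pi^{J_2J_1}_\zeta)^*$, $(\pi^{J_2J_1}_\zeta)_*$ as the transported versions of the tilde-functors.
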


By Lemma \ref{lem:inv2} and Lemma \ref{lem:dir2} the functors 
\eqref{eq:inv}, \eqref{eq:dir}  induce
\begin{align}
({\pi}^{J_2J_1}_\zeta)^*:
{\modu}(\CO_{\CP_{J_2,\zeta}})
&\to
{\modu}(\CO_{\CP_{J_1,\zeta}}),
\\
({\pi}^{J_2J_1}_\zeta)_*:
{\modu}(\CO_{\CP_{J_1,\zeta}})
&\to
{\modu}(\CO_{\CP_{J_2,\zeta}}).
\end{align}

It is easily also seen that the functors \eqref{eq:inv}, \eqref{eq:dir} induce
\begin{align}
\label{eq:inv-eq}
({\pi}^{J_2J_1}_\zeta)^*:
{\Mod}^{\eq}(\CO_{\CP_{J_2,\zeta}})
\to&
{\Mod}^{\eq}(\CO_{\CP_{J_1,\zeta}}),
\\
\label{eq:dir-eq}
({\pi}^{J_2J_1}_\zeta)_*:
{\Mod}^{\eq}(\CO_{\CP_{J_1,\zeta}})
\to&
{\Mod}^{\eq}(\CO_{\CP_{J_2,\zeta}}).
\end{align}
Similarly to \cite[Lemma 4.9]{T2} we have the following.
\begin{lemma}
\label{lem:IndGamma}
We have the following commutative diagram of functors:
\[
\xymatrix@C=50pt@R=30pt{
\Mod_\inte(U^L_\zeta(\Gp_{J_1}^-))
\ar[r]^{\Ind^{J_2J_1}}
\ar[d]_{\CF_{J_1}}
&
\Mod_\inte(U^L_\zeta(\Gp_{J_2}^-))
\ar[d]^{\CF_{J_2}}
\\
\Mod^{\eq}(\CO_{\CP_{J_1,\zeta}})
\ar[r]_{({\pi}^{J_2J_1}_\zeta)_*}
&
\Mod^{\eq}(\CO_{\CP_{J_2,\zeta}}).
}
\]
\end{lemma}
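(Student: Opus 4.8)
The plan is to pull everything back to the ``$\widetilde{\;\;}$''-categories and then reduce the statement to the tensor identity for quantized induction. Since $\CF_J$ is by definition the composite of $\widetilde{\CF}_J\colon\Mod_\inte(U^L_\zeta(\Gp_J^-))\to\widetilde{\Mod}^{\eq}(\CO_{\CP_{J,\zeta}})$, $K\mapsto\CO_\zeta(G)\otimes K$, with the equivalence $\widetilde{\Mod}^{\eq}(\CO_{\CP_{J,\zeta}})\cong\Mod^{\eq}(\CO_{\CP_{J,\zeta}})$ of Theorem \ref{thm:1equiv2} (see \eqref{eq:equivariant}, \eqref{eq:equivariant2}), and since $({\pi}^{J_2J_1}_\zeta)_*$ corresponds to $(\widetilde{\pi}^{J_2J_1}_\zeta)_*$ under the same equivalence, the assertion is equivalent to producing a natural isomorphism
\[
(\widetilde{\pi}^{J_2J_1}_\zeta)_*\circ\widetilde{\CF}_{J_1}
\;\cong\;
\widetilde{\CF}_{J_2}\circ\Ind^{J_2J_1}
\]
of functors $\Mod_\inte(U^L_\zeta(\Gp_{J_1}^-))\to\widetilde{\Mod}^{\eq}(\CO_{\CP_{J_2,\zeta}})$.

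First I would fix $K\in\Mod_\inte(U^L_\zeta(\Gp_{J_1}^-))$ and unwind the right $U^L_\zeta(\Gp_{J_1}^-)$-action on $\widetilde{\CF}_{J_1}(K)=\CO_\zeta(G)\otimes K$. The defining formula $(\varphi\otimes k)y=\sum_{(y)}\varphi y_{(0)}\otimes(Sy_{(1)})k$ says exactly that, as a right $U^L_\zeta(\Gp_{J_1}^-)$-module, $\CO_\zeta(G)\otimes K$ is the diagonal tensor product of $\CO_\zeta(G)$ (restricted from $U^L_\zeta(\Gg)$) with the right-module twist $K^{[r]}$. Applying $(\widetilde{\pi}^{J_2J_1}_\zeta)_*$, whose underlying right $U^L_\zeta(\Gp_{J_2}^-)$-module is $\Ind^{rJ_2J_1}$ of the input, and invoking the tensor identity $\Ind^{rJ_2J_1}(\CO_\zeta(G)\otimes M)\cong\CO_\zeta(G)\otimes\Ind^{rJ_2J_1}M$ with $M=K^{[r]}$, I obtain
\[
(\widetilde{\pi}^{J_2J_1}_\zeta)_*\bigl(\widetilde{\CF}_{J_1}(K)\bigr)
\;\cong\;
\CO_\zeta(G)\otimes\Ind^{rJ_2J_1}(K^{[r]})
\;=\;
\CO_\zeta(G)\otimes(\Ind^{J_2J_1}K)^{[r]}
\]
as right $U^L_\zeta(\Gp_{J_2}^-)$-modules, the final equality being $\Ind^{rJ_2J_1}(K^{[r]})=(\Ind^{J_2J_1}K)^{[r]}$, immediate from $\Ind^{rJ_2J_1}(\bullet)=(\Ind^{J_2J_1}((\bullet)^{[l]}))^{[r]}$ together with $(K^{[r]})^{[l]}=K$. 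Reading the right-hand side with its diagonal $U^L_\zeta(\Gp_{J_2}^-)$-action recovers precisely $\widetilde{\CF}_{J_2}(\Ind^{J_2J_1}K)$ at the level of right modules.

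It then remains to check that this isomorphism respects the left $\CO_\zeta(G)$- and left $U^L_\zeta(\Gg)$-module structures and is natural in $K$. On $\widetilde{\CF}_{J_2}(\Ind^{J_2J_1}K)$ both ``outer'' structures are the regular ones on the first $\CO_\zeta(G)$-factor; on $(\widetilde{\pi}^{J_2J_1}_\zeta)_*(\widetilde{\CF}_{J_1}(K))$ the left $\CO_\zeta(G)$-structure is, by the construction of $(\widetilde{\pi}^{J_2J_1}_\zeta)_*$, obtained by applying $\Ind^{rJ_2J_1}$ to the multiplication map on the first $\CO_\zeta(G)$-factor of $\CO_\zeta(G)\otimes(\CO_\zeta(G)\otimes K)$ and transporting along the tensor identity; tracing this through the isomorphism above turns it into left multiplication on the first $\CO_\zeta(G)$-factor of $\CO_\zeta(G)\otimes\Ind^{J_2J_1}K$, and the left $U^L_\zeta(\Gg)$-action is identical, being the left regular action on that same factor on both sides. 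Naturality is automatic because every ingredient --- the right-action formula, the $[l]/[r]$ twists, and the tensor-identity isomorphism --- is functorial in $K$. The hard part will be this last compatibility check: one must verify that the tensor-identity isomorphism is simultaneously linear for the ``outer'' $\CO_\zeta(G)$- and $U^L_\zeta(\Gg)$-actions and for the ``inner'' $U^L_\zeta(\Gp_{J_2}^-)$-action. This is a routine but somewhat lengthy Sweedler-notation verification, carried out exactly as in the proof of \cite[Lemma 4.9]{T2}, and I expect no essentially new difficulty beyond it.
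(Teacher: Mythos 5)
Your proposal is correct and follows the same route the paper intends: the paper gives no explicit proof, deferring to \cite[Lemma 4.9]{T2}, and your argument — pass to the ``$\widetilde{\phantom{M}}$''-categories, identify the right $U^L_\zeta(\Gp_{J_1}^-)$-structure on $\CO_\zeta(G)\otimes K$ as a diagonal tensor product, apply the tensor identity for $\Ind^{rJ_2J_1}$ together with the $[l]/[r]$-twist bookkeeping, then verify the outer $\CO_\zeta(G)$- and $U^L_\zeta(\Gg)$-actions match — is precisely that tensor-identity argument. The deferred Sweedler-notation compatibility check you flag at the end is indeed the content of the cited lemma, so nothing essential is missing.
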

\section{{$D$-modules on the quantized flag manifolds}}
\subsection{$D$-modules}
\label{subsec:Dmod}
For $\varphi\in A_\BA$, $u\in U_\BA(\Gg)$, $\lambda\in\Lambda$ we define
$
\ell_\varphi, \deru_u, \sigma_\lambda
\in\End_\BA(A_\BA)
$
by
\[
\ell_\varphi(\psi)=\varphi\psi,
\quad
\deru_u(\psi)=u\psi,
\quad
\sigma_\lambda(\psi)=\psi k_\lambda
\qquad(\psi\in A_\BA).
\]
Here, $A_\BA$ is regarded as a $(U_\BA(\Gg),U_\BA(\Gh))$-bimodule via the embedding
$U_\BA(\Gg)\hookrightarrow U^L_\BA(\Gg)$, 
$U_\BA(\Gh)\hookrightarrow U^L_\BA(\Gh)$.
We define an $\BA$-subalgebra $D_\BA$ of $\End_\BA(A_\BA)$ by
\[
D_\BA=
\langle
\ell_\varphi, \deru_u, \sigma_{2\lambda}
\mid
\varphi\in A_\BA, u\in U_\BA(\Gg), \lambda\in\Lambda
\rangle
\subset 
\End_\BA(A_\BA).
\]
Then $D_\BA$ turns out to be a $\Lambda$-graded $\BA$-algebra by the 
$\Lambda$-grading 
$
D_\BA=\bigoplus_{\lambda\in\Lambda}D_\BA(\lambda)
$
given by
\[
D_\BA(\lambda)
=
\{P\in D_\BA\mid
P(A_\BA(\mu))\subset A_\BA(\lambda+\mu)
\quad(\mu\in\Lambda)\}.
\]
We define a $\Lambda$-graded $\BC$-algebra $D_\zeta$ by
\[
D_\zeta=\BC\otimes_\BA D_\BA,
\]
where $\BA\to\BC$ is given by $q^{1/|\Lambda/Q|}\mapsto\zeta^{1/|\Lambda/Q|}$.
It is easily seen that in the $\BC$-algebra $D_\zeta$ we have
\begin{equation}
\label{eq:sigmaHC}
z\in Z_\Har(U_\zeta(\Gg)),
\quad
\Xi(z)=\sum_{\lambda\in\Lambda}a_\lambda k_{2\lambda}
\;
\Longrightarrow
\;
\deru_z=\sum_{\lambda\in\Lambda}a_\lambda\sigma_{2\lambda}.
\end{equation}

Regarding  $A_\zeta$  as a $\Lambda$-graded $\BC$-subalgebra of $D_\zeta$ by
$A_\zeta\to D_\zeta$ ($\varphi\mapsto\ell_\varphi$)
we define a category 
$\Mod(\DD_{\CB_\zeta})$ by
\begin{equation}
\Mod(\DD_{\CB_\zeta})
=
\Mod_\Lambda(D_\zeta)
/
(\Mod_\Lambda(D_\zeta)\cap\Tor_{\Lambda^+}(A_\zeta)).
\end{equation}
The natural exact functor
\[
\omega^*:
\Mod_{\Lambda}(D_\zeta)
\to
\Mod(\DD_{\CB_\zeta})
\]
admits a right adjoint 
\[
\omega_{*}:
\Mod(\DD_{\CB_\zeta})
\to
\Mod_{\Lambda}(D_\zeta),
\]
which is left exact.
For $M\in\Mod_\Lambda(D_\zeta)$ the zero-th part 
$M(0)$ turns out to be a $U_\zeta(\Gg)$-module via
$U_\zeta(\Gg)\to D_\zeta(0)$ ($u\mapsto\deru_u$),
and hence
we obtain a left exact functor
\begin{equation}
\label{eq:Gamma1}
\Gamma:\Mod(\DD_{\CB_\zeta})
\to
\Mod(U_\zeta(\Gg))
\qquad
(M\mapsto(\omega_{*}M)(0)).
\end{equation}

For $t\in H$ we define full subcategories
$\Mod(\DD_{\CB_\zeta,t})$ and
$\Mod_t(\DD_{\CB_\zeta})$ of 
$\Mod(\DD_{\CB_\zeta})$ 
as follows.
Let
$\Mod_{\Lambda}(D_\zeta;t)$ be the full subcategory of $\Mod_{\Lambda}(D_\zeta)$ consisting of 
$M\in \Mod_{\Lambda}(D_\zeta)$ satisfying
\[
\sigma_{2\lambda}|_{M(\mu)}
=\theta_\lambda(t)
\zeta^{2(\lambda,\mu)}
\id
\qquad(\mu\in\Lambda).
\]
We also denote by
$\Mod_{\Lambda,t}(D_\zeta)$ the full subcategory of $\Mod_{\Lambda}(D_\zeta)$ consisting of 
$M\in \Mod_{\Lambda}(D_\zeta)$ 
such that for any $m\in M(\mu)$ with $\mu\in\Lambda$ there exists some $n$ such that 
\[
(\sigma_{2\lambda}-
\theta_\lambda(t)
\zeta^{2(\lambda,\mu)})^nm=0.
\]
Then we define 
$\Mod(\DD_{\CB_\zeta,t})$ and
$\Mod_t(\DD_{\CB_\zeta})$ 
by
\begin{align}
\Mod(\DD_{\CB_\zeta,t})
=&
\Mod_{\Lambda}(D_\zeta;t)
/
(\Mod_{\Lambda}(D_\zeta;t)\cap\Tor_{\Lambda^+}(A_\zeta)),
\\
\Mod_t(\DD_{\CB_\zeta})
=&
\Mod_{\Lambda,t}(D_\zeta)
/
(\Mod_{\Lambda,t}(D_\zeta)\cap\Tor_{\Lambda^+}(A_\zeta)).
\end{align}
Note that  $\Mod(\DD_{\CB_\zeta,t})$ is an analogue of the category of quasi-coherent $\DD_{\CB,\nu}$-modules, where $\DD_{\CB,\nu}$ is the ring of twisted differential operators on the ordinary flag manifold $\CB$ corresponding to the parameter $\nu\in\Gh^*$.
By restricting $\omega^*$ and $\omega_*$ we obtain 
natural exact functors
\[
\omega_t^*:
\Mod_{\Lambda}(D_\zeta;t)
\to
\Mod(\DD_{\CB_\zeta,t}),
\qquad
{}_t\omega^*:
\Mod_{\Lambda,t}(D_\zeta)
\to
\Mod_t(\DD_{\CB_\zeta})
\]
and their right adjoints
\[
\omega_{t*}:
\Mod(\DD_{\CB_\zeta,t})
\to
\Mod_{\Lambda}(D_\zeta;t),
\qquad
{}_t\omega_{*}:
\Mod_t(\DD_{\CB_\zeta})
\to
\Mod_{\Lambda,t}(D_\zeta),
\]
which are left exact.
Moreover, \eqref{eq:Gamma1} induces 
\begin{align}
\label{eq:Gamma2}
&\Gamma:\Mod(\DD_{\CB_\zeta,t})
\to
\Mod(U_\zeta(\Gg)_{[t]})
\qquad
(M\mapsto(\omega_{t*}M)(0))
\\
\label{eq:Gamma22}
&\Gamma:\Mod_t(\DD_{\CB_\zeta})
\to
\Mod_{[t]}(U_\zeta(\Gg))
\qquad
(M\mapsto({}_t\omega_{*}M)(0))
\end{align}
by \eqref{eq:sigmaHC}.
Here, $\Mod_{[t]}(U_\zeta(\Gg))$  is the full subcategory of $\Mod(U_\zeta(\Gg))$ 
consisting of $M\in \Mod(U_\zeta(\Gg))$ such that for any $m\in M$ we have $\Ker(\xi_{[t]})^nm=0$ for some $n$.

We define $\modu(\DD_{\CB_\zeta})$, 
$\modu(\DD_{\CB_\zeta,t})$, 
$\modu_t(\DD_{\CB_\zeta})$ 
similarly to 
$\Mod(\DD_{\CB_\zeta})$, 
$\Mod(\DD_{\CB_\zeta,t})$, 
$\Mod_t(\DD_{\CB_\zeta})$ 
respectively
using $\modu_\Lambda(D_\zeta)$ instead of 
$\Mod_\Lambda(D_\zeta)$.
They are full subcategories of 
$\Mod(\DD_{\CB_\zeta})$, 
$\Mod(\DD_{\CB_\zeta,t})$,
$\Mod_t(\DD_{\CB_\zeta})$
respectively, and 
\eqref{eq:Gamma1}, \eqref{eq:Gamma2}, 
\eqref{eq:Gamma22}
 induce
\begin{align}
\label{eq:Gamma1A}
&\Gamma:\modu(\DD_{\CB_\zeta})
\to
\modu(U_\zeta(\Gg)),
\\
\label{eq:Gamma2A}
&\Gamma:\modu(\DD_{\CB_\zeta,t})
\to
\modu(U_\zeta(\Gg)_{[t]}),
\\
\label{eq:Gamma22A}
&\Gamma:\modu_t(\DD_{\CB_\zeta})
\to
\modu_{[t]}(U_\zeta(\Gg)),
\end{align}
where $\modu_{[t]}(U_\zeta(\Gg))=\Mod_{[t]}(U_\zeta(\Gg))\cap \modu(U_\zeta(\Gg))$.
\begin{remark}
In our previous papers \cite{T1} and \cite{T2}, we used, 
instead of the categories
$\Mod(\DD_{\CB_\zeta})$ and 
$\Mod(\DD_{\CB_\zeta,t})$ ($t\in H$)
described above,
the categories
$\Mod(\DD^1_{\CB_\zeta})$, 
$\Mod(\DD^1_{\CB_\zeta,t})$
defined starting from
\[
D^1_\BA=
\langle
\ell_\varphi, \deru_u, \sigma_{\lambda}
\mid
\varphi\in A_\BA, u\in U_\BA(\Gg), \lambda\in\Lambda
\rangle
\subset \End_\BA(A_\BA)
\]
which is slightly larger than 
$D_\BA$.
In fact we have 
$\Mod(\DD^1_{\CB_\zeta,t})
\cong
\Mod(\DD_{\CB_\zeta,t^2})$ 
by \cite[Proposition 6.1, Remark 6.5]{TR}.
\end{remark}

For $\lambda\in\Lambda$ we define $t_\lambda\in H$ by
\begin{equation}
\label{eq:t}
\theta_\mu(t_\lambda)=\zeta^{2(\lambda,\mu)}
\qquad
(\mu\in\Lambda).
\end{equation}
The functor \eqref{eq:shift2} induces an exact functor
\begin{equation}
(\bullet)[\lambda]:
\Mod(\DD_{\CB_\zeta,t})\to
\Mod(\DD_{\CB_\zeta,tt_\lambda})
\qquad(\lambda\in\Lambda).
\end{equation}
Note
\begin{equation}
w\circ t_\lambda=t_{w(\lambda+\rho)-\rho}
\qquad(w\in W, \lambda\in\Lambda).
\end{equation}
We say that $t\in H$ is regular if $|W\circ t|=|W|$.

In \cite{T0} we proved in the case where $\zeta$ is transcendental that 
\eqref{eq:Gamma2A} gives an equivalence of abelian categories if $t=t_\lambda$ with
$\lambda\in \Lambda^+$.
This is an analogue of the Beilinson-Bernstein correspondence.
For general $\zeta\in\BC^\times$ we have the following conjecture.
\begin{conjecture}
\label{conj:BB2}
The derived functors
\begin{align}
\label{eq:RGamma2A}
R\Gamma:D^b(\modu(\DD_{\CB_\zeta,t}))
\to
D^b(\modu(U_\zeta(\Gg)_{[t]}))
\\
\label{eq:RGamma22A}
R\Gamma:D^b(\modu_t(\DD_{\CB_\zeta}))
\to
D^b(\modu_{[t]}(U_\zeta(\Gg)))
\end{align}
of \eqref{eq:Gamma2A}, \eqref{eq:Gamma22A}
between bounded derived categories give 
equivalences of triangulated categories
if $t\in H$ is regular.
\end{conjecture}

In the case $\zeta$ is not a root of $1$, 
Conjecture \ref{conj:BB2} is regarded as an
analogue of a known result for twisted $D$-modules on the ordinary flag manifold $\CB$ over $\BC$ (see \cite{BB2}, \cite{KT}).
However, the proof in the ordinary case uses a certain integral transform, which is sometimes called the Radon transform, and is not directly applied to our case.
It is an interesting problem to define the Radon transforms for quantized situation.
Conjecture \ref{conj:BB2} in the case $\zeta$ is a root of $1$ will be discussed in Section \ref{sec:ED}.

\subsection{$\CU_{\CB_\zeta}$-modules}
\label{sec:EV}
For $V\in\modu_\inte(U_\zeta^L(\Gg))$ we set
$E_V=V\otimes A_\zeta$
We regard it as a right $A_\zeta$-module via the right multiplication on the second factor. 
We also regard it as a left $U_\zeta^L(\Gg)$-module by 
\[
u(v\otimes\varphi)=\sum_{(u)}u_{(0)}v\otimes \deru_{u_{(1)}}\varphi
\qquad
(u\in U_\zeta^L(\Gg),\;
v\in V,\; 
\varphi\in A_\zeta).
\]
We will identify $E_V$ with $A_\zeta\otimes V$ through the isomorphism
$R^{A_\zeta,V}:A_\zeta\otimes V\to V\otimes A_\zeta$ of $U_\zeta^L(\Gg)$-modules (see \eqref{eq:R}).
We define a left $A_\zeta$-module structure of $E_V$ by the left multiplication on the first factor of $E_V=A_\zeta\otimes V$.
Then $E_V$ turns out to be an $A_\zeta$-bimodule.
Moreover, by 
\[
R^{A_\zeta,V}(1\otimes v)=v\otimes 1
\qquad (v\in V)
\]
we can regard $V$ as a $U_\zeta^L(\Gg)$-submodule of $E_V$ by the embedding 
\[
V\hookrightarrow E_V
\qquad
(v\mapsto v\otimes 1\in V\otimes A_\zeta=E_V).
\]
It is easily seen that the functor 
$
E_V\otimes_{A_\zeta}(\bullet):
\Mod_\Lambda(A_\zeta)
\to
\Mod_\Lambda(A_\zeta)
$
induces an exact functor 
\begin{equation}
\label{eq:funcV}
V\otimes(\bullet):
\Mod(\CO_{\CB_\zeta})
\to
\Mod(\CO_{\CB_\zeta}).
\end{equation}
We refer the readers to \cite[4.3]{T0} for the properties of $E_V$ mentioned above.

\begin{proposition}
\label{prop:tensVA}
Assume that  $V\in\modu_\inte(U^L_\zeta(\Gg))$ is equipped with a $U^L_\zeta(\Gb^-)$-stable filtration 
\begin{equation}
\label{eq:filt1A}
V=V_1\supset V_2\supset\cdots\supset V_m\supset V_{m+1}=\{0\}
\end{equation}
such that $V_j/V_{j+1}$ is a one-dimensional $U^L_\zeta(\Gb^-)$-module with character $\xi_j\in\Lambda$.
Then for $\CM\in\Mod(\CO_{\CB_\zeta})$ we have
 a functorial filtration 
\[
V\otimes \CM=
\CM_1\supset \CM_2\supset\cdots\supset 
\CM_m\supset 
\CM_{m+1}=\{0\}
\]
of $V\otimes\CM$ 
such that 
$\CM_j/\CM_{j+1}
\cong
\CM[\xi_j]
$
for $j=1,\dots, m$.
\end{proposition}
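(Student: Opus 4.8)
The plan is to transport the given $U^L_\zeta(\Gb^-)$-stable filtration of $V$ through the functor $V\otimes(\bullet)$ and identify the subquotients using the $R$-matrix description of $E_V$. First I would work at the level of $A_\zeta$-bimodules, before passing to $\Mod(\CO_{\CB_\zeta})$. Recall that $E_V=V\otimes A_\zeta$ is identified with $A_\zeta\otimes V$ via $R^{A_\zeta,V}$, and that the functor on $\Mod(\CO_{\CB_\zeta})$ in \eqref{eq:funcV} is induced by $E_V\otimes_{A_\zeta}(\bullet)$ on $\Mod_\Lambda(A_\zeta)$. So the key object to understand is the $A_\zeta$-bimodule $E_V$, and the filtration \eqref{eq:filt1A} of $V$ should induce a filtration of $E_V$ by sub-bimodules. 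The point where the $U^L_\zeta(\Gb^-)$-structure enters is exactly in checking that $E_{V_j}:=V_j\otimes A_\zeta$, viewed inside $E_V=A_\zeta\otimes V$ through $R^{A_\zeta,V}$, is stable under the \emph{left} $A_\zeta$-action — the right $A_\zeta$-action on $V_j\otimes A_\zeta$ is obvious.

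The main step is therefore: show that $R^{A_\zeta,V}(A_\zeta\otimes V_j)=V_j\otimes A_\zeta$ inside $V\otimes A_\zeta$, equivalently that the $R$-matrix isomorphism restricts to an isomorphism $A_\zeta\otimes V_j\xrightarrow{\sim}V_j\otimes A_\zeta$. For this I would use the formula $R^{A_\zeta,V}(m\otimes v)=\zeta^{-(\lambda,\mu)}\sum_{\gamma\in Q^+}P(\CR_{\zeta,\gamma}(m\otimes v))$: since the second tensor factor of $\CR_{\zeta,\gamma}$ lies in $U^L_\zeta(\Gn^-)_{-\gamma}\subset U^L_\zeta(\Gb^-)$, it acts on $v\in V_j$ and stays in $V_j$ by the assumed $U^L_\zeta(\Gb^-)$-stability of the filtration. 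Hence $\CR_{\zeta,\gamma}(A_\zeta\otimes V_j)\subset A_\zeta\otimes V_j$, so after applying $P$ we land in $V_j\otimes A_\zeta$; thus $E_{V_j}$ is an $A_\zeta$-sub-bimodule of $E_V$, and we get a filtration of $A_\zeta$-bimodules
\[
E_V=E_{V_1}\supset E_{V_2}\supset\cdots\supset E_{V_m}\supset E_{V_{m+1}}=\{0\}.
\]
Next I would identify the subquotient $E_{V_j}/E_{V_{j+1}}$ as an $A_\zeta$-bimodule. As a right $A_\zeta$-module it is $(V_j/V_{j+1})\otimes A_\zeta\cong A_\zeta$ (since $V_j/V_{j+1}$ is one-dimensional). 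For the left $A_\zeta$-action, the induced $R$-matrix on $(V_j/V_{j+1})\otimes A_\zeta$ is governed by the action of $U^L_\zeta(\Gb^-)$ on the one-dimensional quotient, which is the character $\chi_{\xi_j}$; comparing with the definition of the shift functor \eqref{eq:shiftB}–\eqref{eq:tshift} (and using Proposition \ref{thm:1equiv} to move between the two descriptions of $\CO_{\CB_\zeta}$), one sees $E_{V_j}/E_{V_{j+1}}\cong A_\zeta[\xi_j]$ as $A_\zeta$-bimodules — more precisely, the left module structure is twisted by $\chi_{\xi_j}$, which is exactly what the functor $(\bullet)[\xi_j]$ encodes. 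Here I would lean on the computation $R^{A_\zeta,\BC_{\xi_j}}$ on a $1$-dimensional $U^L_\zeta(\Gb^-)$-module, essentially the content of \cite[4.3]{T0}.

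Finally, I would apply the exact functor $(\bullet)\otimes_{A_\zeta}\CM$ for $\CM\in\Mod_\Lambda(A_\zeta)$, or rather the induced functors on $\Mod(\CO_{\CB_\zeta})$. Exactness of $E_V\otimes_{A_\zeta}(\bullet)$ (which holds because $E_V$ is free as a right $A_\zeta$-module, $E_V\cong A_\zeta^{\dim V}$) ensures that the bimodule filtration of $E_V$ yields a filtration
\[
V\otimes\CM=\CM_1\supset\CM_2\supset\cdots\supset\CM_m\supset\CM_{m+1}=\{0\}
\]
with $\CM_j/\CM_{j+1}\cong(E_{V_j}/E_{V_{j+1}})\otimes_{A_\zeta}\CM\cong A_\zeta[\xi_j]\otimes_{A_\zeta}\CM\cong\CM[\xi_j]$, and all of this descends to $\Mod(\CO_{\CB_\zeta})$ by the construction of \eqref{eq:funcV} and the shift functor. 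Functoriality in $\CM$ is immediate since every construction (the bimodule filtration, the tensor product) is functorial. The main obstacle I anticipate is the bookkeeping in the middle step — precisely pinning down the left $A_\zeta$-module structure on the subquotient $E_{V_j}/E_{V_{j+1}}$ and matching it with the shift $[\xi_j]$, which requires care with the $R$-matrix normalization factor $\zeta^{-(\lambda,\mu)}$ and the conventions relating $A_\zeta(\lambda)$ to the character $\chi_\lambda$; everything else is formal, given Proposition \ref{prop:R}, Proposition \ref{thm:1equiv}, and the properties of $E_V$ recalled from \cite[4.3]{T0}.
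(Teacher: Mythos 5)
Your proof is correct, but it takes a genuinely different route from the paper's. You work at the level of $A_\zeta$-bimodules: you show the $R$-matrix isomorphism $R^{A_\zeta,V}:A_\zeta\otimes V\to V\otimes A_\zeta$ restricts on each filtration piece because the negative part of $\CR_{\zeta,\gamma}$ lies in $U^L_\zeta(\Gb^-)$ and hence preserves $V_j$, obtain a filtration of $E_V$ by sub-bimodules $E_{V_j}$, identify the subquotients with $A_\zeta[\xi_j]$, and tensor with $\CM$ using right-flatness of the $E_{V_j}$. This is essentially the original argument of \cite[4.4]{T0} (over $\BF$) adapted to the specialized setting, and the paper explicitly distances itself from that approach: it proves the proposition instead by passing through the equivalence $\widetilde{\Mod}(\CO_{\CB_\zeta})\cong\Mod(\CO_{\CB_\zeta})$ of Proposition \ref{thm:1equiv}. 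The paper introduces an auxiliary model $(V\otimes M)^\dagger$ in $\widetilde{\Mod}(\CO_{\CB_\zeta})$ — with right $U^L_\zeta(\Gb^-)$-action twisted by the antipode on $V$ and left $\CO_\zeta(G)$-action given by an explicit $R$-matrix-type intertwiner — shows $V\otimes M\cong(V\otimes M)^\dagger$ by a concrete formula, and then reads off the filtration directly from the fact that each $V_j\otimes M$ is visibly a subobject of $(V\otimes M)^\dagger$. What the paper's route buys is that the subquotient identification $M[\xi_j]$ becomes immediate once the character $\xi_j$ is in play, entirely avoiding the bimodule-grading and $\zeta$-power bookkeeping that you correctly flag as the delicate point of your approach (pinning down the left $A_\zeta$-structure on $E_{V_j}/E_{V_{j+1}}$ versus $A_\zeta[\xi_j]$); what your route buys is a treatment that stays entirely inside $\Mod_\Lambda(A_\zeta)$ and does not need the $\widetilde{\Mod}$ machinery.
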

\begin{proof}
A proof of this result over $\BF$ is given in \cite[4.4]{T0}.
Here, we give a different proof using the equivalence
\eqref{eq:1equiv2} for $J=\emptyset$.

It is easily seen that the functor
\[
V\otimes(\bullet):
\widetilde{\Mod}(\CO_{\CB_\zeta})
\to
\widetilde{\Mod}(\CO_{\CB_\zeta})
\]
corresponding to \eqref{eq:funcV} is given by
$
M\mapsto V\otimes M
$.
Here, the right $U^L_\zeta(\Gb^-)$-module structure of $V\otimes M$ is given by the  
right $U^L_\zeta(\Gb^-)$-module structure of  $M$ (the action of $U^L_\zeta(\Gb^-)$ on $V$ is trivial), and 
the left $\CO_\zeta(G)$-module structure of $V\otimes M$ is given by the composite of 
\[
\CO_\zeta(G)\otimes V\otimes M
\xrightarrow{R^{\CO_\zeta(G),V}\otimes 1}
V\otimes\CO_\zeta(G)\otimes M
\xrightarrow{1\otimes \alpha}
V\otimes M,
\]
where 
$\alpha:\CO_\zeta(G)\otimes M\to M$ is the left $\CO_\zeta(G)$-module structure of $M$.

Let us define $(V\otimes M)^\dagger\in\widetilde{\Mod}(\CO_{\CB_\zeta})$ as follows.
As a vector space we have $(V\otimes M)^\dagger=V\otimes M$.
The right $U_\zeta^L(\Gb^-)$-module structure 
of $(V\otimes M)^\dagger$ 
is given by 
\[
(v\otimes m)y=
\sum_{(y)}
(Sy_{(0)})v\otimes my_{(1)}
\qquad
(v\in V,\; m\in M,\; y\in  U^L_\zeta(\Gb^-)),
\]
and the left $\CO_\zeta(G)$-module structure of $(V\otimes M)^\dagger$ is given by the composite of 
\[
\CO_\zeta(G)\otimes V\otimes M
\xrightarrow{\sum_{\gamma\in Q^+}R'_\gamma\otimes1}
V\otimes \CO_\zeta(G)\otimes M
\xrightarrow{1\otimes \alpha} V\otimes M,
\]
where 
$R'_\gamma:\CO_\zeta(G)\otimes V\to V\otimes \CO_\zeta(G)$
is given by
\[
\CR_{\zeta,\gamma}=\sum_pr_p\otimes r'_p
\;\;\Longrightarrow\;\;
R'_\gamma(\varphi\otimes v)
=
\zeta^{-(\lambda-\gamma,\mu)}
\sum_pr'_pv\otimes\varphi r_p
\]
for $v\in V_\mu$ and $\varphi\in\CO_\zeta(G)$ such that 
$\varphi h=\chi_\lambda(h)\varphi$ for any $h\in U_\zeta^L(\Gh)$.

One can check that  the linear map 
$
V\otimes M\to V\otimes M
$ defined by 
\[
v\otimes m\mapsto\sum_iv_i\otimes \varphi_i m
\quad\text{if}\quad 
uv=\sum_{i}\langle\varphi_i,u\rangle v_i \quad(u\in U^L_\zeta(\Gg), \; v\in V)
\]
gives an isomorphism $V\otimes M\to(V\otimes M)^\dagger$ in 
$\widetilde{\Mod}(\CO_{\CB_\zeta})$.

It remains to show that $V_j\otimes M$ is a subobject of $(V\otimes M)^\dagger\in \widetilde{\Mod}(\CO_{\CB_\zeta})$ and 
we have 
$(V_j\otimes M)/(V_{j+1}\otimes M)\cong M[\xi_j]$. 
This is easily seen from the definition of $(V\otimes M)^\dagger$.
\end{proof}
\begin{remark}
Let $\GM_1$ be a quasi-coherent $\CO_G$-module equivariant with respect to the action of $B^-$ on $G$ given by the left multiplication.
It corresponds to an object $M_1$ of $\widetilde{\Mod}(\CO_\CB)$ (see Remark \ref{rem:CC}).
Let $V_1$ be a finite-dimensional $G$-module.
Then $V\otimes M$
(resp.\ $(V\otimes M)^\dagger$) in the proof of Proposition \ref{prop:tensVA} is 
an analogue of the object of $\widetilde{\Mod}(\CO_\CB)$ corresponding to $V_1\otimes_\BC \GM_1$ equipped with right
$B^-$action given by 
\[
(v\otimes m)b=v\otimes mb
\qquad
(\text{resp.\ }
(v\otimes m)b=vb^{-1}\otimes mb)
\]
for $v\in V_1$, $m\in\GM_1$, $b\in B^-$.
\end{remark}

We regard $A_\zeta\otimes U_\zeta(\Gg)$ as a $\BC$-algebra via the smash product of the algebra $A_\zeta$ and the  Hopf algebra $U_\zeta(\Gg)$ acting on it. Namely, we define the multiplication of 
 $A_\zeta\otimes U_\zeta(\Gg)$ by
\[
(\varphi\otimes u)(\varphi'\otimes u')
=\sum_{(u)}
\varphi\cdot\deru_{u_{(0)}}(\varphi')\otimes
{u}_{(1)}u'
\qquad
(\varphi, \varphi'\in A_\zeta,\; u,u'\in U_\zeta(\Gg)).
\]
We will identify $A_\zeta$ and $U_\zeta(\Gg)$ as subalgebras of $A_\zeta\otimes U_\zeta(\Gg)$ by the embeddings
$\varphi\mapsto\varphi\otimes1$, $u\mapsto 1\otimes u$ 
for $\varphi\in A_\zeta$, $u\in U_\zeta(\Gg)$.
We regard $A_\zeta\otimes U_\zeta(\Gg)$ as a  $\Lambda$-graded $\BC$-algebra by 
$(A_\zeta\otimes U_\zeta(\Gg))(\lambda)=
A_\zeta(\lambda)\otimes U_\zeta(\Gg)$.
We have a canonical homomorphism
\begin{equation}
\label{eq:ED}
A_\zeta\otimes U_\zeta(\Gg)\to D_\zeta
\qquad
(\varphi\otimes u\mapsto\ell_\varphi\deru_u)
\end{equation}
of 
$\Lambda$-graded algebras.
We set 
\begin{equation}
\label{eq:UBZ}
\Mod(\CU_{\CB_\zeta})=
\Mod_\Lambda(A_\zeta\otimes U_\zeta(\Gg))/
(\Mod_\Lambda(A_\zeta\otimes U_\zeta(\Gg))
\cap\Tor_{\Lambda^+}(A_\zeta)).
\end{equation}
Then \eqref{eq:funcV} induces an exact functor 
\begin{equation}
\label{eq:FV2}
V\otimes(\bullet):
\Mod(\CU_{\CB_\zeta})
\to
\Mod(\CU_{\CB_\zeta})
\end{equation}
for $V\in\modu_\inte(U_\zeta^L(\Gg))$.
Here, for $M\in\Mod_\Lambda(A_\zeta\otimes U_\zeta(\Gg))$ we regard $V\otimes M$ as a $U_\zeta(\Gg)$-module by the tensor product of two $U_\zeta(\Gg)$-modules.

By Proposition \ref{prop:tensVA} we have the following.
\begin{proposition}
\label{prop:tensVA2}
Let $V$ be as in Proposition \ref{prop:tensVA}.
Let $\CM\in\Mod(\DD_{\CB_\zeta,t})$, and regard it as an object of $\Mod(\CU_{\CB_\zeta})$ via \eqref{eq:ED}.
Then $V\otimes\CM\in \Mod(\CU_{\CB_\zeta})$
has 
 a functorial filtration 
\[
V\otimes \CM=
\CM_1\supset \CM_2\supset\cdots\supset 
\CM_m\supset 
\CM_{m+1}=\{0\}
\]
such that 
$\CM_j/\CM_{j+1}
\cong
\CM[\xi_j]
\in
\Mod(\DD_{\CB_\zeta,tt_{\xi_j}})$
for $j=1,\dots, m$.
\end{proposition}

\section{Representation theory at roots of unity}
\label{sec:Rep}
In the rest of this paper we assume that $\zeta\in\BC^\times$ is a primitive $\ell$-th root of unity,
where $\ell$ is an integer satisfying the following conditions (a1), (a2), (a3):
\begin{itemize}
\item[(a1)]
$\ell>1$ is odd,
\item[(a2)]
$\ell$ is prime to $|\Lambda/Q|$,
\item[(a3)]
$\ell$ is prime to 3 if $\Delta$ is of type $G_2$.
\end{itemize}
We take $\zeta^{1/|\Lambda/Q|}$ to be a primitive $\ell$-th root of unity (see (a2)).
\subsection{Center}
For $\alpha\in\Delta^+$ we define the positive and the negative root vectors $e_\alpha, f_\alpha\in U_\zeta(\Gg)$ using Lusztig's braid group action (see \cite{Lbook}).
Then the elements
$k_{\ell\lambda}$ ($\lambda\in\Lambda$),
$e_\alpha^\ell, \; f_\alpha^\ell$ ($\alpha\in\Delta^+$)
belong to the center $Z(U_\zeta(\Gg))$
of $U_\zeta(\Gg)$.
We denote by $Z_\Fr(U_\zeta(\Gg))$ the subalgebra of 
$Z(U_\zeta(\Gg))$ generated by them.
\begin{remark}
The elements $e_\alpha$, $f_\alpha$ actually depends on the choice of a reduced expression of the longest element of the Weyl group $W$; however, the subalgebra $Z_\Fr(U_\zeta(\Gg))$ is independent of the  choice.
\end{remark}

Define a closed subgroup $K$ of $G\times G$ by
\[
K=\{
(xs, ys^{-1})
\mid
x\in N^+, y\in N^-, s\in H\}.
\]
\begin{proposition}[\cite{DP}, \cite{Gav}]
\begin{itemize}
\item[(i)]
The algebra $Z_\Fr(U_\zeta(\Gg))$ is naturally isomorphic to the coordinate algebra $\CO(K)$ of $K$.
\item[(ii)]
The algebra 
$Z_\Fr(U_\zeta(\Gg))\cap Z_\Har(U_\zeta(\Gg))$ is naturally isomorphic to 
$\CO(H)^W=\CO(H/W)$.
\item[(iii)]
We have an isomorphism
\[
Z(U_\zeta(\Gg))\cong
Z_\Fr(U_\zeta(\Gg))
\otimes_{
Z_\Fr(U_\zeta(\Gg))\cap Z_\Har(U_\zeta(\Gg))
}
Z_\Har(U_\zeta(\Gg))
\quad(z_1z_2\leftrightarrow z_1\otimes z_2)
\]
of algebras.
\end{itemize}
\end{proposition}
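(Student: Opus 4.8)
The plan is to follow the analysis of De Concini--Kac and De Concini--Procesi, organised around the triangular decomposition $U_\zeta(\Gg)\cong U_\zeta(\Gn^-)\otimes U_\zeta(\Gh)\otimes U_\zeta(\Gn^+)$ and the PBW basis built from Lusztig's root vectors $e_\alpha,f_\alpha$ ($\alpha\in\Delta^+$). For (i), one first notes that $Z_\Fr(U_\zeta(\Gg))$ is commutative, since $k_{\ell\lambda}$, $e_\alpha^\ell$, $f_\alpha^\ell$ are central. Using the quantum PBW theorem together with the Levendorskii--Soibelman commutation relations among the $e_\alpha$ (and among the $f_\alpha$) and the quantum binomial formula, one checks that the monomials $\prod_\alpha f_\alpha^{\ell a_\alpha}\,k_{\ell\mu}\,\prod_\alpha e_\alpha^{\ell b_\alpha}$ with $a_\alpha,b_\alpha\in\BZ_{\geqq0}$, $\mu\in\Lambda$, form a $\BC$-basis of $Z_\Fr(U_\zeta(\Gg))$; equivalently $Z_\Fr(U_\zeta(\Gg))$ is the tensor product of two polynomial rings in $|\Delta^+|$ variables with the group algebra of $\ell\Lambda$, and $U_\zeta(\Gg)$ is free of rank $\ell^{\dim\Gg}$ over it. On the geometric side the map $N^+\times H\times N^-\to G\times G$, $(x,s,y)\mapsto(xs,ys^{-1})$, identifies $K$ with $N^+\times H\times N^-$ as a variety, so $\CO(K)\cong\CO(N^+)\otimes\CO(H)\otimes\CO(N^-)$ has the same shape; I would then write the comparison isomorphism by sending $e_\alpha^\ell$, $f_\alpha^\ell$, $k_{\ell\lambda}$ to the corresponding coordinate functions and characters, and upgrade it to an isomorphism of algebraic groups $\Spec Z_\Fr(U_\zeta(\Gg))\cong K$ using Lusztig's coproduct formulas for $\ell$-th powers of the root vectors, which collapse modulo $\ell$ so as to reproduce exactly the group law of $K\subset B^+\times B^-$.

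For (ii), by the Harish-Chandra isomorphism recalled above one has $Z_\Har(U_\zeta(\Gg))\cong{}^eU_\zeta(\Gh)^{W\circ}\cong\CO(H/W\circ)$. Since $\zeta^\ell=1$, the twist in the $\circ$-action becomes trivial after raising to the $\ell$-th power, so $[t]\mapsto[t^\ell]$ is a well-defined finite morphism $H/W\circ\to H/W$ and pulls $\CO(H)^W$ back into $\CO(H/W\circ)$. I would check that the image of $\CO(H)^W$ in $U_\zeta(\Gg)$ lies in $Z_\Fr(U_\zeta(\Gg))$: a $W$-invariant element of $\CO(H)$, expressed through the $k$'s, becomes, after the $\ell$-th-power substitution, a polynomial in the $k_{\ell\lambda}$. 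For the reverse inclusion, take $z\in Z_\Fr(U_\zeta(\Gg))\cap Z_\Har(U_\zeta(\Gg))$; applying the Harish-Chandra projection $\varepsilon\otimes1\otimes\varepsilon$ to the PBW expansion of $z$ and comparing with the explicit description of $Z_\Fr(U_\zeta(\Gg))$ from (i) forces $z$ to be already toral, i.e. $z\in{}^eU_\zeta(\Gh)$, and then its $W\circ$-invariance, combined with the $\ell$-th-power substitution, forces $z$ to come from $\CO(H)^W$.

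For (iii), the multiplication $z_1\otimes z_2\mapsto z_1z_2$ is a well-defined algebra homomorphism $Z_\Fr(U_\zeta(\Gg))\otimes_{Z_\Fr(U_\zeta(\Gg))\cap Z_\Har(U_\zeta(\Gg))}Z_\Har(U_\zeta(\Gg))\to Z(U_\zeta(\Gg))$ by (ii). For injectivity: $H/W$ is smooth (Chevalley) and $H/W\circ$ is Cohen--Macaulay, so the finite morphism $H/W\circ\to H/W$ is flat, hence $Z_\Har(U_\zeta(\Gg))$ is free of rank $\ell^{\rank\Gg}$ over $Z_\Fr(U_\zeta(\Gg))\cap Z_\Har(U_\zeta(\Gg))$; together with the freeness of $U_\zeta(\Gg)$ over $Z_\Fr(U_\zeta(\Gg))$ from (i) this yields injectivity. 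Surjectivity is the crux: the spectrum of the source is $K\times_{H/W}(H/W\circ)=\CX$, which has dimension $\dim G=\dim\Spec Z(U_\zeta(\Gg))$ and is finite of degree $\ell^{\rank\Gg}$ over $\Spec Z_\Fr(U_\zeta(\Gg))$; one then shows that every central element is integral over $Z_\Fr(U_\zeta(\Gg))$ within this degree and lies in the image, using that $U_\zeta(\Gg)$ is Azumaya over a dense open subset of $\Spec Z(U_\zeta(\Gg))$ together with the quantum coadjoint action of De Concini--Kac (or, more directly, by analysing the PBW form of a central element under the adjoint action of $U_\zeta(\Gg)$ on itself).

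The step I expect to be the main obstacle is the surjectivity in (iii), i.e. ruling out central elements not generated by $Z_\Fr(U_\zeta(\Gg))$ and $Z_\Har(U_\zeta(\Gg))$; this is precisely the point that requires the quantum coadjoint action rather than formal manipulation, and it is also where the hypotheses on $\ell$ (oddness, coprimality to $|\Lambda/Q|$, and to $3$ in type $G_2$) enter, guaranteeing that Lusztig's root vectors and their $\ell$-th powers behave as expected and that $Z_\Fr(U_\zeta(\Gg))$ has the asserted size. A secondary nuisance is the lattice bookkeeping throughout (i)--(ii) relating $\{k_{2\lambda}\}$, $\{k_{\ell\lambda}\}$ and the character group of $H$ under the simply-connected hypothesis, which has to be handled carefully in order to identify the naturality of the isomorphisms.
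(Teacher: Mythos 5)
The paper does not prove this proposition; it is stated with the citations \cite{DP}, \cite{Gav} (and implicitly \cite{DK1}), so there is no ``paper's own proof'' to compare against. Your outline reproduces the shape of the standard De Concini--Kac / De Concini--Procesi argument: the PBW description of $Z_\Fr(U_\zeta(\Gg))$, the identification of $\Spec Z_\Fr(U_\zeta(\Gg))$ with the variety $N^+\times H\times N^-\cong K$, the quantum Frobenius collapse of the coproduct on $\ell$-th powers of root vectors (which is what promotes the variety isomorphism to a group isomorphism, and is the genuine contribution of Gavarini), the Harish--Chandra isomorphism for $Z_\Har$, and the quantum coadjoint action for the hardest part. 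You correctly single out the surjectivity in (iii) as the crux; without the coadjoint-action or Azumaya-degree argument this step simply does not follow from the PBW bookkeeping, and you are right that this is also where the running hypotheses on $\ell$ enter.

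One point in (ii) is misstated and would mislead a reader. You write that applying the Harish--Chandra projection to $z\in Z_\Fr\cap Z_\Har$ ``forces $z$ to be already toral, i.e.\ $z\in{}^eU_\zeta(\Gh)$.'' This is false: elements of $Z_\Har$ are generically not toral (they are quantum Casimir-type elements involving all three PBW factors), and $Z_\Fr\cap Z_\Har$ inherits this. What the projection argument actually gives is that $\Xi(z)$ is toral and moreover lies in $\bigl(\bigoplus_\lambda\BC\,k_{\ell\lambda}\bigr)\cap{}^eU_\zeta(\Gh)^{W\circ}$, i.e.\ in the $W\circ$-invariants of $\bigoplus_\nu\BC\,k_{2\ell\nu}$ (here you need $\ell$ odd so that $\ell\Lambda\cap2\Lambda=2\ell\Lambda$). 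Under $\theta_\lambda\leftrightarrow k_{2\lambda}$ and $\zeta^\ell=1$ this is exactly the pullback of $\CO(H)^W$ along $t\mapsto t^\ell$, as desired. The conclusion you want is correct; the slip is in conflating $z$ with $\Xi(z)$, and it is worth fixing because the injectivity of $\Xi$ on $Z_\Har$ (not the literal torality of $z$) is what lets you recover $z$ from $\Xi(z)$.
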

Moreover, 
under the identification
\begin{gather*}
Z_\Fr(U_\zeta(\Gg))\cong\CO(K),
\quad
Z_\Har(U_\zeta(\Gg))\cong\CO(H/W\circ),
\\
Z_\Fr(U_\zeta(\Gg))\cap Z_\Har(U_\zeta(\Gg))
\cong
\CO(H/W),
\end{gather*}
the inclusions
\begin{gather*}
Z_\Fr(U_\zeta(\Gg))\cap Z_\Har(U_\zeta(\Gg))
\to
Z_\Har(U_\zeta(\Gg)),
\\
Z_\Fr(U_\zeta(\Gg))\cap Z_\Har(U_\zeta(\Gg))
\to
Z_\Fr(U_\zeta(\Gg))
\end{gather*}
correspond to
$
H/W\circ\to H/W
$ ($[t]\mapsto[t^\ell]$),
and 
\[
\varkappa:K\to H/W
\]
which is the composite of 
\[
\eta:K\to G
\qquad((x_1,x_2)\mapsto x_1x_2^{-1})
\]
with the Steinberg map
$\St:G\to H/W$ respectively.
Hence $Z(U_\zeta(\Gg))$ is isomorphic to the coordinate algebra $\CO(\CX)$ of the affine algebraic variety
$
\CX=K\times_{H/W}(H/W\circ)$.
For $g\in G$ we denote its Jordan decomposition by $g=g_sg_u$.
By definition we have
\begin{equation}
\CX
=
\{(k,[t])\in 
K\times(H/W\circ)
\mid
t^\ell\in\Ad(G)(\eta(k)_s)\}.
\end{equation}

For $k\in K$ we denote by
\begin{equation}
\xi^k:Z_\Fr(U_\zeta(\Gg))\to\BC
\end{equation}
the corresponding character of $Z_\Fr(U_\zeta(\Gg))$.
For 
$(k,[t])\in \CX$ 
we have a character 
\begin{equation}
\xi^k_{[t]}:Z(U_\zeta(\Gg))\to\BC
\end{equation}
of the total center $Z(U_\zeta(\Gg))$
such that
$\xi^k_{[t]}|_{Z_\Har(U_\zeta(\Gg))}=\xi_{[t]}$, and 
$\xi^k_{[t]}|_{Z_\Fr(U_\zeta(\Gg))}=\xi^k$.
We set
\begin{align}
U_\zeta(\Gg)^k=&
U_\zeta(\Gg)\otimes_{Z_\Fr(U_\zeta(\Gg))}\BC
&(k\in K),
\\
U_\zeta(\Gg)^k_{[t]}=&
U_\zeta(\Gg)\otimes_{Z(U_\zeta(\Gg))}\BC
&((k,[t])\in 
\CX)
\end{align}
with respect to $\xi^k$, $\xi_{[t]}^k$ respectively.
Recall also that we have
\begin{align}
U_\zeta(\Gg)_{[t]}=&
U_\zeta(\Gg)\otimes_{Z_\Har(U_\zeta(\Gg))}\BC
&([t]\in H/W\circ)
\end{align}
with respect to $\xi_{[t]}$.
We define $\wU_\zeta(\Gg)^k$
(resp.\
$\wU_\zeta(\Gg)_{[t]}$, 
$\wU_\zeta(\Gg)^k_{[t]}$) 
to be the completion of $U_\zeta(\Gg)$ at 
the maximal ideal
$\Ker(\xi^k)$
(resp.\ 
$\Ker(\xi_{[t]})$, 
$\Ker(\xi^k_{[t]})$) 
of $Z_\Fr(U_\zeta(\Gg))$
(resp.\
$Z_\Har(U_\zeta(\Gg))$, 
$Z(U_\zeta(\Gg))$).

A version of Schur's lemma tells us that
for any irreducible $U_\zeta(\Gg)$-module $M$ there exists some 
$(k,[t])\in \CX$ 
such that $z|_M=\xi^k_{[t]}(z)\id$ for any $z\in Z(U_\zeta(\Gg))$.
It is known by \cite{DKP} that
for $k, k'\in K$ 
we have
$U_\zeta(\Gg)^k\cong U_\zeta(\Gg)^{k'}$
if $\eta(k)$ is conjugate to $\eta(k')$.

We say that $\tilde{t}\in H$ is unramified if 
we have
$\theta_\alpha(\tilde{t})=\zeta^{-2(\rho,\alpha)}$ 
for any $\alpha\in\Delta$ satisfying $\theta_\alpha(\tilde{t}^\ell)=1$.
We denote by $H_\ur$ the set of unramified elements of $H$.

For $k\in K$ we give a description of the set $\CX^k$ consisting of 
the Harish-Chandra central characters
$\xi_{[t]}:Z_\Har(U_\zeta(\Gg))\to\BC$ which is compatible with the Frobenius central character $\xi^{{k}}:Z_\Fr(U_\zeta(\Gg))\to\BC$.
Fix
$h\in H\cap\Ad(G)(\eta(k)_s)$.
Then we have 
$H\cap\Ad(G)(\eta(k)_s)=W(h)$.
By the above argument we have 
\[
\CX^k\cong H(W(h))/W\circ
\cong
H(h)/W_h\circ,
\]
where
\begin{gather*}
H(W(h))
=
\{t\in H
\mid
t^\ell\in W(h)\},
\quad
H(h)=\{t\in H\mid t^\ell=h\},
\\
W_h=\{w\in W\mid w({h})={h}\}.
\end{gather*}
It is known that $W_h$ is the Weyl group of the root system
\[
\Delta_h=
\{\alpha\in\Delta\mid
\theta_\alpha(h)=1\}.
\]
We denote by $H_\ell$ the set of $d\in H$ satisfying $d^\ell=1$.
By our assumption on $\ell$ we have
\begin{equation}
\label{eq:Hl}
H_\ell=\{t_\lambda\mid\lambda\in\Lambda\}
\cong\Lambda/\ell\Lambda
\end{equation}
(see \eqref{eq:t} for the notation).
Set
\begin{align*}
H_\ur(h)=&H(h)\cap H_{\ur}
=\{
\tilde{t}\in H(h)\mid
\theta_\alpha(\tilde{t})=\zeta^{-2(\rho,\alpha)}
\;\;(\alpha\in{\Delta}_h)\}.
\end{align*}
Then we have
\[
H(h)=\bigsqcup_{\tilde{t}\in H_\ur(h)}
\tilde{t}H_\ell.
\]
It is easily seen that for $\tilde{t}\in H_\ur(h)$, 
$d\in H_\ell$, and $w\in{W}_h$
we have
$w\circ(\tilde{t}d)=\tilde{t}(w(d))$.

Hence we obtain
\begin{align*}
\CX^k\cong&
H(h)/{W}_h\circ
=
\bigsqcup_{\tilde{t}\in H_\ur(h)}
(\tilde{t}H_\ell)/{W}_h\circ,
\\
(\tilde{t}H_\ell)/{W}_h\circ
\cong&
H_\ell/{W}_h
\cong
{\Lambda}/({W}_h\ltimes(\ell{\Lambda}))
\qquad
(\tilde{t}\in H_\ur(h)).
\end{align*}

\subsection{Tensor product with integrable highest weight modules}
\label{subsec:TP}
Let $k\in K$, and take 
$h\in H$ such that $\Ad(G)(\eta(k)_s)\cap H=W(h)$.
We denote by $\modu^k(U_\zeta(\Gg))$ the category of finitely generated $U_\zeta(\Gg)$-modules on which $z-\xi^k(z)$ for $z\in Z_\Fr(U_\zeta(\Gg))$ acts locally nilpotently.
For $[t]\in H(W(h))/W\circ$ 
we set 
$\modu^k(U_\zeta(\Gg)_{[t]})
=\modu^k(U_\zeta(\Gg))
\cap
\modu(U_\zeta(\Gg)_{[t]})$.
We also denote by 
$\modu^k_{[t]}(U_\zeta(\Gg))$ the category of 
 finitely generated $U_\zeta(\Gg)$-modules on which $z-\xi_{[t]}^k(z)$ for $z\in Z(U_\zeta(\Gg))$ acts locally nilpotently.

Assume $M\in\modu^k(U_\zeta(\Gg))$.
Since $U_\zeta(\Gg)^k$ is finite-dimensional, 
$M$ is a finite-dimensional $U_\zeta(\Gg)$-module.
In particular, it has a finite composition series.
Let $N$ be a composition factor of $M$.
By Schur's lemma there exists some $[t]\in 
H(W(h))/W\circ$ such that any $z\in Z_\Har(U_\zeta(\Gg))$ 
acts on $N$ by the scalar multiplication of $\xi_{[t]}(z)$.
It follows that  we have the direct sum decomposition 
\begin{equation}
\label{eq:Hd}
\modu^k(U_\zeta(\Gg))
=\bigoplus_{[t]\in H(W(h))/W\circ}
\modu^k_{[t]}(U_\zeta(\Gg)).
\end{equation}

If $V$ is a $U^L_\zeta(\Gg)$-module, 
we will regard it as a $U_\zeta(\Gg)$-module 
via the natural homomorphism $U_\zeta(\Gg)\to U^L_\zeta(\Gg)$ induced by 
$U_\BA(\Gg)\subset U^L_\BA(\Gg)$.
For a $U_\zeta(\Gg)$-module $M$ we regard 
$V\otimes M$ as a $U_\zeta(\Gg)$-module through the comultiplication of $U_\zeta(\Gg)$.
We see easily the following.
\begin{lemma}
For $V\in\modu_\inte(U^L_\zeta(\Gg))$ and  $M\in\modu^k(U_\zeta(\Gg))$ we have
$V\otimes M\in\modu^k(U_\zeta(\Gg))$.
\end{lemma}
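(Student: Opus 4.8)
The plan is to reduce the statement to the already-established fact that the Frobenius center $Z_\Fr(U_\zeta(\Gg))$ acts on the tensor product in a controlled way. Concretely, I would first observe that the relevant condition — $z - \xi^k(z)$ acting locally nilpotently on a module for every $z \in Z_\Fr(U_\zeta(\Gg))$ — is a condition that need only be checked on a set of algebra generators of $Z_\Fr(U_\zeta(\Gg))$, namely the elements $k_{\ell\lambda}$ ($\lambda \in \Lambda$) and $e_\alpha^\ell, f_\alpha^\ell$ ($\alpha \in \Delta^+$), since $Z_\Fr(U_\zeta(\Gg))$ is commutative and finitely generated and a simultaneous generalized-eigenvalue condition propagates from generators to the whole subalgebra (the set of $z$ on which $z-\xi^k(z)$ is locally nilpotent on a fixed finite-dimensional module is a subalgebra containing the generators).

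Next I would record that $V \in \modu_\inte(U^L_\zeta(\Gg))$ is finite-dimensional, and on such a module the Frobenius-type elements act by a very restricted mechanism: by Lusztig's quantum Frobenius the elements $e_\alpha^\ell, f_\alpha^\ell$ act as $0$ on any finite-dimensional integrable $U^L_\zeta(\Gg)$-module (they lie in the kernel of the Frobenius), and $k_{\ell\lambda}$ acts as the identity (again because $k_{\ell\lambda} - 1$ maps into the Frobenius kernel in its action on integrable modules, equivalently $\chi_\mu(k_{\ell\lambda}) = \zeta^{\ell(\lambda,\mu)} = 1$ by our hypotheses on $\ell$). Hence on $V$ the character of $Z_\Fr(U_\zeta(\Gg))$ is the distinguished one $\xi^{k_0}$ with $k_0 = (1,1) \in K$. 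I would state this as a short lemma, citing the structure-of-the-center discussion (\cite{DP}, \cite{Gav}) and the definition of the Lusztig form.

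Then the main computation: using the comultiplication formulas $\Delta(k_\lambda) = k_\lambda \otimes k_\lambda$ and the standard expression for $\Delta(e_\alpha^\ell)$, $\Delta(f_\alpha^\ell)$ (these higher root vectors are primitive up to terms of the form $k_? \otimes (\text{something in the Frobenius kernel})$ and $(\text{Frobenius kernel}) \otimes k_?$ when one works modulo the relevant ideals), one computes the action of each generator $z$ of $Z_\Fr(U_\zeta(\Gg))$ on $v \otimes m$ for $v \in V$, $m \in M$. Because $z$ acts on $V$ through $\xi^{k_0}$, the cross-terms in $\Delta(z)$ that would involve nontrivial action on the $V$-factor collapse, and one is left with: $z$ acts on $V \otimes M$ by (scalar from $\xi^{k_0}$ on $V$) $\otimes$ (action of a corresponding central element on $M$), so that $z - \xi^k(z)$ is locally nilpotent on $V \otimes M$ because it is on $M$. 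For $k_{\ell\lambda}$ this is immediate ($k_{\ell\lambda}$ acts as $\id \otimes k_{\ell\lambda}$); for $e_\alpha^\ell$ and $f_\alpha^\ell$ one uses that the extra summands in the comultiplication either hit $V$ with a Frobenius-kernel element (hence act as $0$ on $V$) or are exactly the term $1 \otimes z$ (resp.\ $k \otimes z$ with $k$ acting as identity on $V$). I would carry out one representative case ($f_\alpha^\ell$, say) explicitly and leave the others as analogous.

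The main obstacle I anticipate is the bookkeeping for $\Delta(e_\alpha^\ell)$ and $\Delta(f_\alpha^\ell)$ for non-simple $\alpha$: unlike the simple case these are not literally primitive, and one needs the precise form of their comultiplication (e.g.\ via Levendorskii–Soibelman-type or Lusztig's formulas) to see that every correction term, when its first tensor factor acts on the integrable module $V$, lands in the Frobenius kernel and thus acts by zero. Once that is in hand — or once one invokes the already-cited structural results on $Z_\Fr$ and the $U^L_\zeta(\Gg)$-module map $U_\zeta(\Gg) \to U^L_\zeta(\Gg)$ to argue more cheaply that $V \otimes (\bullet)$ preserves the generalized $\xi^k$-eigenspace decomposition by functoriality of the $Z_\Fr$-action on tensor products — the conclusion is immediate, and one should note it also follows that $V \otimes (\bullet)$ preserves each block $\modu^k_{[t]}(U_\zeta(\Gg))$ up to reindexing by weights of $V$ along the lines of Proposition \ref{prop:tensVA2}.
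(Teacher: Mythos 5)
The paper offers no proof of this lemma (it is declared ``easily seen''), so there is nothing to compare against; your proposal is correct in substance, and the issue you flag yourself is the only soft spot. Let me pin down how it closes. The structural fact you want from the cited references is not just information about the generators of $Z_\Fr(U_\zeta(\Gg))$ but that $Z_\Fr(U_\zeta(\Gg))$ is a \emph{Hopf subalgebra} of $U_\zeta(\Gg)$, i.e.\ $\Delta(Z_\Fr(U_\zeta(\Gg)))\subset Z_\Fr(U_\zeta(\Gg))\otimes Z_\Fr(U_\zeta(\Gg))$; this is in De Concini--Kac and De Concini--Procesi, which the paper already cites for the structure of $Z_\Fr$. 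Combined with the computation you do on generators --- the composite $Z_\Fr(U_\zeta(\Gg))\hookrightarrow U_\zeta(\Gg)\to U^L_\zeta(\Gg)\to\End(V)$ kills $e_\alpha^\ell, f_\alpha^\ell$ and sends $k_{\ell\lambda}$ to $\id_V$ (since $\chi_\mu(k_{\ell\lambda})=\zeta^{\ell(\mu,\lambda)}=1$), so $Z_\Fr(U_\zeta(\Gg))$ acts on any integrable $U^L_\zeta(\Gg)$-module by the restriction $\varepsilon|_{Z_\Fr}$ of the counit --- one gets at once that for $z\in Z_\Fr(U_\zeta(\Gg))$ the action on $V\otimes M$ is $(\sigma_V\otimes\id)\Delta(z)=(\varepsilon\otimes\id)\Delta(z)|_M=\id_V\otimes z|_M$, and hence $z-\xi^k(z)$ is locally nilpotent on $V\otimes M$ because it is on $M$. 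This makes the case-by-case inspection of $\Delta(e_\alpha^\ell)$, $\Delta(f_\alpha^\ell)$ for non-simple $\alpha$ entirely unnecessary, and even the remark about reducing to algebra generators of $Z_\Fr$ becomes moot. Two minor imprecisions to fix when writing this up: $e_\alpha^\ell, f_\alpha^\ell$ are elements of $U_\zeta(\Gg)$ whose images under the Hopf algebra map $U_\zeta(\Gg)\to U^L_\zeta(\Gg)$ are already zero (because $[\ell]_{q_\alpha}!=0$ at $\zeta$), not elements of $U^L_\zeta(\Gg)$ lying in $\Ker\Fr$; and one should also record that $V\otimes M$ is finitely generated, which follows since $V\otimes U_\zeta(\Gg)$ is a free $U_\zeta(\Gg)$-module for finite-dimensional $V$. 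Neither affects the soundness of the argument.
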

Let $V\in\modu_\inte(U^L_\zeta(\Gg))$.
Fix 
$z\in Z_\Har(U_\zeta(
\Gg))$.
For $\nu\in \Lambda$ we define 
$a_\nu\in {}^eU_\zeta(\Gh)$ by
\[
\Xi(z)
=\sum_{\mu\in \Lambda}
c_\mu k_{2\mu}\in {}^eU_\zeta(\Gh)^{W\circ}
\;\Longrightarrow\;
a_\nu=
\sum_{\mu\in \Lambda}c_\mu\zeta^{2(\mu,\nu)} k_{2\mu}\in {}^eU_\zeta(\Gh),
\]
and set
\[
F_z(x)=
\prod_{\nu\in\wt(V)}(x-a_{\nu})
\in {}^eU_\zeta(\Gh)[x].
\]
Here, $\wt(V)$ denotes the multi-set consisting of weights of $V$, where each weight $\nu$ of $V$ is counted $\dim V_\nu$-times.
We see easily that
$F_z(x)\in {}^eU_\zeta(\Gh)^{W\circ}[x]$,
and hence we can write
\begin{equation}
\label{eq:Fz}
F_z(x)=x^n+\sum_{k=0}^{n-1}\Xi(z_k)x^k
\qquad(z_k\in Z_\Har(U_\zeta(\Gg))),
\end{equation}
where $n=\dim V$.

Define
\[
\delta:
Z_\Har(U_\zeta(\Gg))\to 
\End(V)\otimes U_\zeta(\Gg)
\]
to be the composite of 
\[
Z_\Har(U_\zeta(\Gg))
\subset 
U_\zeta(\Gg)
\xrightarrow{\Delta}
U_\zeta(\Gg)\otimes U_\zeta(\Gg)
\xrightarrow{\sigma_V\otimes1}
\End(V)\otimes U_\zeta(\Gg),
\]
where $\sigma_V:
U_\zeta(\Gg)
\to
\End(V)
$ is the corresponding representation.

We have the following analogue of Kostant's result for the ordinary enveloping algebras.
\begin{proposition}
\label{prop:Kostant}
Let $V\in \modu_\inte(U^L_\zeta(\Gg))$.
We assume that there exist a finite-dimensional integrable $U_\BF(\Gg)$-module $V_\BF$ and its $U_\BA^L(\Gg)$-stable $\BA$-form $V_\BA$ such that $V=\BC\otimes_\BA V_\BA$ with respect to the specialization $q\mapsto \zeta$.
Let $z\in Z_\Har(U_\zeta(\Gg))$ and define $z_k\in Z_\Har(U_\zeta(\Gg))$ by \eqref{eq:Fz}.
Then we have
\[
\delta(z)^n+\sum_{k=0}^{n-1}
(1\otimes z_k)
\delta(z)^k
=0
\]
in $\End(V)\otimes U_\zeta(\Gg)$.
\end{proposition}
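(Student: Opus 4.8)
The plan is to reduce the statement to a classical Cayley–Hamilton-type identity by exploiting the functoriality built into the construction of $F_z(x)$ and $\delta$. First I would observe that both sides of the asserted equation live in $\End(V)\otimes U_\zeta(\Gg)$, which acts naturally on $V\otimes M$ for every $U_\zeta(\Gg)$-module $M$; since $U_\zeta(\Gg)$ embeds in a product of the $U_\zeta(\Gg)_{[s]}$ over $[s]\in H/W\circ$ (the Harish-Chandra center separates these), and since the operators $\delta(z)$ and $1\otimes z_k$ are built from central elements and the comultiplication, it suffices to check that the operator $\delta(z)^n+\sum_k(1\otimes z_k)\delta(z)^k$ acts by zero on $V\otimes M$ for $M$ running over a collection of $U_\zeta(\Gg)$-modules rich enough to detect $0$ — for instance the baby Verma-type modules, or simply enough modules with a fixed Harish-Chandra central character $\xi_{[s]}$. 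On such an $M$, $\delta(z)$ acts as $\sigma_V(z_{(0)})\otimes z_{(1)}$ followed by the scalar $\xi_{[s]}$ on the $U_\zeta(\Gg)$-factor, i.e. as the single operator $T_z := (\text{action of } z \text{ on } V\otimes M)$, and the claim becomes $F_z(T_z)=0$, i.e. that $T_z$ satisfies its ``characteristic polynomial'' $F_z$.

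The key step is therefore to identify the eigenvalues of the action of $z\in Z_\Har(U_\zeta(\Gg))$ on $V\otimes M$ when $M$ has Harish-Chandra central character corresponding to $t\in H$. This is precisely the quantum analogue of Kostant's computation: using Proposition~\ref{prop:tensVA} (or rather its module-theoretic shadow — the existence of a $U^L_\zeta(\Gb^-)$-stable filtration of $V$ with one-dimensional quotients of characters $\nu\in\wt(V)$), one filters $V\otimes M$ so that the associated graded pieces have Harish-Chandra central characters shifted by the weights $\nu$ of $V$; concretely, if $z$ acts on $M$ by $\xi_{[t]}(z)$, then on the subquotient labelled by $\nu$ the element $z$ acts (up to the usual $\rho$-shift and the twisted $W$-action) by the scalar obtained by evaluating $\Xi(z)=\sum_\mu c_\mu k_{2\mu}$ at the shifted point, which is exactly $a_\nu$ evaluated via $\xi_{[t]}$. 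Hence the action of $z$ on $V\otimes M$ is a (possibly non-diagonalizable) operator whose eigenvalues are among $\{\,\xi_{[t]}(a_\nu)\mid \nu\in\wt(V)\,\}$, so it is annihilated by $\prod_{\nu\in\wt(V)}(x-a_\nu)$ specialized at $[t]$, which is $F_z(x)$ specialized at $[t]$. Reassembling over all $[t]$ and using \eqref{eq:Fz} gives the identity in $\End(V)\otimes U_\zeta(\Gg)$.

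I would carry this out in the following order: (i) record that it is enough to prove the identity after applying $1\otimes(\text{action on }M)$ for all $M$, equivalently after passing to each block $U_\zeta(\Gg)_{[t]}$, since $\bigcap_{[t]}\Ker(U_\zeta(\Gg)\to\End(M_{[t]}))=0$ as $[t],M$ vary; (ii) fix $[t]$ and set $T = \delta(z)|_{V\otimes M}$, noting $1\otimes z_k$ acts on $V\otimes M$ by the scalar $\xi_{[t]}(z_k)$; (iii) build the filtration of $V\otimes M$ from the $U^L_\zeta(\Gb^-)$-filtration of $V$ (this is where the hypothesis that $V$ is defined over $\BA$ from $V_\BF$ is used, guaranteeing a highest-weight filtration with the correct characters), and compute the scalar by which $z$ acts on each graded piece using the definition of $\Xi$ and the twisted $W$-action on ${}^eU_\zeta(\Gh)$; (iv) conclude $F_z(T)=0$ on $V\otimes M$ by the Cayley–Hamilton argument for an operator with a full flag of eigenvalues; (v) let $M$ and $[t]$ vary to recover the clean statement. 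The main obstacle I anticipate is step (iii): pinning down the exact scalar — including getting the $\rho$-shifts, the factors $\zeta^{2(\mu,\nu)}$, and the $W\circ$-twist to line up so that what comes out is literally $a_\nu$ rather than some Weyl-conjugate or shifted variant — requires care, and one must check that the filtration from Proposition~\ref{prop:tensVA} is compatible with the Harish-Chandra action in the way claimed; this is the genuine content, the rest being formal. A secondary, more routine point is justifying that the ``enough modules'' in step (i) really do detect zero in $\End(V)\otimes U_\zeta(\Gg)$, for which it suffices to note that $U_\zeta(\Gg)$ acts faithfully on the direct sum of all its finite-dimensional quotients (e.g. all $U_\zeta(\Gg)^k_{[t]}$, which are finite-dimensional) and tensor with $\End(V)$.
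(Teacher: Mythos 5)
Your proposal is a direct attack at the root of unity, which is genuinely different from what the paper does. The paper's proof is much shorter: it proves the corresponding identity over the transcendental field $\tilde{\BF}=\BC(q^{1/|\Lambda/Q|})$, where the Kostant--Bernstein--Gelfand argument (filtration of $V\otimes(\text{Verma module})$, semisimplicity of category $\CO$, eigenvalue computation via the Harish--Chandra homomorphism) carries over verbatim, and then specializes $q^{1/|\Lambda/Q|}\mapsto\zeta^{1/|\Lambda/Q|}$. The hypothesis that $V$ comes from an $\BA$-form $V_\BA$ of a $U_\BF(\Gg)$-module is used precisely to make this lifting possible, and the only delicate point is that $Z(\tilde{\BA}\otimes_\BA U_\BA(\Gg))\to Z_\Har(U_\zeta(\Gg))$ is surjective (so the coefficients $z_k$ lift to the integral form). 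In particular, the paper never has to confront semisimplicity failure, non-reducedness of blocks, or the structure of finite-dimensional $U_\zeta(\Gg)$-modules.

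Your approach, as written, has two gaps. First, in step (i) you propose to detect zero in $\End(V)\otimes U_\zeta(\Gg)$ by evaluating on $V\otimes U_\zeta(\Gg)^k_{[t]}$. But $U_\zeta(\Gg)^k_{[t]}=U_\zeta(\Gg)\otimes_{Z(U_\zeta(\Gg))}\BC$ is the honest quotient, not the generalized eigenspace, so $\bigoplus_{(k,[t])} U_\zeta(\Gg)^k_{[t]}$ need not be a faithful $U_\zeta(\Gg)$-module; an element lying in the square of the relevant maximal ideal of the center but not annihilating the whole block would be invisible. This is fixable (e.g.\ use irreducible modules and invoke that $U_\zeta(\Gg)$ is an affine PI domain, hence semiprimitive by Braun's theorem), but it is a nontrivial input that you would need to make explicit and that your writeup glosses over.

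Second, and more seriously, step (iii) as described does not work: a $U^L_\zeta(\Gb^-)$-stable filtration $V_1\supset V_2\supset\cdots$ of $V$ only makes $V_j\otimes M$ a $U_\zeta(\Gb^-)$-stable subspace, not a $U_\zeta(\Gg)$-submodule (the generators $e_i$ move $V_j$), so the associated graded pieces are not $U_\zeta(\Gg)$-modules and it makes no sense to say ``$z$ acts on the subquotient labelled $\nu$ by $\xi_{[tt_\nu]}(z)$''. The classical argument avoids this by taking $M$ to be a Verma module and applying the tensor identity, so that $V\otimes M\cong U_\zeta(\Gg)\otimes_{U_\zeta(\Gb^+)}(V\otimes\BC_\lambda)$ inherits a genuine $U_\zeta(\Gg)$-filtration with Verma subquotients; the upper-triangularity then comes for free because $z$ is central. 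Proposition~\ref{prop:tensVA} is not the right tool here: it filters $V\otimes\CM$ as an object of $\Mod(\CO_{\CB_\zeta})$ (via the $R$-matrix), not as a $U_\zeta(\Gg)$-module, so it gives no information about the minimal polynomial of the $U_\zeta(\Gg)$-action on $V\otimes M$ for an abstract module $M$. If you insist on arguing directly at $\zeta$, you must replace your step (iii) by a Verma-module computation and then establish that the Verma modules (or an appropriate family of induced modules) jointly detect zero in $U_\zeta(\Gg)$, which at a root of unity is a nontrivial density statement of its own. The paper's route via generic $q$ sidesteps all of this.
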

\begin{proof}
Set $\tilde{\BF}=\BC(q^{1/|\Lambda/Q})$, 
$\tilde{\BA}=\BC[q^{\pm1/|\Lambda/Q}]$.
The proof of the  corresponding statement  
over $\tilde{\BF}$, where $U_\zeta(\Gg)$, $V$, $Z_\Har(U_\zeta(\Gg))$ are replaced by 
$\tilde{\BF}\otimes_\BF U_\BF(\Gg)$, 
$\tilde{\BF}\otimes_\BF V_\BF$, $Z(\tilde{\BF}\otimes_\BF U_\BF(\Gg))$ 
 respectively,
is similar to that for the ordinary enveloping algebras given in 
\cite{Kos}, \cite{BG}.
Our assertion follows from this 
since $U_\zeta(\Gg)$ is a specialization of 
the $\tilde{\BA}$-form $\tilde{\BA}\otimes_\BA U_\BA(\Gg)$ of 
$\tilde{\BF}\otimes_\BF U_\BF(\Gg)$ and since
$Z(\tilde{\BA}\otimes_\BA U_\BA(\Gg))\to Z_\Har(U_\zeta(\Gg))$ is surjective.
\end{proof}

It follows easily from Proposition \ref{prop:Kostant} the following.
\begin{proposition}
\label{prop:8ccX}
Let $V\in \modu_\inte(U^L_\zeta(\Gg))$ be as in Proposition \ref{prop:Kostant}.
Assume $M\in\modu^k_{{[t]}}(U_\zeta(\Gg))$ for 
$[t]\in H(W(h))/W\circ$.
Then for
$z\in Z_\Har(U_\zeta(\Gg))$
we have
\[
\prod_{\nu\in\wt(V)}(z-\xi_{[tt_\nu]}(z))(V\otimes M)=0.
\]
\end{proposition}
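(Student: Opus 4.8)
The plan is to deduce Proposition \ref{prop:8ccX} from Proposition \ref{prop:Kostant} by restricting the polynomial identity in $\End(V)\otimes U_\zeta(\Gg)$ to the module $V\otimes M$ and then extracting the Harish-Chandra central character on each weight piece of $V$. First I would fix $z\in Z_\Har(U_\zeta(\Gg))$, write $F_z(x)=x^n+\sum_{k=0}^{n-1}\Xi(z_k)x^k$ as in \eqref{eq:Fz}, and recall that by construction $F_z(x)=\prod_{\nu\in\wt(V)}(x-a_\nu)$ with $a_\nu=\sum_{\mu}c_\mu\zeta^{2(\mu,\nu)}k_{2\mu}$, where $\Xi(z)=\sum_\mu c_\mu k_{2\mu}$. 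The key observation is that the operator $\delta(z)\in\End(V)\otimes U_\zeta(\Gg)$ acts on $V\otimes M$ exactly as $z$ does under the $U_\zeta(\Gg)$-module structure of $V\otimes M$ coming from the comultiplication, since $\delta$ is $(\sigma_V\otimes1)\circ\Delta$ restricted to $Z_\Har$. Thus Proposition \ref{prop:Kostant} says that on $V\otimes M$ the action of $z$ satisfies $z^n+\sum_{k=0}^{n-1}(1\otimes z_k)z^k=0$, i.e.\ $F_z(z)=0$ as an operator on $V\otimes M$, where now the coefficients $z_k$ act through $M$ and hence, since $M\in\modu^k_{[t]}(U_\zeta(\Gg))$, act by generalized eigenvalues equal to $\xi_{[t]}(z_k)$.

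Next I would interpret the eigenvalues $\xi_{[t]}(a_\nu)$. Writing $\Xi(z)=\sum_\mu c_\mu k_{2\mu}\in{}^eU_\zeta(\Gh)^{W\circ}$ and using the identification ${}^eU_\zeta(\Gh)\cong\CO(H)$ via $k_{2\mu}\leftrightarrow\theta_\mu$, one has $\xi_{[t]}(z)=\sum_\mu c_\mu\theta_\mu(t)$. Recalling the definition of $t_\nu\in H$ via $\theta_\mu(t_\nu)=\zeta^{2(\nu,\mu)}$ in \eqref{eq:t}, the element $a_\nu$ corresponds to the function $t\mapsto\sum_\mu c_\mu\zeta^{2(\mu,\nu)}\theta_\mu(t)=\sum_\mu c_\mu\theta_\mu(t_\nu t)=\sum_\mu c_\mu\theta_\mu(tt_\nu)$, so that (viewing $a_\nu$ as an element of $Z_\Har(U_\zeta(\Gg))$, up to the image of $\Xi$) its value under $\xi_{[t]}$ is $\xi_{[tt_\nu]}(z)$. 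I should check that $tt_\nu$ is again well-defined modulo $W\circ$, which is immediate since $t_\nu\in H_\ell$ and the twisted $W$-action preserves the coset structure. Consequently the generalized eigenvalues of $\delta(z)$ on $V\otimes M$ are among the scalars $\xi_{[tt_\nu]}(z)$, $\nu\in\wt(V)$.

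Then I would combine the two facts: $F_z(z)$ annihilates $V\otimes M$, and substituting $x=z$ into $F_z$ while replacing each coefficient $\Xi(z_k)$ by its action on $M$ turns $F_z$ into (a factor of) $\prod_{\nu\in\wt(V)}(z-\xi_{[tt_\nu]}(z))$ acting on $V\otimes M$. More precisely, because $M$ lies in the block $\modu^k_{[t]}(U_\zeta(\Gg))$, the central elements $z_k$ act on $M$ — hence on $V\otimes M$ through the second tensor factor of $\delta$ — nilpotently modulo $\xi_{[t]}(z_k)$, and one has to pass from ``$z_k$ acts by $\xi_{[t]}(z_k)+\text{nilpotent}$'' to an honest polynomial identity with constant coefficients; this is handled by taking a sufficiently high power, or more cleanly by first reducing to the case $M\in\modu(U_\zeta(\Gg)^k_{[t]})$ (where the $z_k$ act by genuine scalars) and then using that the Grothendieck-group statement, or a filtration argument, extends it to the whole block. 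After this reduction the identity $F_z(z)=0$ becomes literally $\prod_{\nu\in\wt(V)}(z-\xi_{[tt_\nu]}(z))(V\otimes M)=0$, which is the assertion.

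The main obstacle I expect is the bookkeeping in the last step: matching the coefficients $z_k$ (defined abstractly via the factorization \eqref{eq:Fz} inside $Z_\Har(U_\zeta(\Gg))$) with the concrete product $\prod_\nu(x-a_\nu)$ and confirming that evaluating $\xi_{[t]}$ on the $z_k$ exactly reproduces the symmetric functions of the scalars $\xi_{[tt_\nu]}(z)$ — in other words, that the algebra map $\xi_{[t]}\colon Z_\Har(U_\zeta(\Gg))\to\BC$ is compatible with the polynomial factorization after specialization. This is essentially the statement that $\xi_{[t]}$ extended coefficient-wise sends $F_z(x)$ to $\prod_{\nu\in\wt(V)}(x-\xi_{[tt_\nu]}(z))$ in $\BC[x]$, which follows from the explicit description of $a_\nu$ above together with $W\circ$-invariance; once that is in hand, and once the nilpotence-to-scalar reduction on $M$ is dealt with, the rest is formal.
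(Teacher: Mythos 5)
The approach is essentially the one the paper intends (the paper itself gives no proof, only ``It follows easily from Proposition~\ref{prop:Kostant}''), and the core computation is correct. You correctly identify $\delta(z)$ with the comultiplication action of $z$ on $V\otimes M$, and the bookkeeping step you worry about at the end does go through: evaluating $F_z(x)=\prod_{\nu\in\wt(V)}(x-a_\nu)\in{}^eU_\zeta(\Gh)[x]$ coefficientwise at the representative $t\in H$ (not at $[t]$ --- the individual $a_\nu$ are not $W\circ$-invariant, only their symmetric functions $\Xi(z_k)$ are, but evaluating the invariant product at $t$ is unambiguous) turns it into $\prod_\nu\bigl(x-\xi_{[tt_\nu]}(z)\bigr)=x^n+\sum_k\xi_{[t]}(z_k)x^k$ in $\BC[x]$, using $\theta_\mu(t_\nu)=\zeta^{2(\mu,\nu)}$ and the identification $k_{2\mu}\leftrightarrow\theta_\mu$.

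Where you should be more careful is in the last step. For $M\in\modu^k_{[t]}(U_\zeta(\Gg))$ the $z_k$ act on $M$ as $\xi_{[t]}(z_k)+N_k$ with $N_k$ nilpotent, and what Proposition~\ref{prop:Kostant} actually gives on $V\otimes M$ is
\[
\prod_{\nu\in\wt(V)}\bigl(z-\xi_{[tt_\nu]}(z)\bigr)
=-\sum_{k=0}^{n-1}(1\otimes N_k)\,z^k,
\]
a nilpotent (the summands commute) but in general nonzero operator; already for $V$ trivial the left-hand side is $z-\xi_{[t]}(z)$ acting on $M$. So your proposed filtration reduction to $M\in\modu(U_\zeta(\Gg)^k_{[t]})$ --- where the $z_k$ act by the exact scalars and the exponent-$1$ identity does hold --- in fact only shows that a \emph{power} of $\prod_\nu\bigl(z-\xi_{[tt_\nu]}(z)\bigr)$ annihilates a general $M$ in the block; the identity is not ``literal'' for such $M$, contrary to the last sentence of your third paragraph. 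This is likely a small imprecision in the statement of Proposition~\ref{prop:8ccX} itself (its only downstream use, Corollary~\ref{cor:8ZX}, needs only the local-nilpotence version, which your argument does establish), but you should not claim the exponent-$1$ conclusion follows from the filtration step.
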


In general for $V\in\modu_\inte(U^L_\zeta(\Gg))$ 
we regard
$V^*=\Hom_\BC(V,\BC)$
as an  object of $\modu_\inte(U^L_\zeta(\Gg))$ by 
\[
\langle uf,v\rangle=
\langle f, (Su)v\rangle
\qquad
(u\in U^L_\zeta(\Gg), \;
f\in V^*,\; 
v\in V).
\]
For $\lambda\in\Lambda^+$ 
we call 
$\nabla_\zeta(\lambda)=\Delta_\zeta(-w_0\lambda)^*\in
\modu_\inte(U^L_\zeta(\Gg))$ the dual Weyl module with highest weight $\lambda$.
Here, $w_0$ is the longest element of $W$.
There exists a non-zero homomorphism 
$\Delta_\zeta(\lambda)
\to
\nabla_\zeta(\lambda)
$ of $U^L_\zeta(\Gg)$-modules, which is unique up to constant multiple, and 
its image $L_\zeta(\lambda)$
is an irreducible $U^L_\zeta(\Gg)$-module.
Moreover, any irreducible integrable $U^L_\zeta(\Gg)$-module is isomorphic to $L_\zeta(\lambda)$ for some $\lambda\in\Lambda^+$.
\begin{corollary}
\label{cor:8ZX}
Let $V\in \modu_\inte(U^L_\zeta(\Gg))$, and let
$L_\zeta(\lambda_1)$, \dots, $L_\zeta(\lambda_r)$ for $\lambda_j\in\Lambda^+$ be its composition factors.
Let $t\in H(W(h))$, and define a subset $\CA$ of $H/W\circ$
(without multiplicity) by
\[
\CA=\bigcup_{j=1}^r\{[tt_\nu]\mid\nu\in\wt(\Delta_\zeta(\lambda_j))\}.
\]
Then for 
$
M\in\modu^k_{{[t]}}(U_\zeta(\Gg))
$
we have
$
V\otimes M\in\bigoplus_{[t']\in\CA}
\modu^k_{[t']}(U_\zeta(\Gg))$.
\end{corollary}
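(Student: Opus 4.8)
The plan is to reduce to the case where $V$ is irreducible by means of a composition series, then to replace each irreducible constituent $L_\zeta(\lambda_j)$ by the Weyl module $\Delta_\zeta(\lambda_j)$ covering it so that Proposition~\ref{prop:8ccX} applies, and finally to read off the assertion from the block decomposition \eqref{eq:Hd}.

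Concretely, I would first fix a Jordan--H\"older filtration $0=V^{(0)}\subset\cdots\subset V^{(r)}=V$ by $U^L_\zeta(\Gg)$-submodules with $V^{(j)}/V^{(j-1)}\cong L_\zeta(\lambda_j)$. Since $U_\zeta(\Gg)$ acts through $U_\zeta(\Gg)\to U^L_\zeta(\Gg)$, each $V^{(j)}$ is $U_\zeta(\Gg)$-stable, so $V^{(j)}\otimes M$ is a $U_\zeta(\Gg)$-submodule of $V\otimes M$ with $(V^{(j)}\otimes M)/(V^{(j-1)}\otimes M)\cong L_\zeta(\lambda_j)\otimes M$. For each $j$ the surjection $\Delta_\zeta(\lambda_j)\twoheadrightarrow L_\zeta(\lambda_j)$ of $U^L_\zeta(\Gg)$-modules induces a surjection $\Delta_\zeta(\lambda_j)\otimes M\twoheadrightarrow L_\zeta(\lambda_j)\otimes M$ of $U_\zeta(\Gg)$-modules. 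Now $\Delta_\zeta(\lambda_j)=\BC\otimes_\BA\Delta_\BA(\lambda_j)$ with $\Delta_\BA(\lambda_j)=U^L_\BA(\Gg)v_{\lambda_j}$ a $U^L_\BA(\Gg)$-stable $\BA$-form of the finite-dimensional integrable $U_\BF(\Gg)$-module $\Delta_\BF(\lambda_j)$, so the Weyl module satisfies the hypothesis of Proposition~\ref{prop:Kostant}; and $[t]\in H(W(h))/W\circ$ because $t\in H(W(h))$. Hence Proposition~\ref{prop:8ccX}, applied with $\Delta_\zeta(\lambda_j)$ in place of $V$, shows that for every $z\in Z_\Har(U_\zeta(\Gg))$ the central element $P_j(z)=\prod_{\nu\in\wt(\Delta_\zeta(\lambda_j))}(z-\xi_{[tt_\nu]}(z))$ annihilates $\Delta_\zeta(\lambda_j)\otimes M$, hence its quotient $L_\zeta(\lambda_j)\otimes M$; consequently $P_j(z)(V^{(j)}\otimes M)\subset V^{(j-1)}\otimes M$, and so $P_1(z)\cdots P_r(z)$ annihilates $V\otimes M$.

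Finally I would invoke \eqref{eq:Hd}: by the Lemma above, $V\otimes M\in\modu^k(U_\zeta(\Gg))$, so $V\otimes M=\bigoplus_{[t']}N_{[t']}$ with $N_{[t']}\in\modu^k_{[t']}(U_\zeta(\Gg))$, and it suffices to show $N_{[t']}=0$ unless $[t']\in\CA$. Fix $[t']$ with $N_{[t']}\neq0$. For each $z\in Z_\Har(U_\zeta(\Gg))$ the operator $z-\xi_{[t']}(z)$ is locally nilpotent on $N_{[t']}$ while $P_1(z)\cdots P_r(z)$ kills $N_{[t']}$; if $\xi_{[t']}(z)$ were distinct from all $\xi_{[t'']}(z)$ with $[t'']\in\CA$, then every factor $z-\xi_{[t'']}(z)=(\xi_{[t']}(z)-\xi_{[t'']}(z))+(z-\xi_{[t']}(z))$ would be invertible on $N_{[t']}$, contradicting $N_{[t']}\neq0$. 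Thus $\xi_{[t']}(z)\in\{\xi_{[t'']}(z)\mid[t'']\in\CA\}$ for every $z$, and since $Z_\Har(U_\zeta(\Gg))\cong\CO(H/W\circ)$ separates the points of $H/W\circ$, a generic-linear-combination argument over the infinite field $\BC$ (a finite union of proper affine subspaces being proper) forces $[t']\in\CA$. The only slightly delicate point is this last separation step; the rest is a routine reduction feeding into Proposition~\ref{prop:8ccX}, so I expect no essential difficulty.
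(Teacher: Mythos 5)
Your proof is correct and follows the route the paper intends: reduce along a Jordan--H\"older filtration of $V$ (so that each layer $L_\zeta(\lambda_j)\otimes M$ is a quotient of $\Delta_\zeta(\lambda_j)\otimes M$, to which Proposition~\ref{prop:Kostant} / Proposition~\ref{prop:8ccX} applies), conclude that $\prod_j\prod_{\nu\in\wt(\Delta_\zeta(\lambda_j))}\bigl(z-\xi_{[tt_\nu]}(z)\bigr)$ annihilates $V\otimes M$ for every $z\in Z_\Har(U_\zeta(\Gg))$, and then read off the block membership from the decomposition \eqref{eq:Hd} by a standard separation-of-central-characters argument. All the points you flag as potentially delicate (the $\BA$-form hypothesis for $\Delta_\zeta(\lambda_j)$, the finite-dimensionality of $V\otimes M$ needed to pass from locally nilpotent to nilpotent, and the choice of a single $z$ separating $[t']$ from the finite set $\CA$) are handled correctly.
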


By \eqref{eq:Hd} we have
\begin{equation}
\label{eq:Hd2}
\modu(\wU_\zeta(\Gg)^k)
=\bigoplus_{[t]\in H(W(h))/W\circ}
\modu(\wU_\zeta(\Gg)^k_{[t]}),
\end{equation}
and by Corollary \ref{cor:8ZX} we easily obtain the following.
\begin{corollary}
Let $V\in \modu_\inte(U^L_\zeta(\Gg))$, 
$t\in H(W(h))$, $\CA\subset H/W\circ$ be as in 
Corollary \ref{cor:8ZX}.
Then for 
$
M\in\modu(\wU_\zeta(\Gg)^k_{{[t]}})
$
we have
$
V\otimes M\in\bigoplus_{[t']\in\CA}
\modu(\wU_\zeta(\Gg)^k_{[t']})$.
\end{corollary}

\section{Back to the ordinary flag manifold}
\label{sec:back}
The aim of this section is to relate the quantized partial flag manifold $\CP_{J, \zeta}$ with the ordinary partial flag manifold $\CP_J$ using the Frobenius morphism
$\Fr:\CP_{J,\zeta}\to\CP_J$, which is a morphism in the category of non-commutative schemes.
\subsection{$\GO$-modules} 
In the arguments below we identify $\Mod(\CO_{\CP_{J,1}})$
(resp.\ $\modu(\CO_{\CP_{J,1}})$)
with the category
$\Mod(\CO_{\CP_J})$
(resp.\ $\modu(\CO_{\CP_J})$)
consisting of quasi-coherent (resp.\ coherent)
$\CO_{\CP_J}$-modules, 
where $\CP_J=P_J^-\backslash G$ is the ordinary partial flag manifold 
(see Remark \ref{rem:P1}).
Note that for $\lambda\in\Lambda^J$ 
\begin{equation}
\CO_{\CP_J}[\lambda]:=
\omega_J^*(A_1[\lambda])\in\modu(\CO_{\CP_{J,1}})=\modu(\CO_{\CP_J})
\end{equation}
is the invertible $\CO_{\CP_J}$-module corresponding to $\lambda\in\Lambda^J$.

Let 
$
\Fr:U^L_\zeta(\Gg)\to U(\Gg)
$
be Lusztig's Frobenius homomorphism (see \cite{Lbook}).
It induces an embedding 
$
\CO(G)\hookrightarrow\CO_\zeta(G)
$
of Hopf algebras.
From this we obtain an embedding
$
A_1\hookrightarrow A_\zeta
$
of $\BC$-algebras.
Note that we have
$A_1(\lambda)\hookrightarrow A_\zeta(\ell\lambda)$ for $\lambda\in\Lambda$.
Define a 
$\Lambda^J$-graded algebra $A_{J,\zeta}^{(\ell)}
=\bigoplus_{\lambda\in\Lambda^J}
A_{J,\zeta}^{(\ell)}(\lambda)
$ by
\[
A_{J,\zeta}^{(\ell)}(\lambda)
=
A_{J,\zeta}(\ell\lambda)
\qquad
(\lambda\in\Lambda^J).
\]
Then $A_{J,1}
=\bigoplus_{\lambda\in\Lambda^J}
A_{1}(\lambda)$ is a $\Lambda^J$-graded central subalgebra of 
$A_{J,\zeta}^{(\ell)}$.
Moreover, $A_{J,\zeta}^{(\ell)}$ is finitely generated as an $A_{J,1}$-module.
Applying
$\omega_{J}^*:\modu_{\Lambda^J}(A_{J,1})\to
\modu(\CO_{\CP_{J,1}})$
to $A_{J,\zeta}^{(\ell)}$ regarded as an object of $\modu_{\Lambda^J}(A_{J,1})$, 
we set
\[
\GO_J=
\Fr_*\CO_{\CP_{J,\zeta}}:=
\omega_{J}^*A_{J,\zeta}^{(\ell)}
\in\modu(\CO_{\CP_{J,1}})
=\modu(\CO_{\CP_{J}}).
\]
Note that  the coherent $\CO_{\CP_J}$-module 
$\GO_J$ is naturally equipped with an 
$\CO_{\CP_{J}}$-algebra structure through the multiplication of $A_{J,\zeta}^{(\ell)}$.
Moreover, 
the category
\begin{gather*}
\Mod_{\Lambda^J}(A_{J,\zeta}^{(\ell)})/
(\Mod_{\Lambda^J}(A_{J,\zeta}^{(\ell)})
\cap
\Tor_{\Lambda^{J+}}(A_{J,1}))
\\
(\text{resp.}
\;
\modu_{\Lambda^J}(A_{J,\zeta}^{(\ell)})/
(\modu_{\Lambda^J}(A_{J,\zeta}^{(\ell)})
\cap
\Tor_{\Lambda^{J+}}(A_{J,1}))
)
\end{gather*}
is naturally identified with the category
$\Mod(\GO_J)$
(resp.\
$\modu(\GO_J)$)
of quasi-coherent
(resp.\ coherent)
$\GO_J$-modules.

Let 
\[
\overline{\omega}_{J}^*:
\Mod_{\Lambda^{J}}(A_{J,\zeta}^{(\ell)})
\to
\Mod(\GO_J)
\]
be the canonical functor, and let
\[
\overline{\omega}_{J*}:
\Mod(\GO_J)
\to
\Mod_{\Lambda^{J}}(A_{J,\zeta}^{(\ell)})
\]
be its right adjoint.

We write 
$\GO=\GO_\emptyset\in\modu(\CO_\CB)$ and
\[
\overline{\omega}^*=\overline{\omega}_\emptyset^*:
\Mod_{\Lambda}(A_{\zeta}^{(\ell)})
\to
\Mod(\GO),
\qquad
\overline{\omega}_{*}:=\overline{\omega}_{\emptyset*}:
\Mod(\GO)
\to
\Mod_{\Lambda^{J}}(A_{\zeta}^{(\ell)})
\]

Let
\[
(\bullet)^{(\ell)}:
\Mod_{\Lambda^J}(A_{J,\zeta})
\to
\Mod_{\Lambda^J}(A_{J,\zeta}^{(\ell)})
\]
be the functor given by
$M\mapsto
\bigoplus_{\lambda\in\Lambda^J}M(\ell\lambda)
$, and define a functor
\begin{equation}
\label{eq:Fr}
\Fr_*:
\Mod(\CO_{\CP_{J,\zeta}})
\to
\Mod(\GO_J)
\end{equation}
by the commutative diagram:
%
\[
\xymatrix@C=40pt@R=30pt{
\Mod_{\Lambda^{J}}(A_{J,\zeta})
\ar[r]^{(\bullet)^{(\ell)}}
\ar[d]_{\omega_J^*}
&
\Mod_{\Lambda^{J}}(A_{J,\zeta}^{(\ell)})
\ar[d]^{\overline{\omega}_J^*}
\\
\Mod(\CO_{\CP_{J,\zeta}})
\ar[r]_{\Fr_*}^\cong
&
\Mod(\GO_J).
}
\]

Similarly to \cite[Lemma 3.9]{T1}
we have the following.
\begin{lemma}
\label{lem:Fr-dir}
\begin{itemize}
\item[(i)]
The functor 
\eqref{eq:Fr} gives category equivalences:
\[
\Mod(\CO_{\CP_{J,\zeta}})
\cong
\Mod(\GO_J),
\qquad
\modu(\CO_{\CP_{J,\zeta}})
\cong
\modu(\GO_J).
\]
\item[(ii)]
We have the following commutative diagram:
%
\[
\xymatrix@C=30pt@R=25pt{
\Mod(\CO_{\CP_{J,\zeta}})
\ar[r]^{\omega_{J*}}
\ar[d]_{\Fr_*}^\cong
&
\Mod_{\Lambda^{J}}(A_{J,\zeta})
\ar[d]^{(\bullet)^{(\ell)}}
\\
\Mod(\GO_J)
\ar[r]_{\overline{\omega}_{J*}}
&
\Mod_{\Lambda^{J}}(A_{J,\zeta}^{(\ell)}).
}
\]
\end{itemize}
\end{lemma}

For $\lambda\in\Lambda^J$ we set
\begin{equation}
\GO_J[\lambda]=\Fr_*(\omega_J^*(A_{J,\zeta}[\lambda]))
=\overline\omega_J^*((A_{J,\zeta}[\lambda])^{(\ell)})
\in\Mod(\GO_J).
\end{equation}
Then 
$\GO_J[\lambda]$ is naturally an $\GO_J$-bimodule.
Moreover, 
the two $\CO_{\CP_J}$-module structures of $\GO_J[\lambda]$ obtained by restricting the left and the right $\GO_J$-module structures coincide.
We have
\begin{equation}
\GO_J[\ell\lambda]\cong\CO_{\CP_J}[\lambda]\otimes_{\CO_{\CP_J}}\GO_J
\qquad(\lambda\in\Lambda^J),
\end{equation}
\begin{equation}
\GO_J[\lambda]\otimes_{\GO_J}\GO_J[\mu]
\cong
\GO_J[\lambda+\mu]
\qquad(\lambda, \mu\in\Lambda).
\end{equation}
Define 
\begin{equation}
(\bullet)[\lambda]:
\Mod(\GO_J)\to\Mod(\GO_J)
\qquad(\GM\mapsto\GM[\lambda])
\end{equation}
to be the functor induced by \eqref{eq:shift2}.
Then we have
\begin{equation}
\GM[\lambda]\cong\GO_J[\lambda]\otimes_{\GO_J}\GM
\qquad(\lambda\in\Lambda^J, \GM\in\Mod(\GO_J)).
\end{equation}
The natural $U^L_\zeta(\Gg)$-module structure of $A_{J,\zeta}$ 
induces a canonical $\BC$-algebra homomorphism
\begin{equation}
\label{eq:UO}
U^L_\zeta(\Gg)\to\End_\BC(\GO_J[\lambda])
\qquad(\lambda\in\Lambda^J).
\end{equation}

Now assume $J_1\subset J_2\subset I$.
Recall that we have natural morphism
$\pi^{J_2J_1}:\CP_{J_1}\to\CP_{J_2}$ of ordinary algebraic varieties.
By Lemma \ref{lem:dir2} and Lemma \ref{lem:Fr-dir} we have the following.
\begin{lemma}
\label{lem:piFr}
The following diagram is commutative:
%
\[
\xymatrix@C=50pt@R=25pt{
\Mod(\CO_{\CP_{J_1,\zeta}})
\ar[r]^{({\pi}^{J_2J_1}_\zeta)_*}
\ar[d]_{\Fr_*}^\cong
&
\Mod(\CO_{\CP_{J_2,\zeta}})
\ar[d]^{\Fr_*}_\cong
\\
\Mod(\GO_{J_1})
\ar[r]_{({\pi}^{J_2J_1})_*}
&
\Mod(\GO_{J_2}).
}
\]
\end{lemma}
\begin{lemma}
$
(\pi^{J_2J_1})_*(\GO_{J_1})
=
\GO_{J_2}$.
\end{lemma}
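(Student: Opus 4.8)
The plan is to deduce this identity from the corresponding statement for the quantized partial flag manifolds, namely that $(\pi^{J_2J_1}_\zeta)_*\CO_{\CP_{J_1,\zeta}}\cong\CO_{\CP_{J_2,\zeta}}$, where I write $\CO_{\CP_{J,\zeta}}=\omega_J^*(A_{J,\zeta})$ for the structure sheaf, and then transport it along the Frobenius pushforward. Since $\GO_J=\Fr_*\CO_{\CP_{J,\zeta}}$, applying Lemma \ref{lem:piFr} to the object $\CO_{\CP_{J_1,\zeta}}$ gives
\[
(\pi^{J_2J_1})_*(\GO_{J_1})
=(\pi^{J_2J_1})_*\Fr_*\CO_{\CP_{J_1,\zeta}}
\cong
\Fr_*\bigl((\pi^{J_2J_1}_\zeta)_*\CO_{\CP_{J_1,\zeta}}\bigr),
\]
so it will be enough to prove $(\pi^{J_2J_1}_\zeta)_*\CO_{\CP_{J_1,\zeta}}\cong\CO_{\CP_{J_2,\zeta}}$ and then apply $\Fr_*$ once more, using $\Fr_*\CO_{\CP_{J_2,\zeta}}=\GO_{J_2}$.

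For the $\zeta$-version, I would invoke Lemma \ref{lem:dir2}, which gives $(\pi^{J_2J_1}_\zeta)_*=\omega_{J_2}^*\circ F^{J_2J_1}_\zeta\circ\omega_{J_1*}$, together with Lemma \ref{lem:oo}, which gives $\omega_{J_1*}\omega_{J_1}^*A_{J_1,\zeta}\cong A_{J_1,\zeta}$. Combining them,
\[
(\pi^{J_2J_1}_\zeta)_*\CO_{\CP_{J_1,\zeta}}
=\omega_{J_2}^*F^{J_2J_1}_\zeta\bigl(\omega_{J_1*}\omega_{J_1}^*A_{J_1,\zeta}\bigr)
\cong\omega_{J_2}^*F^{J_2J_1}_\zeta(A_{J_1,\zeta}).
\]
The one remaining point is the elementary identification of $F^{J_2J_1}_\zeta(A_{J_1,\zeta})$ with $A_{J_2,\zeta}$ as a $\Lambda^{J_2}$-graded algebra: by definition $F^{J_2J_1}_\zeta(A_{J_1,\zeta})=\bigoplus_{\lambda\in\Lambda^{J_2}}A_{J_1,\zeta}(\lambda)$, and since $J_1\subset J_2$ one has $\Lambda^{J_2}\subset\Lambda^{J_1}$, so for $\lambda\in\Lambda^{J_2}$ we get $A_{J_1,\zeta}(\lambda)=A_\zeta(\lambda)$ exactly when $\lambda\in\Lambda^{J_2}\cap\Lambda^+=\Lambda^{J_2+}$ and $A_{J_1,\zeta}(\lambda)=0$ otherwise; hence $F^{J_2J_1}_\zeta(A_{J_1,\zeta})=\bigoplus_{\lambda\in\Lambda^{J_2+}}A_\zeta(\lambda)=A_{J_2,\zeta}$, with matching multiplications. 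Applying $\omega_{J_2}^*$ then gives $(\pi^{J_2J_1}_\zeta)_*\CO_{\CP_{J_1,\zeta}}\cong\CO_{\CP_{J_2,\zeta}}$, and feeding this back into the first display yields $(\pi^{J_2J_1})_*(\GO_{J_1})\cong\GO_{J_2}$.

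There is no real obstacle here; the only care needed is that every isomorphism above is compatible not merely with module structures but with the algebra structures and the $\Lambda^{J_2}$-gradings, and that the graded-piece computation uses the inclusion $\Lambda^{J_2}\subset\Lambda^{J_1}$ in the right direction. As an alternative route, one could avoid the $\tomega$-picture altogether and argue directly inside $\widetilde{\Mod}(\CO_{\CP_{J_2,\zeta}})$, applying Lemma \ref{lem:IndGamma} and the tensor identity for $\Ind^{J_2J_1}$ to the one-dimensional trivial module; but the passage through Lemma \ref{lem:dir2} and Lemma \ref{lem:oo} sketched above is the most economical.
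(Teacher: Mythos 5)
Your proof follows essentially the same route as the paper: reduce via Lemma \ref{lem:piFr} to the $\zeta$-level identity $(\pi^{J_2J_1}_\zeta)_*(\omega_{J_1}^*A_{J_1,\zeta})=\omega_{J_2}^*A_{J_2,\zeta}$, then compute with Lemma \ref{lem:dir2}, Lemma \ref{lem:oo}, and the identification $F^{J_2J_1}_\zeta(A_{J_1,\zeta})=A_{J_2,\zeta}$. The only difference is that you spell out the last graded-piece computation (correctly, using $\Lambda^{J_2}\cap\Lambda^{J_1+}=\Lambda^{J_2+}$), which the paper leaves implicit.
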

\begin{proof}
For $J\subset I$ we have
\[
\GO_{J}
=\overline{\omega}_{J}^*A_{J,\zeta}^{(\ell)}
=\Fr_*(\omega_J^*A_{J,\zeta}).
\]
Hence it is sufficient to show 
$(\pi^{J_2J_1}_\zeta)_*(\omega_{J_1}^*A_{J_1,\zeta})=
\omega_{J_2}^*A_{J_2,\zeta}$ 
by Lemma \ref{lem:piFr}.
By Corollary \ref{cor:oo} and Lemma \ref{lem:dir2} we have
\[
(\pi^{J_2J_1}_\zeta)_*(\omega_{J_1}^*A_{J_1,\zeta})
=\omega_{J_2}^*F_\zeta^{J_2J_1}\omega_{J_1*}
\omega_{J_1}^*A_{J_1,\zeta}
=\omega_{J_2}^*F_\zeta^{J_2J_1}A_{J_1,\zeta}
=\omega_{J_2}^*A_{J_2,\zeta}.
\]
\end{proof}
Hence by
\begin{align*}
\Hom(\GO_{J_2},\GO_{J_2})
\cong&
\Hom(\GO_{J_2},
(\pi^{J_2J_1})_*(\GO_{J_1}))
\cong
\Hom((\pi^{J_2J_1})^{-1}\GO_{J_2},
\GO_{J_1})
\end{align*}
we obtain a homomorphism
\[
(\pi^{J_2J_1})^{-1}\GO_{J_2}
\to
\GO_{J_1}
\]
of sheaf of rings.
\begin{proposition}
The following diagram is commutative:
\[
\xymatrix@C=140pt@R=25pt{
\Mod(\CO_{\CP_{J_2,\zeta}})
\ar[r]^{({\pi}^{J_2J_1}_\zeta)^*}
\ar[d]_{\Fr_*}^\cong
&
\Mod(\CO_{\CP_{J_1,\zeta}})
\ar[d]^{\Fr_*}_\cong
\\
\Mod(\GO_{J_2})
\ar[r]_{\GO_{J_1}
\otimes_{(\pi^{J_2J_1})^{-1}\GO_{J_2}}(\pi^{J_2J_1})^{-1}(\bullet)}
&
\Mod(\GO_{J_1}).
}
\]
\end{proposition}
\begin{proof}
Since the vertical functors $\Fr_*$ are equivalences,
it is sufficient to show that the functor 
$\GO_{J_1}
\otimes_{(\pi^{J_2J_1})^{-1}\GO_{J_2}}(\pi^{J_2J_1})^{-1}(\bullet)$
is left adjoint to 
\[
(\pi^{J_2J_1})_*:
\Mod(\GO_{J_1})
\to
\Mod(\GO_{J_2}).
\]
For 
$M\in\Mod(\GO_{J_1})$, 
$N\in\Mod(\GO_{J_2})$
we have
\begin{align*}
&
\Hom_{\GO_{J_2}}
(N,(\pi^{J_2J_1})_*M)
\cong
\Hom_{(\pi^{J_2J_1})^{-1}\GO_{J_2}}
((\pi^{J_2J_1})^{-1}N,M)
\\
\cong&
\Hom_{\GO_{J_1}}
(\GO_{J_1}
\otimes_{(\pi^{J_2J_1})^{-1}\GO_{J_2}}(\pi^{J_2J_1})^{-1}N,M).
\end{align*}
\end{proof}

\subsection{$\GD$-modules}
We set
\begin{equation}
\GD=\Fr_*\DD_{\CB_\zeta}
:=
\overline{\omega}^*(D_\zeta^{(\ell)})
\in\Mod(\GO).
\end{equation}
It is a sheaf of $\BC$-algebras on $\CB$ equipped with
$\BC$-algebra homomorphisms
\begin{align}
\label{eq:GD1}
&U_\zeta(\Gg)\to \GD
&(u\mapsto\deru_u),
\\
\label{eq:GD2}
&
\GO\to \GD
&(\varphi\mapsto\ell_\varphi),
\\
\label{eq:GD3}
&
\BC[\Lambda]\to \GD
&(e(\mu)\mapsto\sigma_{2\mu}).
\end{align}
As a $\BC$-algebra we have
\[
\GD=
\langle
\deru_u,\,
\ell_\varphi,\,
\sigma_{2\mu}\mid
u\in U_\zeta(\Gg),\;
\varphi\in\GO,\;
\mu\in\Lambda
\rangle
\]
locally on $\CB$.
For $\lambda\in\Lambda$ we have a natural $\GD$-module structure of $\GO[\lambda]$.
For $\mu\in\Lambda$ we have
$\sigma_{2\mu}|_{\GO(\lambda)}=\zeta^{2(\mu,\lambda)}\id$.
For $\varphi\in\GO$
the action of $\ell_\varphi$ on $\GO[\lambda]$ is given by the left $\GO$-module structure of $\GO[\lambda]$.
For $u\in U_\zeta(\Gg)$ 
the action of $\deru_u$ 
on $\GO[\lambda]$
is given by the composite of 
$U_\zeta(\Gg)\to U^L_\zeta(\Gg)$ with \eqref{eq:UO}.

In $\GD$ we have
\begin{align}
\label{eq:Drel1}
&\deru_u\ell_\varphi=
\sum_{(u)}
\ell_{u_{(0)}\varphi}\deru_{u_{(1)}}
&(u\in U_\zeta(\Gg),\;\varphi\in\GO),
\\
\label{eq:Drel2}
&\ell_\varphi\sigma_{2\mu}=
\sigma_{2\mu}\ell_\varphi
&(\varphi\in\GO,\;\mu\in\Lambda),
\\
\label{eq:Drel3}
&\deru_u\sigma_{2\mu}=
\sigma_{2\mu}\deru_u
&(u\in U,\;\mu\in\Lambda).
\end{align}
In particular, $\sigma_{2\mu}$ belongs to the center of $\GD$.
It is also easily seen that $\deru_z$ for $z\in Z_\Fr(U_\zeta(\Gg))$ belongs to the center of $\GD$ (see the proof of \cite[Lemma 5.1]{T1}).
For $t\in  H$ we set
\begin{equation}
\GD_t:=
\GD/\sum_{\mu\in\Lambda}\GD(\sigma_{2\mu}-\theta_\mu(t)).
\end{equation}
We denote by $\modu_t(\GD)$ the category of coherent $\GD$-module $\GM$ such that 
for any $\mu\in\Lambda$ and 
any section $m$ of $\GM$ we have
$(\sigma_{2\mu}-\theta_\mu(t))^nm=0$ 
for some $n$.
We have
\begin{alignat}{2}
\label{eq:DB1}
\modu(\DD_{\CB_\zeta})&\cong
\modu(\GD),
\\
\label{eq:DB2}
\modu(\DD_{\CB_\zeta,t})&\cong
\modu(\GD_t),
\\
\label{eq:DB3}
\modu_t(\DD_{\CB_\zeta})&\cong
\modu_t(\GD).
\end{alignat}

For $J\subset I$
we set
\begin{equation}
\GD_J=\pi^{J}_*\GD.
\end{equation}
It is a sheaf of $\BC$-algebras on $\CP_J$
 equipped with
$\BC$-algebra homomorphisms
\begin{align}
&U_\zeta(\Gg)\to \GD_J
&(u\mapsto\deru_u),
\\
&
\GO_J\to \GD_J
&(\varphi\mapsto\ell_\varphi),
\\
&
\BC[\Lambda]\to \GD_J
&(e(\mu)\mapsto\sigma_{2\mu})
\end{align}
satisfying the relations similar to \eqref{eq:Drel1}, 
\eqref{eq:Drel2}, \eqref{eq:Drel3}.
For $t\in  H$ we set
\[
\GD_{J,t}:=
\GD_J/\sum_{\mu\in\Lambda}\GD_J(\sigma_{2\mu}-\theta_\mu(t)).
\]

\begin{theorem}
\label{thm:GammaD}
Let $t\in H$.
Assume that $\ell$ is a power of a prime number and that the order of $t^\ell$ is finite and prime to $\ell$.
Then we have
\[
R\Gamma(\CP_J,\GD_{J,t})\cong U_\zeta(\Gg)_{[t]}.
\]
\end{theorem}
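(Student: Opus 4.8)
The plan is to reduce the statement to the cohomological computation on the full flag manifold $\CB$, which is already available from \cite{T3}. By the definition of $\GD_J$ we have $\GD_J=(\pi^{J\emptyset})_*\GD$ for the projection $\pi^{J\emptyset}\colon\CB\to\CP_J$, and since $\Gamma(\CP_J,-)\circ(\pi^{J\emptyset})_*=\Gamma(\CB,-)$ while $(\pi^{J\emptyset})_*$ carries injective sheaves to injective sheaves, one gets $R\Gamma(\CP_J,R(\pi^{J\emptyset})_*\CF)\cong R\Gamma(\CB,\CF)$ for every $\CF$ on $\CB$. Applying this with $\CF=\GD_t$, it suffices to prove (i) that $R(\pi^{J\emptyset})_*\GD_t$ is concentrated in degree $0$ and equals $\GD_{J,t}$, and (ii) that $R\Gamma(\CB,\GD_t)\cong U_\zeta(\Gg)_{[t]}$. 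For (ii) there is nothing to do: it is a special case of \cite[Theorem 8.1]{T3}, which under exactly the hypotheses on $\ell$ and on $t$ imposed here (a prime power, and $t^\ell$ of finite order prime to $\ell$) gives even the stronger localized isomorphism $R\Gamma(\jmath_t^*\sD)\cong U_\zeta(\Gg)\otimes_{\CO(H/W\circ)}\CO(H/W\circ)_{[t]}$, whence (ii) follows by specialization along $\CO(H/W\circ)_{[t]}\to\BC$.

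The real content is (i), which I would establish in two steps. First, the non-derived, non-specialized version $R^{>0}(\pi^{J\emptyset})_*\GD=0$, so that $R(\pi^{J\emptyset})_*\GD=\GD_J$; the fibres of $\pi^{J\emptyset}$ are flag varieties of the Levi subgroup $L_J$, so this is a fibrewise vanishing assertion. Here $\GD=\overline{\omega}^*(D_\zeta^{(\ell)})$ is quasi-coherent over $\CO_\CB$ and is an increasing union of $\CO_\CB$-coherent subsheaves, and using the structure of $\GO=\Fr_*\CO_{\CB_\zeta}$ recalled in \cite{T1} together with the bookkeeping of the $\Lambda$- and $\Lambda^J$-gradings, each such subsheaf carries, Zariski-locally on $\CP_J$, a filtration whose graded pieces have the form $(\pi^{J\emptyset})^*(\text{coherent on }\CP_J)\otimes_{\CO_\CB}\CO_\CB[\mu]$ with $\mu$ dominant in the fibre directions; quantum Kempf vanishing \cite[Corollary 5.7]{APW} applied fibrewise then kills the higher direct images (these commute with the filtered colimit as $\pi^{J\emptyset}$ is quasi-compact). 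Equivalently one may pass through $\CV$, which the square relating $\CV,\CV_J,\CB,\CP_J$ identifies with the flag bundle $\CB\times_{\CP_J}\CV_J$, and push $\sD$ forward with the same vanishing. Second, I would commute $R(\pi^{J\emptyset})_*$ with the specialization at $t$: writing $\BC[\Lambda]\to\GD$, $e(\mu)\mapsto\sigma_{2\mu}$, we have $\GD_t=\GD\otimes_{\BC[\Lambda]}\BC$ and $\GD_{J,t}=\GD_J\otimes_{\BC[\Lambda]}\BC$, and the elements $\sigma_{2\mu_i}-\theta_{\mu_i}(t)$ attached to a $\BZ$-basis $(\mu_i)$ of $\Lambda$ form a regular sequence on $\GD$ — this is the flatness of $\sD$ in the $H$-direction from \cite{T1} — so $\BC$ admits a finite Koszul resolution by free $\BC[\Lambda]$-modules and $\GD_t=\GD\otimes^L_{\BC[\Lambda]}\BC$. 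The projection formula for the proper morphism $\pi^{J\emptyset}$ then gives $R(\pi^{J\emptyset})_*\GD_t\cong(R(\pi^{J\emptyset})_*\GD)\otimes^L_{\BC[\Lambda]}\BC\cong\GD_J\otimes^L_{\BC[\Lambda]}\BC\cong\GD_{J,t}$, the last step because $\GD_J$ inherits $\BC[\Lambda]$-flatness from $\GD$ once the higher direct images vanish.

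I expect the main obstacle to be the first half of (i): one must control the weights $\mu$ occurring in the natural filtration of $\GD$ precisely enough to guarantee fibrewise dominance, so that Kempf vanishing applies; everything else is either formal (the Koszul/projection-formula base change) or already in the literature (\cite[Theorem 8.1]{T3} for (ii), \cite{T1} for the flatness). If one wishes to bypass the two-step push-forward, an alternative plan is to reprove \cite[Theorem 8.1]{T3} directly on the quantized partial flag manifold $\CP_{J,\zeta}$ — the Beilinson--Bernstein equivalence at transcendental $\zeta$ (obtained as in \cite{T0}), together with the standard cohomology-and-base-change specialization over the $q$-line and the parabolic-induction reduction to the exceptional case (\cite{DK2}), yields $R\Gamma(\CP_J,\GD_{J,t})\cong U_\zeta(\Gg)_{[t]}$ in exactly the same way as for $\CB$.
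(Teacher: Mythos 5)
Your overall reduction strategy --- push $\GD_t$ forward along $\pi^{J\emptyset}$, show that $R(\pi^{J\emptyset})_*\GD_t$ is concentrated in degree zero and equals $\GD_{J,t}$, and then quote the established $J=\emptyset$ result on $\CB$ --- is exactly the shape of the paper's proof, and your step 2 (Koszul resolution of $\BC$ over $\BC[\Lambda]$ together with the projection formula, followed by the observation that $\GD_J$ inherits $\BC[\Lambda]$-flatness once the higher direct images vanish) is a perfectly reasonable way to commute the pushforward with specialization, somewhat slicker than what the paper writes (the paper tracks everything through ${}^f\GD$ and the flatness of $U_\zeta(\Gg)$ over ${}^fU_\zeta(\Gg)$).

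The gap is in your step 1, which is the real content of the theorem, and neither of your two routes works as stated. First, the proposed Zariski-local filtration of $\GD$ with graded pieces $(\pi^{J\emptyset})^*(\text{coherent})\otimes\CO_\CB[\mu]$ and $\mu$ dominant in the fibre directions is not available. The sheaf $\GD$ is built from $D_\zeta^{(\ell)}$, which contains $\deru_u$ for all $u\in U_\zeta(\Gg)$; the non-integrable part of $U_\zeta(\Gg)$ has no filtration with positive weights in any reasonable sense, and once pushed into $\GD$ as a $\GO$-module these operators do not organize themselves into fibrewise-dominant line-bundle twists. The paper precisely avoids this by replacing $\GD$ with the ad-finite subsheaf ${}^f\GD={}^f\DD_{\CB_\zeta}$ first (using that $U_\zeta(\Gg)$ is flat over ${}^fU_\zeta(\Gg)$), because ${}^f\GD$ corresponds under the equivariant dictionary to $V_\zeta\cong\CO_\zeta(B^-)_\ad$, a Peter--Weyl-type object whose structure is controlled. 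The fibrewise vanishing is then recast as the vanishing $R^i\Ind^{J\emptyset}(\CO_\zeta(B^-)_\ad)\otimes_{\CO(H)}\CO(H)_t=0$ for $i\ne 0$, which is proved not by Kempf vanishing but by factoring $\For(\CO_\zeta(B^-)_\ad)\cong\CO_\zeta(B_J^-)_\ad\otimes N$ over $U^L_\zeta(\Gb_J^-)$, applying the tensor identity, and invoking the already-established $J=\emptyset$ statement \textit{for the Levi $L_J$ in place of $G$}. This reduction to the smaller group is the essential idea, and it is entirely absent from your sketch.

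Second, your ``equivalent'' route through $\CV$ rests on the claim that $\CV\cong\CB\times_{\CP_J}\CV_J$, which is false. A point of $\CV_J$ imposes only that the $L_J$-part of $g\eta(k)g^{-1}$ has prescribed Steinberg image, while a point of $\CV$ requires that conjugate to land in $t^\ell N^-$ on the nose; the fibre of $\spi^{J\emptyset}:\CV\to\CV_J$ over a point is therefore a Springer fibre of $L_J$, not the full flag variety of $L_J$. Lemma \ref{lem:spi} in the paper proves $R\spi^{J\emptyset}_*\CO_\CV=\CO_{\CV_J}$ exactly by identifying $\spi^{J\emptyset}$ as a base change of the Grothendieck--Springer map $\varrho_J:\tilde{G}\to\tilde{G}_J\times_{H/W_J}H$ and appealing to the rationality of Springer-type fibres --- a genuinely geometric input, not a flag-bundle triviality. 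Your final suggestion, reproving \cite[Theorem~8.1]{T3} directly on $\CP_{J,\zeta}$, is in spirit how the $J=\emptyset$ case was obtained, but you would then still need the quantum Grothendieck--Springer rationality \eqref{eq:rhoJ} and the reduction-to-$q=1$ machinery for each $J$, which is more work than the paper's reduction from $\CP_J$ to $\CB$.
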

\begin{remark}

We conjecture that Theorem \ref{thm:GammaD} holds without assuming that $\ell$ is a power of a prime 
(see Conjecture \ref{conj:BB2}).
In fact we can show Theorem \ref{thm:GammaD} without assuming that $\ell$ is a power of a prime in the case $G$ is of type $A$
(see \cite{TK}).
\end{remark}
We first recall the outline of the proof of
Theorem \ref{thm:GammaD} in the case $J=\emptyset$ given in \cite{T3}.

The adjoint action of $U_\BF(\Gg)$ on $U_\BF(\Gg)$ given by
\[
\ad(u)(v)=\sum_{(u)}u_{(0)}\cdot v\cdot Su_{(1)}
\qquad
(u,v\in U_\BF(\Gg))
\]
induces the adjoint action of $U^L_\zeta(\Gg)$ on $U_\zeta(\Gg)$.
We set 
\[
{}^fU_\zeta(\Gg)=
\{u\in
U_\zeta(\Gg)\mid
\dim \ad(U^L_\zeta(\Gg))(u)<\infty\}.
\]
It is a subalgebra of $U_\zeta(\Gg)$.
Define a subalgebra ${}^fD_\zeta$  of $D_\zeta$ 
by
\[
{}^fD_\zeta=
\langle 
\ell_\varphi, \deru_u, \sigma_{2\lambda}
\mid
\varphi\in A_\zeta, u\in {}^fU_\zeta(\Gg), \lambda\in\Lambda
\rangle.
\]
We also  set
\[
{}^fD_{\zeta,t}
=
{}^fD_\zeta/
\sum_{\lambda\in\Lambda}
{}^fD_\zeta(\sigma_{2\lambda}-\theta_\lambda(t)).
\]
Define objects
${}^f\DD_{\CB_\zeta}$ and ${}^f\DD_{\CB_\zeta,t}$ ($t\in H$) of 
$\Mod(\CO_{\CB_\zeta})$ by
\[
{}^f\DD_{\CB_\zeta}=\omega^*({}^fD_\zeta),
\qquad
{}^f\DD_{\CB_\zeta,t}=\omega^*({}^fD_{\zeta,t}).
\]
We also define objects ${}^f\GD$ and ${}^f\GD_t$ of $\Mod(\GO)$ by
\[
{}^f\GD=\Fr_*{}^f\DD_{\CB_\zeta},
\qquad
{}^f\GD_t=\Fr_*{}^f\DD_{\CB_\zeta,t}.
\]
Then ${}^f\GD$ is the subalgebra of $\GD$ given by
\[
{}^f\GD=
\langle 
\ell_\varphi, \deru_u, \sigma_{2\lambda}
\mid
\varphi\in\GO, u\in {}^fU_\zeta(\Gg), \lambda\in\Lambda
\rangle,
\]
and we have
\[
{}^f\GD_t=
{}^f\GD/
\sum_{\lambda\in\Lambda}{}^f\GD(\sigma_{2\lambda}-\theta_\lambda(t))
\qquad(t\in H).
\]
Moreover, we have
\[
\GD={}^f\GD\otimes_{{}^fU_\zeta(\Gg)}U_\zeta(\Gg),
\qquad
\GD_t={}^f\GD_t\otimes_{{}^fU_\zeta(\Gg)}U_\zeta(\Gg).
\]
Since $U_\zeta(\Gg)$ is a flat 
${}^fU_\zeta(\Gg)$-module (both as a left module and a right module), 
we have
\begin{align*}
R\Gamma(\CB,\GD)=&
R\Gamma(\CB,{}^f\GD)
\otimes_{{}^fU_\zeta(\Gg)}U_\zeta(\Gg)
=
R\Gamma(\CB_\zeta,{}^f\DD_{\CB_\zeta})
\otimes_{{}^fU_\zeta(\Gg)}U_\zeta(\Gg)
,
\\
R\Gamma(\CB,\GD_t)=&
R\Gamma(\CB,{}^f\GD_t)
\otimes_{{}^fU_\zeta(\Gg)}U_\zeta(\Gg)
=
R\Gamma(\CB_\zeta,{}^f\DD_{\CB_\zeta,t})
\otimes_{{}^fU_\zeta(\Gg)}U_\zeta(\Gg).
\qquad
\end{align*}
Here, $R\Gamma(\CB_\zeta,\bullet)$ is the derived functor of the global section functor
\[
\Gamma(\CB_\zeta,\bullet):
\Mod(\CO_{\CB_\zeta})\to\Mod(\BC)
\qquad(M\mapsto(\omega_*M)(0)).
\]

We denote by ${}^eU_\zeta(\Gg)$ the subalgebra of $U_\zeta(\Gg)$ generated by 
$k_{2\lambda}$ for $\lambda\in\Lambda$ and  $e_i$, $Sf_i$ for $i\in I$.
Set
\begin{equation}
V_\zeta
={}^eU_\zeta(\Gg)/
\sum_{i\in I}(Sf_i)\cdot
{}^eU_\zeta(\Gg).
\end{equation}
The adjoint action of $U^L_\zeta(\Gg)$ on $U_\zeta(\Gg)$ induces a  $U^L_\zeta(\Gb^-)$-module structure of  $V_\zeta$ so that $V_\zeta\in\Mod_\inte(U^L_\zeta(\Gb^-))$.
We also regard $V_\zeta$ as an $\CO(H)$-module by 
\[
\overline{u}\theta_\lambda=\overline{uk_{2\lambda}}
\qquad
(u\in {}^eU_\zeta(\Gg),\; \lambda\in\Lambda).
\]
Then $V_\zeta$ is a $(U^L_\zeta(\Gb^-),\CO(H))$-bimodule.
For $t\in H$ we set 
\[
V_{\zeta,t}=V_\zeta\otimes_{\CO(H)}\BC_t,
\]
where $\BC_t$ is the one-dimensional $\CO(H)$-module given by
$t:\CO(H)\to\BC$.
By \cite{T2}
${}^f\DD_{\CB_\zeta}$ and ${}^f\DD_{\CB_\zeta,t}$ are objects of 
$\Mod^{\eq}(\CO_{\CB_\zeta})$, and the corresponding objects of 
$\Mod_\inte(U^L_\zeta(\Gb^-))$ are $V_\zeta$ and $V_{\zeta,t}$ 
respectively (see \eqref{eq:equivariant}).
We have the natural morphisms
\begin{align}
\label{eq:conj-Ind1}
{}^fU_\zeta(\Gg)\otimes_{Z_\Har(U_\zeta(\Gg))}\CO(H)
\to
R\Ind^{I\emptyset}(V_\zeta),
\\
\label{eq:conj-Ind2}
{}^fU_\zeta(\Gg)\otimes_{Z_\Har(U_\zeta(\Gg))}\BC_{[t]}
\to
R\Ind^{I\emptyset}(V_{\zeta,t}),
\end{align}
where, $\BC_{[t]}$ is the one-dimensional $Z_\Har(U_\zeta(\Gg))$-module given by $\xi_{[t]}$.
We see by Lemma \ref{lem:IndGamma} that 
if \eqref{eq:conj-Ind1} is an isomorphism, 
we have
\begin{equation}
R\Gamma(\CB,\GD)\cong
U_\zeta(\Gg)\otimes_{Z_\Har(U_\zeta(\Gg))}\CO(H), 
\end{equation}
and if \eqref{eq:conj-Ind2} is an isomorphism, 
we have
\begin{equation}
R\Gamma(\CB,\GD_t)\cong
U_\zeta(\Gg)_{[t]}.
\end{equation}

The quantized coordinate algebra $\CO_\zeta(G)$ is denoted as
$\CO_\zeta(G)_\ad$ when it is regarded as a left $U^L_\zeta(\Gg)$-module via the adjoint action:
\[
\ad(u)(\varphi)
=\sum_{(u)}u_{(0)}\cdot\varphi\cdot(S^{-1}u_{(1)})
\qquad
(u\in U^L_\zeta(\Gg),\; \varphi\in \CO_\zeta(G)).
\]
A similar convention is also applied to $\CO_\zeta(B^-)$ and $\CO_\zeta(P_J^-)$.
One can construct using the Drinfeld paring the natural isomorphisms
\begin{equation}
\label{eq:Disom}
V_\zeta \cong \CO_\zeta(B^-)_\ad,
\qquad
{}^fU_\zeta(\Gg)\cong \CO_\zeta(G)_\ad
\end{equation}
in $\Mod_\inte(U^L_\zeta(\Gb^-))$ and 
$\Mod_\inte(U^L_\zeta(\Gg))$ respectively
(see \cite{Cal}, \cite{T3}).
Then under the identification \eqref{eq:Disom} the morphisms 
\eqref{eq:conj-Ind1}, \eqref{eq:conj-Ind2} become canonical morphisms
\begin{align}
\label{eq:conj-Ind1a}
\CO_\zeta(G)_\ad\otimes_{\CO(H/W\circ)}\CO(H)
\to
R\Ind^{I\emptyset}(\CO_\zeta(B^-)_\ad),
\\
\label{eq:conj-Ind2a}
\CO_\zeta(G)_\ad\otimes_{\CO(H/W\circ)}\BC_{[t]}
\to
R\Ind^{I\emptyset}(\CO_\zeta(B^-)_{\ad,t}),
\end{align}
where $\CO_\zeta(B^-)_{\ad,t}=\CO_\zeta(B^-)_{\ad}\otimes_{\CO(H)}\BC_t$.

Denote by $\CO(H)_t$ the localization of $\CO(H)$ at the maximal ideal $\Ker(t:\CO(H)\to\BC)$.
In \cite{T3} we established the isomorphism
\begin{equation}
\label{eq:conj-Ind1b}
\CO_\zeta(G)_\ad\otimes_{\CO(H/W\circ)}\CO(H)_t
\cong
R\Ind^{I\emptyset}(\CO_\zeta(B^-)_\ad)
\otimes_{\CO(H)}\CO(H)_t
\end{equation}
using reduction to the case $q=1$
(the assumption on $\ell$ and $t\in H$ in Theorem \ref{thm:GammaD} is only used here).
By \cite[Proposition 4.5]{T3} this implies 
\begin{equation}
\label{eq:conj-Ind2b}
\CO_\zeta(G)_\ad\otimes_{\CO(H/W\circ)}\BC_{[t]}
\cong
R\Ind^{I\emptyset}(\CO_\zeta(B^-)_{\ad,t}).
\end{equation}
Hence Theorem \ref{thm:GammaD} for $J=\emptyset$ follows from the above argument.

Now let us give a proof of Theorem \ref{thm:GammaD} for general $J$.
By Lemma \ref{lem:piFr}, Lemma \ref{lem:IndGamma}
we have 
\begin{align*}
R\pi^{J}_*({}^f\GD)
\cong&
\Fr_*(R\pi^{J}_{\zeta*}({}^f\DD_{\CB_\zeta}))
\cong
\Fr_*\CF_J(R\Ind^{J\emptyset}(\CO_\zeta(B^-)_\ad)),
\end{align*}
and
\begin{align*}
R\pi^{J}_*({}^f\GD_t)
\cong&
\Fr_*(R\pi^{J}_{\zeta*}({}^f\DD_{\CB_\zeta,t}))
\cong
\Fr_*\CF_J(R\Ind^{J\emptyset}(\CO_\zeta(B^-)_{\ad,t})).
\end{align*}
Assume we could show 
\begin{equation}
\label{eq:IndJ-van}
R^i\Ind^{J\emptyset}(\CO_\zeta(B^-)_{\ad})\otimes_{\CO(H)}\CO(H)_t=0
\qquad(i\ne0).
\end{equation}
Then by \cite[Proposition 4.5]{T3} we have
\[
R\Ind^{J\emptyset}(\CO_\zeta(B^-)_{\ad,t})
\cong
\Ind^{J\emptyset}(\CO_\zeta(B^-)_{\ad})\otimes_{\CO(H)}\BC_t,
\]
and hence
\begin{align*}
R\pi^{J}_*({}^f\GD_t)
\cong&
\Fr_*\CF_J(\Ind^{J\emptyset}
(\CO_\zeta(B^-)_\ad))\otimes_{\CO(H)}\BC_t
\cong
\pi^{J}_*({}^f\GD)\otimes_{\CO(H)}\BC_t.
\end{align*}
Since $U_\zeta(\Gg)$ is a flat ${}^fU_\zeta(\Gg)$-module, we have
\begin{align*}
R\pi^{J}_*(\GD_t)
\cong&
R\pi^{J}_*({}^f\GD_t)\otimes_{{}^fU_\zeta(\Gg)}U_\zeta(\Gg)
\cong
(\pi^{J}_*({}^f\GD)\otimes_{\CO(H)}\BC_t)
\otimes_{{}^fU_\zeta(\Gg)}U_\zeta(\Gg)
\\
\cong&
\pi^{J}_*(\GD)\otimes_{\CO(H)}\BC_t
\cong\GD_{J,t}.
\end{align*}
Hence we obtain
\[
R\Gamma(\CP_J,\GD_{J,t})
\cong
R\Gamma(\CP_J,R\pi^{J}_*(\GD_t))
\cong
R\Gamma(\CB,\GD_t)
\cong
U_\zeta(\Gg)_{[t]}
\]
by Theorem \ref{thm:GammaD} for $J=\emptyset$.

It remains to show \eqref{eq:IndJ-van}.
Define 
$U^L_\zeta(\Gl_J)$, $U^L_\zeta(\Gb_J^-)$ and $O_\zeta(B_J^-)$ in a obvious way.
We can similarly define the induction functor 
\[
\Ind':\Mod_\inte(U^L_\zeta(\Gb_J^-))
\to
\Mod_\inte(U^L_\zeta(\Gl_J)).
\]
By a standard fact concerning induction functors we have
\[
\For(\Ind^{J\emptyset}(M))
\cong\Ind'(\For(M))
\]
for any $U^L_\zeta(\Gb^-)$-module $M$.
Here, $\For$ denotes the forgetful functor.
We can show that  there exists an integrable $U^L_\zeta(\Gp_J)$-module $N$ such that 
\[
\For(\CO_\zeta(B^-)_\ad)
\cong
\CO_\zeta(B_J^-)_\ad\otimes N
\]
in $\Mod_\inte(U^L_\zeta(\Gb_J^-))$.
Hence we obtain
\begin{align*}
&\For(R^i\Ind^{J\emptyset}(\CO_\zeta(B^-)_{\ad}))\otimes_{\CO(H)}\CO(H)_t
\\
\cong&
R^i\Ind'(\For(\CO_\zeta(B^-)_{\ad}))\otimes_{\CO(H)}\CO(H)_t
\\
\cong&
R^i\Ind'(\CO_\zeta(B_J^-)_\ad\otimes N)\otimes_{\CO(H)}\CO(H)_t
\\
\cong&
(R^i\Ind'(\CO_\zeta(B_J^-)_\ad)\otimes_{\CO(H)}\CO(H)_t)\otimes N.
\end{align*}
Here, the last isomorphism is a consequence of the tensor identity for the induction functor. Therefore, it is sufficient to show 
\[
R^i\Ind'(\CO_\zeta(B_J^-)_\ad)\otimes_{\CO(H)}\CO(H)_t
\qquad(i\ne0).
\]
This is proved similarly to \eqref{eq:conj-Ind2b} 
by replacing $G$ and $B^-$ with $L_J$ and $B_J^-$ respectively.
The proof of Theorem \ref{thm:GammaD} is complete.

\subsection{Localization on $\CV$}
\label{subsec:locV}
We set
\begin{equation}
\CY=K\times_{H/W} H
\end{equation}
with respect to $\varkappa:K\to H/W$ and $H\to H/W$ ($t\mapsto [t^\ell]$).
We also set
\begin{equation}
\CV=
\{(B^-g, k,t)\in\CB\times K\times H
\mid
g\eta(k)g^{-1}\in t^{\ell}N^-\},
\end{equation}
and consider the projections
\begin{equation}
p^\CV_\CB:\CV\to \CB
\qquad
((B^-g, k,t)\mapsto B^-g),
\end{equation}
\begin{equation}
p^\CV_H:\CV\to H
\qquad
((B^-g, k,t)\mapsto t),
\end{equation}
\begin{equation}
p^\CV_{\CY}:\CV\to \CY
\qquad
((B^-g, k,t)\mapsto(k,t)).
\end{equation}
We define a subalgebra 
$\GZ$ of $\GD$ by
\[
\GZ=
\langle
\ell_\varphi,
\deru_u,
\sigma_{2\mu}
\mid
\varphi\in\CO_\CB,\;
u\in Z_\Fr(U_\zeta(\Gg)),\;
\mu\in \Lambda
\rangle
\subset
\GD.
\]
It is contained in the center of $\GD$.
By \cite{T1} we have
\begin{equation}
\label{eq:Z}
\GZ
\cong
(p^\CV_\CB)_{*}\CO_\CV.
\end{equation}
Note that $p^\CV_\CB$ is an affine morphism.
Hence localizing $\GD$ on $\CV$ we obtain an 
$\CO_{\CV}$-algebra 
$\sD$ 
such that
\begin{equation}
\Mod(\GD)
\cong
\Mod(\sD),
\qquad
\modu(\GD)
\cong
\modu(\sD).
\end{equation}
It is  easily seen that $\sD$ is a coherent $\CO_\CV$-module.
We have a natural $\BC$-algebra homomorphism
\begin{equation}
\label{eq:UtD}
U_\zeta(\Gg)\to\sD
\qquad(u\mapsto\deru_u).
\end{equation}

We define subvarieties 
$\CV_t$ for $t\in H$, and 
$\CV_t^k$ for $(k,t)\in \CY$ by
\begin{equation}
\CV_t=(p^\CV_H)^{-1}(t),
\qquad
\CV_t^k=
(p^\CV_{\CY})^{-1}(k,t).
\end{equation}
We denote by $\wCV_t$
(resp.\ $\wCV^k_t$) the formal neighborhood of $\CV_t$
(resp.\ $\CV^k_t$) in $\CV$.
\begin{remark}
Let $Y$ be a closed subvariety of a variety $X$.
In this paper we consider the formal neighborhood $\widehat{Y}$  of $Y$ in $X$ only when $Y$ is a base change of an affine closed embedding $Z'\to Z$ for a projective morphism $X\to Z$.
Hence following \cite{BM} we understand $\widehat{Y}$ to be the ordinary scheme $X\times_Z\Spec(\widehat{\CO(Z)})$, where
$\widehat{\CO(Z)}$ denotes the completion of $\CO(Z)$ at the ideal defining $Z'$
(see \cite[Section 0.1]{BM}).
\end{remark}

For $t\in H$ we define an $\CO_{\CV_t}$-algebra 
$\sD_t$ 
and an $\CO_{\wCV_t}$-algebra 
$\wsD_t$ 
by
\[
\sD_{t}
=
\sD\otimes_{\CO_\CV}\CO_{\CV_t},
\qquad
\wsD_{t}
=
\sD\otimes_{\CO_\CV}\CO_{\wCV_t}.
\]
Then \eqref{eq:UtD} induces $\BC$-algebra homomorphisms
\begin{equation}
\label{eq:UtD2}
U_\zeta(\Gg)_{[t]}\to\sD_t,
\qquad
\wU_\zeta(\Gg)_{[t]}\to\wsD_t.
\end{equation}
Since $\sigma_{2\mu}-\theta_\mu(t)$ for 
$\mu\in\Lambda$ generate the defining ideal of $\CV_t$ in $\CV$ we have
\begin{align}
\label{eq:DEt}
\modu(\GD_t)
\cong
\modu(\sD_{t}).
\end{align}
We denote by $\modu_t(\sD)$ the category of coherent $\sD$-modules supported on $\CV_t$.
Then we have
\begin{equation}
\modu_t(\GD)\cong\modu_t(\sD).
\end{equation}

For $(k,t)\in\CY$ we set
\[
\sD^k_{t}
=
\sD\otimes_{\CO_\CV}\CO_{\CV^k_t},
\qquad
\wsD^k_{t}
=
\sD\otimes_{\CO_\CV}\CO_{\wCV^k_t}.
\]
They are 
an $\CO_{\CV^k_t}$-algebra
and
an $\CO_{\wCV^k_t}$-algebra respectively.
We denote by 
$\modu^k_t(\sD)$
the category of coherent $\sD$-modules
supported on $\CV^k_t$.
Then we have
\[
\modu(\sD^k_t)
\subset
\modu^k_t(\sD)
\subset
\modu(\wsD^k_t).
\]

Set
\begin{equation}
\CB_t^k=
\{B^-g\in\CB
\mid
g\eta(k)g^{-1}\in t^\ell N^-\}
\quad((k,t)\in \CY).
\end{equation}
Note that 
we have a natural isomorphism $\CV^k_t\cong\CB^k_t$ for $(k,t)\in \CY$.
Since the variety $\CB_t^k$ depends only on $(k,t^\ell)$, 
we have a natural identification
\begin{equation}
\label{eq:Vkt}
\CV_{td}^k\cong
\CB^k_t
\qquad((k,t)\in \CY,\;d\in H_\ell).
\end{equation}

Regard $\CB^k_t$ as a subvariety of 
\begin{equation}
\tilde{G}'=\{(B^-g,k')\in \CB\times K\mid
g\eta(k')g^{-1}\in B^-\}.
\end{equation}
Since the natural morphism $\CV\to \tilde{G}'$ is an \'{e}tale morphism, we have
\begin{equation}
\label{eq:wVkt}
\wCV_{td}^k\cong
\wCB^k_t
\qquad(d\in H_\ell),
\end{equation}
where
$\wCB^k_t$ is the formal neighborhood of 
$\CB^k_t$ in $G'$.
We will sometimes regard $\sD^k_{td}$ and $\wsD^k_{td}$ as an 
$\CO_{\CB^k_{t}}$-algebra and an $\CO_{\wCB^k_{t}}$-algebra  respectively.

\subsection{Localization on $\CV_J$}
\label{subsec:locVJ}
Let $J\subset I$.
We set
\begin{equation}
\CV_J=
\{(P_J^-g, k,t)\in\CP_J\times K\times H
\mid
g\eta(k)g^{-1}\in\St_J^{-1}([t^{\ell}])N_J^-\},
\end{equation}
where $\mathrm{St}_J:L_J\to H/W_J$ is the Steinberg map for $L_J$.
We have a natural morphism 
\[
\spi^{J}:\CV\to\CV_J
\qquad((B^-g,k,t)\mapsto(P_J^-g,k,t)).
\]

Define $a:B^-\to H$ and
$a_J:P_J^-\to L_J$ using the natural isomorphisms
$B^-/N^-\cong H$ and $P_J^-/N_J^-\cong L_J$ respectively.
Set
\begin{align*}
\tilde{G}=&\{(B^-g,x)\in\CB\times G\mid
gxg^{-1}\in B^-\},
\\
\tilde{G}_J=&
\{(P_J^-g,x)\in\CP_J\times G\mid
gxg^{-1}\in P_J^-\}.
\end{align*}
Then we have a natural morphism
\[
\varrho_J:
\tilde{G}\to\tilde{G}_J\times_{H/W_J}H
\qquad
((B^-g,x)\mapsto(P_J^-g,x,a(gxg^{-1}))),
\]
where $\tilde{G}_J\to H/W_J$ is given by 
$(P_J^-g,x)\mapsto \St_J(a_J(gxg^{-1}))$, and
$H\to H/W_J$ is given by $t\mapsto[t]=Wt$.
We have the following Cartesian diagram:
\begin{equation}
\label{eq:Cart}
\xymatrix@C=80pt{
\CV
\ar[r]^{\spi^{J}}
\ar[d]
&
\CV_J
\ar[d]
\\
\tilde{G}
\ar[r]_{\varrho_J}
&
\tilde{G}_J\times_{H/W_J}H,
}
\end{equation}
where the vertical arrows are induced by $\eta:K\to G$.
\begin{lemma}
\label{lem:spi}
We have
$
R\spi^{J}_*\CO_\CV=\CO_{\CV_J}
$.
\end{lemma}
\begin{proof}
In the Cartesian diagram \eqref{eq:Cart} the vertical arrows are smooth morphisms.
Hence it is sufficient to show 
\begin{equation}
\label{eq:rhoJ}
R\varrho_{J*}\CO_{\tilde{G}}=
\CO_{\tilde{G}_J\times_{H/W_J}H}.
\end{equation}

In the case $J=I$ $\varrho_J$ is given by
\[
\varrho_I:\tilde{G}\to G\times_{H/W}H
\qquad
((B^-g,x)\mapsto(x,a(gxg^{-1}))),
\]
and \eqref{eq:rhoJ} is well-known (see, for example, \cite[(8.13)]{T3}).

Let $J$ be arbitrary.
Note that $\CP_J$ is covered by open subsets of the form $U^{g_0}=P_J^-\backslash P_J^-N_J^+g_0$ for $g_0\in G$.
Let 
\[
\varrho_J^{g_0}:(p\circ\varrho_J)^{-1}(U^{g_0})\to
p^{-1}(U^{g_0})
\]
be the restriction 
of $\varrho_J$, where $p:\tilde{G}_J\times_{H/W_J}H\to \CP_J$ is the natural projection.
Then it is sufficient to show 
\[
R\varrho^{g_0}_{J*}\CO_{(p\circ\varrho_J)^{-1}(U^{g_0})}=
\CO_{p^{-1}(U^{g_0})}
\]
for any $g_0\in G$.
It is easily shown that we have a commutative diagram
\[
\xymatrix@C=80pt{
(p\circ\varrho_J)^{-1}(U^{g_0})
\ar[r]^{\varrho_J^{g_0}}
\ar[d]_{\cong}
&
p^{-1}(U^{g_0})
\ar[d]^{\cong}
\\
\tilde{L}_J\times (N_J^+\times N_J^-)
\ar[r]_{\varrho'_J\times\Id}
&
(L_J\times_{H/W_J}H)\times (N_J^+\times N_J^-),
}
\]
where
\[
\tilde{L}_J=\{(B_J^-g,x)\in B_J^-\backslash L_J\times L_J\mid
gxg^{-1}\in B_J^-\},
\]
and $\varrho'_J$ is defined similarly to $\varrho_I$.
Hence the assertion is a consequence of 
\[
R\varrho'_{J*}\CO_{\tilde{L}_J}=
\CO_{\tilde{L}_J\times_{H/W_J}H},
\]
which is shown similarly to \eqref{eq:rhoJ} for $J=I$ by replacing $G$ with $L_J$.
\end{proof}
Let
\[
p^{\CV_J}_{\CP_J}:{\CV_J}\to \CP_J, \quad
p^{\CV_J}_H:{\CV_J}\to H,\quad
p^{\CV_J}_{\CY}:{\CV_J}\to \CY
\]
be the projections.

We set
\[
\GZ_J=\pi^{J}_*\GZ.
\]
It is a central subalgebra of $\GD_J=\pi^{J}_*\GD$.
We have
\begin{equation}
\GZ_J
\cong
(p^{\CV_J}_{\CP_J})_{*}\CO_{\CV_J}.
\end{equation}
In fact 
\[
\GZ_J=\pi^{J}_*\GZ
\cong
\pi^{J}_*(p^{\CV}_{\CB})_{*}\CO_{\CV}
\cong
(p^{\CV_J}_{\CP_J})_{*}
\spi^{J}_*\CO_{\CV}
\cong
(p^{\CV_J}_{\CP_J})_{*}\CO_{\CV_J}
\]
by \eqref{eq:Z} and Lemma \ref{lem:spi}.
Hence 
localizing $\GD_J$ on $\CV_J$ we obtain an 
$\CO_{\CV_J}$-algebra $\sD_J$.
By definition we have
\[
\sD_J=\spi^{J}_*(\sD).
\]
We have a natural $\BC$-algebra homomorphism 
\begin{equation}
\label{eq:UtDJ}
U_\zeta(\Gg)\to\sD_J\qquad(u\mapsto \deru_u).
\end{equation}

Set
\[
\CV_{J,t}=(p^{\CV_J}_H)^{-1}(t)
\quad(t\in H),
\qquad
\CV_{J,t}^k=
(p^{\CV_J}_{\CY})^{-1}(k,t)
\qquad((k,t)\in \CY),
\]
and let 
$\wCV_{J,t}$ (resp.\ 
$\wCV_{J,t}^k$) be the formal neighbourhood of 
$\CV_{J,t}$ (resp.\ $\CV_{J,t}^k$) in $\CV_J$.
Define an $\CO_{\CV_{J,t}}$-algebra $\sD_{J,t}$,
an $\CO_{\wCV_{J,t}}$-algebra $\wsD_{J,t}$, 
an $\CO_{\CV^k_{J,t}}$-algebra $\sD^k_{J,t}$,
and an $\CO_{\wCV^k_{J,t}}$-algebra $\wsD^k_{J,t}$ 
by
\begin{gather*}
\sD_{J,t}
=
\sD_J\otimes_{\CO_{\CV_J}}\CO_{\CV_{J,t}},
\qquad
\wsD_{J,t}
=
\sD_J\otimes_{\CO_{\CV_J}}\CO_{\wCV_{J,t}}
\qquad(t\in H),
\\
\sD^k_{J,t}
=
\sD_J\otimes_{\CO_{\CV_J}}\CO_{\CV^k_{J,t}},
\qquad
\wsD^k_{J,t}
=
\sD_J\otimes_{\CO_{\CV_J}}\CO_{\wCV^k_{J,t}}
\qquad((k,t)\in\CY)
\end{gather*}
respectively.
By \eqref{eq:UtDJ} we have natural $\BC$-algebra homomorphisms
\begin{align}
\label{eq:UtD2J}
&U_\zeta(\Gg)_{[t]}\to\sD_{J,t},
\qquad
\wU_\zeta(\Gg)_{[t]}\to\wsD_{J,t},\qquad(t\in H),
\\
\label{eq:UtD2Ja}
&U_\zeta(\Gg)^k_{[t]}\to\sD_{J,t}^k,
\qquad
\wU_\zeta(\Gg)^k_{[t]}\to\wsD^k_{J,t},\qquad((k,t)\in \CY).
\end{align}

For $t\in H$ (resp.\ $(k,t)\in\CY$) 
we denote by $\modu_t(\sD_J)$ (resp.\ $\modu_t^k(\sD_J)$) 
the category of coherent $\sD_J$-modules
supported on $\CV_{J,t}$ (resp.\ $\CV_{J,t}^k$).
Then we have
\begin{gather*}
\modu(\sD_{J,t})\subset
\modu_t(\sD_{J})\subset
\modu(\wsD_{J,t})
\qquad(t\in H)
\\
\modu(\sD_{J,t}^k)\subset
\modu^k_t(\sD_{J})\subset
\modu(\wsD_{J,t}^k)
\qquad((k,t)\in\CY).
\end{gather*}

For $(k,t)\in \CY$ set
\begin{equation}
\CP_{J,t}^k=
\{P_J^-g\in\CP_J
\mid
g\eta(k)g^{-1}\in \St_J^{-1}(t^\ell)N_J^-\},
\end{equation}
and 
let $\wCP^k_{J,t}$ be the formal neighborhood of 
$\CP^k_{J,t}$ naturally embedded in the smooth subvariety
\begin{equation}
\tilde{G}'_J=\{(P^-g,k')\in \CP_J\times K\mid
g\eta(k')g^{-1}\in P_J^-\}
\end{equation}
of $\CP_J\times K$.

Then we have natural identifications
\begin{equation}
\label{eq:Vkt2}
\CV_{J,td}^k\cong
\CP^k_{J,t}, 
\qquad
\wCV_{J,td}^k\cong
\wCP^k_{J,t}
\qquad((k,t)\in \CY,\;d\in H_\ell).
\end{equation}
We will sometimes regard $\sD^k_{J,td}$ and $\wsD^k_{J,td}$ as an 
$\CO_{\CP^k_{J,t}}$-algebra and an $\CO_{\wCP^k_{J,t}}$-algebra  respectively.

\subsection{Azumaya property}
\label{subsec:Azumaya}
Let us recall the main results of \cite{T1}.
In \cite[Theorem 6.1] {T1} it is proved using a result of \cite{BrG} that $\sD$ is an Azumaya algebra of rank $\ell^{2|\Delta^+|}$ on $\CV$
if the following conditions are satisfied
\begin{itemize}
\item[(c1)]
$\ell$ is prime to 3 if $G$ is of type $F_4$, $E_6$, $E_7$, $E_8$;
\item[(c2)]
$\ell$ is prime to 5 if $G$ is of type $E_8$.
\end{itemize}
Namely, $\sD$ is a locally free $\CO_\CV$-module of rank $\ell^{2|\Delta^+|}$, and 
for any $v\in\CV$ the fiber $\sD(v)$ is isomorphic to the matrix algebra $M_{\ell^{|\Delta^+|}}(\BC)$.
Moreover, it is also shown that 
$\sD_t^k$ and $\wsD_t^k$ are split Azumaya algebras for any $(k,t)\in \CY$ 
under the same conditions (c1), (c2) (see \cite[Theorem 6.10, Remark 6.13]{T1}).
Namely, 
there exists a locally free $\CO_{\CV^k_t}$-module $\GK$ 
(resp.\ 
a locally free $\CO_{\wCV^k_t}$-module $\wGK$) 
of rank $\ell^{|\Delta^+|}$ such that 
$\sD_t^k\cong\CEnd_{\CO_{\CV^k_t}}(\GK)$
(resp.\
$\wsD_t^k\cong\CEnd_{\CO_{\wCV^k_t}}(\wGK)$).

We now discuss the choice of the splitting bundles $\GK$ 
and $\wGK$ of $\sD_t^k$ and $\wsD_t^k$ for $(k,t)\in\CY$ respectively.
Note that we have a natural algebra homomorphism
\begin{equation}
\label{eq:BrG}
\CO_\CV\otimes_{Z(U_\zeta(\Gg))}U_\zeta(\Gg)\to
\sD
\end{equation}
given by
$Z(U_\zeta(\Gg))\cong
\CO(\CX)\to \CO_\CV$
corresponding to
\[
\CV\to \CX
\qquad((B^-g, k, t)\mapsto (k,[t])).
\]
Set 
\[
\CX_\ur=K\times_{H/W}(H_\ur/W\circ)\subset \CX,
\quad
\CY_\ur=K\times_{H/W}H_\ur\subset \CY,
\quad
\CV_{\ur}=(p^{\CV}_H)^{-1}(H_\ur).
\]
They are open subsets of $\CX$, $\CY$ and $\CV$ respectively.
By \cite[Theorem 2.5, Theorem 4.9]{BrG} together with 
\cite[Theorem 4.3]{BrGoo} 
we have the following.
\begin{proposition}
\label{prop:BrG}
The algebra 
$\CO_{\CX_\ur}\otimes_{Z(U_\zeta(\Gg))}U_\zeta(\Gg)$ 
is an Azumaya algebra on $\CX_\ur$ of rank $\ell^{2|\Delta^+|}$.
\end{proposition}

Hence
the restriction
$\CO_{\CV_\ur}
\otimes_{Z(U_\zeta(\Gg))}
U_\zeta(\Gg)$
of $\CO_{\CV}
\otimes_{Z(U_\zeta(\Gg))}
U_\zeta(\Gg)$ to $\CV_\ur$ is also an Azumaya algebra of rank $\ell^{2|\Delta^+|}$.
Since $\sD$ is locally generated as an $\CO_\CV$-module by $\ell^{2|\Delta^+|}$-sections, wee see that \eqref{eq:BrG} is an isomorphism on $\CV_\ur$ (\cite[Section 6]{T1}).
Namely, we have
\begin{equation}
\label{eq:isom-ur}
\sD|_{\CV_{\ur}}\cong
\CO_{\CV_\ur}
\otimes_{Z(U_\zeta(\Gg))}
U_\zeta(\Gg).
\end{equation}

Let $(k,t)\in\CY_\ur$.
By \eqref{eq:isom-ur} we have
\begin{equation}
\sD^k_{{t}}\cong 
\CO_{\CV^k_{{t}}}\otimes_\BC U_\zeta(\Gg)^k_{[{t}]},
\qquad
\wsD^k_{{t}}\cong
\CO_{\wCV^k_{{t}}} 
\otimes_{\widehat{\CO}(\CX)^k_{[{t}]}}
\wU_\zeta(\Gg)^k_{[{t}]},
\end{equation}
where $\widehat{\CO}(\CX)^k_{[{t}]}$ denotes the completion of 
${\CO}(\CX)$ at the maximal ideal 
corresponding to the point $(k,[{t}])\in \CX_\ur$.
Since 
$U_\zeta(\Gg)^k_{[{t}]}$ is isomorphic to the matrix algebra $M_{\ell^{|\Delta^+|}}(\BC)$,  there exists a unique irreducible $U_\zeta(\Gg)^k_{[{t}]}$-module $K^k_{[{t}]}$.
It follows that 
$\sD^k_{{t}}$
is a split Azumaya algebra with 
splitting bundle
\[
\GK^k_{{t}}=
\CO_{\CB^k_{{t}}}
\otimes_\BC
K^k_{[{t}]},
\]
where we identify $\CV^k_t$ with $\CB^k_t$ via \eqref{eq:Vkt}.
Moreover, since 
$\widehat{\CO}(\CX)^k_{[{t}]}$ is a complete local ring, there exists a 
$\wU_\zeta(\Gg)^k_{[{t}]}$-module 
$\wK^k_{[{t}]}$ which is free over 
$\widehat{\CO}(\CX)^k_{[{t}]}$ 
such that 
$\BC\otimes_{\widehat{\CO}(\CX)^k_{[{t}]}}\wK^k_{[{t}]}
\cong
{K}^k_{[{t}]}$
and 
$\wU_\zeta(\Gg)^k_{[{t}]}\cong
\End_{\wU_\zeta(\Gg)^k_{[{t}]}}(\wK^k_{[{t}]})$.
Hence
$\wsD^k_{{t}}$
is a split Azumaya algebra with 
splitting bundle
\[
\wGK^k_{{t}}=
\CO_{\wCB^k_{{t}}}
\otimes_{\wCO(\CX)^k_{[{t}]}}
\wK^k_{[{t}]}.
\]

By \cite{T1} (see \cite[Proposition 6.12]{T1} and its proof) 
we have the following.
\begin{theorem}
\label{thm:Azumaya}
Let
$(k,\tilde{t})\in\CY_\ur$ and 
$\lambda\in\Lambda$.
Then
$\sD^k_{\tilde{t}t_\lambda}$
and 
$\wsD^k_{\tilde{t}t_\lambda}$
are  split Azumaya algebras with splitting bundles
\[
\GK^{k}_{\tilde{t}}[\lambda]=
\GO[\lambda]\otimes_{\GO}
\GK^{k}_{\tilde{t}},
\qquad
\wGK^{k}_{\tilde{t}}[\lambda]=
\GO[\lambda]\otimes_{\GO}
\wGK^{k}_{\tilde{t}}
\]
respectively,
where
$\CV^k_{\tilde{t}t_\lambda}$ and $\wCV^k_{\tilde{t}t_\lambda}$ 
are identified with $\CB^k_{\tilde{t}}$ and $\wCB^k_{\tilde{t}}$ 
respectively, and 
the $\GO$-module structures of $\GK^{k}_{\tilde{t}}$ 
and 
$\wGK^{k}_{\tilde{t}}$ are 
given by $\GO\to\sD$.
\end{theorem}

\begin{remark}
If (c1), (c2) are satisfied, then for any $t\in H$
there exists $\tilde{t}\in H_\ur$ and $d\in H_\ell$ 
such that $t=\tilde{t}d$.
Hence 
$\sD^k_{t}$
and 
$\wsD^k_{t}$
are split Azumaya algebras for any $(k,t)\in\CY$.
For $(k,t)\in\CY$ such that $\eta(k)$ is unipotent  
we can always find $\tilde{t}\in H_{\ur}$ and $\lambda\in\Lambda$ such that $t=\tilde{t}t_\lambda$ without assuming (c1), (c2)
(see the proof of
\cite[Lemma 6.6]{T1}).
Hence we do not need to assume (c1), (c2) in dealing with the case where
$\eta(k)$ is unipotent.
\end{remark}
\begin{remark}
The splitting bundle
$\GK^{k}_{\tilde{t}}[\lambda]$ 
for 
$\sD^k_{\tilde{t}t_\lambda}$ 
depends on the choice of $\lambda$.
We have 
$t_\lambda=t_{\lambda+\ell\nu}$ for 
$\lambda, \nu\in \Lambda$ and 
\[
\GK^{k}_{\tilde{t}}[\lambda+\ell\nu]
=
\GO[\ell\nu]
\otimes_{\GO}
\GK^{k}_{\tilde{t}}[\lambda]
\cong
\CO_{\CB}[\nu]\otimes_{\CO_{\CB}}
\GK^{k}_{\tilde{t}}[\lambda].
\]
\end{remark}
For $J\subset I$ 
set 
$
\CV_{J,\ur}=(p^{\CV_J}_H)^{-1}(H_\ur)
$,
and let
$\spi^{J}_\ur:\CV_{\ur}\to
\CV_{J,\ur}$ be the restriction
of $\spi^{J}$.

\begin{proposition}
\label{prop:Azumaya-ur}
\begin{itemize}
\item[(i)]
The restriction of 
$\sD_J$ to 
$\CV_{J,\ur}$ 
is an Azumaya algebra.
\item[(ii)]
Assume $(k,\tilde{t})\in \CY_\ur$.
Then 
$\sD_{J,\tilde{t}}^k$ and 
$\wsD_{J,\tilde{t}}^k$ are  split Azumaya algebras with 
splitting bundles
$\GK_{J,\tilde{t}}^k=
\CO_{\CP_{J,t}^k}\otimes_\BC K^{k}_{[\tilde{t}]}$ 
and 
$\wGK_{J,\tilde{t}}^k=
\CO_{\wCP_{J,t}^k}\otimes_{\wCO(\CX)^k_{[t]}} \wK^{k}_{[\tilde{t}]}$ 
respectively.
\end{itemize}
\end{proposition}
\begin{proof}
By \eqref{eq:isom-ur} and Lemma \ref{lem:spi} we have
\begin{align*}
\sD_J|_{\CV_{J,\ur}}
\cong&
(\spi^{J}_\ur)_*
(\sD|_{\CV_{\ur}})
\cong
(\spi^{J}_\ur)_*
(\CO_{\CV_\ur}
\otimes_{Z(U_\zeta(\Gg))}
U_\zeta(\Gg))
\\
\cong&
(\spi^{J}_\ur)_*\CO_{\CV_\ur}
\otimes_{Z(U_\zeta(\Gg))}
U_\zeta(\Gg)
\\
\cong&
\CO_{\CV_{J,\ur}}
\otimes_{Z(U_\zeta(\Gg))}
U_\zeta(\Gg),
\end{align*}
and hence the assertion is proved similarly to the case $J=I$ using Proposition \ref{prop:BrG}.
\end{proof}

Similarly to Theorem \ref{thm:Azumaya} we obtain 
from Proposition \ref{prop:Azumaya-ur} 
the following.
\begin{theorem}
\label{thm:Azumaya-J}
Let $(k,\tilde{t})\in \CY_\ur$ and let 
$\mu\in\Lambda^J$.
Then 
$\sD_{J,\tilde{t}t_\mu}^k$ and $\wsD_{J,\tilde{t}t_\mu}^k$
are split Azumaya algebras with 
splitting bundles
$\GK_{J,\tilde{t}}^k[\mu]
=\GO_J[\mu]\otimes_{\GO_J}\GK_{J,\tilde{t}}^k
$
and
$\wGK_{J,\tilde{t}}^k[\mu]
=\GO_J[\mu]\otimes_{\GO_J}\wGK_{J,\tilde{t}}^k
$
respectively.
\end{theorem}

\begin{theorem}
\label{thm:Morita}
For $(k,\tilde{t})\in \CY_\ur$
and $\mu\in \Lambda^J$
we have
\begin{align}
\label{eq:Morita1}
\modu(\sD^{k}_{J,\tilde{t}t_\mu})
\cong&
\modu(\CO_{\CP^{k}_{J,\tilde{t}}})
\qquad&
(\GK^{k}_{J,\tilde{t}}[\mu]
\otimes_{\CO_{\CP^{k}_{J,\tilde{t}}}}
\CS\longleftrightarrow\CS),
\\
\label{eq:Morita2}
\modu(\wsD^{k}_{J,\tilde{t}t_\mu})
\cong&
\modu(\CO_{\wCP^{k}_{J,\tilde{t}}})
\qquad&
(\wGK^{k}_{J,\tilde{t}}[\mu]
\otimes_{\CO_{\wCP^{k}_{J,\tilde{t}}}}
\CS\longleftrightarrow\CS),
\\
\label{eq:Morita3}
\modu^{k}_{\tilde{t}t_\mu}(\sD_J)
\cong&
\modu_{\CP^k_{J,\tilde{t}}}(\CO_{\widetilde{G}_J'}),
\end{align}
where 
$\modu_{\CP^k_{J,t}}(\CO_{\widetilde{G}_J'})$ denotes 
the category of coherent $\CO_{\widetilde{G}_J'}$-modules
supported on $\CP^k_{J,t}$.
\end{theorem}
\begin{proof}
The assertions \eqref{eq:Morita1} and \eqref{eq:Morita2} are consequences of Theorem \ref{thm:Azumaya-J} and the Morita correspondence.
Then  \eqref{eq:Morita3} follows from \eqref{eq:Morita2}.
\end{proof}

\section{{Equivalence of derived categories}}
\label{sec:ED}
\subsection{Choice of $\dot{k}\in K$}
As noted in Section \ref{sec:Rep}, 
$U_\zeta(\Gg)^k$ for $k\in K$ depends only on the conjugacy class of $G$ containing $\eta(k)$.
Hence in the representation theory of $U_\zeta(\Gg)$
we can select 
for each conjugacy class $O$ of $G$ any single $k$ satisfying $\eta(k)\in O$ and restrict ourselves to the investigation of $U_\zeta(\Gg)^k$-modules.
It is well-known that any conjugacy class of $G$ contains an element $g$ such that $g_s\in H$, $g_u\in N^+$ and there exists a maximal torus of the centralizer $Z_G(g)$ contained in $H$ (see \cite[22.3]{H}).
In the rest of this paper we fix 
$\dot{k}\in K$, $\dot{g}\in G$, 
$\dot{h}\in H$, $\dot{x}\in N^+$
such that
\begin{itemize}
\item[(k1)]
$\eta(\dot{k})=\dot{g}=\dot{h}\dot{x}=\dot{x}\dot{h}$,
\item[(k2)]
there exists a maximal torus of 
$Z_G(\dot{g})$ contained in $H$,
\end{itemize}
and 
restrict ourselves to the case where the Frobenius central character is given by $\xi^{\dot{k}}$.
Note that we can write $\dot{k}=(\dot{x}\dot{h}^{1/2},\dot{h}^{-1/2})$, where $\dot{h}^{1/2}\in H$ satisfies $(\dot{h}^{1/2})^2=\dot{h}$.
On the other hand
by the theory of parabolic induction 
(see \cite{DK2})
we are further reduced to the case where $\dot{h}$ is exceptional.
Here, a semisimple element $h$ of $G$ is called exceptional if 
the semisimple rank of $Z_{G}(h)$ is the same as that of $G$.
Hence we further assume
\begin{itemize}
\item[(k3)]
$\dot{h}$ is exceptional
\end{itemize}
in the following.
We set $\dot{G}=Z_G(\dot{h})$.
It is connected since $G$ is chosen to be simply-connected.
Hence $\dot{G}$ is a connected semisimple algebraic group with maximal torus $H$.
We denote by $\dot{\Delta}\subset\Delta$ the root system of $\dot{G}$.
We set $\dot{\Delta}^+=\Delta^+\cap \dot{\Delta}$, 
$\dot{Q}=\sum_{\alpha\in\dot{\Delta}}\BZ\alpha$.
We denote by $\dot{W}$ the Weyl group of $\dot{\Delta}$.
\subsection{Equivalences}
In \cite{T3} we have proved Conjecture \ref{conj:BB2}
in certain cases.
In particular, it is true 
when
the regular element $t\in H$ satisfies $t\in H(\dot{h})$ for $\dot{h}$ as above, and $\ell$ satisfies the conditions:
\begin{itemize}
\item[(k4)]
$\ell$ is prime to $|Q/\dot{Q}|$,
\item[(p)]
$\ell$ is a power of a prime number
\end{itemize}
in addition to the conditions (a1), (a2), (a3).
In the rest of this paper we also assume the conditions (k4), (p).

Recall  that $H(\dot{h})$ consists of elements of the form $\tilde{t}d$ for $\tilde{t}\in H_\ur(\dot{h})$ and $d\in H_\ell$.
We fix $\dot{t}\in H_\ur(\dot{h})$ in the following.

By \eqref{eq:UtDJ}, \eqref{eq:UtD2J} we have left 
exact functors
\begin{align}
\label{eq:GammaX1}
\Gamma:&
\modu(\sD_{J,\dot{t}d}),
\to
\modu(U_\zeta(\Gg)_{[\dot{t}d]}),
\\
\label{eq:GammaX2}
\Gamma:&
\modu_{\dot{t}d}(\sD_J)
\to
\modu_{[\dot{t}d]}(U_\zeta(\Gg)),
\\
\label{eq:GammaX3}
\Gamma:&
\modu(\wsD_{J,\dot{t}d})
\to
\modu(\wU_\zeta(\Gg)_{[\dot{t}d]}).
\end{align}
We have also right exact functors
\begin{align}
\label{eq:LocX1}
\Loc_{J,\dot{t}d}:&
\modu(U_\zeta(\Gg)_{[\dot{t}d]})
\to
\modu(\sD_{J,\dot{t}d}),
\\
\label{eq:LocX2}
{}_{\dot{t}d}\Loc_J:&
\modu_{[\dot{t}d]}(U_\zeta(\Gg))
\to
\modu_{\dot{t}d}(\sD_J),
\\
\label{eq:LocX3}
\widehat{\Loc}_{J,\dot{t}d}:&
\modu(\wU_\zeta(\Gg)_{[\dot{t}d]})
\to
\modu(\wsD_{J,\dot{t}d})
\end{align}
in the opposite directions.
Here, \eqref{eq:LocX1} and \eqref{eq:LocX3} are defined by
\[
{\Loc}_{J,\dot{t}d}(M)
=
\sD_{J,\dot{t}d}\otimes_{U_\zeta(\Gg)_{[\dot{t}d]}}M,
\qquad
\widehat{\Loc}_{J,\dot{t}d}(M)=
\wsD_{J,\dot{t}d}\otimes_{\wU_\zeta(\Gg)_{[\dot{t}d]}}M
\]
respectively.
For $M\in\modu_{[\dot{t}d]}(U_\zeta(\Gg))$ we have
\[
\sD_J\otimes_{U_\zeta(\Gg)}M
\in
\bigoplus_{t'\in W\circ(\dot{t}d)}
\modu_{t'}(\sD_J),
\]
and ${}_{\dot{t}d}\Loc_J(M)$ is the component of 
$\sD_J\otimes_{U_\zeta(\Gg)}M$ belonging to
$\modu_{\dot{t}d}(\sD_J)$.

We say that $t\in H$ is $J$-regular if it satisfies
\[
w\in W,\;\;
w\circ t=t\;\Longrightarrow\; w\in W_J.
\]
Note that $t$ is $\emptyset$-regular if and only if it is regular in the sense of \ref{subsec:Dmod}.
\begin{theorem}
\label{thm:BB1}
Let $d=t_\lambda$ for $\lambda\in\Lambda^J$, and assume that $\dot{t}d$ is $J$-regular.
Then the right derived functors of 
\eqref{eq:GammaX1}, 
\eqref{eq:GammaX2}, 
\eqref{eq:GammaX3}
and the left derived functors of 
\eqref{eq:LocX1}, 
\eqref{eq:LocX2}, 
\eqref{eq:LocX3}
induce 
equivalences
\begin{align}
\label{eq:BB1a}
&D^b(\modu(U_\zeta(\Gg)_{[\dot{t}d]}))
\cong
D^b(\modu(\sD_{J,\dot{t}d})),
\\
\label{eq:BB1b}
&D^b(\modu_{[\dot{t}d]}(U_\zeta(\Gg)))
\cong
D^b(\modu_{\dot{t}d}(\sD_J)),
\\
\label{eq:BB1c}
&D^b(\modu(\wU_\zeta(\Gg)_{[\dot{t}d]}))
\cong
D^b(\modu(\wsD_{J,\dot{t}d}))
\end{align}
of triangulated categories.
\end{theorem}
\begin{proof}
Assume $J=I$.
Then the first assertion \eqref{eq:BB1a} is already proved in \cite[Theorem 9.4]{T3} in view of 
\eqref{eq:DB2} and \eqref{eq:DEt}.
The main part in the argument of \cite{T3}, which is used to establish \eqref{eq:BB1a} for $J=I$, is to show 
$R\Gamma(\sD_t)\cong U_\zeta(\Gg)_{[t]}$ for $t\in H(\dot{h})$ (see Theorem \ref{thm:GammaD}).
The proof of  \eqref{eq:BB1c} for $J=I$ is similar to that of 
\eqref{eq:BB1a}.
Here, we need 
$R\Gamma(\wsD_t)\cong \wU_\zeta(\Gg)_{[t]}$ instead of  $R\Gamma(\sD_t)\cong U_\zeta(\Gg)_{[t]}$.
This also holds since we actually proved in \cite{T3}  a stronger result
\[
R\Gamma(\jmath_t^*(\sD))
\cong 
U_\zeta(\Gg)\otimes_{\CO(H/W\circ)}{\CO(H/W\circ)_{[t]}}.
\]
Here $\CO(H)_t$ and $\CO(H/W\circ)_{[t]}$ for $t\in H(\dot{h})$ denote the localizations of 
$\CO(H)$ and $\CO(H/W\circ)$
at the maximal ideals
corresponding to the points $t\in H$ and $[t]\in H/W\circ$ respectively, 
and 
\[
\jmath_t:\CV\times_H\Spec(\CO(H)_t)\to \CV
\]
is the natural morphism (see \cite[Theorem 8.1]{T3}).
The assertion \eqref{eq:BB1b} for $J=I$ follows  from \eqref{eq:BB1c}  for $J=I$ since both sides of 
\eqref{eq:BB1b} are subcategories of those of 
\eqref{eq:BB1c}.

The proof for general $J$ is similar.
Details are omitted.
\end{proof}

\begin{remark}
If we can show Theorem \ref{thm:GammaD} without assuming the condition (p), then Theorem \ref{thm:BB1} 
holds without assuming it.
If this is the case, the arguments in the rest of this paper also work without assuming (p).
\end{remark}
For $d\in H_\ell$ we denote by 
$\modu^{\dot{k}}(\sD_{J,\dot{t}d})$ 
(resp.\ 
$\modu^{\dot{k}}_{\dot{t}d}(\sD_J)$)
the category consisting of coherent $\sD_{J,\dot{t}d}$-modules 
(resp.\ 
$\sD_J$-modules)
supported on $\CV^k_{td}$.
Taking into account of the action of $Z_\Fr(U_\zeta(\Gg))$ we also obtain the following.
\begin{theorem}
\label{thm:BB3}
Let $d=t_\lambda$ for $\lambda\in\Lambda^J$, and assume that $\dot{t}d$ is $J$-regular.
Then we have equivalences
\begin{align}
\label{eq:BB3a}
&D^b(\modu^{\dot{k}}(U_\zeta(\Gg)_{[\dot{t}d]}))
\cong
D^b(\modu^{\dot{k}}(\sD_{J,\dot{t}d})),
\\
\label{eq:BB3b}
&D^b(\modu^{\dot{k}}_{[\dot{t}d]}(U_\zeta(\Gg)))
\cong
D^b(\modu^{\dot{k}}_{\dot{t}d}(\sD_J)),
\\
\label{eq:BB3c}
&D^b(\modu(\wU_\zeta(\Gg)^{\dot{k}}_{[\dot{t}d]}))
\cong
D^b(\modu(\wsD^{\dot{k}}_{J,\dot{t}d}))
\end{align}
of triangulated categories.
\end{theorem}

\section{Torus action}
\subsection{Torus $C$}
For $\lambda\in\Lambda$ we define $s_\lambda\in H$ by 
$\theta_\mu(s_\lambda)=\zeta^{(\lambda,\mu)}$ for $\mu\in\Lambda$.
Then we have $s_\lambda^2=t_\lambda$.
Let $\dot{s}\in H$ be the unique element satisfying 
$\dot{s}^2=\dot{t}$, $\dot{s}^\ell=\dot{h}^{1/2}$.

We set
\begin{equation}
\label{eq:C}
C=Z_H(\dot{g})^0=Z_H(\dot{x})^0.
\end{equation}
By our assumption (k2) $C$ is a maximal torus of 
$Z_G(\dot{g})=Z_{\dot{G}}(\dot{x})$.
Define a subset $\Gamma$ of $\Delta^+$ by 
\[
{\dot{x}}=\exp(\dot{a}),
\qquad
\dot{a}=\sum_{\beta\in\Gamma}\dot{a}_\beta,
\qquad
\dot{a}_\beta\in\Gg_\beta\setminus\{0\},
\]
and set
\begin{align*}
\Lambda'_C=&\Lambda\cap
(\sum_{\beta\in\Gamma}\BQ\beta),
\qquad
\Lambda_C=\Lambda/\Lambda'_C.
\end{align*}
We denote by $\lambda\mapsto\overline{\lambda}$ the canonical map $\Lambda\to \Lambda_C$.
For $\gamma=\overline{\lambda}\in\Lambda_C$ we 
set
$\theta^C_{\gamma}=\theta_\lambda|_C$.
Then the character group of the torus $C$ is identified with 
$\Lambda_C$ by the correspondence $\gamma\leftrightarrow\theta^C_\gamma$.
We set
\begin{equation}
\label{eq:Cell}
C_\ell=\{h\in C\mid h^\ell=1\},
\end{equation}
\begin{equation}
\label{eq:QC}
Q_C=\{\lambda \in Q
\mid
(\lambda,\Lambda'_C)=\{0\}\}.
\end{equation}

We see easily that $s_\lambda\in C_\ell$ for $\lambda\in Q_C$.

\begin{lemma}
The group homomorphism
$Q_C\to C_\ell\;(\lambda\mapsto s_\lambda)$ 
induces an isomorphism
\[
Q_C/\ell Q_C\cong C_\ell
\]
of abelian groups.
\end{lemma}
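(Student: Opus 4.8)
The plan is to identify both sides with concrete lattices modulo $\ell$ and to recognize the map $\lambda\mapsto s_\lambda$ as the reduction of an inclusion of lattices, after which everything reduces to a coprimality statement governed by conditions (a1), (a3). Write $\phi\colon Q_C\to C_\ell$, $\lambda\mapsto s_\lambda$; it is a homomorphism since $s_{\lambda+\mu}=s_\lambda s_\mu$, and it lands in $C_\ell$ by the remark preceding the lemma. Since $C$ is a torus with character group $X(C)=\Lambda_C$ (via $\gamma\leftrightarrow\theta^C_\gamma$), restriction gives $C_\ell\cong\Hom(\Lambda_C,\mu_\ell)$, an element $h$ corresponding to $\overline\mu\mapsto\theta_\mu(h)$. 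Using that $\zeta$ is a primitive $\ell$-th root of $1$ and that $(\lambda,\mu)\in\BZ$ for $\lambda\in Q$, $\mu\in\Lambda$, I identify $\mu_\ell$ with $\BZ/\ell\BZ$; then $\phi(\lambda)$ becomes the homomorphism $\overline\mu\mapsto(\lambda,\mu)\bmod\ell$, which indeed descends to $\Lambda_C=\Lambda/\Lambda'_C$ because $(\lambda,\Lambda'_C)=0$ for $\lambda\in Q_C$. Since $\Lambda_C$ is free of finite rank, $\Hom(\Lambda_C,\BZ/\ell)\cong\Lambda_C^*/\ell\,\Lambda_C^*$, and via the form $(\,,\,)$ one identifies $\Lambda_C^*=\Hom(\Lambda_C,\BZ)$ with $M_C:=\{v\in M\mid(v,\Lambda'_C)=0\}$, where $M:=\{v\in\Gh^*\mid(v,\Lambda)\subseteq\BZ\}$. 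Under all these identifications $\phi$ becomes precisely the reduction modulo $\ell$ of the inclusion $Q_C\hookrightarrow M_C$ (note $Q_C\subseteq M_C$ since $Q\subseteq M$ and $Q_C=Q\cap M_C$).

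So the statement reduces to showing that the inclusion $Q_C\hookrightarrow M_C$ induces an isomorphism $Q_C/\ell Q_C\cong M_C/\ell M_C$. Because $Q$ and $M$ are full-rank lattices in $\Gh^*$, we have $Q_C\otimes\BQ=(\Lambda'_C\otimes\BQ)^\perp=M_C\otimes\BQ$, so $Q_C$ and $M_C$ have the same rank and $N:=M_C/Q_C$ is a finite abelian group; tensoring the short exact sequence $0\to Q_C\to M_C\to N\to 0$ with $\BZ/\ell$ over $\BZ$ yields $0\to N[\ell]\to Q_C/\ell Q_C\to M_C/\ell M_C\to N/\ell N\to 0$. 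Hence it suffices to show $\gcd(\ell,|N|)=1$, for then $N[\ell]=N/\ell N=0$.

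The remaining, and main, point is to bound $|N|$. The assignment $x+Q_C\mapsto x+Q$ is a well-defined injection $N=M_C/Q_C\hookrightarrow M/Q$ (injective because $Q_C=Q\cap M_C$), so $|N|$ divides $|M/Q|$, and I claim $|M/Q|=\prod_{i\in I}d_i$. To see this, note $Q\subseteq M\subseteq Q^*$, where $Q^*$ is the dual of $Q$ with respect to $(\,,\,)$ and $M=\Lambda^*$: here $[Q^*:Q]$ equals the Gram determinant of $Q$ in the basis $\{\alpha_i\}$, namely $\det\big((d_i a_{ij})\big)=\big(\prod_i d_i\big)\det(a_{ij})=\big(\prod_i d_i\big)\,[\Lambda:Q]$, while $[Q^*:M]=[Q^*:\Lambda^*]=[\Lambda:Q]$ since duality reverses indices; dividing gives $[M:Q]=\prod_i d_i$. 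Finally $d_i\in\{1,2,3\}$, with $d_i=3$ occurring only in type $G_2$ (where $\prod_i d_i=3$) and $\prod_i d_i$ a power of $2$ in every other type; thus conditions (a1) ($\ell$ odd) and (a3) ($\ell$ prime to $3$ in type $G_2$) force $\gcd(\ell,\prod_i d_i)=1$, hence $\gcd(\ell,|N|)=1$, completing the argument. The only real obstacle is marshalling the lattice-duality identifications cleanly; the coprimality itself is then immediate from the hypotheses on $\ell$.
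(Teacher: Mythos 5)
Your proof is correct and follows the same route as the paper's, though it is considerably more explicit. Your lattice $M=\{v\in\Gh^*\mid(v,\Lambda)\subseteq\BZ\}$ is exactly the coroot lattice $Q^\vee$, so your $M_C$ coincides with the paper's $Q^\vee_C=\{\gamma\in Q^\vee\mid(\gamma,\Lambda'_C)=0\}$, and both arguments reduce the lemma to showing that $Q_C\subset Q^\vee_C$ induces an isomorphism $Q_C/\ell Q_C\to Q^\vee_C/\ell Q^\vee_C$. The paper disposes of this last step with the one-line remark that it ``follows from our assumption on $\ell$''; you carry it out in full by bounding the finite cokernel $Q^\vee_C/Q_C$ via the injection into $Q^\vee/Q$, computing $|Q^\vee/Q|=\prod_i d_i$, and observing that (a1) and (a3) force $\gcd(\ell,\prod_i d_i)=1$. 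All the auxiliary identifications (torus points of order $\ell$ with $\Hom(\Lambda_C,\mu_\ell)$, the $\Tor$-sequence for $0\to Q_C\to M_C\to N\to 0$, the verification $Q_C=Q\cap M_C$) are correct, so what you have written is a faithful expansion of the paper's terse proof rather than an alternative one.
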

\begin{proof}
Set
$Q^\vee_C=\{\gamma\in Q^\vee
\mid
(\gamma,\Lambda'_C)=0\}$.
It
is the lattice dual to 
$\Lambda_C$ so that we have $Q^\vee_C/\ell Q^\vee_C\cong C_\ell$.
Hence it is sufficient to show that the natural map 
$Q_C/\ell Q_C\to Q^\vee_C/\ell Q^\vee_C$ 
induced by 
$Q_C\subset Q^\vee_C$ 
is bijective.
This follows from our assumption on $\ell$.
\end{proof}

\subsection{Graded modules}
Note that $U_\zeta(\Gg)$ is equipped with the natural weight space decomposition
$
U_\zeta(\Gg)
=\bigoplus_{\lambda\in Q}
U_\zeta(\Gg)_\lambda
$
such that
$U_\zeta(\Gh)\subset U_\zeta(\Gg)_0$, 
$e_i\in U_\zeta(\Gg)_{\alpha_i}$, 
$f_i\in U_\zeta(\Gg)_{-\alpha_i}$.
We define a $\Lambda_C$-grading 
\begin{equation}
\label{eq:Ugr}
U_\zeta(\Gg)=\bigoplus_{\gamma\in\Lambda_C}
U_\zeta(\Gg)\langle\gamma\rangle
\end{equation}
of the algebra $U_\zeta(\Gg)$ by
$
U_\zeta(\Gg)\langle\gamma\rangle
=\bigoplus_{\lambda\in Q, \overline{\lambda}=\gamma}
U_\zeta(\Gg)_\lambda$ 
for $\gamma\in\Lambda_C$, where the natural map $Q\to\Lambda_C$ is denoted by $\lambda\mapsto\overline{\lambda}$.
We say that a $U_\zeta(\Gg)$-module $M$ is 
$\Lambda_C$-graded if we are given a grading 
\begin{equation}
\label{eq:grz1}
M=\bigoplus_{\gamma\in\Lambda_C}
M\langle\gamma\rangle
\end{equation}
of $M$ such that
\begin{equation}
\label{eq:grz2}
U_\zeta(\Gg)\langle\gamma\rangle 
M\langle\gamma'\rangle
\subset M\langle\gamma+\gamma'\rangle
\qquad(\gamma, \gamma'\in\Lambda_C).
\end{equation}

\begin{lemma}
\label{lem:Uk-gr}
The $\Lambda_C$-grading \eqref{eq:Ugr} of $U_\zeta(\Gg)$ induces 
$\Lambda_C$-gradings of 
$U_\zeta(\Gg)^{\dot{k}}$ and 
$U_\zeta(\Gg)^{\dot{k}}_{[\dot{t}d]}$
for $d\in H_\ell$.
\end{lemma}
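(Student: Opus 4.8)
The plan is to exhibit $U_\zeta(\Gg)^{\dot{k}}$ and $U_\zeta(\Gg)^{\dot{k}}_{[\dot{t}d]}$ as quotients of $U_\zeta(\Gg)$ by $\Lambda_C$-graded two-sided ideals, so that the grading \eqref{eq:Ugr} descends. We have $U_\zeta(\Gg)^{\dot{k}}=U_\zeta(\Gg)/U_\zeta(\Gg)\Ker(\xi^{\dot{k}})$ with $\Ker(\xi^{\dot{k}})$ an ideal of the central subalgebra $Z_\Fr(U_\zeta(\Gg))$, and $U_\zeta(\Gg)^{\dot{k}}_{[\dot{t}d]}=U_\zeta(\Gg)/U_\zeta(\Gg)\Ker(\xi^{\dot{k}}_{[\dot{t}d]})$ with $\Ker(\xi^{\dot{k}}_{[\dot{t}d]})\subseteq Z(U_\zeta(\Gg))$. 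Hence it suffices to check two things: (i) $Z_\Fr(U_\zeta(\Gg))$ and $Z(U_\zeta(\Gg))$ are $\Lambda_C$-graded subalgebras of $U_\zeta(\Gg)$; and (ii) $\xi^{\dot{k}}$ and $\xi^{\dot{k}}_{[\dot{t}d]}$ annihilate every homogeneous component of nonzero $\Lambda_C$-degree. Granting (i) and (ii), the two kernels are homogeneous ideals of the respective centers, so the ideals they generate in $U_\zeta(\Gg)$ are generated by $\Lambda_C$-homogeneous elements, hence $\Lambda_C$-graded.

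For (i): the generators $k_{\ell\lambda}$, $e_\alpha^\ell$, $f_\alpha^\ell$ of $Z_\Fr(U_\zeta(\Gg))$ have $Q$-weights $0$, $\ell\alpha$, $-\ell\alpha$, hence are $\Lambda_C$-homogeneous, of degrees $0$, $\ell\overline{\alpha}$, $-\ell\overline{\alpha}$, so $Z_\Fr(U_\zeta(\Gg))$ is a graded subalgebra. Next, $Z_\Har(U_\zeta(\Gg))$ is generated by the image of $Z(U_\BA(\Gg))$, and any element of $Z(U_\BA(\Gg))$ commutes with all $k_\lambda$ inside $U_\BF(\Gg)$, which over the field $\BF$ forces it to have $Q$-weight $0$; thus $Z_\Har(U_\zeta(\Gg))\subseteq U_\zeta(\Gg)_0\subseteq U_\zeta(\Gg)\langle 0\rangle$. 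Since $Z(U_\zeta(\Gg))$ is generated by $Z_\Fr(U_\zeta(\Gg))$ and $Z_\Har(U_\zeta(\Gg))$, it is a graded subalgebra as well, and from the factorization of the center ($z_1z_2\leftrightarrow z_1\otimes z_2$) together with $Z_\Har(U_\zeta(\Gg))\subseteq U_\zeta(\Gg)\langle 0\rangle$ we get $Z(U_\zeta(\Gg))\langle\gamma\rangle=Z_\Fr(U_\zeta(\Gg))\langle\gamma\rangle\cdot Z_\Har(U_\zeta(\Gg))$ for every $\gamma\in\Lambda_C$.

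For (ii): since $\xi^{\dot{k}}_{[\dot{t}d]}$ agrees with $\xi^{\dot{k}}$ on $Z_\Fr(U_\zeta(\Gg))$ and with $\xi_{[\dot{t}d]}$ on $Z_\Har(U_\zeta(\Gg))$, the last displayed identity reduces the assertion to showing that $\xi^{\dot{k}}$ kills $Z_\Fr(U_\zeta(\Gg))\langle\gamma\rangle$ for $\gamma\neq 0$. Under the identification $Z_\Fr(U_\zeta(\Gg))\cong\CO(K)$, the $\Lambda_C$-grading is the decomposition into isotypic components for the action of the torus $C$ on $K$ by simultaneous conjugation; this is read off from the generators $k_{\ell\lambda}$, $e_\alpha^\ell$, $f_\alpha^\ell$, which go to coordinate functions on $K$ transforming under conjugation by the characters $\theta^C_0$, $\theta^C_{\ell\overline{\alpha}}$, $\theta^C_{-\ell\overline{\alpha}}$ of $C$. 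Now $\dot{k}=(\dot{x}\dot{h}^{1/2},\dot{h}^{-1/2})$ is a $C$-fixed point: $c\in C$ commutes with $\dot{x}$ because $C=Z_H(\dot{x})^0$, and with $\dot{h}^{1/2}\in H$ because $C\subseteq H$. A character given by evaluation at a fixed point of a torus action vanishes on every component of nonzero weight, so $\xi^{\dot{k}}$ vanishes on $Z_\Fr(U_\zeta(\Gg))\langle\gamma\rangle$ for $\gamma\neq 0$, and therefore $\xi^{\dot{k}}_{[\dot{t}d]}$ vanishes on $Z(U_\zeta(\Gg))\langle\gamma\rangle$ for $\gamma\neq 0$. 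This yields the $\Lambda_C$-gradings of $U_\zeta(\Gg)^{\dot{k}}$ and $U_\zeta(\Gg)^{\dot{k}}_{[\dot{t}d]}$.

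The only point requiring genuine care, I expect, is the last step of (ii): matching the $\Lambda_C$-grading on $Z_\Fr(U_\zeta(\Gg))\cong\CO(K)$ with the conjugation action of $C$ on $K$ — equivalently, computing directly that $\xi^{\dot{k}}(f_\alpha^\ell)=0$ for all $\alpha$ (the $N^-$-component of $\dot{k}$ is trivial) and $\xi^{\dot{k}}(e_\alpha^\ell)=0$ whenever $\overline{\alpha}\neq 0$ (the $N^+$-component $\dot{x}=\exp(\sum_{\beta\in\Gamma}a_\beta)$ is supported on roots lying in $\sum_{\beta\in\Gamma}\BZ_{\geqq 0}\beta$). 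Everything else is routine bookkeeping with the weight grading and with the structure of $Z(U_\zeta(\Gg))$.
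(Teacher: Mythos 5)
Your proof is correct, and at its core it is the same argument as the paper's: reduce to showing that the defining ideal of each quotient is generated by $\Lambda_C$-homogeneous elements, which comes down to $Z_\Har(U_\zeta(\Gg))\subset U_\zeta(\Gg)\langle 0\rangle$ together with the fact that the Frobenius-central constants determined by $\dot k$ can only be nonzero on degree-zero generators. The paper is terser — it writes out the presentation $U_\zeta(\Gg)^{\dot k}=U_\zeta(\Gg)/\CI$ and inspects the generators $e_\beta^\ell$, $Sf_\beta^\ell-c_\beta$, $k_{\ell\lambda}-d_\lambda$ directly, reducing to ``$c_\beta\ne 0\Rightarrow\overline\beta=0$'' — whereas you route everything through the centers, which is a cleaner way to organize the bookkeeping. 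The genuine addition on your side is the geometric reformulation: $\dot k$ is a $C$-fixed point of $K$ under simultaneous conjugation (since $C=Z_H(\dot x)^0\subset H$ commutes with both components of $\dot k$), so $\xi^{\dot k}$ annihilates every nonzero-weight isotypic component of $\CO(K)$; this gives a transparent conceptual reason behind the paper's one-line ``easily seen from the definition of $\xi^{\dot k}$.'' A small caution: matching the $\Lambda_C$-grading on $Z_\Fr(U_\zeta(\Gg))$ with the $C$-conjugation isotypic decomposition of $\CO(K)$ involves a choice of conventions (left versus right action, sign of the weight, and whether $e_\alpha^\ell$ or $Sf_\alpha^\ell$ sits over $N^+$ in the De Concini--Kac--Procesi isomorphism — the appearance of the antipode $S$ in the paper's presentation of $\CI$ is a hint that these are easy to scramble). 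Your direct verification at the end — that the only nonvanishing constants come from roots supported on $\Gamma$, all of which have $\overline\beta=0$, while the trivial $N^-$-component of $\dot k$ kills the remaining family outright — is the safe way to close the argument; none of the sign ambiguities affect whether a given weight vanishes.
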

\begin{proof}
Note 
$U_\zeta(\Gg)^{\dot{k}}=
U_\zeta(\Gg)/\CI$, where $\CI$ is the ideal of $U_\zeta(\Gg)$ generated by
$e_\beta^{\ell}$, 
$Sf_\beta^\ell-c_\beta$ for $\beta\in\Delta^+$, and
$k_{\ell\lambda}-d_\lambda$
for $\lambda\in\Lambda$.
Here, $c_\beta\in\BC$, $d_\lambda\in \BC^\times$ are determined from $\dot{k}$.
Hence in order to verify the assertion for 
$U_\zeta(\Gg)^{\dot{k}}$
it is sufficient to show that if $c_\beta\ne0$ for $\beta\in\Delta^+$, we have 
$\beta\in\Ker(\Lambda\to\Lambda_C)$.
This is easily seen from the definition of $\xi^{\dot{k}}$.
The assertion for $U_\zeta(\Gg)^{\dot{k}}_{[\dot{t}d]}$ follows from 
$Z_\Har(U_\zeta(\Gg))\subset U_\zeta(\Gg)_0\subset U_\zeta(\Gg)\langle0\rangle$. 
\end{proof}

The $\Lambda_C$-gradings of $U=U_\zeta(\Gg)$, $U_\zeta(\Gg)^{\dot{k}}$, $U_\zeta(\Gg)^{\dot{k}}_{[\dot{t}d]}$ determine left $C$-actions on them given by
\[
s\cdot u=\theta_\gamma^C(s) u
\qquad(s\in C,\; \gamma\in\Lambda_C, \; u\in U\langle\gamma\rangle).
\]
Correspondingly we have coactions
$
U\to U\otimes\CO(C).
$

\begin{lemma}
\label{lem:M-gr0}
For $M\in\modu(U_\zeta(\Gg)^{\dot{k}})$
we have a simultaneous eigenspace decomposition
\[
M=\bigoplus_{\overline{\gamma}\in\Lambda_C/\ell\Lambda_C}
M\langle\langle\overline{\gamma}\rangle\rangle,
\]
\[
M\langle\langle\overline{\gamma}\rangle\rangle
=\{m\in M\mid
k_\nu m=\theta_\nu(\dot{s})\theta^C_\gamma(s_\nu)m
\;\;(\nu\in Q_C)
\}
\qquad(\gamma\in\Lambda_C)
\]
with respect to the action of 
$\{k_\nu\}_{\nu\in Q_C}$ on $M$.
\end{lemma}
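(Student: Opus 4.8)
The plan is to exploit that $M$ is finite-dimensional and that, for $\nu\in Q_C$, the invertible element $k_\nu$ has $\ell$-th power acting as a fixed nonzero scalar; this forces $k_\nu$ to act semisimply, and simultaneous diagonalization of the commuting family $\{k_\nu\}_{\nu\in Q_C}$ produces a weight decomposition which I then match with the asserted one via the isomorphism $Q_C/\ell Q_C\cong C_\ell$ of the preceding lemma. First I would observe that $U_\zeta(\Gg)^{\dot{k}}$ is finite-dimensional, so $M$ is finite-dimensional, and that for $\nu\in Q_C$ one has $k_\nu^\ell=k_{\ell\nu}\in Z_\Fr(U_\zeta(\Gg))$, which acts on $M$ by the scalar $\xi^{\dot{k}}(k_{\ell\nu})$; this is the nonzero constant that, in the notation of the proof of Lemma \ref{lem:Uk-gr}, I will call $d_\nu$. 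Since $X^\ell-d_\nu\in\BC[X]$ is separable, $k_\nu$ acts diagonalizably on $M$ with eigenvalues among the $\ell$-th roots of $d_\nu$; as the $k_\nu$ commute (all lying in $U_\zeta(\Gh)$) they are simultaneously diagonalizable, giving $M=\bigoplus_\chi M_\chi$ over group homomorphisms $\chi\colon Q_C\to\BC^\times$, where $M_\chi=\{m\mid k_\nu m=\chi(\nu)m\ (\nu\in Q_C)\}$ and $M_\chi\neq 0$ only if $\chi(\nu)^\ell=d_\nu$ for all $\nu$.

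The key computational point, and the step I expect to be the main obstacle, is the identity $d_\nu=\theta_\nu(\dot{s})^{\ell}$ for $\nu\in Q_C$. I would derive this from the description of the Frobenius centre $Z_\Fr(U_\zeta(\Gg))\cong\CO(K)$ recalled in Section \ref{sec:Rep}, from the factorization $\dot{k}=(\dot{x}\dot{h}^{1/2},\dot{h}^{-1/2})$ with $\dot{h}^{1/2}$ the semisimple part of the first factor, and from the defining relation $\dot{s}^\ell=\dot{h}^{1/2}$: the character $\xi^{\dot{k}}$ evaluated on $k_{\ell\nu}$ only records the torus component of $\dot{k}$, so that $\xi^{\dot{k}}(k_{\ell\nu})=\theta_\nu(\dot{h}^{1/2})=\theta_\nu(\dot{s}^{\ell})=\theta_\nu(\dot{s})^{\ell}$. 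This is where the normalizations of $\dot{s}$, $\dot{t}$, and the identification $\CO(H)={}^eU_\zeta(\Gh)$ interact (in particular the $k_\lambda$ versus $k_{2\lambda}=\theta_\lambda$ convention and the parallel $s_\lambda^2=t_\lambda$, $\dot{s}^2=\dot{t}$ conventions must be tracked), so it requires care.

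Granting $d_\nu=\theta_\nu(\dot{s})^{\ell}$, for any $\chi$ occurring in $M$ the ratio $\psi(\nu):=\chi(\nu)\theta_\nu(\dot{s})^{-1}$ satisfies $\psi(\nu)^\ell=1$, and $\psi$ is a homomorphism $Q_C\to\mu_\ell$ into the group of $\ell$-th roots of unity (being a product of the homomorphisms $\chi$ and $\nu\mapsto\theta_\nu(\dot{s})^{-1}$), hence factors through $Q_C/\ell Q_C$. Using the isomorphism $Q_C/\ell Q_C\cong C_\ell$ ($\nu\mapsto s_\nu$) of the preceding lemma together with the identification of the character group of the finite group $C_\ell$ with $\Lambda_C/\ell\Lambda_C$, there is a unique $\overline{\gamma}\in\Lambda_C/\ell\Lambda_C$ with $\psi(\nu)=\theta^C_\gamma(s_\nu)$, i.e.\ $\chi(\nu)=\theta_\nu(\dot{s})\,\theta^C_\gamma(s_\nu)$. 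I would also check the two bookkeeping points: that $\theta^C_\gamma(s_\nu)$ depends only on $\overline{\gamma}$ (because $\theta^C_{\overline{\ell\mu}}(s_\nu)=\theta_{\ell\mu}(s_\nu)=\zeta^{\ell(\nu,\mu)}=1$, using $(\nu,\mu)\in\BZ$ for $\nu\in Q$) and that distinct $\overline{\gamma}$ give distinct characters (again by the character-group identification for $C_\ell$). Collecting the eigenspaces $M_\chi$ according to the resulting $\overline{\gamma}$ and setting $M\langle\langle\overline{\gamma}\rangle\rangle=M_\chi$ for $\chi(\nu)=\theta_\nu(\dot{s})\,\theta^C_\gamma(s_\nu)$ then yields the asserted decomposition, some summands possibly being zero.
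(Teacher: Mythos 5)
Your proof is correct, and it takes a mildly different route from the paper's. The paper diagonalizes the full family $\{k_\nu\}_{\nu\in\Lambda}$, obtains $M=\bigoplus_{\lambda\in Q}M(\dot{s}s_\lambda)$ from the explicit identification $\{h\in H\mid h^\ell=\dot{h}^{1/2}\}=\{\dot{s}s_\lambda\mid\lambda\in Q\}$, computes $k_\nu|_{M(\dot{s}s_\lambda)}=\theta_\nu(\dot{s})\theta^C_{\overline{\lambda}}(s_\nu)$ for $\nu\in Q_C$, and finishes by verifying that the composite $Q\hookrightarrow\Lambda\to\Lambda_C\to\Lambda_C/\ell\Lambda_C$ is surjective (a consequence of (a2)). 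You instead diagonalize only the subfamily $\{k_\nu\}_{\nu\in Q_C}$ and classify the occurring eigencharacters $\chi$ intrinsically: the twist $\nu\mapsto\chi(\nu)\theta_\nu(\dot{s})^{-1}$ takes $\ell$-th-root-of-unity values, hence factors through $Q_C/\ell Q_C\cong C_\ell$, and so corresponds to a unique $\overline{\gamma}\in\Lambda_C/\ell\Lambda_C$ via the duality $\Hom(C_\ell,\BC^\times)\cong\Lambda_C/\ell\Lambda_C$. Both proofs rest on the same inputs --- finite-dimensionality of $M$, the identity $k_\nu^\ell=\theta_\nu(\dot{h}^{1/2})=\theta_\nu(\dot{s})^\ell$ on $M$, and $Q_C/\ell Q_C\cong C_\ell$ from the preceding lemma --- but yours avoids the detour through the full $\Lambda$-weight decomposition and does not need surjectivity of $Q\to\Lambda_C/\ell\Lambda_C$ (so it does not re-invoke (a2)); it also handles possibly-vanishing summands transparently. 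The paper's version has the minor compensating advantage of exhibiting the finer decomposition indexed by $\lambda\in Q$ along the way.
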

\begin{proof}
Since 
$U_\zeta(\Gg)^{\dot{k}}$ is finite-dimensional, 
$M$ is finite-dimensional.
By $M\in\modu(U_\zeta(\Gg)^{\dot{k}})$ we have
$k_\nu^\ell-\theta_\nu(\dot{h}^{1/2})=0$ for any $\nu\in\Lambda$ as an operator on $M$.
In particular, the action of $k_\nu$ on $M$ is diagonalizable.
It follows that 
the action of 
$\{k_\nu\}_{\nu\in\Lambda}$  on $M$ is simultaneously diagonalizable since
$U_\zeta(\Gh)$ is commutative.
Hence we obtain
\[
M=\bigoplus_{h\in H}M(h),
\qquad
M(h)=
\{
m\in M\mid k_\nu m=\theta_\nu(h)m
\;\;(\nu\in\Lambda)
\}.
\]
For 
$m\in M(h)$ we have
\[
\theta_{\nu}(h^\ell)m
=
\theta_{\ell\nu}(h)m
=
k_{\ell\nu}m
=\theta_\nu(\dot{h}^{1/2})m.
\]
for any $\nu\in\Lambda$, and hence 
we have $h^\ell=\dot{h}^{1/2}$ if $M(h)\ne0$.
Therefore, by 
$
\{h\in H\mid h^\ell=\dot{h}^{1/2}\}
=
\{\dot{s}s_\lambda\mid \lambda\in Q\}
$ we obtain 
$
M=\bigoplus_{\lambda\in Q}M(\dot{s}s_\lambda)$, 
which gives the simultaneous eigenspace decomposition of $M$ with respect to the action of 
$\{k_\nu\}_{\nu\in \Lambda}$ on $M$.

We restrict this to the action of 
$\{k_\nu\}_{\nu\in Q_C}$.
Assume $\nu\in Q_C$.
The  we have 
\[
k_\nu|_{M(\dot{s}s_\lambda)}
=\theta_\nu(\dot{s})\theta_\nu(s_\lambda)\id
=\theta_\nu(\dot{s})\theta_\lambda(s_\nu)\id
=\theta_\nu(\dot{s})\theta^C_{\overline\lambda}(s_\nu)\id
\]
for $\lambda\in Q$.
Moreover, for $\lambda, \mu\in Q$ we have
$\theta_\nu(\dot{s})\theta^C_{\overline\lambda}(s_\nu)
=
\theta_\nu(\dot{s})\theta^C_{\overline{\mu}}(s_\nu)
$ for any $\nu\in Q_C$
if and only if 
$\theta^C_{\overline{\lambda-\mu}}|_{C_\ell}=1$.
This condition is equivalent to 
$
\lambda-\mu\in\Ker(Q\to\Lambda_C/\ell\Lambda_C)
$.
It remains to show 
\[
Q/
\Ker(Q\to\Lambda_C/\ell\Lambda_C)
\cong
\Lambda_C/\ell\Lambda_C.
\]
This follows from the surjectivity of 
the composite of 
$Q\hookrightarrow\Lambda\to\Lambda_C
\to\Lambda_C/\ell\Lambda_C$, which is a consequence of (a2).
\end{proof}

We denote by
$\modu(U_\zeta(\Gg)^{\dot{k}};C)$
the category of $\Lambda_C$-graded 
$U_\zeta(\Gg)$-modules 
$M$ contained in 
$\modu(U_\zeta(\Gg)^{\dot{k}})$
such that
\begin{equation}
\label{eq:grz3a}
\nu\in Q_C,\;
\gamma\in\Lambda_C,\;
m\in M\langle\gamma\rangle
\Longrightarrow\;
k_\nu m
=
\theta_\nu(\dot{s})\theta^C_\gamma(s_\nu)m.
\end{equation}
We also denote by
$\modu^{\dot{k}}(U_\zeta(\Gg);C)$
the category of $\Lambda_C$-graded 
$U_\zeta(\Gg)$-modules 
$M$ contained in 
$\modu^{\dot{k}}(U_\zeta(\Gg))$
such that
\begin{equation}
\label{eq:grz3b}
\nu\in Q_C,\;
\gamma\in\Lambda_C,\;
m\in M\langle\gamma\rangle
\Longrightarrow\;
\exists n \;\;\text{s.t.}\;\;
(k_\nu-\theta_\nu(\dot{s})\theta^C_\gamma(s_\nu))^nm=0.
\end{equation}

Let $d\in H_\ell$.
We denote by
$\modu^{\dot{k}}_{[\dot{t}d]}(U_\zeta(\Gg);C)$
the full subcategory of 
$\modu^{\dot{k}}(U_\zeta(\Gg);C)$
consisting of $M\in \modu^{\dot{k}}(U_\zeta(\Gg);C)$
which is contained in $\modu^{\dot{k}}_{[\dot{t}d]}(U_\zeta(\Gg))$
as a $U_\zeta(\Gg)$-module.
By Lemma \ref{lem:Uk-gr} the torus $C$ acts on $\wU_\zeta(\Gg)^{\dot{k}}_{[\dot{t}d]}$.
Correspondingly we have a coaction 
\[
\wU_\zeta(\Gg)^{\dot{k}}_{[\dot{t}d]}\to
\wU_\zeta(\Gg)^{\dot{k}}_{[\dot{t}d]}\hotimes
\CO(C)
\]
where
\[
\wU_\zeta(\Gg)^{\dot{k}}_{[\dot{t}d]}\hotimes
\CO(C)
=\varprojlim_{n}
\left(U_\zeta(\Gg)\otimes \CO(C)\right)/
((\Ker(\xi^{\dot{k}}_{\dot{t}d}))^nU_\zeta(\Gg)\otimes \CO(C)).
\]
For a $\wU_\zeta(\Gg)^{\dot{k}}_{[\dot{t}d]}$-module $M$ we set
\[
M\hotimes
\CO(C)
=\varprojlim_{n}
(M\otimes \CO(C))/
((\Ker(\xi^{\dot{k}}_{\dot{t}d}))^nM\otimes \CO(C)).
\]
We denote by $\modu(\wU_\zeta(\Gg)^{\dot{k}}_{[\dot{t}d]};C)$ the category consisting of finitely generated 
$\wU_\zeta(\Gg)^{\dot{k}}_{[\dot{t}d]}$-module $M$ endowed with a right coaction $M\to M\hotimes\CO(C)$ of $\CO(C)$ on $M$ such that 
for any $\nu\in Q_C$ and $n>0$ the linear endomorphism
$f_{\nu,n}\in\End_\BC(M/(\Ker(\xi^{\dot{k}}_{\dot{t}d}))^nM)$ induced by 
$f_\nu=\sigma(k_\nu)-\theta_\nu(\dot{s})\rho(s_\nu)\in\End_\BC(M)$ is nilpotent.
Here, $\rho(s)$ for $s\in C$  denotes the action of $s$ on $M$ induced by the coaction of  $\CO(C)$ on $M$, and $\sigma(u)$ for $u\in U_\zeta(\Gg)$ denotes the action of $u$ on $M$ coming from the $\wU_\zeta(\Gg)^{\dot{k}}_{[\dot{t}d]}$-module structure.

\subsection{Equivariant $\GD$-modules}
We have natural right actions of $C$ on $\CB$, $K$ and $\CV$ given by
\begin{align}
\label{eq:1ac}
&B^-g\cdot h=B^-gh
\qquad&(B^-g\in\CB),
\\
\label{eq:1ac1}
&(x_1,x_2)\cdot h=(h^{-1}x_1h,h^{-1}x_2h)
\qquad&((x_1,x_2)\in K),
\\
\label{eq:1ac2}
&(B^-g,k,t)\cdot h=
(B^-g\cdot h,k\cdot h,t)
\qquad&((B^-g,k,t)\in\CV)
\end{align}
for $h\in C$.
We will also use the following right actions of $C$ on the same spaces given by the $\ell$-th powers:
\begin{align}
\label{eq:ellac}
&B^-g\star h=B^-gh^{\ell}
\qquad&(B^-g\in\CB),
\\
\label{eq:ellac1}
&(x_1,x_2)\star h=(h^{-\ell}x_1h^\ell,h^{-\ell}x_2h^\ell)
\qquad&((x_1,x_2)\in K),
\\
\label{eq:ellac2}
&(B^-g,k,t)\star h=
(B^-g\star h,k\star h ,t)
\qquad&((B^-g,k,t)\in\CV)
\end{align}
for $h\in C$.

Let $A_\BF=
\bigoplus_{\lambda\in\Lambda}
(A_\BF)_\lambda$ 
be the weight space decomposition of $A_\BF$,  where 
$A_\BF$ is regarded as a $U_\BF(\Gh)$-module by 
restricting the left $U_\BF(\Gg)$-module structure.
We define a $\Lambda_C$-grading 
$
A_\BF=\bigoplus_{\gamma\in\Lambda_C}A_\BF\langle\gamma\rangle$ of $A_\BF$ by
$
A_\BF\langle\gamma\rangle
=
\bigoplus_{\lambda\in\Lambda, \overline{\lambda}=\gamma}
(A_\BF)_\lambda$.
It induces a $\Lambda_C$-grading 
$
A_\BA=\bigoplus_{\gamma\in\Lambda_C}A_\BA\langle\gamma\rangle$
of $A_\BA$ given
by $A_\BA\langle\gamma\rangle=
A_\BF\langle\gamma\rangle\cap A_\BA$.
Moreover,  we have a $\Lambda_C$-grading 
$
D_\BA=\bigoplus_{\gamma\in\Lambda_C}D_\BA\langle\gamma\rangle$
of $D_\BA$ given by
\[
D_\BA\langle\gamma\rangle
=\{P\in D_\BA\mid P(A_\BA\langle\gamma'\rangle)\subset A_\BA\langle\gamma+\gamma'\rangle
\;(\gamma'\in\Lambda_C)\}.
\]
By the specialization $q\mapsto\zeta$ we obtain 
$\Lambda_C$-gradings
\begin{equation}
\label{eq:gradingAD}
A_\zeta=\bigoplus_{\gamma\in\Lambda_C}A_\zeta\langle\gamma\rangle,
\qquad
D_\zeta=\bigoplus_{\gamma\in\Lambda_C}D_\zeta\langle\gamma\rangle
\end{equation}
of $A_\zeta$, $D_\zeta$ respectively.
We have
$\deru_u, \ell_\varphi\in D_\zeta\langle\gamma\rangle$ for 
$u\in U_\zeta(\Gg)\langle\gamma\rangle$,
$\varphi\in A_\zeta\langle\gamma\rangle$,
and 
$\sigma_{2\lambda}\in D_\zeta\langle0\rangle$ for 
$\lambda\in\Lambda$.
Note that the original $\Lambda$-gradings
$
A_\zeta=
\bigoplus_{\lambda\in\Lambda}
A_\zeta(\lambda)$,
$
D_\zeta=\bigoplus_{\lambda\in\Lambda}
D_\zeta(\lambda)$
are compatible with the new $\Lambda_C$-gradings in the sense 
\begin{align*}
A_\zeta=
\bigoplus_{\lambda\in\Lambda, \gamma\in\Lambda_C}
A_\zeta(\lambda)
\cap
A_\zeta\langle\gamma\rangle,
\qquad
D_\zeta=
\bigoplus_{\lambda\in\Lambda, \gamma\in\Lambda_C}
D_\zeta(\lambda)
\cap
D_\zeta\langle\gamma\rangle.
\end{align*}
From \eqref{eq:gradingAD} we obtain left actions of the torus $C$ on the $\BC$-algebras 
$A_\zeta$, $D_\zeta$
given by
\begin{align*}
&P\in D_\zeta\langle\gamma\rangle
\;\Longrightarrow\;
hP=\theta^C_\gamma(h)P
\qquad(h\in C),
\\
&\varphi\in A_\zeta\langle\gamma\rangle
\;\Longrightarrow\;
h\varphi=\theta^C_\gamma(h)\varphi
\qquad(h\in C).
\end{align*}
By restricting them to $A_\zeta^{(\ell)}$, $D_\zeta^{(\ell)}$ we obtain left actions of 
$C$ on $A_\zeta^{(\ell)}$, $D_\zeta^{(\ell)}$.
By further restricting it to $A_1$ through the embedding 
$A_1\hookrightarrow A_\zeta^{(\ell)}$
we obtain a left $C$-action on $A_1$.
Identifying $A_1$ with the projective coordinate algebra of the ordinary flag manifold 
$\CB=B^-\backslash G$ we obtain the corresponding right $C$-action on $\CB$.
This is given by 
\eqref{eq:ellac}.
Define 
\[
p:\CB\times C\to\CB,
\qquad
a:\CB\times C\to\CB
\]
by 
\[
p(x,h)=x,\qquad
a(x,h)=x\star h
\]
respectively.
\begin{proposition}
The left action of $C$ on $A_\zeta^{(\ell)}$
(resp.\ $D_\zeta^{(\ell)}$)
defined above gives a $C$-equivariant $\CO_\CB$-algebra structure of
\[
\GO=\omega^*A_\zeta^{(\ell)}
\qquad
(\text{resp.}\;
\GD=\omega^*D_\zeta^{(\ell)})
\]
with respect to the right $C$-action \eqref{eq:ellac}.
Namely, we are given a homomorphism
\[
F:a^*\GO\to p^*\GO
\qquad
(\text{resp.}\;
F:a^*\GD\to p^*\GD)
\]
of $\CO_{\CB\times C}$-algebras satisfying $F|_{\CB\times\{1\}}=\id$
and the cocycle condition.
\end{proposition}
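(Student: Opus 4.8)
The plan is to reduce the assertion to a purely algebraic statement about comodules over $\CO(C)$ and then transport it through the functor $\omega^{*}$. Identify $\CO(C)$ with the group algebra $\BC[\Lambda_C]$ of the character group of $C$, so that a $\Lambda_C$-graded vector space is the same datum as a (right) $\CO(C)$-comodule. Under this identification the $\Lambda_C$-grading \eqref{eq:gradingAD} of $A_{\zeta}^{(\ell)}$ becomes a coaction
\[
\rho\colon A_{\zeta}^{(\ell)}\to A_{\zeta}^{(\ell)}\otimes\CO(C),\qquad
\rho(\varphi)=\varphi\otimes\theta^{C}_{\gamma}\quad(\varphi\in A_{\zeta}^{(\ell)}\langle\gamma\rangle),
\]
and likewise for $D_{\zeta}^{(\ell)}$. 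The elementary facts to record are: (i) $\rho$ is an algebra homomorphism, since the $\Lambda_C$-grading is a grading of algebras and characters multiply; (ii) $\rho$ is a homomorphism of $\Lambda$-graded algebras, where $A_{\zeta}^{(\ell)}\otimes\CO(C)$ is $\Lambda$-graded through its first factor — this is precisely the compatibility of the $\Lambda$- and the $\Lambda_C$-gradings recalled just before \eqref{eq:gradingAD}; (iii) the restriction of $\rho$ to $A_{1}\subset A_{\zeta}^{(\ell)}$ is the comorphism $a^{\#}$ of the action map $a$ (this is the only place where passing to $\ell$-th powers matters, because $A_{1}(\lambda)\subset A_{\zeta}^{(\ell)}(\lambda)$ sits in $\Lambda_C$-degree $\ell\overline{\lambda}$), while the comorphism of $p$ is $\varphi\mapsto\varphi\otimes1$; (iv) $\rho$ is counital and coassociative, which merely says that a grading is a grading.

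Next I would produce the equivariance datum on the algebraic level and push it forward. From $\rho$ I form $\widetilde{F}\colon A_{\zeta}^{(\ell)}\otimes\CO(C)\to A_{\zeta}^{(\ell)}\otimes\CO(C)$, $\widetilde{F}(\varphi\otimes f)=\rho(\varphi)\,(1\otimes f)$, and check — using (i)–(iii) — that $\widetilde{F}$ is a homomorphism of $\Lambda$-graded $\BC$-algebras, that it intertwines the $\bigl(A_{1}\otimes\CO(C)\bigr)$-module structure obtained by pulling back along $a^{\#}$ with the one obtained by pulling back along $p^{\#}$, and that it is invertible, an inverse being built from the antipode of $\CO(C)$. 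Now $\CB\times C\to\CB$ is affine, with $\Lambda$-graded coordinate algebra $A_{1}\otimes\CO(C)$, and $\omega^{*}$ over $\CB\times C$ preserves colimits (being a left adjoint) and sends $A_{1}[\lambda]$ to $\CO_{\CB}[\lambda]$; hence it carries $A_{\zeta}^{(\ell)}\otimes\CO(C)$ with the $p$-module structure to $p^{*}\GO$ and with the $a$-module structure to $a^{*}\GO$, compatibly with their $\CO_{\CB\times C}$-algebra structures. Applying $\omega^{*}$ to $\widetilde{F}$ therefore yields an isomorphism $F\colon a^{*}\GO\xrightarrow{\sim}p^{*}\GO$ of $\CO_{\CB\times C}$-algebras. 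Running the identical construction with $D_{\zeta}^{(\ell)}$ in place of $A_{\zeta}^{(\ell)}$ produces $F\colon a^{*}\GD\xrightarrow{\sim}p^{*}\GD$, and its compatibility with $\GO\to\GD$ is automatic because both coactions are restrictions of the single coaction attached to the $\Lambda_C$-grading of $D_{\zeta}$.

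Finally I would verify the two normalizations. Restriction to $\CB\times\{1\}$ corresponds to applying $\mathrm{id}\otimes\varepsilon$, with $\varepsilon\colon\CO(C)\to\BC$ the counit (evaluation at $1\in C$); by counitality $(\mathrm{id}\otimes\varepsilon)\circ\rho=\mathrm{id}$, so $F|_{\CB\times\{1\}}$ is induced by the identity of $A_{\zeta}^{(\ell)}$, i.e.\ $F|_{\CB\times\{1\}}=\mathrm{id}_{\GO}$. The cocycle condition on $\CB\times C\times C$, once transported through $\omega^{*}$ and the identifications above, is exactly the coassociativity $(\rho\otimes\mathrm{id})\circ\rho=(\mathrm{id}\otimes\Delta)\circ\rho$ of $\rho$, $\Delta$ the comultiplication of $\CO(C)$; and again the same holds for $\GD$.

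I expect the main obstacle to be not any single computation — items (i)–(iv) and the properties of $\widetilde{F}$ are all formal — but the bookkeeping in the middle step: making precise that $\omega^{*}$ over $\CB\times C$, applied to $A_{\zeta}^{(\ell)}\otimes\CO(C)$ equipped with the module structures pulled back along $p^{\#}$ and along $a^{\#}$, literally computes $p^{*}\GO$ and $a^{*}\GO$ as $\CO_{\CB\times C}$-algebras. One can either establish this base-change property of the $\Proj$-type functor $\omega^{*}$ directly (using affineness of $\CB\times C\to\CB$ and cocontinuity of $\omega^{*}$), or sidestep it by passing to an affine open cover of $\CB$, pulling back the $\Lambda_C$-grading there and gluing; I would take the first route.
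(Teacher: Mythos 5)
The paper states this proposition without proof, so there is no argument of the paper's own to compare against; your write-up supplies the omitted verification, and the approach you take — encoding the $\Lambda_C$-grading as an $\CO(C)$-coaction $\rho$, checking module-comodule compatibility over $A_1$, forming the shear map $\widetilde F$, and pushing it through $\omega^*$ via affine/flat base change — is the correct formal argument, and I believe it is essentially what the author has in mind when treating the statement as evident. Two small points deserve tightening, though neither is a gap in the logic.

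First, in item (iii) the parenthetical "$A_{1}(\lambda)\subset A_{\zeta}^{(\ell)}(\lambda)$ sits in $\Lambda_C$-degree $\ell\overline{\lambda}$" is not accurate: $A_1(\lambda)$ is defined by the right $U^L_\zeta(\Gh)$-weight, while the $\Lambda_C$-grading is by the left weight, and $A_1(\lambda)$ contains elements of many left weights. What is true, and what makes (iii) go through, is that the Frobenius embedding $A_1\hookrightarrow A_\zeta^{(\ell)}$ multiplies \emph{left} weights by $\ell$, so that $\varphi\in A_1$ of left weight $\mu$ lands in $\Lambda_C$-degree $\ell\overline{\mu}$; this is exactly why $\rho|_{A_1}$ equals the comorphism of the $\ell$-th-power action $\star$. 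Second, the sentence about $\widetilde F$ "intertwining the $(A_1\otimes\CO(C))$-module structure obtained by pulling back along $a^\#$ with the one obtained by pulling back along $p^\#$" is ambiguous, and under the naive reading (restriction of scalars) it comes out backwards: a direct check using $a^\#(\psi)=\psi\otimes\theta^C_\delta$ for $\psi\in A_1\langle\delta\rangle$ and $\rho(\varphi)=\varphi\otimes\theta^C_\gamma$ for $\varphi\in A_\zeta^{(\ell)}\langle\gamma\rangle$ gives $\widetilde F\circ p^\#=a^\#$ rather than $\widetilde F\circ a^\#=p^\#$. The statement you actually want, and which is correct, is that under the flat-base-change identification $a^*\GO\cong\omega^*\bigl((A_1\otimes\CO(C))\otimes_{A_1,a^\#}A_\zeta^{(\ell)}\bigr)$ and $p^*\GO\cong\omega^*\bigl(A_\zeta^{(\ell)}\otimes\CO(C)\bigr)$, the $(A_1\otimes\CO(C))$-linear map $(r\otimes h)\otimes m\mapsto rm\otimes\theta^C_\gamma h$ (well-defined precisely by the module-comodule compatibility $\rho(\psi\varphi)=a^\#(\psi)\rho(\varphi)$) is an isomorphism whose restriction along $(1\otimes h)\otimes m\mapsto m\otimes h$ is your $\widetilde F$. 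Stating it this way also removes the "main obstacle" you flag at the end, since the identification of $a^*\GO$ and $p^*\GO$ at the level of graded modules is then built into the construction from the start. With these clarifications the argument is complete; the cocycle condition and the normalization over $\CB\times\{1\}$ reduce, exactly as you say, to coassociativity and counitality of $\rho$, and the whole discussion carries over verbatim to $D_\zeta^{(\ell)}$ and to the compatibility with $\GO\to\GD$.
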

Similarly, we have the following.
\begin{proposition}
The sheaves $\GD$, 
$\GD^{\dot{k}}$ and 
$\GD^{\dot{k}}_{\dot{t}d}$ ($d\in H_\ell$) are 
$C$-equivariant $\CO_\CB$-algebras
with respect to the right $C$-action \eqref{eq:ellac}.
\end{proposition}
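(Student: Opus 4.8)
The case of $\GD$ is the preceding proposition, so the work is in treating $\GD^{\dot{k}}$ and $\GD^{\dot{k}}_{\dot{t}d}$. The plan is to exhibit each of these as a quotient of the $C$-equivariant $\CO_\CB$-algebra $\GD$ by a two-sided ideal that is $C$-stable, and then to invoke the following elementary descent: if $F\colon a^*\GD\to p^*\GD$ is the equivariance datum supplied by the preceding proposition and $\CI\subset\GD$ is a $C$-stable two-sided ideal, then $F(a^*\CI)\subset p^*\CI$, hence $F$ induces $a^*(\GD/\CI)\to p^*(\GD/\CI)$ which still satisfies $F|_{\CB\times\{1\}}=\id$ and the cocycle condition, and $\GD/\CI$ remains a coherent $\CO_\CB$-algebra. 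As the ideals below are generated by global sections of $\GD$ and $C$ is a torus, it will be enough to check that their generators are $C$-semi-invariant, i.e.\ lie in a single $C$-weight space of $\Gamma(\CB,\GD)$.

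First, $\GD^{\dot{k}}$ is the quotient of $\GD$ by the two-sided ideal (two-sided by the centrality of $\deru_z$ for $z\in Z_\Fr(U_\zeta(\Gg))$) generated by $\deru_z-\xi^{\dot{k}}(z)$, $z\in Z_\Fr(U_\zeta(\Gg))$. Here $Z_\Fr(U_\zeta(\Gg))$ is generated by $k_{\ell\lambda}$ ($\lambda\in\Lambda$), $e_\beta^\ell$, $f_\beta^\ell$ ($\beta\in\Delta^+$), which are $\Lambda_C$-homogeneous of degrees $0$, $\ell\overline{\beta}$, $-\ell\overline{\beta}$; since $\deru$ carries $U_\zeta(\Gg)\langle\gamma\rangle$ into $D_\zeta\langle\gamma\rangle$ and (by the preceding proposition) the $C$-action on $\GD$ for \eqref{eq:ellac} is the one induced by the $\Lambda_C$-grading of $D_\zeta$, the sections $\deru_{k_{\ell\lambda}}$, $\deru_{e_\beta^\ell}$, $\deru_{f_\beta^\ell}$ are $C$-semi-invariant. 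So it remains to see that $\Ker(\xi^{\dot{k}})\subset Z_\Fr(U_\zeta(\Gg))$ is $C$-stable, equivalently that $\xi^{\dot{k}}\colon Z_\Fr(U_\zeta(\Gg))\cong\CO(K)\to\BC$ is $C$-invariant, equivalently that $\dot{k}$ is a fixed point of the $C$-action \eqref{eq:ellac1}. This is the one point that is not purely formal, and it holds because $\dot{k}=(\dot{x}\dot{h}^{1/2},\dot{h}^{-1/2})$ with $\dot{x}\in N^+$, so that for $h\in C=Z_H(\dot{x})^0$ the element $h^\ell$ commutes with $\dot{x}$ and, being in $H$, with $\dot{h}^{1/2}$, whence $\dot{k}\star h=\dot{k}$. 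Thus the defining ideal is $C$-stable and $\GD^{\dot{k}}$ acquires a $C$-equivariant $\CO_\CB$-algebra structure for \eqref{eq:ellac}.

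Second, $\GD^{\dot{k}}_{\dot{t}d}$ ($d\in H_\ell$) is the quotient of $\GD$ by the two-sided ideal generated by the above relations together with $\sigma_{2\mu}-\theta_\mu(\dot{t}d)$ ($\mu\in\Lambda$) --- equivalently, by the above relations together with $\deru_z-\xi_{[\dot{t}d]}(z)$ for $z\in Z_\Har(U_\zeta(\Gg))$. In either description the new generators are outright $C$-invariant: $Z_\Har(U_\zeta(\Gg))\subset U_\zeta(\Gg)_0\subset U_\zeta(\Gg)\langle0\rangle$ so $\deru_z\in D_\zeta\langle0\rangle$, and likewise $\sigma_{2\mu}\in D_\zeta\langle0\rangle$, while $\theta_\mu(\dot{t}d)$ and $\xi_{[\dot{t}d]}(z)$ are scalars. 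Hence this ideal too is $C$-stable, the descent of the first paragraph applies, and $\GD^{\dot{k}}_{\dot{t}d}$ is a $C$-equivariant $\CO_\CB$-algebra for \eqref{eq:ellac}. In sum, the only genuine obstacle is the fixed-point property of $\dot{k}$ under \eqref{eq:ellac1}, which follows at once from the choices (k1), (k2) of $\dot{k}$ and the definition \eqref{eq:C} of $C$; the rest is the formal stability of semi-invariantly generated ideals under the $C$-action and the descent of the equivariance datum to the quotient.
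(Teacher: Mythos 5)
Your proof is correct. The paper states this proposition without a written proof (treating it as analogous to the preceding one), but the substantive content is exactly that of Lemma~\ref{lem:Uk-gr}, which the paper does prove: the $\Lambda_C$-grading of $U_\zeta(\Gg)$ descends to $U_\zeta(\Gg)^{\dot{k}}$ and $U_\zeta(\Gg)^{\dot{k}}_{[\dot{t}d]}$ because the generators of the defining ideal are $\Lambda_C$-homogeneous. Your reformulation --- that the defining ideals of $\GD^{\dot{k}}$ and $\GD^{\dot{k}}_{\dot{t}d}$ are $C$-stable because $\dot{k}$ is a fixed point of the $C$-action \eqref{eq:ellac1} on $K$, verified directly from $\dot{k}=(\dot{x}\dot{h}^{1/2},\dot{h}^{-1/2})$, $C=Z_H(\dot{x})^0\subset H$ --- is equivalent to and slightly more conceptual than the paper's ``$c_\beta\ne0\Rightarrow\overline{\beta}=0$, easily seen from the definition of $\xi^{\dot{k}}$''; indeed, the fixed-point property is precisely why that implication holds. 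The rest (centrality of the generators, descent of the equivariance datum to the quotient, and the fact that $\sigma_{2\mu}$ and $\deru_z$ for $z\in Z_\Har(U_\zeta(\Gg))$ lie in degree $0$) matches the framework already established in Section~7 of the paper.
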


We refer the reader to \cite{Kas} for basics about equivariant coherent sheaves.

The right action \eqref{eq:ellac2} of $C$ on $\CV$  induces the right actions of $C$ on $\CV^{\dot{k}}$ and $\CV^{\dot{k}}_{\dot{t}d}$ for $d\in H_\ell$;
\begin{align}
\label{eq:ellac2a}
(B^-g,\dot{k},t)\star h
=&
(B^-gh^\ell,\dot{k},t)
\qquad&(h\in C, (B^-g,\dot{k},t)\in \CV^{\dot{k}}),
\\
\label{eq:ellac2b}
(B^-g,\dot{k},\dot{t}d)\star h
=&
(B^-gh^\ell,\dot{k},\dot{t}d))
\qquad&(h\in C, (B^-g,\dot{k},\dot{t}d)
\in \CV^{\dot{k}}_{\dot{t}d}).
\end{align}
Moreover, the $C$-equivariant $\CO_\CB$-algebra structure of $\GD$ 
(resp.\ $\GD^{\dot{k}}$, $\GD^{\dot{k}}_{\dot{t}d}$)
is naturally lifted to  
the $C$-equivariant $\CO_{\CV}$-algebra 
(resp.\ $\CO_{\CV^{\dot{k}}}$-algebra, 
$\CO_{\CV^{\dot{k}}_{\dot{t}d}}$-algebra) 
structure of $\sD$ 
(resp.\ $\sD^{\dot{k}}$, 
$\sD^{\dot{k}}_{\dot{t}d}$)
with respect to \eqref{eq:ellac2}
(resp.\ \eqref{eq:ellac2a}, 
\eqref{eq:ellac2b}).

Assume that 
$\GM$ is a $C$-equivariant $\sD$-module.
The $\BC$-algebra homomorphism
$
U_\zeta(\Gg)
\to\sD
$ 
induced by \eqref{eq:GD1} gives an action 
\begin{equation}
\label{eq:deru}
U_\zeta(\Gg)\to\End_{\BC}(\GM)
\qquad(u\mapsto\deru_u)
\end{equation}
of $U_\zeta(\Gg)$ on $\GM$.
On the other hand  since the action of the subgroup $C_\ell$ of $C$ on $\CV$ in  \eqref{eq:ellac2}
is trivial,
we obtain  an action 
\begin{equation}
\label{eq:rho}
C_\ell\to\End_{\BC}(\GM)
\qquad(h\mapsto\rho_h)
\end{equation}
of $C_\ell$ on $\GM$.

We denote by 
$\modu(\sD^{\dot{k}}_{\dot{t}d};\star C)$
the category of 
$C$-equivariant 
coherent
$\sD^{\dot{k}}_{\dot{t}d}$-module $\GM$
such that 
for any $\nu\in Q_C$ and 
any section $m$ of $\GM$ 
we have
$\deru_{k_\nu}m=
\theta_\nu(\dot{s})
\rho_{s_\nu}m$
with respect to
\eqref{eq:deru} and 
\eqref{eq:rho}.

We also denote by 
$\modu^{\dot{k}}_{\dot{t}d}(\sD;\star C)$
the category of 
$C$-equivariant 
coherent
$\sD$-module $\GM$
contained  
in $\modu^{\dot{k}}_{\dot{t}d}(\sD)$
as a $\sD$-module 
and
for any $\nu\in Q_C$ and 
any section $m$ of $\GM$ 
there exists $n$ 
satisfying 
$(\deru_{k_\nu}-
\theta_\nu(\dot{s})
\rho_{s_\nu})^nm=0$
with respect to
\eqref{eq:deru} and 
\eqref{eq:rho}.

Note that  $\dot{t}d$ for $d\in H_\ell$ is regular if and only if $d$ is regular with respect to the action of $\dot{W}$ on $H_\ell$, i.e.
\begin{equation}
\label{eq:regularity}
w\in\dot{W},
\;wd=d
\;
\Longrightarrow
\;
w=1.
\end{equation}

If $d\in H_\ell$ is regular in the sense of \eqref{eq:regularity}, then we have 
\begin{align*}
D^b(\modu_{[{\dot{t}d}]}^{\dot{k}}(\sD))
\cong&
D^b(\modu_{[{\dot{t}d}]}^{\dot{k}}(U_\zeta(\Gg)))
\end{align*}
by Theorem \ref{thm:BB3}.

\begin{proposition}
\label{prop:BBC1}
Assume $d\in H_\ell$ satisfies \eqref{eq:regularity}.
Then \eqref{eq:BB3b}
induces the equivalence
\begin{align*}
D^b(\modu_{[{\dot{t}d}]}^{\dot{k}}(\sD;\star C))
\cong&
D^b(\modu_{[{\dot{t}d}]}^{\dot{k}}(U_\zeta(\Gg);C))
\end{align*}
of triangulated categories.
\end{proposition}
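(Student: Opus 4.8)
The plan is to show that the equivalence \eqref{eq:BB3b} of Theorem~\ref{thm:BB3} — realized by the right derived functor of the global section functor \eqref{eq:GammaX2} (restricted to the subcategories with support on $\CV^{\dot k}_{\dot t d}$) and by the left derived functor of ${}_{\dot t d}\Loc$ in \eqref{eq:LocX2} — is compatible with the $C$-equivariant structures, and hence restricts to an equivalence of the subcategories appearing in the statement. First I would record that both sides of \eqref{eq:BB3b} carry compatible $C$-structures: the sheaf $\sD^{\dot k}_{\dot t d}$ is a $C$-equivariant $\CO_{\CV^{\dot k}_{\dot t d}}$-algebra with respect to the $\star$-action \eqref{eq:ellac2b}, as established earlier in this section, while $U_\zeta(\Gg)^{\dot k}_{[\dot t d]}$ is $\Lambda_C$-graded by Lemma~\ref{lem:Uk-gr}, and the structure map $U_\zeta(\Gg)\to\sD$ is $\Lambda_C$-graded (equivalently $C$-equivariant). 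Since $C$ is a torus, hence linearly reductive, $R\Gamma$ applied to a bounded complex of $C$-equivariant coherent $\sD$-modules is naturally a bounded complex of $C$-representations, i.e.\ of $\Lambda_C$-graded $U_\zeta(\Gg)$-modules — one computes it with a $C$-equivariant \v{C}ech or injective resolution (see \cite{Kas}) — and ${}_{\dot t d}\Loc$, being computed by a derived tensor product over $U_\zeta(\Gg)^{\dot k}_{[\dot t d]}$ of $C$-equivariant objects, preserves $C$-equivariance as well. Moreover both functors are $\Lambda_C$-graded on Hom-complexes, so the unit and counit of the adjunction $({}_{\dot t d}\Loc,\Gamma)$, which are isomorphisms of the underlying non-equivariant objects by \eqref{eq:BB3b}, are automatically isomorphisms in the equivariant categories.

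Next I would match the extra compatibility conditions cutting out $\modu^{\dot k}_{\dot t d}(\sD;\star C)$ and $\modu^{\dot k}_{[\dot t d]}(U_\zeta(\Gg);C)$. For $\nu\in Q_C$ the element $k_\nu$ lies in $U_\zeta(\Gg)_0\subset U_\zeta(\Gg)\langle 0\rangle$, so its image $\deru_{k_\nu}$ in $\sD$ has $\Lambda_C$-degree $0$, while $s_\nu$ lies in the subgroup $C_\ell$ of \eqref{eq:Cell}, whose $\star$-action on $\CV^{\dot k}_{\dot t d}$ is trivial; consequently on a $C$-equivariant coherent $\sD$-module $\GM$ the operator $\rho_{s_\nu}$ of \eqref{eq:rho} is $\CO$-linear and $\sD$-linear, and applying $R\Gamma$ turns $\deru_{k_\nu}$ into the action of $k_\nu$ and $\rho_{s_\nu}$ into the action of $s_\nu$ coming from the $\Lambda_C$-grading, which on the degree-$\gamma$ part is multiplication by the scalar $\theta^C_\gamma(s_\nu)$. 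Hence local nilpotence of $\deru_{k_\nu}-\theta_\nu(\dot s)\rho_{s_\nu}$ (on every section of $\GM$) is carried by $R\Gamma$ precisely to the condition \eqref{eq:grz3b} on each cohomology module $R^i\Gamma(\GM)$, so $R\Gamma$ maps $D^b(\modu^{\dot k}_{\dot t d}(\sD;\star C))$ into $D^b(\modu^{\dot k}_{[\dot t d]}(U_\zeta(\Gg);C))$. Reading the same comparison backwards through ${}_{\dot t d}\Loc$ — using that on an object of $\modu^{\dot k}_{[\dot t d]}(U_\zeta(\Gg);C)$ the element $k_\nu$ acts on the degree-$\gamma$ part with generalized eigenvalue $\theta_\nu(\dot s)\theta^C_\gamma(s_\nu)$ by \eqref{eq:grz3b} and Lemma~\ref{lem:M-gr0} — shows that ${}_{\dot t d}\Loc$ maps $D^b(\modu^{\dot k}_{[\dot t d]}(U_\zeta(\Gg);C))$ into $D^b(\modu^{\dot k}_{\dot t d}(\sD;\star C))$. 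Together with the first paragraph, the restricted functors are mutually quasi-inverse, which gives the asserted equivalence.

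I expect the main technical point to be the verification that this extra $Q_C$-compatibility condition is preserved in both directions and, more precisely, that it can be checked termwise on a complex: one must be sure that the $C_\ell$-action $\rho$ — which is ``vertical'', i.e.\ $\CO$-linear and $\sD$-linear, because $C_\ell$ acts trivially on the base $\CV^{\dot k}_{\dot t d}$ — really does become, after applying $R\Gamma$, the restriction to $C_\ell$ of the full $C$-action underlying the $\Lambda_C$-grading of $R\Gamma(\GM)$, and that passing to each cohomology object does not disturb this comparison. Everything else — the $C$-equivariance of $R\Gamma$ and of ${}_{\dot t d}\Loc$, and the promotion of the unit/counit isomorphisms to the equivariant categories — is formal once one invokes the linear reductivity of the torus $C$, the equivariance statements of this section, and \cite{Kas}, and in particular does not require re-proving anything about the non-equivariant equivalence \eqref{eq:BB3b}.
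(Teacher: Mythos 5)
Your proposal is correct and follows essentially the same route as the paper: the paper's proof is simply the observation that $\Gamma$ and ${}_{\dot td}^{\dot k}\Loc$ carry the equivariant/graded subcategories into each other, followed by an appeal to Proposition~\ref{prop:D-cat} (equivariant injectives are $\Gamma$-acyclic) and Lemma~\ref{lem:U-cat} (graded projectives stay projective after forgetting the grading), which is precisely your mechanism of computing derived functors with $C$-equivariant injective resolutions on the $\sD$-side and $\Lambda_C$-graded projective resolutions on the $U_\zeta(\Gg)$-side and then using conservativity of the forgetful functor to promote the unit/counit isomorphisms. You spell out the check that the $Q_C$-compatibility conditions match on the two sides (that $k_\nu$ has $\Lambda_C$-degree $0$ and $s_\nu\in C_\ell$ acts by a $\sD$-linear automorphism, so $R\Gamma$ exchanges the two conditions), which the paper dismisses as ``we can easily show''; the one place you are less explicit than the paper is on the $\Loc$ side, where the relevant fact is Lemma~\ref{lem:U-cat} rather than a general remark about equivariance of tensor products.
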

\begin{proof}
We can easily
show that 
the functors
\begin{align*}
\Gamma:
\modu_{\dot{t}d}^{\dot{k}}(\sD)
\to
\modu_{[{\dot{t}d}]}^{\dot{k}}(U_\zeta(\Gg)),
\\
{}_{\dot{t}d}^{\dot{k}}\Loc:
\modu_{[{\dot{t}d}]}^{\dot{k}}(U_\zeta(\Gg))
\to
\modu_{\dot{t}d}^{\dot{k}}(\sD)
\end{align*}
induce
\begin{align*}
\Gamma:
\modu_{\dot{t}d}^{\dot{k}}(\sD;\star C)
\to
\modu_{[{\dot{t}d}]}^{\dot{k}}(U_\zeta(\Gg)
;C),
\\
{}_{\dot{t}d}^{\dot{k}}\Loc:
\modu_{[{\dot{t}d}]}^{\dot{k}}(U_\zeta(\Gg)
;C)
\to
\modu_{\dot{t}d}^{\dot{k}}(\sD;\star C)
\end{align*}
respectively.
Hence the desired result follows from Proposition \ref{prop:D-cat} and 
Lemma \ref{lem:U-cat} below.
\end{proof}

Denote by
$\Mod(\sD;\star C)$
the category of $C$-equivariant quasi-coherent $\sD$-modules.
\begin{proposition}
\label{prop:D-cat}
The abelian category 
$\Mod(\sD;\star C)$
has enough injectives.
Moreover, for any injective object 
$\CI$ of $\Mod(\sD;\star C)$ we have 
\[
R^k\Gamma(\CI)=0\qquad(k>0).
\]
\end{proposition}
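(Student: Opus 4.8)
I would prove Proposition \ref{prop:D-cat} by reducing it to two standard facts: (a) categories of equivariant quasi-coherent modules over an equivariant sheaf of rings on a quasi-projective variety have enough injectives, and (b) injective equivariant $\sD$-modules are flasque, hence acyclic for $\Gamma$. The first task is to set up the correct framework. Recall that $\sD$ is the localization of $\GD$ on the affine morphism $p^\CV_\CB\colon\CV\to\CB$, and as an $\CO_\CV$-module it is coherent; moreover $\sD$ carries a $C$-equivariant structure with respect to the action $\star$ of \eqref{eq:ellac2}. Since $p^\CV_\CB$ is affine, a quasi-coherent $\sD$-module on $\CV$ is the same thing as a quasi-coherent $\GD$-module on $\CB$, and the $C$-equivariant structures match under this identification. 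So it suffices to work with $C$-equivariant quasi-coherent $\GD$-modules on $\CB$, where $\GD$ is a $C$-equivariant quasi-coherent sheaf of $\CO_\CB$-algebras, coherent as an $\CO_\CB$-module. Here I would invoke the general theory of equivariant modules over a sheaf of rings on a variety with an algebraic group action — e.g.\ the references to \cite{Kas} already cited after Proposition 7.? in the excerpt — which guarantees that $\Mod(\GD;\star C)$, equivalently $\Mod(\sD;\star C)$, is a Grothendieck abelian category, and therefore has enough injectives.

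\emph{Enough injectives.} Concretely, the cleanest argument is: the forgetful functor $\mathrm{For}\colon\Mod(\sD;\star C)\to\Mod(\sD)$ from equivariant to ordinary quasi-coherent $\sD$-modules has a right adjoint, the ``equivariant induction'' (coinduction) functor $\mathrm{Ind}$, built from the diagram $\CV\xleftarrow{p}\CV\times C\xrightarrow{a}\CV$; since $\mathrm{For}$ is exact and $\mathrm{Ind}$ preserves injectives, and $\Mod(\sD)$ has enough injectives (being a category of quasi-coherent modules over a quasi-coherent sheaf of rings on a Noetherian scheme — again the cited \cite{Kas}, or \cite{AZ}-type arguments), one concludes that $\Mod(\sD;\star C)$ has enough injectives. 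Alternatively, and perhaps more efficiently for the paper, one notes $\Mod(\sD;\star C)$ is a locally Noetherian Grothendieck category (it has a generator coming from the affine cover of $\CV$ and the $C$-equivariant structure sheaf, and filtered colimits are exact), and every Grothendieck category has enough injectives.

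\emph{$\Gamma$-acyclicity of injectives.} For the vanishing $R^k\Gamma(\CI)=0$ ($k>0$), the key point is that $\Gamma$ on $\Mod(\sD;\star C)$ factors through taking global sections over the affine cover, composed with the $C$-invariants/grading-zero operation, and injectives in $\Mod(\sD;\star C)$ restrict, after forgetting the equivariance, to $\Gamma$-acyclic (indeed flasque) ordinary $\sD$-modules. The mechanism: an injective $\CI$ in $\Mod(\sD;\star C)$ is a direct summand of an equivariantly coinduced object $\mathrm{Ind}(\CJ)$ with $\CJ$ an injective quasi-coherent $\sD$-module; for coinduced objects the higher cohomology is computed on $\CV\times C$ and, using that $C$ is affine (so $p_*$ on the product is exact on quasi-coherent sheaves) together with flasqueness/acyclicity of $\CJ$ over the affine pieces, one gets $R^k\Gamma(\mathrm{Ind}(\CJ))=0$ for $k>0$; a direct summand of a $\Gamma$-acyclic object is $\Gamma$-acyclic. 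One must check that $\Gamma$ in the sense of \eqref{eq:Gamma1}–\eqref{eq:Gamma22} — the composite of $\omega_*$ with the degree-zero part — behaves well here; but since $\omega_*$ is the right adjoint of the exact localization $\omega^*$ and the projective-scheme machinery of \cite{AZ} (already used throughout) identifies $R\omega_*$ with sheaf cohomology on the (commutative!) $\CV$, this reduces to ordinary quasi-coherent cohomology on a quasi-projective variety, where injectives (hence flasque sheaves) are acyclic.

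\textbf{Main obstacle.} The subtle point, and the one I would spend the most care on, is the compatibility of the equivariant structure with the \emph{localization} defining $\Gamma$: one has to be sure that forgetting the $C$-equivariance sends injectives of $\Mod(\sD;\star C)$ to objects that are still acyclic for the functor $M\mapsto(\omega_*M)(0)$, not merely for naive global sections. This requires knowing that $\omega_*$ (or rather $R\omega_*$) of an equivariantly-induced injective agrees with the sheaf-cohomology computation on $\CV$, i.e.\ that the ``no higher cohomology of $\CO_{\CV}(n)$ for $n\gg0$'' input from \cite{AZ}/\cite{T1} survives the equivariant coinduction — which it does because coinduction along the affine group $C$ is exact and commutes with the relevant colimits, but this is exactly where the argument is not purely formal and needs the geometry of $\CV$ (projectivity of $p^\CV_\CB$, affineness of $C$). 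Everything else is a routine assembly of standard homological algebra of equivariant sheaves as in \cite{Kas}.
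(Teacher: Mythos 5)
Your plan for the first half (enough injectives) matches the paper: the paper simply invokes the general theory of Grothendieck categories, which is exactly your fallback argument. For the second half, however, the paper takes a shorter and more transparent route than yours: it uses the fact, established earlier via the Azumaya theory, that $\sD$ is a \emph{locally free} $\CO_{\CV}$-module. Because restriction of scalars along $\CO_\CV\to\sD$ then has an exact left adjoint $\sD\otimes_{\CO_\CV}(\,\cdot\,)$, an injective object of $\Mod(\sD;\star C)$ restricts to an injective object of $\Mod(\CO_\CV;\star C)$ (this is the analogue of \cite[Lemma~3.3.6]{Kas}), and $\Gamma$-acyclicity is then immediate from \cite[Lemma~3.3.9]{Kas}. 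Your proposal instead goes through equivariant coinduction along $C$, realizing an injective of $\Mod(\sD;\star C)$ as a summand of $\mathrm{Ind}(\CJ)$ with $\CJ$ injective in $\Mod(\sD)$. That route is workable, but note that the step you call ``flasqueness/acyclicity of $\CJ$ over the affine pieces'' is not purely formal: to know that injective quasi-coherent $\sD$-modules are $\Gamma$-acyclic one again needs to pass to $\CO_\CV$-modules, and the clean way to do that is precisely via local freeness of $\sD$ over $\CO_\CV$. You never invoke this fact explicitly, so your argument as written has a gap at exactly the point where the paper inserts its one substantive ingredient. Making local freeness explicit would close the gap, but it would also make your detour through coinduction unnecessary, since you could then restrict scalars directly as the paper does.

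One smaller point: the ``main obstacle'' you flag at the end, about compatibility of the equivariant structure with the $\omega_*$-localization defining $\Gamma$, is a red herring at this stage of the paper. By Section~5 the non-commutative projective machinery has already been traded for honest quasi-coherent sheaves on the scheme $\CV$ (via $\Fr_*$ and localization along $p^\CV_\CB$), so the $\Gamma$ appearing in Proposition~\ref{prop:D-cat} is ordinary sheaf cohomology on $\CV$, not $(\omega_*(\,\cdot\,))(0)$. The genuine input is the local freeness of $\sD$, not any survival of the Artin--Zhang comparison.
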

\begin{proof}
The first half is a consequence of the general theory of 
Grothendieck categories.
Let us show the latter half.
Since 
$\sD$ is a locally free 
$\CO_{\CV}$-module,
we can show that 
$\CI$ is injective as an object of 
$\Mod(\CO_{\CV};\star C)$ similarly to \cite[Lemma 3.3.6]{Kas}.
Hence the assertion follows from \cite[Lemma 3.3.9]{Kas}.
\end{proof}

Denote by
$\Mod(U_\zeta(\Gg);C)$
the category of $\Lambda_C$-graded $U_\zeta(\Gg)$-modules.
The following is a consequence of the standard arguments on the cohomologies of graded rings (see e.g. \cite{JanM}).
\begin{lemma}
\label{lem:U-cat}
The abelian category 
$\Mod(U_\zeta(\Gg);C)$
has enough projectives.
Moreover, any projective object of 
$\Mod(U_\zeta(\Gg);C)$ is projective as  an object of $\Mod(U_\zeta(\Gg))$.
\end{lemma}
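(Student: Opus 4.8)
We must show that $\Mod(U_\zeta(\Gg);C)$, the category of $\Lambda_C$-graded $U_\zeta(\Gg)$-modules, has enough projectives, and moreover that every projective object of $\Mod(U_\zeta(\Gg);C)$ is projective when viewed merely as a $U_\zeta(\Gg)$-module (forgetting the grading).

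**Plan.** The key point is that $\Mod(U_\zeta(\Gg);C)$ is nothing but the category of $\Lambda_C$-graded modules over the $\Lambda_C$-graded ring $U=U_\zeta(\Gg)$ of \eqref{eq:Ugr}, and $\Lambda_C$ is a free abelian group of finite rank; this is a completely standard situation. First I would recall the grading-shift functors $(\bullet)[\gamma]:\Mod(U;C)\to\Mod(U;C)$ for $\gamma\in\Lambda_C$ (given by $(M[\gamma])\langle\gamma'\rangle=M\langle\gamma+\gamma'\rangle$) and the forgetful functor $\For:\Mod(U;C)\to\Mod(U)$. The functor $\For$ has a left adjoint $F$ given by $F(N)=\bigoplus_{\gamma\in\Lambda_C}(U\otimes_U N)[\gamma]$ — more concretely, for an ungraded $U$-module $N$ one forms the graded module $U\otimes_{\BC}N$-style induction $\bigoplus_{\gamma}\left(U\langle\gamma\rangle\otimes N\right)$ with the obvious grading, whose degree-$\gamma$ part receives $N$. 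The standard adjunction identity is $\Hom_{\Mod(U;C)}(F(N),M)\cong\Hom_{\Mod(U)}(N,\For M)$. Since $\For$ is exact (forgetting a grading does not change kernels or cokernels), $F$ sends projectives to projectives; and since every $U$-module is a quotient of a free module $U^{(S)}$, applying $F$ and using that $F$ is right exact and commutes with direct sums shows every object of $\Mod(U;C)$ is a quotient of an object of the form $\bigoplus_{i}(U)[\gamma_i]$ (a graded free module), which is projective. This gives enough projectives.

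**The second half.** For the statement that projectives in $\Mod(U;C)$ are projective in $\Mod(U)$, I would argue as follows. A graded free module $\bigoplus_i U[\gamma_i]$ is obviously free, hence projective, as an ungraded $U$-module. An arbitrary projective object $P$ of $\Mod(U;C)$ is a direct summand of such a graded free module: pick a surjection $\bigoplus_i U[\gamma_i]\twoheadrightarrow P$ in $\Mod(U;C)$, which splits by projectivity of $P$, so $P$ is a graded direct summand, hence in particular an ungraded direct summand, of an ungraded free module. A direct summand of a free $U$-module is projective, which is exactly the claim. (This is the content alluded to by the reference to \cite{JanM}.)

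**Main obstacle.** There is essentially no serious obstacle here — the result is a formal consequence of the theory of graded rings over a finitely generated free abelian grading group, and the proof is the one-paragraph argument above. The only point requiring the slightest care is verifying that the induction functor $F$ adjoint to $\For$ is well-defined and right exact on all of $\Mod(U;C)$ (not just finitely generated modules) and that the resulting induced modules are graded free; this is routine once one writes $F(N)=U\otimes_{U_{\langle 0\rangle}}N$ appropriately or, even more simply, uses that the graded free modules $U[\gamma]$ already form a generating set of projectives. Accordingly the proof I would write down is short: exhibit graded free modules as projective generators, note they are ungraded-free, and deduce both assertions.
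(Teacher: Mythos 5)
Your final argument --- graded free modules $\bigoplus_i U_\zeta(\Gg)[\gamma_i]$ are projective generators of $\Mod(U_\zeta(\Gg);C)$, and since these are free as ungraded modules, every projective graded module is a direct summand of an ungraded free module --- is correct and is exactly the standard argument the paper has in mind in citing Jantzen (the paper gives no proof of its own). So the short proof you say you would ultimately write down is fine.

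The detour through a claimed left adjoint $F$ of the forgetful functor $\For\colon\Mod(U_\zeta(\Gg);C)\to\Mod(U_\zeta(\Gg))$ should, however, be deleted: no such left adjoint exists when the grading group $\Lambda_C$ is infinite (which it is here unless $\dot{x}$ is regular unipotent). Concretely, $\For$ fails to preserve infinite products: the product in the graded category of the rank-one objects $\BC[\gamma]$, $\gamma\in\Lambda_C$, equals $\bigoplus_\gamma\BC[\gamma]$ (each graded degree sees only one factor), while the product of $\For(\BC[\gamma])=\BC$ in $\Mod(\BC)$ is $\prod_\gamma\BC$; these have different cardinality, so $\For$ cannot be a right adjoint. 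Moreover, the formula $F(N)=\bigoplus_\gamma(U\otimes_U N)[\gamma]$ is not meaningful for ungraded $N$ (there is no well-defined shift $N[\gamma]$), and $U_\zeta(\Gg)\otimes_{U_\zeta(\Gg)\langle0\rangle}N$ is left adjoint to the degree-zero functor $M\mapsto M\langle0\rangle$, not to $\For$. None of this is needed: any graded module is generated by its homogeneous elements, giving directly a graded surjection from a graded free module, and $U_\zeta(\Gg)[\gamma]$ is projective in $\Mod(U_\zeta(\Gg);C)$ because $\Hom_{\Mod(U_\zeta(\Gg);C)}(U_\zeta(\Gg)[\gamma],\,\cdot\,)$ is the exact functor of taking the appropriate graded component. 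With that substitution made, your write-up is the standard proof and is complete.
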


We denote by  $(\CV^{\dot{k}}_{\dot{t}d}\times C)^\wedge$ the formal neighborhood of $\CV^{\dot{k}}_{\dot{t}d}\times C$ in $\CV\times C$.
We define 
\[
p:(\CV^{\dot{k}}_{\dot{t}d}\times C)^\wedge\to
\wCV^{\dot{k}}_{\dot{t}d},
\qquad
a:(\CV^{\dot{k}}_{\dot{t}d}\times C)^\wedge\to
\wCV^{\dot{k}}_{\dot{t}d}
\]
to be the morphisms induced by the projection 
$\CV\times C \to\CV$ and the right $C$-action 
\eqref{eq:ellac2b}
respectively.
Then $\wsD^{\dot{k}}_{\dot{t}d}$ is a $C$-equivariant 
$\CO_{\wCV^{\dot{k}}_{\dot{t}d}}$-algebra in the sense that we are given a homomorphism 
$F:a^*(\wsD^{\dot{k}}_{\dot{t}d})\to 
p^*(\wsD^{\dot{k}}_{\dot{t}d})$ satisfying $F|_{\wCV^{\dot{k}}_{\dot{t}d}\times \{1\}}=\id$ and the cocycle condition.

We denote by $\modu(\wsD^{\dot{k}}_{\dot{t}d};\star C)$ the category of $C$-equivariant coherent  $\wsD^{\dot{k}}_{\dot{t}d}$-modules.
Namely, its object is a coherent $\wsD^{\dot{k}}_{\dot{t}d}$-module $\GM$ equipped with an isomorphism
$F_\GM:a^*\GM\to p^*\GM$ of $p^*(\wsD^{\dot{k}}_{\dot{t}d})$-modules 
satisfying $F_\GM|_{\wCV^{\dot{k}}_{\dot{t}d}\times \{1\}}=\id$
and the cocycle condition.

Similarly to Proposition \ref{prop:BBC1} we have the following

\begin{proposition}
\label{prop:BBC2}
Assume $d\in H_\ell$ satisfies \eqref{eq:regularity}.
Then \eqref{eq:BB3c} 
induces the equivalence
\begin{align*}
D^b(\modu(\wsD_{\dot{t}d}^{\dot{k}};\star C))
\cong &
D^b(\modu(\wU_\zeta(\Gg)_{[{\dot{t}d}]}^{\dot{k}};C))
\end{align*}
of triangulated categories.
\end{proposition}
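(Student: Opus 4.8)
The plan is to mimic the proof of Proposition \ref{prop:BBC1} verbatim, replacing the pair $(\sD^{\dot{k}}_{\dot{t}d},U_\zeta(\Gg)^{\dot{k}}_{[\dot{t}d]})$ by its completed counterpart $(\wsD^{\dot{k}}_{\dot{t}d},\wU_\zeta(\Gg)^{\dot{k}}_{[\dot{t}d]})$ and the equivalence \eqref{eq:BB3b} by \eqref{eq:BB3c}. Concretely, I would first check that the functors
\[
\Gamma:\modu(\wsD^{\dot{k}}_{\dot{t}d})\to\modu(\wU_\zeta(\Gg)^{\dot{k}}_{[\dot{t}d]}),
\qquad
\widehat{\Loc}^{\dot{k}}_{\dot{t}d}:\modu(\wU_\zeta(\Gg)^{\dot{k}}_{[\dot{t}d]})\to\modu(\wsD^{\dot{k}}_{\dot{t}d})
\]
that realize the equivalence \eqref{eq:BB3c} carry the equivariance/compatibility structure of \eqref{eq:grz3b} across. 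This is where the central ring homomorphism $\CO(H)\to\GD$ and the identity $\deru_{k_\nu}=\sigma(k_\nu)$ relating the $U_\zeta(\Gg)$-action and the $C_\ell$-action enter: for $\nu\in Q_C$, the operator $\deru_{k_\nu}-\theta_\nu(\dot{s})\rho_{s_\nu}$ on a $C$-equivariant $\wsD^{\dot{k}}_{\dot{t}d}$-module corresponds precisely to $f_{\nu}=\sigma(k_\nu)-\theta_\nu(\dot{s})\rho(s_\nu)$ on the associated module; and since the grading is by the commutative part of $\sD$ resp.\ by $\CO(H)\subset Z(U_\zeta(\Gg))$, both functors commute with the $\Lambda_C$-grading and hence preserve the nilpotency-of-$f_{\nu,n}$ condition. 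So the functors restrict to
\[
\Gamma:\modu(\wsD_{\dot{t}d}^{\dot{k}};\star C)\to\modu(\wU_\zeta(\Gg)_{[\dot{t}d]}^{\dot{k}};C),
\qquad
\widehat{\Loc}:\modu(\wU_\zeta(\Gg)_{[\dot{t}d]}^{\dot{k}};C)\to\modu(\wsD_{\dot{t}d}^{\dot{k}};\star C).
\]

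Second, I would supply the two homological lemmas on the equivariant categories analogous to Propositions \ref{prop:D-cat} and Lemma \ref{lem:U-cat}. For the geometric side one wants: the Grothendieck category $\Mod(\wsD^{\dot{k}}_{\dot{t}d};\star C)$ has enough injectives, and for an injective $\CI$ one has $R^k\Gamma(\CI)=0$ for $k>0$. This follows exactly as in Proposition \ref{prop:D-cat}: $\wsD^{\dot{k}}_{\dot{t}d}$ is locally free over $\CO_{\wCV^{\dot{k}}_{\dot{t}d}}$, so an injective equivariant $\wsD$-module is injective as an equivariant $\CO_{\wCV^{\dot{k}}_{\dot{t}d}}$-module by the argument of \cite[Lemma 3.3.6]{Kas}, and then vanishing of higher cohomology is \cite[Lemma 3.3.9]{Kas}; here one uses that $\wCV^{\dot{k}}_{\dot{t}d}$ is, by the conventions fixed in the remark in Section \ref{sec:back}, the base change of $\CV$ along the completion of $\CX$ at the point $(\dot{k},[\dot{t}d])$, so $\Gamma$ is an affine-type pushforward composed with a flat base change and the relevant acyclicity is preserved. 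On the algebraic side, $\Mod(\wU_\zeta(\Gg)^{\dot{k}}_{[\dot{t}d]};C)$ is equivalent to the category of $\Lambda_C$-graded $\wU_\zeta(\Gg)^{\dot{k}}_{[\dot{t}d]}$-modules satisfying \eqref{eq:grz3b}, and standard arguments on graded rings (as in \cite{JanM}, cf.\ Lemma \ref{lem:U-cat}) give enough projectives, with graded projectives being ungraded-projective; since $\wU_\zeta(\Gg)^{\dot{k}}_{[\dot{t}d]}$ is Noetherian and module-finite over the complete local ring $\widehat{\CO}(\CX)^{\dot{k}}_{[\dot{t}d]}$, there is no difficulty passing the adjunction $(\widehat{\Loc},\Gamma)$ to the equivariant level.

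Third, with these lemmas in hand the derived adjunction $(L\widehat{\Loc},R\Gamma)$ between $D^b(\modu(\wsD^{\dot{k}}_{\dot{t}d};\star C))$ and $D^b(\modu(\wU_\zeta(\Gg)^{\dot{k}}_{[\dot{t}d]};C))$ is defined, and the fact that it is an equivalence is deduced from \eqref{eq:BB3c} by a forgetful-functor argument: the natural forgetful functors down to $D^b(\modu(\wsD^{\dot{k}}_{\dot{t}d}))$ and $D^b(\modu(\wU_\zeta(\Gg)^{\dot{k}}_{[\dot{t}d]}))$ are conservative and commute with $R\Gamma$ and $L\widehat{\Loc}$, so the unit and counit of the equivariant adjunction are isomorphisms because they are after applying the forgetful functors, which is precisely \eqref{eq:BB3c}. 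I expect the only genuine obstacle to be the verification that the unbounded completed structures behave well: one must be careful that $R\Gamma$ on the $C$-equivariant completed $\GD$-category is computed by the same resolutions as on the non-equivariant one (this is the content of the cited \cite{Kas} lemmas together with the affineness of $p^\CV_\CB$), and that the $\widehat{\otimes}$-completions in the definition of $\modu(\wU_\zeta(\Gg)^{\dot{k}}_{[\dot{t}d]};C)$ are compatible with taking cohomology; everything else is a routine transcription of Proposition \ref{prop:BBC1}.
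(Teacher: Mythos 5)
Your proposal is correct and matches the paper's intended argument: the paper itself gives no details for Proposition \ref{prop:BBC2}, stating only that it holds ``similarly to Proposition \ref{prop:BBC1}'', and your three steps (check the functors respect the $C$-equivariant/graded structures, establish the completed analogues of Proposition \ref{prop:D-cat} and Lemma \ref{lem:U-cat}, then reduce the unit/counit isomorphism to \eqref{eq:BB3c} via the conservative forgetful functors) are exactly the transcription the author intends. One small imprecision worth noting: $\wCV^{\dot{k}}_{\dot{t}d}$ is, by the convention of Section \ref{sec:back}, the base change of $\CV$ along the completion of $\CO(\CY)$ at $(\dot{k},\dot{t}d)$ (not of $\CO(\CX)$ at $(\dot{k},[\dot{t}d])$, which is what appears on the $\wU_\zeta(\Gg)$-side); this does not affect the argument but should be stated correctly if the proof were written out in full.
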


\subsection{Equivariant Morita equivalence}
We next give a $C$-equivariant analogue of the Morita equivalence \eqref{eq:Morita1}.
We will use the identification
\begin{equation}
\label{eq:VB2}
\CV^{\dot{k}}_{\dot{t}t_\lambda}
\cong
\CB^{\dot{k}}_{\dot{t}}
\qquad(\lambda\in\Lambda),
\end{equation}
and regard 
$\sD_{\dot{t}t_\lambda}^{\dot{k}}$
with an $\CO_{\CB^{\dot{k}}_{\dot{t}}}$-algebra
in the following.
We have the right $C$-action on 
$\CB^{\dot{k}}_{\dot{t}}$ given by 
\begin{align}
\label{eq:CB1}
B^-g\cdot h=B^-gh
\qquad(B^-g\in \CB^{\dot{k}}_{\dot{t}}, \; h\in C).
\end{align}
Under the identification \eqref{eq:VB2} this corresponds to 
the right $C$-action on
$\CV^{\dot{k}}_{\dot{t}t_\lambda}$
 induced by the 
right $C$-action \eqref{eq:1ac2}  on $\CV$
(without $\ell$-th power).
The right $C$-action on $\CB^{\dot{k}}_{\dot{t}}$ given by the $\ell$-th power is denoted by
\begin{equation}
\label{eq:CBell}
\CB^{\dot{k}}_{\dot{t}}\times C\to
\CB^{\dot{k}}_{\dot{t}},
\qquad((x,h)\mapsto x\star h:=x\cdot h^\ell).
\end{equation}

Let us describe the action of  $C$ on the splitting bundle  
$\GK^{\dot{k}}_{\dot{t}}[\lambda]$
of $\sD^{\dot{k}}_{\dot{t}t_\lambda}$ for $\lambda\in\Lambda$.
Recall 
$
\GK^{\dot{k}}_{\dot{t}}
=
\CO_{\CB^{\dot{k}}_{\dot{t}}}
\otimes_\BC
K^{\dot{k}}_{[\dot{t}]}
$, 
where $K^{\dot{k}}_{[\dot{t}]}$ is the unique irreducible module over $U_\zeta(\Gg)^{\dot{k}}_{[\dot{t}]}$.
By \cite{JanM} $K^{\dot{k}}_{[\dot{t}]}$ has a unique (up to shift of grading) $\Lambda_C$-grading so that 
$K_{[\dot{t}]}^{\dot{k}}
\in
\modu(U_\zeta(\Gg)^{\dot{k}}_{[\dot{t}]};C)$.
Hence there exists a unique (up to tensoring with one-dimensional $C$-module)  $C$-equivariant 
$\sD_{\dot{t}}^{\dot{k}}$-module structure of 
$\GK^{\dot{k}}_{\dot{t}}$
such that 
$
\GK^{\dot{k}}_{\dot{t}}
\in
\modu(\sD_{\dot{t}}^{\dot{k}};\star C)
$.
Not that for $\lambda\in\Lambda$ we have
\[
\sD_{\dot{t}t_\lambda}^{\dot{k}}
=
\GO[\lambda]\otimes_\GO
\sD_{\dot{t}}^{\dot{k}}
\otimes_\GO\GO[-\lambda],
\qquad
\GK_{\dot{t}}^{\dot{k}}[\lambda]
=
\GO[\lambda]
\otimes_\GO
\GK_{\dot{t}}^{\dot{k}}.
\]
Since $\GO$ is a $C$-equivariant $\CO_\CB$-algebra and since $\GO[\lambda]$ is a $C$-equivariant $\GO$-module, 
we have
$
\GK_{\dot{t}}^{\dot{k}}[\lambda]
\in
\modu(\sD_{\dot{t}t_\lambda}^{\dot{k}};\star C)$.

We denote by 
$\modu(\CO_{\CB_{\dot{t}}^{\dot{k}}};C)$
the category of $C$-equivariant coherent 
$\CO_{\CB_{\dot{t}}^{\dot{k}}}$-modules with respect to the right action \eqref{eq:CB1} of $C$ on $\CB_{\dot{t}}^{\dot{k}}$.
\begin{proposition}
\label{prop:MorC}
The Morita equivalence \eqref{eq:Morita1}
induces an equivalence
\begin{equation}
\label{eq:MorC}
\modu(\sD_{\dot{t}t_\lambda}^{\dot{k}};\star C)
\cong
\modu(\CO_{\CB_{\dot{t}}^{\dot{k}}};C)
\end{equation}
of categories.
\end{proposition}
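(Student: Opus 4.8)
The plan is to promote the two mutually quasi-inverse functors that implement \eqref{eq:Morita1}, namely $\CS\mapsto\GK^{\dot{k}}_{\dot{t}}[\lambda]\otimes_{\CO_{\CB^{\dot{k}}_{\dot{t}}}}\CS$ and $\GM\mapsto\HHom_{\sD^{\dot{k}}_{\dot{t}t_\lambda}}(\GK^{\dot{k}}_{\dot{t}}[\lambda],\GM)$, to $C$-equivariant functors, and then to check that they restrict to an equivalence between the two categories in \eqref{eq:MorC}. The key input is that, as arranged just before the proposition, $\GK^{\dot{k}}_{\dot{t}}[\lambda]$ carries a $\star$-$C$-equivariant $\sD^{\dot{k}}_{\dot{t}t_\lambda}$-module structure; fix one such (it is unique up to tensoring with a one-dimensional $C$-module, which changes the functor only by a harmless twist). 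Then $\GK^{\dot{k}}_{\dot{t}}[\lambda]$ is a $\star$-$C$-equivariant $(\sD^{\dot{k}}_{\dot{t}t_\lambda},\CO_{\CB^{\dot{k}}_{\dot{t}}})$-bimodule, the structure sheaf $\CO_{\CB^{\dot{k}}_{\dot{t}}}$ being $\star$-$C$-equivariant as well. A Morita equivalence implemented by such an equivariant bimodule upgrades, in the standard way, to an equivalence between the category of $\star$-$C$-equivariant coherent $\sD^{\dot{k}}_{\dot{t}t_\lambda}$-modules and the category of $\star$-$C$-equivariant coherent $\CO_{\CB^{\dot{k}}_{\dot{t}}}$-modules, the unit and counit of the adjunction being morphisms of equivariant objects.

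Next I would identify, on each side, the extra condition cutting out the category appearing in \eqref{eq:MorC}. An object of $\modu(\sD^{\dot{k}}_{\dot{t}t_\lambda};\star C)$ is, by definition, a $\star$-$C$-equivariant coherent $\sD^{\dot{k}}_{\dot{t}t_\lambda}$-module $\GM$ with $\deru_{k_\nu}=\theta_\nu(\dot{s})\rho_{s_\nu}$ for all $\nu\in Q_C$; since the elements $s_\nu$ generate $C_\ell$ (the lemma above), this is equivalent to saying that the $C_\ell$-part of the equivariant structure of $\GM$ equals $\theta_\nu(\dot{s})^{-1}\deru_{k_\nu}$. Because $\GK^{\dot{k}}_{\dot{t}}[\lambda]$ satisfies the same relation and every $\sD^{\dot{k}}_{\dot{t}t_\lambda}$-linear map commutes with $\deru_{k_\nu}$, which is an invertible element of $\sD^{\dot{k}}_{\dot{t}t_\lambda}$, one computes that on $\HHom_{\sD^{\dot{k}}_{\dot{t}t_\lambda}}(\GK^{\dot{k}}_{\dot{t}}[\lambda],\GM)$ the operators $\rho_{s_\nu}$ act by the identity; conversely, if $C_\ell$ acts trivially on an $\CO_{\CB^{\dot{k}}_{\dot{t}}}$-module $\CS$, then $\GK^{\dot{k}}_{\dot{t}}[\lambda]\otimes_{\CO_{\CB^{\dot{k}}_{\dot{t}}}}\CS$ inherits the defining relation from $\GK^{\dot{k}}_{\dot{t}}[\lambda]$, the $\sD^{\dot{k}}_{\dot{t}t_\lambda}$-action running through the first tensor factor. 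Thus the equivariant Morita equivalence restricts to an equivalence between $\modu(\sD^{\dot{k}}_{\dot{t}t_\lambda};\star C)$ and the full subcategory of $\star$-$C$-equivariant coherent $\CO_{\CB^{\dot{k}}_{\dot{t}}}$-modules on which $C_\ell$ acts trivially.

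Finally, the $\star$-action \eqref{eq:CBell} of $C$ on $\CB^{\dot{k}}_{\dot{t}}$ is the composite of the isogeny $C\to C$, $h\mapsto h^\ell$, with the action \eqref{eq:CB1}, and $C_\ell=\Ker(h\mapsto h^\ell)$ acts trivially on $\CB^{\dot{k}}_{\dot{t}}$ for $\star$. Pullback of equivariant structures along this isogeny of tori therefore yields a standard equivalence between $\modu(\CO_{\CB^{\dot{k}}_{\dot{t}}};C)$ (equivariance for \eqref{eq:CB1}) and the full subcategory of $\star$-$C$-equivariant coherent $\CO_{\CB^{\dot{k}}_{\dot{t}}}$-modules on which $C_\ell$ acts trivially. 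Composing with the equivalence of the previous paragraph gives \eqref{eq:MorC}, and tracing through the constructions shows it is the equivalence induced by \eqref{eq:Morita1}.

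The step I expect to require the most care is the bookkeeping of the $C_\ell$-torsion: one must check precisely that the defining relation of $\modu(\sD^{\dot{k}}_{\dot{t}t_\lambda};\star C)$ --- which constrains only the restriction to $C_\ell$ of the $\star$-equivariant structure, in terms of the operators $\deru_{k_\nu}$ --- matches, under the Morita transport, exactly the triviality of the $C_\ell$-action, and that the permitted twist in the $\star$-$C$-equivariant structure of $\GK^{\dot{k}}_{\dot{t}}[\lambda]$ does not affect the statement. The two remaining ingredients, the equivariant enhancement of a Morita equivalence and descent of equivariant sheaves along an isogeny of tori, are formal.
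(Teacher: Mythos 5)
Your proof is correct and follows the same route as the paper's own: pass the Morita functors $H=\HHom_{\sD^{\dot{k}}_{\dot{t}t_\lambda}}(\GK^{\dot{k}}_{\dot{t}}[\lambda],\bullet)$ and $E=\GK^{\dot{k}}_{\dot{t}}[\lambda]\otimes(\bullet)$ through the $\star$-$C$-equivariant setting, observe that the constraint defining $\modu(\sD^{\dot{k}}_{\dot{t}t_\lambda};\star C)$ translates under $H$ into the triviality of the $C_\ell$-action, and then reidentify $C/C_\ell$ with $C$ via $hC_\ell\leftrightarrow h^\ell$. The one place where you add detail the paper leaves implicit is the computation showing that $\rho_{s_\nu}$ acts as the identity on $H(\GM)$ — namely $(s_\nu\cdot f)=\rho^{\GM}_{s_\nu}\circ f\circ(\rho^{\GK}_{s_\nu})^{-1}=\deru^{\GM}_{k_\nu}\circ f\circ(\deru^{\GK}_{k_\nu})^{-1}=f$ by $\sD$-linearity of $f$ — which is a welcome elaboration of the paper's bare assertion that the $C_\ell$-action on $H(\GM)$ is trivial.
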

\begin{proof}
Note that \eqref{eq:Morita1} is given by 
\begin{align*}
&H:\modu(\sD^{\dot{k}}_{\dot{t}t_\lambda})\to\modu(\CO_{\CB^{\dot{k}}_{\dot{t}}})
\qquad
&(H(\GM)=\HHom_{\sD^{\dot{k}}_{\dot{t}t_\lambda}}
(\GK^{\dot{k}}_{\dot{t}}[\lambda],\GM)),
\\
&E:\modu(\CO_{\CB^{\dot{k}}_{\dot{t}}})\to
\modu(\sD^{\dot{k}}_{\dot{t}t_\lambda})
\qquad
&(E(\CS)=\GK^{\dot{k}}_{\dot{t}}[\lambda]\otimes_{\CO_{\CB^{\dot{k}}_{\dot{t}}}}\CS).
\end{align*}

Assume $\GM\in
\modu(\sD_{\dot{t}t_\lambda}^{\dot{k}};\star C)$.
Then we have a natural $C$-equivariant 
$\CO_{\CB_{\dot{t}}^{\dot{k}}}$-module structure of 
$H(\GM)$
with respect to \eqref{eq:CBell}.
However, since the action of 
$C_\ell$ on
$H(\GM)$ is trivial, 
$H(\GM)$ is in fact a $C/C_\ell$-equivariant 
$\CO_{\CB_{\dot{t}}^{\dot{k}}}$-module.
Hence identifying $C/C_\ell$ with $C$ by $hC_\ell\leftrightarrow h^\ell$, we obtain 
$H(\GM)
\in
\modu(\CO_{\CB_{\dot{t}}^{\dot{k}}};C)$.

Assume 
$\CS\in\modu(\CO_{\CB_{\dot{t}}^{\dot{k}}};C)$.
By lifting the $C$-equivariant 
$\CO_{\CB_{\dot{t}}^{\dot{k}}}$-module structure of  $\CS$ via 
$C\ni h\mapsto h^\ell\in C$ we obtain a new 
$C$-equivariant 
$\CO_{\CB_{\dot{t}}^{\dot{k}}}$-module structure
of $\CS$ 
with respect to \eqref{eq:CBell}.
By this we obtain 
$E(\CS)
\in
\modu(\sD_{\dot{t}t_\lambda}^{\dot{k}};\star C)$.
\end{proof}

We next give a $C$-equivariant analogue of \eqref{eq:Morita2}.
We will use the identification
\begin{equation}
\label{eq:VBII}
\wCV^{\dot{k}}_{\dot{t}t_\lambda}
\cong
\wCB^{\dot{k}}_{\dot{t}}
\qquad(\lambda\in\Lambda)
\end{equation}
(see \eqref{eq:wVkt}), 
and regard 
$\wsD_{\dot{t}t_\lambda}^{\dot{k}}$
as
an $\CO_{\wCB^{\dot{k}}_{\dot{t}}}$-algebra
in the following.
The right $C$-action 
\[
(B^-g,\dot{k})\cdot h
=
(B^-gh,\dot{k}\cdot h)
\qquad
((B^-g,\dot{k})\in\tilde{G}', \; h\in C)
\]
on $\tilde{G}'$ induces a right $C$-action
\begin{equation}
\label{eq:CB2}
(\CB^{\dot{k}}_t\times C)^\wedge
\to
\wCB^{\dot{k}}_t
\end{equation}
on $\wCB^{\dot{k}}_t$, 
where
$(\CB^{\dot{k}}_t\times C)^\wedge$ is the formal neighborhood of $\CB^{\dot{k}}_t\times C$ in $\tilde{G}'\times C$.

We denote by 
$\modu(\CO_{\wCB_{\dot{t}}^{\dot{k}}};C)$
the category of $C$-equivariant coherent 
$\CO_{\wCB_{\dot{t}a}^{\dot{k}}}$-modules with respect to the right $C$-action \eqref{eq:CB2}
on 
$\wCB_{\dot{t}}^{\dot{k}}$.
Similarly to Proposition \ref{prop:MorC} we have the following.
\begin{proposition}
\label{prop:MorC2}
The Morita equivalence \eqref{eq:Morita2}
induces an equivalence
\begin{equation}
\label{eq:MorC2}
\modu(\wsD_{\dot{t}t_\lambda}^{\dot{k}};\star C)
\cong
\modu(\CO_{\wCB_{\dot{t}}^{\dot{k}}};C)
\end{equation}
of categories.
\end{proposition}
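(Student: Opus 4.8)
The plan is to transcribe the proof of Proposition \ref{prop:MorC}, replacing $\CB^{\dot{k}}_{\dot{t}}$, $\sD^{\dot{k}}_{\dot{t}t_\lambda}$ and $\GK^{\dot{k}}_{\dot{t}}[\lambda]$ by their formal-neighbourhood counterparts $\wCB^{\dot{k}}_{\dot{t}}$, $\wsD^{\dot{k}}_{\dot{t}t_\lambda}$ and $\wGK^{\dot{k}}_{\dot{t}}[\lambda]$, while keeping track of the two right $C$-actions on $\wCB^{\dot{k}}_{\dot{t}}$: the ``plain'' one \eqref{eq:CB2}, with respect to which the target category $\modu(\CO_{\wCB^{\dot{k}}_{\dot{t}}};C)$ is defined, and the ``$\ell$-th power'' one $x\star h=x\cdot h^\ell$, induced by \eqref{eq:ellac2b} under the identification \eqref{eq:VBII}, with respect to which $\wsD^{\dot{k}}_{\dot{t}t_\lambda}$ is a $C$-equivariant $\CO_{\wCB^{\dot{k}}_{\dot{t}}}$-algebra. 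Recall that \eqref{eq:Morita2} is realised by the mutually inverse functors
\[
H(\GM)=\HHom_{\wsD^{\dot{k}}_{\dot{t}t_\lambda}}(\wGK^{\dot{k}}_{\dot{t}}[\lambda],\GM),
\qquad
E(\CS)=\wGK^{\dot{k}}_{\dot{t}}[\lambda]\otimes_{\CO_{\wCB^{\dot{k}}_{\dot{t}}}}\CS,
\]
so what has to be shown is that $H$ carries $\modu(\wsD^{\dot{k}}_{\dot{t}t_\lambda};\star C)$ into $\modu(\CO_{\wCB^{\dot{k}}_{\dot{t}}};C)$ and $E$ carries it back.

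First I would equip the splitting bundle $\wGK^{\dot{k}}_{\dot{t}}=\CO_{\wCB^{\dot{k}}_{\dot{t}}}\otimes_{\wCO(\CX)^{\dot{k}}_{[\dot{t}]}}\wK^{\dot{k}}_{[\dot{t}]}$ with a $C$-equivariant $\wsD^{\dot{k}}_{\dot{t}}$-module structure making it an object of $\modu(\wsD^{\dot{k}}_{\dot{t}};\star C)$, and then deduce $\wGK^{\dot{k}}_{\dot{t}}[\lambda]\in\modu(\wsD^{\dot{k}}_{\dot{t}t_\lambda};\star C)$ by tensoring with the $C$-equivariant $\GO$-module $\GO[\lambda]$, exactly as in Proposition \ref{prop:MorC}. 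The point is that $(\dot{k},[\dot{t}])$ is a $C$-fixed point of $\CX$ --- equivalently, by the structure-constant computation in the proof of Lemma \ref{lem:Uk-gr} the maximal ideal $\Ker(\xi^{\dot{k}}_{[\dot{t}]})$ of $Z(U_\zeta(\Gg))$ is $\Lambda_C$-homogeneous --- so the complete local ring $\wCO(\CX)^{\dot{k}}_{[\dot{t}]}$ carries a $C$-action, and since $\wK^{\dot{k}}_{[\dot{t}]}$ is the unique finitely generated $\wU_\zeta(\Gg)^{\dot{k}}_{[\dot{t}]}$-module that is free over $\wCO(\CX)^{\dot{k}}_{[\dot{t}]}$ and reduces to $K^{\dot{k}}_{[\dot{t}]}$, the $C$-equivariant structure on $K^{\dot{k}}_{[\dot{t}]}$ furnished by \cite{JanM} propagates uniquely to $\wK^{\dot{k}}_{[\dot{t}]}$; the resulting coaction puts $\wK^{\dot{k}}_{[\dot{t}]}$ in $\modu(\wU_\zeta(\Gg)^{\dot{k}}_{[\dot{t}]};C)$, the compatibility with $\{k_\nu\}_{\nu\in Q_C}$ being inherited from that of $K^{\dot{k}}_{[\dot{t}]}$ upon passing to the completion.

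With $\wGK^{\dot{k}}_{\dot{t}}[\lambda]$ made equivariant, the rest of the argument is word-for-word that of Proposition \ref{prop:MorC}. For $\GM\in\modu(\wsD^{\dot{k}}_{\dot{t}t_\lambda};\star C)$ the sheaf $H(\GM)$ is naturally a $C$-equivariant $\CO_{\wCB^{\dot{k}}_{\dot{t}}}$-module for the $\star$-action; the relation $\deru_{k_\nu}m=\theta_\nu(\dot{s})\rho_{s_\nu}m$ for $\nu\in Q_C$, which also holds on $\wGK^{\dot{k}}_{\dot{t}}[\lambda]$, forces the residual $C_\ell$-action on $H(\GM)$ to be trivial, so $H(\GM)$ descends to a $C/C_\ell$-equivariant sheaf, and the identification $C/C_\ell\cong C$, $hC_\ell\leftrightarrow h^\ell$, turns the $\star$-action into the plain action \eqref{eq:CB2}; hence $H(\GM)\in\modu(\CO_{\wCB^{\dot{k}}_{\dot{t}}};C)$. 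Conversely, pulling back the $C$-equivariant structure of $\CS\in\modu(\CO_{\wCB^{\dot{k}}_{\dot{t}}};C)$ along $h\mapsto h^\ell$ endows $E(\CS)$ with a $\star$-equivariant structure, so $E(\CS)\in\modu(\wsD^{\dot{k}}_{\dot{t}t_\lambda};\star C)$. Since $H$ and $E$ are already mutual inverses between the non-equivariant categories, this yields the desired equivalence.

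The genuinely new point --- everything else merely copying Proposition \ref{prop:MorC} --- is the construction in the second paragraph: lifting the $\Lambda_C$-grading of $K^{\dot{k}}_{[\dot{t}]}$ to a coaction on the free module $\wK^{\dot{k}}_{[\dot{t}]}$ over the completed central ring. This is where the $C$-fixedness of $(\dot{k},[\dot{t}])$ (hence, ultimately, the homogeneity conditions on $\ell$) enters, and where the argument of \cite{JanM} over a genuine ring has to be replaced by a completion and uniqueness argument.
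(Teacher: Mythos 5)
Your proposal is correct and takes essentially the same route as the paper, which gives no proof of its own but simply asserts \emph{``Similarly to Proposition \ref{prop:MorC} we have the following.''} You have correctly located the one step that does not transcribe mechanically --- producing a $C$-equivariant structure on the formal splitting bundle $\wGK^{\dot{k}}_{\dot{t}}[\lambda]$ --- and your treatment of it is the right one: since $(\dot{k},[\dot{t}])\in\CX$ is a $C$-fixed point (as follows from the $\Lambda_C$-homogeneity established in the proof of Lemma \ref{lem:Uk-gr}), the complete local ring $\wCO(\CX)^{\dot{k}}_{[\dot{t}]}$ carries a $C$-action, and the uniqueness of $\wK^{\dot{k}}_{[\dot{t}]}$ as the free lift of $K^{\dot{k}}_{[\dot{t}]}$ lets the $\Lambda_C$-grading of \cite{JanM} propagate to the completion, after which the $C/C_\ell$ identification argument of Proposition \ref{prop:MorC} carries over verbatim.
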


Denote by 
$\modu_{\CB_{\dot{t}}^{\dot{k}}}(\CO_{\widetilde{G}'};C)$
the category of $C$-equivariant coherent 
$\CO_{\widetilde{G}'}$-module
supported on 
$\CB_{\dot{t}}^{\dot{k}}$.
By \eqref{eq:Morita3} and Proposition \ref{prop:MorC2} we have the following.
\begin{equation}
\label{eq:MorC2a}
\modu_{\dot{t}t_\lambda}^{\dot{k}}(\sD;\star C)
\cong
\modu_{\CB_{\dot{t}}^{\dot{k}}}(\CO_{\widetilde{G}'};C).
\end{equation}

\section{{Equivalence of abelian categories}}
In the rest of this paper we further pose the condition  
\begin{itemize}
\item[(k4)]
$\dot{h}=1$,
\end{itemize}
which is stronger than (k3).
In particular, we have $\dot{\Delta}=\Delta$, $\dot{W}=W$, and 
$\dot{t}=t_{-\rho}$.
For $\lambda\in\Lambda$ we set $\dot{t}(\lambda)=\dot{t}t_\lambda=t_{\lambda-\rho}$.

\subsection{Alcoves}
Set 
\[
\Lambda_\BR=\BR\otimes_\BZ \Lambda.
\]
For $\lambda\in\Lambda$ we define $\tau_\lambda:
\Lambda_\BR\to
\Lambda_\BR$ by $\tau_\lambda(x)=x+\ell\lambda$ for $x\in
\Lambda_\BR$.
We denote by $W_a$ 
(resp.\ $\tilde{W}_a$) 
the group of affine transformations of $\Lambda_\BR$ generated by $W$ and $\tau_\lambda$ for $\lambda\in Q$
(resp.\ $\Lambda$).
It is called the affine Weyl group (resp.\ extended affine Weyl group).
The action of $W_a$ 
 is generated by the reflections $s_{\alpha^\vee, n}$ 
($\alpha\in\Delta^+$, $n\in\BZ$)
with respect to the hyperplanes ${\GH}_{\alpha^\vee,n}$ 
given by 
\[
{\GH}_{\alpha^\vee,n}
=\{\lambda\in{\Lambda_\BR}
\mid
\langle\alpha^\vee,\lambda\rangle=\ell n\}.
\]
We set $\Lambda_\BR^\reg:=\Lambda_\BR\setminus
\bigcup_{\alpha\in\Delta^+, n\in\BZ}
{\GH}_{\alpha^\vee,n}$.
Note that $d=t_\lambda$ for  $\lambda\in\Lambda$ is regular in the sense of \eqref{eq:regularity} (for $\dot{W}=W$) if and only if 
we have $\lambda\in\Lambda^\reg:=\Lambda\cap\Lambda_\BR^{\reg}$.
The connected components of 
$\Lambda_\BR^\reg$ are called alcoves.
The affine Weyl group $W_a$ acts simply transitively on the set of alcoves.
We call the alcove
\[
A_0=
\{
\mu\in\Lambda_\BR
\mid
\langle\alpha^\vee,\mu\rangle>0
\;(\alpha\in{\Delta}^+), 
\;\;
\langle{\theta}^\vee,\mu\rangle<\ell\}
\]
the fundamental alcove.
Here, $\theta^\vee$ is the highest coroot.

Set $I_a=I\sqcup\{0\}$, and define 
$\GH_i$ for $i\in I_a$ by
$\GH_i=\GH_{\alpha_i^\vee,0}$ ($i\in I$),  
$\GH_0=\GH_{\theta^\vee,1}$.
Then $A_0$ is surrounded by the hyperplanes 
$\GH_i$ ($i\in I_a$).
For $i\in I_a$ we denote by $F_0^i$ the codimension one face of $A_0$ contained in $\GH_i$.
We define $s_i$ ($i\in I_a$) to be the reflection with respect to $\GH_i$.
Then $W$ and $W_a$ are Coxeter groups with the canonical generator systems
$
S=\{s_i\mid i\in I\}$ 
and 
$S_a=\{s_i\mid i\in I_a\}$
respectively.
We denote by $\ell:W_a\to\BZ_{\geqq0}$ the length function.
Set $\Omega=\{w\in\tW_a\mid w(A_0)=A_0\}$.
It is a finite subgroup of $\tW_a$ isomorphic to
$\tW_a/W_a\cong \Lambda/Q$.
Note $\tW_a\cong \Omega\ltimes W_a$.
We extend the length function for $W_a$ to 
$\ell:\tW_a\to\BZ_{\geqq0}$ by setting
$
\ell(w\omega)=\ell(w)
$
for 
$w\in W_a$, $\omega\in\Omega$.

The following fact is crucially used in the theory of translation functors (see
Jantzen \cite[Part II, 7.7]{JanB}).
\begin{lemma}
\label{lem:6J}
Assume that 
$\lambda, \mu\in \Lambda$ are contained in the closure of the same alcove.
Define 
$\nu\in \Lambda^+$ by 
$
\{\nu\}=W(\mu-\lambda)\cap \Lambda^+
$.
Assume that $\xi\in\wt(\Delta_\zeta(\nu))$ satisfies
$
\lambda+\xi\in W_a\mu
$.
Then we have 
$\xi\in W\nu$.
Moreover, we have
\[
w_1\lambda=\lambda,
\qquad
w_1\mu=\lambda+\xi.
\]
for some $w_1\in W_a$.
\end{lemma}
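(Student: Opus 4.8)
The plan is to reduce the statement to a well-understood fact about the geometry of alcoves and the affine Weyl group acting on $\Lambda_\BR$. First I would set up the standard picture: since $\lambda,\mu$ lie in the closure $\overline{A}$ of a common alcove $A$, and $\nu\in\Lambda^+$ is the dominant representative of $W(\mu-\lambda)$, the difference $\mu-\lambda$ is a dominant-chamber translate of a point of $\overline{A}-\overline{A}$, so $\nu$ is small in the sense that the weights $\wt(\Delta_\zeta(\nu))$ all lie in the convex hull $\operatorname{conv}(W\nu)$. The key geometric input is that $W\nu$ consists of the \emph{extreme} weights of $\Delta_\zeta(\nu)$, and for $\xi$ an arbitrary weight one has $\xi\in\operatorname{conv}(W\nu)$ with equality on the boundary only for $\xi\in W\nu$. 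Combined with the hypothesis $\lambda+\xi\in W_a\mu$, the point $\lambda+\xi$ is in the same $W_a$-orbit as $\mu$, hence lies in the $W_a$-orbit of a point of $\overline{A}$.

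The main step is then the following: a point of the form $\lambda+\xi$ with $\xi\in\operatorname{conv}(W\nu)$ that happens to lie in $W_a\mu$ must in fact have $\xi\in W\nu$. I would argue this by a distance/length estimate. Consider the affine reflection hyperplanes $\GH_{\alpha^\vee,n}$ and the facet $F$ of $\overline{A}$ that both $\lambda$ and $\mu$ belong to (the smallest face of $\overline{A}$ containing both). Because $\lambda+\xi\in W_a\mu$ and $\mu\in\overline{A}$, there is $w_1\in W_a$ with $w_1(\lambda+\xi)=\mu$ — or, adjusting, $w_1\mu=\lambda+\xi$. One shows $w_1$ must fix $F$ pointwise, hence $w_1\lambda=\lambda$; this is where one uses that $\lambda$ and $\mu$ are in the closure of the \emph{same} alcove, so any $W_a$-element carrying one facet point configuration to another compatible one and landing back near $A$ must be in the (finite) stabilizer of $\overline{A}$, which acts on $F$ as a subgroup of the relevant parabolic of $W$. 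Granting $w_1\lambda=\lambda$, we get $\xi = w_1\mu-\lambda = w_1\mu - w_1\lambda = w_1(\mu-\lambda)$, and since $w_1$ fixes $\lambda$, $w_1$ lies in the stabilizer $W_\lambda$ of $\lambda$ inside $W_a$, whose linear part is a reflection subgroup of $W$; applying this to $\mu-\lambda$ whose dominant representative is $\nu$ shows $\xi\in W(\mu-\lambda)=W\nu$. This simultaneously delivers both conclusions $w_1\lambda=\lambda$ and $w_1\mu=\lambda+\xi$.

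The hard part will be the bookkeeping that forces $w_1$ to stabilize the common facet $F$ and hence fix $\lambda$. The natural tool is a length or metric argument in the style of Jantzen \cite[Part II, 7.7]{JanB}: one checks that $\|\lambda+\xi - \lambda\| = \|\xi\| \le \|\nu\|$ with equality iff $\xi\in W\nu$, while on the other hand the $W_a$-translate structure forces $\lambda+\xi$ to be no closer to $\lambda$'s facet than $\mu$ is, pinning down the extremal case. I would make this precise by choosing a $W_a$-invariant inner product (the one induced by \eqref{eq:bil}) and analyzing the finitely many alcoves adjacent to $F$; since $W_a$ acts simply transitively on alcoves, the element $w_1$ is uniquely determined once we know which alcove it sends $A$ to, and the constraint that $\mu$ and $\lambda+\xi$ both lie in $\overline{w_1 A}\cap\overline{A}$ restricts $w_1$ to the stabilizer of $\overline A$. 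I expect this to be essentially the content of the cited Jantzen passage, so the proof would largely be a matter of citing \cite[Part II, 7.7]{JanB} after translating our normalization (the $\ell$-dilated hyperplanes $\GH_{\alpha^\vee,n}$, the role of $\Delta_\zeta(\nu)$ in place of a Weyl module in characteristic $p$) into that reference's setup, and then extracting the two displayed equalities.
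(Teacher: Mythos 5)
The paper does not prove Lemma \ref{lem:6J}: it is stated as a citation to Jantzen \cite[Part II, 7.7]{JanB}, so your fallback plan of translating notation and citing Jantzen is precisely what the paper does, and the convexity/fundamental-domain spirit of your sketch is on the right track. But the sketch has a concrete error. You claim that once $w_1\in W_a$ is chosen with $w_1\mu=\lambda+\xi$, "one shows $w_1$ must fix $F$ pointwise" (with $F$ the smallest face of $\overline{A}$ containing $\lambda$ and $\mu$). This cannot be right: fixing $F$ pointwise would force $w_1\mu=\mu$, hence $\xi=\mu-\lambda$, whereas the lemma is supposed to allow every $\xi\in W\nu$. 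In type $A_1$, take $\lambda=0$, $\mu=\varpi_1$, so $\nu=\varpi_1$; then $\xi=-\varpi_1\in\wt(\Delta_\zeta(\nu))$ satisfies $\lambda+\xi=s_1\mu\in W_a\mu$, and the unique $w_1$ that works is $s_1$, which fixes $\lambda=0$ but does not fix $\overline{A_0}$ pointwise. Relatedly, the $W_a$-stabilizer of $\overline{A}$ is trivial (simple transitivity on alcoves), so it cannot be what constrains $w_1$; the relevant finite group is the stabilizer of $\lambda$ alone.

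The repair is to \emph{construct} a suitable $w_1$ rather than to argue that an arbitrary one fixes $\lambda$. Convexity of the weight polytope gives $(\xi,\xi)\leqq(\nu,\nu)$, with equality only if $\xi\in W\nu$. The reverse inequality $(\xi,\xi)\geqq(\mu-\lambda,\mu-\lambda)=(\nu,\nu)$ holds because $\mu$ is the unique point of $W_a(\lambda+\xi)$ lying in $\overline{A}$ and hence minimizes distance from $\lambda\in\overline{A}$ over the orbit: walk $\lambda+\xi$ back into $\overline{A}$ by reflecting across successive walls, each step weakly decreasing the distance. That already gives $\xi\in W\nu$ without reference to $w_1$. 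For the "moreover," sharpen the same induction: each reflection strictly decreases the distance to $\lambda$ unless the wall contains $\lambda$, and since the distance is already pinned at its minimum $(\nu,\nu)^{1/2}$, every wall used must pass through $\lambda$; the composite of these reflections therefore lies in the stabilizer of $\lambda$ in $W_a$, and its inverse is the desired $w_1$, satisfying $w_1\lambda=\lambda$ and $w_1\mu=\lambda+\xi$ simultaneously. (Also a small slip in your last paragraph: $w_1\mu-w_1\lambda$ equals the \emph{linear part} of $w_1$ applied to $\mu-\lambda$, not $w_1(\mu-\lambda)$, since $w_1$ is affine; with $w_1\lambda=\lambda$ this still gives $\xi\in W(\mu-\lambda)$ as intended.)
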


We define the braid group $\tBB_a$ to be the group generated by the elements $b_w$ for $w\in \tW_a$ satisfying the fundamental relations 
\[
b_wb_{w'}=b_{ww'}
\qquad(w, w'\in \tW_a,\quad\ell(ww')=\ell(w)+\ell(w')).
\]
We denote by $\BB$
(resp.\ $\BB_a$) 
the subgroup of $\tBB_a$ generated by $b_w$ for $w\in W$ 
(resp.\ $W_a$).
Then $\BB$ and $\BB_a$ are naturally isomorphic to the braid groups associated to the Coxeter groups $W$ and $W_a$ respectively.
We denote by $\tBB_a^+$ 
(resp.\ $\BB^+_a$) 
the subsemigroup of 
$\tBB_a$ 
(resp.\ $\BB_a$)  generated by $b_w$ for $w\in W_a$ 
(resp.\ $\tW_a$).

\subsection{Exotic sheaves}
We recall the definition of the exotic sheaves.
Let 
\[
\varrho_G:\tilde{G}\to G
\]
be the obvious projection.
We have
\[
\varrho_G^{-1}(\dot{x})
\cong\
\CB^{\dot{x}}
:=
\{
B^-g\in\CB\mid
g\dot{x}g^{-1}\in B^-\}.
\]
We denote by $\wCB^{\dot{x}}$ the formal neighborhood of 
$\varrho_G^{-1}(\dot{x})$ in $\tilde{G}$.
Note that we have 
\[
\CB^{\dot{x}}=\CB^{\dot{k}}_{\dot{t}},
\qquad
\wCB^{\dot{x}}=\wCB^{\dot{k}}_{\dot{t}}
\]
in the notation of Section \ref{subsec:locV}.

Set 
\begin{equation}
\tilde{\Gg}
=
\{
(B^-g,a)\in\CB\times \Gg\mid
\Ad(g)(a)\in \Gb^-\},
\end{equation}
and define a morphism
\begin{equation}
\varrho_\Gg:\tilde{\Gg}\to \Gg
\end{equation}
to be the obvious projection.
The exponential map $\exp:\Gg\to G$
induces a morphism
\[
\widetilde{\exp}:\tilde{\Gg}\to\tilde{G}
\qquad
((B^-g,a)\mapsto(B^-g,\exp(a)))
\]
of complex manifolds.
This is not algebraic, but becomes algebraic after restricting it to the nilpotent cone.
Take $\dot{a}\in \Gn^+$ such that $\exp(\dot{a})=\dot{x}$.
Then we have 
\[
\varrho_\Gg^{-1}(\dot{a})
\cong\
\CB^{\dot{a}}
:=
\{
B^-g\in\CB\mid
\Ad(g)(\dot{a})\in \Gb^-\}.
\]
It is easily seen that $\widetilde{\exp}$ 
induces isomorphisms
\begin{equation}
\label{eq:Bax}
\CB^{\dot{a}}\cong\CB^{\dot{x}},
\qquad
\wCB^{\dot{a}}
\cong
\wCB^{\dot{x}}
\end{equation}
of (algebraic) schemes, 
where
$\wCB^{\dot{a}}$ is the formal neighborhood of 
$\CB^{\dot{a}}$ in $\tilde{\Gg}$.

Let us transfer the definition of exotic sheaves from 
$\wCB^{\dot{a}}$ 
to
$\wCB^{\dot{x}}$. 
Note 
\[
\tilde{G}\times_G\tilde{G}
=\{(B^-g_1,B^-g_2,x)
\in
\CB\times\CB\times G
\mid
g_kxg_k^{-1}\in B^-\;(k=1, 2)\}.
\]
For $i\in I$ we define a subvariety $\Gamma_i$ of 
$\tilde{G}\times_G\tilde{G}$ to be the closure of 
\[
\{
(B^-g,B^-\tilde{s}_ig,x)
\in
\tilde{G}\times_G\tilde{G}
\mid
gxg^{-1}\in B^-\cap \tilde{s}_i^{-1}B^-\tilde{s}_i
\},
\]
where 
$\tilde{s}_i\in N_G(H)\subset G$ 
is a representative of $s_i\in W\cong N_G(H)/H$.
Let $\varrho_{G,i}:\Gamma_i\to G$ be the natural morphism, and set $\Gamma_i^{\dot{x}}=\varrho_{G,i}^{-1}(\dot{x})$.
The projections $p_{i,r}:\Gamma_i\to \tilde{G}$ ($r=1, 2$)
induce projective morphisms
${p}^{\dot{x}}_{i,r}:
\wGamma_i^{\dot{x}}
\to\wCB^{\dot{x}}$
of schemes, where 
$\wGamma_i^{\dot{x}}$ denotes the formal neighborhood of 
$\Gamma_i^{\dot{x}}$ in
$\Gamma_i$.

By \cite[1.3.2]{BM} (see also \cite{Ri}) 
we have a weak action
\begin{equation}
\label{eq:Jb}
\BFJ_b:D^b(\modu(\CO_{{\wCB^{\dot{x}}}}))
\xrightarrow{\;\;\cong\;\;}
D^b(\modu(\CO_{{\wCB^{\dot{x}}}}))
\qquad(b\in\tBB_a)
\end{equation}
of $\tBB_a$ on 
$D^b(\modu(\CO_{\wCB^{\dot{x}}}))$ 
given by 
\begin{align*}
\BFJ_{b_{s_i}}(\CS)=&
R({p}^{\dot{x}}_{i,1})_*(({p}^{\dot{x}}_{i,2})^*\CS)
\qquad&(i\in I),
\\
\BFJ_{b_{\tau_\lambda}}(\CS)=&
\CO_{\CB}[\lambda]\otimes_{\CO_\CB}\CS
\qquad&(\lambda\in \Lambda^+).
\end{align*}
Let 
${\varrho}^{\dot{x}}:{\wCB^{\dot{x}}}\to\widehat{\{\dot{x}\}}$ be the natural morphisms induced by $\varrho_G$, 
where $\widehat{\{\dot{x}\}}$ is the formal neighborhood of $\{\dot{x}\}$ in $G$.
By \cite[1.5.1]{BM} there exists a $t$-structure
$(D^{\leqq0},D^{\geqq0})$ of the triangulated category
$D^b(\modu(\CO_{\wCB^{\dot{x}}}))$ 
given by 
\begin{align*}
{D}^{\leqq0}
=&
\{
\CS
\in
D^b(\modu(\CO_{\wCB^{\dot{x}}}))
\mid
R{\varrho}^{\dot{x}}_{*}({\BFJ}_{b}\CS)\in D^{\leqq0}(\modu(\CO_{\widehat{\{\dot{x}\}}}))
\;\;(b\in\tBB_a^+)
\},
\\
{D}^{\geqq0}
=&
\{
\CS
\in
D^b(\modu(\CO_{\wCB^{\dot{x}}}))
\mid
R{\varrho}^{\dot{x}}_{*}(({\BFJ}_b)^{-1}\CS)\in D^{\geqq0}(\modu(\CO_{\widehat{\{\dot{x}\}}}))
\;\;(b\in\tBB_a^+)
\}.
\end{align*}
We denote by 
$\modu^\ex(\CO_{{\wCB^{\dot{x}}}})$ the heart of this $t$-structure.
We call objects of 
$\modu^\ex(\CO_{{\wCB^{\dot{x}}}})$
exotic sheaves 
on ${\wCB^{\dot{x}}}$.
We define  
$\modu^\ex(\CO_{{\wCB^{\dot{x}}}};C)$
to be the abelian category consisting of objects of 
$D^b(\modu(\CO_{{\wCB^{\dot{x}}}};C))$
which are 
contained in 
$\modu^\ex(\CO_{{\wCB^{\dot{x}}}})$
if we forget the $C$-action.

Recall that for $\lambda\in\Lambda^\reg$ 
we have equivalences 
\begin{align}
\label{eq:de1b}
D^b(\modu(\wU_\zeta(\Gg)^{\dot{k}}
_{[\dot{t}(\lambda)]}))
\cong&
D^b(\modu(\CO_{{\wCB^{\dot{x}}}})),\\
\label{eq:de2b}
D^b(\modu(\wU_\zeta(\Gg)^{\dot{k}}
_{[\dot{t}(\lambda)]};C))
\cong&
D^b(\modu(
\CO_{{\wCB^{\dot{x}}}}
;C))
\end{align}
of triangulated categories
by Theorem \ref{thm:BB3}, Theorem \ref{thm:Morita}, Proposition \ref{prop:BBC2}, Proposition \ref{prop:MorC2}.

The following is the main result of this paper.
\begin{theorem}
\label{thm:ab}
Let $\lambda_0\in A_0\cap \Lambda$.
Then 
\eqref{eq:de1b}, \eqref{eq:de2b}  induce
equivalences
\begin{align}
\label{eq:ab1b}
&
\modu(\wU_\zeta(\Gg)^{\dot{k}}
_{[\dot{t}(\lambda_0)]})
\cong
\modu^\ex(\CO_{{\wCB^{\dot{x}}}}),
\\
&
\label{eq:ab2b}
\modu(\wU_\zeta(\Gg)^{\dot{k}}
_{[\dot{t}(\lambda_0)]};C)
\cong
\modu^\ex(\CO_{{\wCB^{\dot{x}}}};C)
\end{align}
of abelian categories.
\end{theorem}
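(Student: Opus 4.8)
The plan is to reduce everything to the characterization of the exotic $t$-structure in terms of the braid group action, and to show that under the equivalences \eqref{eq:de1b}, \eqref{eq:de2b} the braid group action $\BFJ_b$ on $D^b(\modu(\CO_{\wCB^{\dot{x}}}))$ corresponds to a braid group action on $D^b(\modu(\wU_\zeta(\Gg)^{\dot{k}}_{[\dot{t}(\lambda_0)]}))$ realized by wall-crossing (translation) functors. First I would set up, for each pair $\lambda,\mu\in\overline{A_0}\cap\Lambda$, the translation functor
\[
T_\lambda^\mu:\modu(\wU_\zeta(\Gg)^{\dot{k}}_{[\dot{t}(\lambda)]})\to\modu(\wU_\zeta(\Gg)^{\dot{k}}_{[\dot{t}(\mu)]}),
\qquad
M\mapsto \mathrm{pr}_{[\dot{t}(\mu)]}(L_\zeta(\nu)\otimes M),
\]
where $\{\nu\}=W(\mu-\lambda)\cap\Lambda^+$ and $\mathrm{pr}_{[\dot{t}(\mu)]}$ is the projection onto the appropriate block; the fact that $L_\zeta(\nu)\otimes M$ decomposes over the central characters $[\dot t(\lambda)t_\xi]$, $\xi\in\wt(\Delta_\zeta(\nu))$, is exactly Corollary \ref{cor:8ZX} together with its completed version (the Proposition following \eqref{eq:Hd2}), and Lemma \ref{lem:6J} controls which $\xi$ actually contribute when $\lambda,\mu$ lie in the closure of a common alcove. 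The wall-crossing functor through a wall is then $T^\lambda_\mu\circ T_\lambda^\mu$ for $\mu$ on the wall and $\lambda$ regular, and the simple reflection generators $\BFJ_{b_{s_i}}$ of $\tBB_a$ will be matched (up to shift) with the cone of the adjunction morphism for this composite, while $\BFJ_{b_{\tau_\lambda}}$ is matched with the grading shift $M\mapsto M[\text{something}]$ coming from \eqref{eq:shiftB} transported through $\GO[\lambda]\otimes_\GO(\bullet)$ and the Morita equivalence \eqref{eq:Morita2}. Here one uses Proposition \ref{prop:tensVA2} and the behaviour of $E_V$ under \eqref{eq:ED} so that translation on the $U_\zeta$ side intertwines correctly with the correspondence \eqref{eq:de1b}.

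The key geometric input is that the functors $\BFJ_{b_{s_i}}$ on the $\DD$-module side — i.e. the correspondence functors $R(p^{\dot x}_{i,1})_*(p^{\dot x}_{i,2})^*$ through $\wGamma_i^{\dot x}$ — agree, under the equivalences of Theorem \ref{thm:BB3} and Theorem \ref{thm:Morita} (plus their equivariant refinements Propositions \ref{prop:BBC2}, \ref{prop:MorC2}), with the corresponding wall-crossing functors on $D^b(\modu(\wsD^{\dot{k}}_{\dot{t}d}))$. I would prove this by identifying both with a single integral kernel: the $\sD$-bimodule obtained by localizing, on $\CV\times_{\CB}\CV$ (cut out appropriately by the condition $(k,[t])$-compatibility), the translation bimodule $E_{L_\zeta(\nu)}$, and then checking on the level of kernels that the geometric side matches the representation-theoretic side. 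This is where the argument is delicate: one must verify that the $D$-affinity statement $R\Gamma(\wsD^{\dot k}_{\dot t d})\cong\wU_\zeta(\Gg)^{\dot k}_{[\dot t d]}$ (the stronger form quoted in the proof of Theorem \ref{thm:BB1}) is compatible with tensoring by an integrable module, so that $R\Gamma$ commutes with both realizations of the wall-crossing functor.

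Having matched the two braid group actions, the theorem follows formally: \cite[1.5.1]{BM} defines $\modu^\ex(\CO_{\wCB^{\dot x}})$ as the heart of the $t$-structure determined by $\BFJ_b$ and the pushforward $R\varrho^{\dot x}_*$; the corresponding $t$-structure on $D^b(\modu(\wU_\zeta(\Gg)^{\dot{k}}_{[\dot{t}(\lambda_0)]}))$ transported through \eqref{eq:de1b} must then have as its heart the abelian category $\modu(\wU_\zeta(\Gg)^{\dot{k}}_{[\dot{t}(\lambda_0)]})$ itself, because: (i) for $\lambda_0\in A_0$ the module $\wU_\zeta(\Gg)^{\dot{k}}_{[\dot{t}(\lambda_0)]}$ in degree $0$ is sent by $R\varrho^{\dot x}_*\circ\BFJ_b$ (for $b\in\tBB_a^+$, built from the wall-crossing generators) to a complex in degree $0$ on $\widehat{\{\dot x\}}$ — this is precisely the statement that the relevant translation functors are exact and that $R\Gamma$ of a standard object is concentrated in degree $0$, which comes from Theorem \ref{thm:BB1}; and (ii) the same for $(\BFJ_b)^{-1}$ by the dual argument. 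The main obstacle I anticipate is step (i)–(ii): proving the requisite exactness/vanishing, i.e.\ that the wall-crossing functors on the quantum side are $t$-exact for the standard $t$-structure in exactly the range needed to characterize the exotic heart. This is the quantum analogue of the "obvious geometric fact requiring extra algebraic work" flagged in the introduction; I expect it to require a careful analysis of baby Verma-type objects (the images of $\CO_{\wCB^{\dot x}}[\lambda]$) and their translations, using Lemma \ref{lem:6J} to pin down the combinatorics of wall-crossing on the alcove $A_0$. The equivariant statement \eqref{eq:ab2b} then follows by the same argument carried out with the torus $C$ present throughout, using Propositions \ref{prop:BBC1}, \ref{prop:BBC2}, \ref{prop:MorC}, \ref{prop:MorC2}, since the translation functors and the kernels above are all manifestly $C$-equivariant.
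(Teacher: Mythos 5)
Your high-level architecture tracks the paper's proof: set up translation functors via Corollary \ref{cor:8ZX} and Lemma \ref{lem:6J}, match the geometric braid action $\BFJ_b$ with the wall-crossing braid action $\BFI_b$, and transport the exotic $t$-structure. But two steps diverge from what is actually needed.

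First, the braid-action comparison. You propose to compare a "single integral kernel" on $\CV\times_\CB\CV$; the paper instead routes the comparison through the quantized \emph{partial} flag manifolds. Following Riche \cite{Ri}, the geometric generator $\BFJ_{b_{s_i}}=R(p^{\dot x}_{i,1})_*(p^{\dot x}_{i,2})^*$ is rewritten via the adjunction $(\wpi^{\dot x}_{\{i\}})^*\dashv R\wpi^{\dot x}_{\{i\}*}$ for the projection to $\wCP_{\{i\}}^{\dot x}$, and then Propositions \ref{prop:Tp} and \ref{prop:Tp2} (which rest on Proposition \ref{prop:Dtran}, the parabolic $D$-affinity Theorem \ref{thm:J1}, the parabolic Morita equivalence Theorem \ref{thm:MoritaJ}, and the splitting-bundle compatibility Proposition \ref{prop:split-inv}) identify $R\wpi^{\dot x}_{J*}$ with $T_{\mu\lambda}$ and $(\wpi^{\dot x}_J)^*$ with $T_{\lambda\mu}$. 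This parabolic Beilinson--Bernstein correspondence is the real technical content of the argument (hence the Appendix), and it is entirely absent from your sketch; your kernel approach would still have to re-derive these facts in some form, so I would not expect it to be shorter or easier.

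Second, your step (i)--(ii) for pinning down the heart is not a correct reduction. You propose to verify the $t$-structure conditions only on the single object $\wU_\zeta(\Gg)^{\dot{k}}_{[\dot{t}(\lambda_0)]}$; but $D^{\leq0}$ and $D^{\geq0}$ for the exotic $t$-structure are conditions on arbitrary objects, and a check on a generator does not determine the heart. The argument actually invoked, from \cite[1.6.5]{BM}, is a formal containment of $t$-structures: Proposition \ref{prop:Tp} with $J=I$ identifies $R\varrho^{\dot x}_*$ with the exact functor $T_{0\lambda_0}$, and the defining triangles give that $\BFI^*_{\lambda_0/i}$ has amplitude $[-1,0]$ and $\BFI^!_{\lambda_0/i}$ has amplitude $[0,1]$, so $\BFI_b$ with $b\in\tBB_a^+$ sends (standard) $D^{\leq0}$ to $D^{\leq0}$ and $\BFI_b^{-1}$ sends $D^{\geq0}$ to $D^{\geq0}$; one containment of $t$-structures follows, and the reverse is automatic by orthogonality. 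In particular your anticipated obstacle --- "$t$-exactness of the wall-crossing functors in exactly the range needed" --- is a non-issue: $R_{\lambda_0/\mu_0}=T_{\lambda_0\mu_0}T_{\mu_0\lambda_0}$ is exact because translation functors are, and the only half-exactness used is the cone-amplitude just stated, which is automatic. Once \eqref{eq:ab1b} is in hand, \eqref{eq:ab2b} follows immediately (one does not need to re-run the argument with $C$).
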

Note that \eqref{eq:ab2b}
easily follows from \eqref{eq:ab1b}.
The aim of this section is to give a proof of \eqref{eq:ab1b}  following the argument of \cite{BM}.
The point is to show that the wall-crossing functor for $U_\zeta(\Gg)$-modules correspond to 
the action \eqref{eq:Jb} of $\tBB_a$ for coherent sheaves.

\subsection{$\GU$-modules}
We set
\[
\GU=\Fr_*\CU_{\CB_\zeta}
:=
\overline{\omega}^*((A_\zeta\otimes U_\zeta(\Gg))^{(\ell)})
\]
(see \eqref{eq:UBZ} for the notation).
It is a sheaf of $\BC$-algebras on $\CB$.
It contains $\GO$ and $U_\zeta(\Gg)$ as subalgebras, and 
\[
\GO\otimes U_\zeta(\Gg)\to \GU
\qquad(\varphi\otimes u\mapsto \varphi u)
\]
is an isomorphism of sheaves.
Moreover, we have
\[
{u}{\varphi}=
\sum_{(u)}
(\deru_{u_{(0)}}(\varphi)){u}_{(1)}
\qquad
(u\in U_\zeta(\Gg), \varphi\in \GO)
\]
in $\GU$.
Hence $\GU$ is the smash product the algebra $\GO$ and the Hopf algebra $U_\zeta(\Gg)$ acting on it.
We denote by $\GJ^{\dot{k}}$ the ideal of $\GU$ generated by $z-\xi^k(z)$ for $z\in Z_\Fr(U_\zeta(\Gg))$, and set
\[
\GU^{\dot{k}}=\GU/\GJ^{\dot{k}},
\qquad
\wGU^{\dot{k}}=\varprojlim_n\GU/(\GJ^{\dot{k}})^n.
\]
For $\lambda\in\Lambda$ we have natural algebra homomorphisms
\begin{equation}
\label{eq:UD}
\GU^{\dot{k}}\to 
\GD^{\dot{k}}_{\dot{t}(\lambda)},
\qquad
\wGU^{\dot{k}}\to 
\wGD^{\dot{k}}_{\dot{t}(\lambda)}
\end{equation}
induced by \eqref{eq:ED}.

By $\modu(\CU_{\CB_\zeta})\cong\modu(\GU)$ we obtain from \eqref{eq:FV2} 
an exact functor 
\begin{equation}
\label{eq:FV3}
V\otimes(\bullet):
\modu(\GU)
\to
\modu(\GU)
\end{equation}
for $V\in\modu_\inte(U_\zeta^L(\Gg))$.
This induces
\begin{equation}
\label{eq:FV4}
V\otimes(\bullet):
\modu(\GU^{\dot{k}})
\to
\modu(\GU^{\dot{k}}),
\qquad
V\otimes(\bullet):
\modu(\wGU^{\dot{k}})
\to
\modu(\wGU^{\dot{k}}).
\end{equation}

By Proposition \ref{prop:tensVA2} we have the following.
\begin{proposition}
\label{prop:tensV}
Let $V\in\modu_\inte(U^L_\zeta(\Gg))$ and let 
\begin{equation}
\label{eq:filt1}
V=V_1\supset V_2\supset\cdots\supset V_m\supset V_{m+1}=\{0\}
\end{equation}
be a $U^L_\zeta(\Gb^-)$-stable filtration 
of $V$ such that $V_j/V_{j+1}$ is a one-dimensional $U^L_\zeta(\Gb^-)$-module with character $\xi_j\in\Lambda$.

Let $\lambda\in\Lambda$.
Let $\GM\in\modu(\GD^{\dot{k}}_{\dot{t}(\lambda)})$ 
(resp.\ $\modu(\wGD^{\dot{k}}_{\dot{t}(\lambda)})$), and regard it as an object of $\modu(\GU^{\dot{k}})$ 
(resp.\ $\modu(\wGU^{\dot{k}})$)
via \eqref{eq:UD}. 
Then $V\otimes\GM\in \modu(\GU^{\dot{k}})$
(resp.\ $\modu(\wGU^{\dot{k}})$) 
has 
 a functorial filtration 
\[
V\otimes \GM=
\GM_1\supset \GM_2\supset\cdots\supset 
\GM_m\supset 
\GM_{m+1}=\{0\}
\]
such that 
$\GM_j/\GM_{j+1}
\cong
\GO[\xi_j]\otimes_\GO
\GM
\in
\modu(\GD^{\dot{k}}_{\dot{t}(\lambda+\xi_j)})$
(resp.\ $\modu(\wGD^{\dot{k}}_{\dot{t}(\lambda+\xi_j)})$) 
for $j=1,\dots, m$.
\end{proposition}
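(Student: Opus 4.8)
The statement to prove is that for $V\in\modu_\inte(U^L_\zeta(\Gg))$ equipped with a $U^L_\zeta(\Gb^-)$-stable filtration with one-dimensional subquotients of characters $\xi_j\in\Lambda$, and for $\GM\in\modu(\GD^{\dot k}_{\dot t(\lambda)})$ (resp. $\modu(\wGD^{\dot k}_{\dot t(\lambda)})$) viewed in $\modu(\GU^{\dot k})$ (resp. $\modu(\wGU^{\dot k})$) via \eqref{eq:UD}, the object $V\otimes\GM$ carries a functorial filtration with subquotients $\GO[\xi_j]\otimes_\GO\GM\in\modu(\GD^{\dot k}_{\dot t(\lambda+\xi_j)})$ (resp. the formal analogue).

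Let me think about this carefully.

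The key input is Proposition 3.x (\texttt{prop:tensVA2}), which is stated at the level of $\Mod(\CU_{\CB_\zeta})$: if $\CM\in\Mod(\DD_{\CB_\zeta,t})$ is regarded in $\Mod(\CU_{\CB_\zeta})$ via \eqref{eq:ED}, then $V\otimes\CM$ has a functorial filtration with $\CM_j/\CM_{j+1}\cong\CM[\xi_j]\in\Mod(\DD_{\CB_\zeta,tt_{\xi_j}})$. So the plan is essentially a ``transfer along the Frobenius pushforward'' argument combined with ``restriction to a central character and completion''.

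So here is the plan. First I would recall the identifications from Section 5 (\texttt{sec:back}): $\modu(\CU_{\CB_\zeta})\cong\modu(\GU)$, $\modu(\DD_{\CB_\zeta,t})\cong\modu(\GD_t)$, and the fact that $\Fr_*$ intertwines the shift functors $(\bullet)[\mu]$ on the two sides (Lemma \texttt{lem:Fr-dirB} together with the definition $\GO[\mu]=\Fr_*(\omega^*(A_\zeta[\mu]))$). Under these equivalences, the exact functor $V\otimes(\bullet)$ of \eqref{eq:FV3} on $\modu(\GU)$ is the image of the functor $V\otimes(\bullet)$ of \eqref{eq:FV2}; this compatibility is built into the construction of \eqref{eq:FV3}. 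Hence Proposition \texttt{prop:tensVA2}, pushed forward along $\Fr_*$, already gives: for $\GN\in\modu(\GD_t)$ regarded in $\modu(\GU)$, the object $V\otimes\GN$ has a functorial filtration with subquotients $\GN[\xi_j]\in\modu(\GD_{tt_{\xi_j}})$, and moreover $\GN[\xi_j]\cong\GO[\xi_j]\otimes_\GO\GN$ by the formula $\GM[\lambda]\cong\GO[\lambda]\otimes_\GO\GM$ recorded in Section 5.

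Next I would specialize. Put $t=\dot t(\lambda)$, so $tt_{\xi_j}=\dot t(\lambda+\xi_j)$ by the convention $\dot t(\mu)=\dot t t_\mu$. The object $\GM$ lies in $\modu(\GD^{\dot k}_{\dot t(\lambda)})$, i.e. it is a $\GD_{\dot t(\lambda)}$-module on which $\deru_z-\xi^{\dot k}(z)$ acts as $0$ for $z\in Z_\Fr(U_\zeta(\Gg))$ (this is the content of passing to $\GD^{\dot k}$, cf. the construction of $\GJ^{\dot k}$ and $\GU^{\dot k}$ in Section 8). I must check that $V\otimes\GM$ and each subquotient again kill $\GJ^{\dot k}$, equivalently that $\deru_z-\xi^{\dot k}(z)$ acts by $0$ on $V\otimes\GM$ for $z\in Z_\Fr(U_\zeta(\Gg))$. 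This follows from the behaviour of the Frobenius center under the coproduct: for $z\in Z_\Fr(U_\zeta(\Gg))$ the action on the tensor product $V\otimes\GM$ is computed through $\Delta(z)$; since $z$ is (up to the scalar $\xi^{\dot k}(z)$) a product of $\ell$-th power root vectors and $k_{\ell\mu}$, and since $V$ is integrable over $U^L_\zeta(\Gg)$ so that the Lusztig-Frobenius homomorphism controls the action of these elements on $V$, one gets $z$ acting on $V\otimes\GM$ diagonally with eigenvalue $\xi^{\dot k}(z)$ on each $\Lambda_C$-homogeneous piece — here I would invoke exactly the argument already used in the proof of Lemma \texttt{lem:Uk-gr} and in Section 4, or more simply quote the group-scheme statement that $Z_\Fr(U_\zeta(\Gg))$ is a sub-bialgebra (it is $\CO(K)$, and $K\to G$ is a group morphism) so its characters behave multiplicatively under $\otimes$ and the character of $V$ at $Z_\Fr$ is trivial. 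Then the filtration of Proposition \texttt{prop:tensVA2} is a filtration by $\GU^{\dot k}$-submodules with the asserted subquotients, and this gives the $\GU^{\dot k}$-statement. For the $\wGU^{\dot k}$-statement I apply the completion functor $\varprojlim_n\GU/(\GJ^{\dot k})^n\otimes_{\GU}(\bullet)$, which is exact on coherent modules supported over $\Ker(\xi^{\dot k})$, to the (functorial) filtration just produced; exactness of completion preserves short exact sequences, and the subquotients $\GO[\xi_j]\otimes_\GO\GM$ with $\GM\in\modu(\wGD^{\dot k}_{\dot t(\lambda)})$ go to $\GO[\xi_j]\otimes_\GO\GM\in\modu(\wGD^{\dot k}_{\dot t(\lambda+\xi_j)})$ since $\GO[\xi_j]$ is $\GO$-coherent and the completion commutes with tensoring by such a bimodule.

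The one genuinely delicate point — and the step I expect to be the main obstacle — is the verification that $V\otimes\GM$ is again annihilated by $\GJ^{\dot k}$ (resp. is $(\GJ^{\dot k})^n$-nilpotent in the formal case), i.e. that tensoring with the integrable module $V$ does not move the Frobenius central character. Everything else is formal bookkeeping: transporting \texttt{prop:tensVA2} along the equivalences $\modu(\CU_{\CB_\zeta})\cong\modu(\GU)$ and $\modu(\DD_{\CB_\zeta,t})\cong\modu(\GD_t)$, matching $[\xi_j]$ with $\GO[\xi_j]\otimes_\GO(\bullet)$, and then applying the exact functors ``$\otimes_{Z_\Fr}\BC$'' and ``complete at $\Ker\xi^{\dot k}$''. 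I would therefore organize the write-up as: (1) a short lemma that $V\otimes(\bullet)$ preserves the Frobenius central character $\xi^{\dot k}$, proved via the coproduct and Lusztig's Frobenius homomorphism exactly as in Section 4; (2) the transfer of \texttt{prop:tensVA2} to $\modu(\GU)$ and the identification of subquotients; (3) restriction to $\GU^{\dot k}$ using (1); (4) passage to $\wGU^{\dot k}$ by exactness of completion. The proof body can then literally be the single sentence ``This follows from Proposition~\ref{prop:tensVA2} together with the identifications of Section~\ref{sec:back}, once one notes that $V\otimes(\bullet)$ preserves the Frobenius central character by the argument of Section~\ref{sec:Rep}.''
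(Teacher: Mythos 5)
Your proof is correct and follows the same route the paper intends; the paper's own proof is the single sentence ``By Proposition~\ref{prop:tensVA2} we have the following,'' and your plan fills in exactly the bookkeeping that phrase suppresses. The one point you flag as delicate — that $V\otimes(\bullet)$ preserves the Frobenius central character $\xi^{\dot{k}}$, so the filtration lands in $\modu(\GU^{\dot{k}})$ rather than just $\modu(\GU)$ — is in fact already absorbed by the paper into the unproved assertion that the functors \eqref{eq:FV4} are well defined on $\modu(\GU^{\dot{k}})$ and $\modu(\wGU^{\dot{k}})$; so you have correctly located where the non-formal content sits, even though it is not strictly part of proving this proposition given that \eqref{eq:FV4} has already been granted. (One small aside: your parenthetical justification ``$K\to G$ is a group morphism'' is not quite right, as $\eta$ is not a group homomorphism; but the substantive claim that $Z_{\Fr}$ acts on the integrable module $V$ by the augmentation character and that this is compatible with the coproduct is the standard fact you need, and the rest of your argument does not depend on that side remark.)
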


\subsection{Translation functor}
Recall that we have a direct sum decomposition 
\begin{equation}
\label{eq:pU}
\modu(\wU_\zeta(\Gg)^{\dot{k}})
=
\bigoplus_{\lambda\in\Lambda/W_a}
\modu(\wU_\zeta(\Gg)^{\dot{k}}_{[\dot{t}(\lambda)]})
\end{equation}
of abelian categories (see \eqref{eq:Hd2}).
For $\lambda\in\Lambda$ we denote by
\begin{equation}
p_\lambda:
\modu(\wU_\zeta(\Gg)^{\dot{k}})
\to
\modu(\wU_\zeta(\Gg)^{\dot{k}}_{[\dot{t}(\lambda)]})
\end{equation} 
the projection
with respect to \eqref{eq:pU}.

Assume that 
$\lambda, \mu\in \Lambda$
are contained in the closure of the same alcove.
In this case we define an exact functor
\begin{align}
T_{\mu\lambda}:
\modu(\wU_\zeta(\Gg)^{\dot{k}}_{[\dot{t}(\lambda)]})
\to
\modu(\wU_\zeta(\Gg)^{\dot{k}}_{[\dot{t}(\mu)]})
\end{align}
by 
\begin{align*}
T_{\mu\lambda}(M)
=
{p}_{\mu}(L_\zeta(\nu)\otimes M),
\end{align*}
where $\nu\in \Lambda^+$ is given by
$
\{\nu\}=W(\mu-\lambda)\cap \Lambda^+
$.
This functor is called the translation functor.

\begin{remark}
For $\nu\in\Lambda^+$ the irreducible module $L_\zeta(\nu)$ appears in the composition series of $\Delta_\zeta(\nu)$ and 
$\nabla_\zeta(\nu)$ with multiplicity one, and other composition factors have smaller highest weights.
Hence we can show using Corollary \ref{cor:8ZX} and  Lemma \ref{lem:6J} that
\begin{align*}
T_{\mu\lambda}(M)
\cong
p_{\mu}(\Delta_\zeta(\nu)\otimes M)
\cong
p_{\mu}(\nabla_\zeta(\nu)\otimes M)
\end{align*}
for $\lambda$, $\mu$, $\nu$ as above.
\end{remark}
\begin{remark}
For $\lambda, \mu\in\Lambda$ as above and $w\in\tilde{W}_a$ we have $T_{\mu\lambda}=T_{w\mu,w\lambda}$.
Hence it is harmless to assume $\lambda,\mu\in\overline{A}_0$ as we do later.
\end{remark}
The proof of the following result is very classic and only included for completeness.
\begin{lemma}
\label{lem:8adX}
Assume that 
$\lambda, \mu\in \Lambda$
are contained in the closure of the same alcove.
Then $T_{\lambda\mu}$
is left and right adjoint to 
$T_{\mu\lambda}$.
\end{lemma}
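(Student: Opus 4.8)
The plan is to deduce the adjunction from the well-known biadjointness of tensoring with a finite-dimensional integrable module, combined with the fact that the projections $p_\lambda$, $p_\mu$ are exact and respect the required adjunction. First I would recall that for $V\in\modu_\inte(U^L_\zeta(\Gg))$ with dual $V^*$ (defined as in Section~\ref{subsec:TP}), the functors $V\otimes(\bullet)$ and $V^*\otimes(\bullet)$ on $\modu(\wU_\zeta(\Gg)^{\dot{k}})$ are left and right adjoint to each other; this follows from the Hopf algebra structure of $U_\zeta(\Gg)$ via the standard unit and counit maps $\BC\to V^*\otimes V$, $V\otimes V^*\to\BC$ (and the other pair), exactly as in the classical case, and the completion causes no trouble since everything is finite-dimensional over the relevant local ring. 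Next I would observe that for the translation functors in question the relevant module is $L_\zeta(\nu)$ with $\nu\in\Lambda^+$ determined by $\{\nu\}=W(\mu-\lambda)\cap\Lambda^+$, and that for the reversed translation $T_{\lambda\mu}$ the corresponding dominant weight $\nu'$ with $\{\nu'\}=W(\lambda-\mu)\cap\Lambda^+$ satisfies $\nu'=-w_0\nu$, hence $L_\zeta(\nu')\cong L_\zeta(\nu)^*$. Thus $T_{\mu\lambda}(M)=p_\mu(L_\zeta(\nu)\otimes M)$ and $T_{\lambda\mu}(N)=p_\lambda(L_\zeta(\nu)^*\otimes N)$.

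The main step is then the computation
\[
\Hom_{\wU_\zeta(\Gg)^{\dot{k}}_{[\dot{t}(\mu)]}}(T_{\mu\lambda}M,N)
=\Hom_{\wU_\zeta(\Gg)^{\dot{k}}_{[\dot{t}(\lambda)]}}(M,T_{\lambda\mu}N),
\]
which I would obtain as follows: since $N\in\modu(\wU_\zeta(\Gg)^{\dot{k}}_{[\dot{t}(\mu)]})$ we have $\Hom(p_\mu(L_\zeta(\nu)\otimes M),N)=\Hom(L_\zeta(\nu)\otimes M,N)$ because $p_\mu$ is the projection onto the block containing $N$ and it kills no morphisms into $N$; then the biadjunction gives $\Hom(L_\zeta(\nu)\otimes M,N)=\Hom(M,L_\zeta(\nu)^*\otimes N)$; and finally, since $M\in\modu(\wU_\zeta(\Gg)^{\dot{k}}_{[\dot{t}(\lambda)]})$, any homomorphism from $M$ into $L_\zeta(\nu)^*\otimes N$ factors through the $\dot{t}(\lambda)$-component $p_\lambda(L_\zeta(\nu)^*\otimes N)=T_{\lambda\mu}N$, so $\Hom(M,L_\zeta(\nu)^*\otimes N)=\Hom(M,T_{\lambda\mu}N)$. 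This shows $T_{\lambda\mu}$ is right adjoint to $T_{\mu\lambda}$; the assertion that $T_{\lambda\mu}$ is also left adjoint follows by the symmetric argument, interchanging the roles of $\lambda$ and $\mu$ (and of $V$ and $V^*$), or by invoking $V^{**}\cong V$.

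The place that needs a little care, and which I expect to be the main obstacle, is justifying that $L_\zeta(\nu)\otimes M$ already lies in $\bigoplus$ of only finitely many blocks and that $p_\mu$ (resp.\ $p_\lambda$) is the correct one-sided projection in the above Hom manipulations — i.e.\ that no morphism into $N$ (resp.\ out of $M$) is lost. This is where Corollary~\ref{cor:8ZX} enters: it guarantees that $L_\zeta(\nu)\otimes M$ decomposes over the finite set $\{[\dot{t}(\lambda)t_\xi]\mid\xi\in\wt(\Delta_\zeta(\nu))\}$ of central characters, each an honest direct summand, so that $\Hom$ out of (or into) a module supported at a single central character sees only the matching summand. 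Given Lemma~\ref{lem:6J}, which controls exactly which translates $\dot{t}(\lambda+\xi)$ can equal $\dot{t}(\mu)$ up to $W_a$, one checks that the block $\dot{t}(\mu)$ really does appear, so that $T_{\mu\lambda}$ and $T_{\lambda\mu}$ are genuinely nonzero and the adjunction is between the expected functors. Everything else is formal manipulation with adjoints and exact projections, so I would keep that part brief.
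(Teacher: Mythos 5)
Your proof is correct and follows essentially the same route as the paper: identify $T_{\lambda\mu}$ with $p_\lambda(L_\zeta(\nu)^*\otimes(\bullet))$, use the biadjunction of $V\otimes(\bullet)$ and $V^*\otimes(\bullet)$, and observe that the block projections $p_\mu$, $p_\lambda$ may be inserted or removed freely in the Hom sets since $N$, $M$ already lie in a single block. The final paragraph on Lemma~\ref{lem:6J} and nonvanishing of the translation functors is not needed for the adjunction itself (adjointness is a formal identity on Hom sets and holds even when the functors vanish), but it does no harm.
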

\begin{proof}
By symmetry it is sufficient to show that 
$T_{\lambda\mu}$ 
is right adjoint to $T_{\mu\lambda}$.
Take $\nu_1, \nu_2\in\Lambda^+$ satisfying 
$
\{\nu_1\}=W(\mu-\lambda)\cap \Lambda^+$,
$
\{\nu_2\}=W(\lambda-\mu)\cap \Lambda^+$.
Then we have $L(\nu_1)\cong L(\nu_2)^*$.
Hence for 
$M\in\modu(U_\zeta(\Gg)^{\dot{k}}_{[\dot{t}(\lambda)]})$, 
$N\in\modu(U_\zeta(\Gg)^{\dot{k}}_{[\dot{t}(\mu)]}))$ we have
\begin{align*}
&\Hom(T_{\mu\lambda}M,N)
=
\Hom({p}_{\mu}(L(\nu_1)\otimes M),N)
\cong
\Hom(L(\nu_1)\otimes M,N)
\\
\cong&
\Hom(L(\nu_2)^*\otimes M,N)
\cong
\Hom(M,L(\nu_2)\otimes N)
\cong
\Hom(M,{p}_{\lambda}(L(\nu_2)\otimes N))
\\
=&
\Hom(M,T_{\lambda\mu}N).
\end{align*}
\end{proof}

\begin{proposition}
\label{prop:Dtran}
Assume that $\mu\in\Lambda$ is contained in the closure of the facet containing $\lambda\in\Lambda$.
Then for 
$\GM\in D^b(\modu(\wGD^{\dot{k}}_{\dot{t}(\lambda)}))$ 
we have
\begin{align*}
T_{\mu\lambda}R\Gamma(\GM)
=R\Gamma(\GO[\mu-\lambda]\otimes_\GO\GM).
\end{align*}
\end{proposition}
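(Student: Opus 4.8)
The plan is to compare the two sides via the filtration of Proposition~\ref{prop:tensV} together with the commutativity of $R\Gamma$ with the translation/tensor functors established in Proposition~\ref{prop:Dtran}'s derived-category setting. Concretely, set $\nu\in\Lambda^+$ with $\{\nu\}=W(\mu-\lambda)\cap\Lambda^+$, so that by definition $T_{\mu\lambda}R\Gamma(\GM)=p_\mu(L_\zeta(\nu)\otimes R\Gamma(\GM))$. The first step is to replace $L_\zeta(\nu)$ by the Weyl module $\Delta_\zeta(\nu)$: by the Remark following the definition of $T_{\mu\lambda}$ (an immediate consequence of Corollary~\ref{cor:8ZX} and Lemma~\ref{lem:6J}, since $\lambda$ and $\mu$ lie in the closure of the same alcove), we have $p_\mu(L_\zeta(\nu)\otimes R\Gamma(\GM))\cong p_\mu(\Delta_\zeta(\nu)\otimes R\Gamma(\GM))$. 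The advantage of $\Delta_\zeta(\nu)$ is that it carries a $U^L_\zeta(\Gb^-)$-stable filtration with one-dimensional subquotients of characters $\xi_j\in\Lambda$ (the weights of $\Delta_\zeta(\nu)$ counted with multiplicity), so Proposition~\ref{prop:tensV} applies.

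**Next I would** move the tensor product inside $R\Gamma$. The point is that the exact functor $V\otimes(\bullet)$ on $\modu(\GU^{\dot{k}})$ (resp.\ $\modu(\wGU^{\dot{k}})$) of \eqref{eq:FV3}--\eqref{eq:FV4} is compatible, via \eqref{eq:UD}, with the tensor functor on $U_\zeta(\Gg)$-modules, and $R\Gamma$ intertwines them: for $\GM\in D^b(\modu(\wGD^{\dot{k}}_{\dot{t}(\lambda)}))$ one has $V\otimes R\Gamma(\GM)\cong R\Gamma(V\otimes\GM)$, because $V$ is a finite-dimensional (flat, indeed finite free over $\BC$) module and the tensor-product functor is given by a locally free kernel, so it commutes with the derived pushforward $R\Gamma$ up to the usual projection-formula type argument. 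Applying this with $V=\Delta_\zeta(\nu)$ gives $\Delta_\zeta(\nu)\otimes R\Gamma(\GM)\cong R\Gamma(\Delta_\zeta(\nu)\otimes\GM)$ in $D^b(\modu(\wGU^{\dot{k}}))$. By Proposition~\ref{prop:tensV}, $\Delta_\zeta(\nu)\otimes\GM$ has a functorial filtration whose subquotients are $\GO[\xi_j]\otimes_\GO\GM\in\modu(\wGD^{\dot{k}}_{\dot{t}(\lambda+\xi_j)})$; applying $R\Gamma$ gives a filtration (a finite set of distinguished triangles) of $R\Gamma(\Delta_\zeta(\nu)\otimes\GM)$ with subquotients $R\Gamma(\GO[\xi_j]\otimes_\GO\GM)\in D^b(\modu(\wU_\zeta(\Gg)^{\dot{k}}_{[\dot{t}(\lambda+\xi_j)]}))$.

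**Then I would** apply the projection $p_\mu$. Since $p_\mu$ is exact and kills every direct summand $\modu(\wU_\zeta(\Gg)^{\dot{k}}_{[\dot{t}(\kappa)]})$ with $[\dot{t}(\kappa)]\ne[\dot{t}(\mu)]$ in $\Lambda/W_a$, only those $j$ with $\lambda+\xi_j\in W_a\mu$ survive. Here the hypothesis that $\mu$ lies in the closure of the facet containing $\lambda$ is decisive: by Lemma~\ref{lem:6J}, for such a $\xi=\xi_j$ one has $\xi\in W\nu$ and there is $w_1\in W_a$ with $w_1\lambda=\lambda$, $w_1\mu=\lambda+\xi$. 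This pins down exactly one weight space $\Delta_\zeta(\nu)_\xi$ contributing (it is one-dimensional since extreme weights of $\Delta_\zeta(\nu)$ have multiplicity one), so $p_\mu$ of the filtered object is $R\Gamma(\GO[\xi]\otimes_\GO\GM)$ for that unique $\xi$. Finally I would identify $\GO[\xi]\otimes_\GO\GM\cong\GO[\mu-\lambda]\otimes_\GO\GM$: indeed $w_1\in W_a$ fixes $\lambda$ and sends $\lambda+\xi$ to $\mu$ under the dot-type action on alcoves, and the ambiguity $\xi$ versus $\mu-\lambda$ in $\ell\Lambda$ is absorbed because shifting by $\ell\Lambda$ corresponds to twisting by a line bundle pulled back from $\CB$, which does not change the central character block; more directly, $w_1$ fixing $\lambda$ forces $\xi=\mu-\lambda$ after the identification $\CV^{\dot{k}}_{\dot{t}(\lambda+\xi)}\cong\CV^{\dot{k}}_{\dot{t}(\mu)}$ used throughout Section~\ref{sec:back}. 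This yields $T_{\mu\lambda}R\Gamma(\GM)=R\Gamma(\GO[\mu-\lambda]\otimes_\GO\GM)$.

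**The main obstacle** I expect is the bookkeeping in the last step: making precise that after applying $p_\mu$ the surviving subquotients of the filtration all collapse to a single term canonically isomorphic to $R\Gamma(\GO[\mu-\lambda]\otimes_\GO\GM)$, rather than merely having the right subquotients up to extensions. This requires care because a priori $p_\mu$ applied to a filtered complex gives an iterated extension, and one must use the multiplicity-one statement from Lemma~\ref{lem:6J} (only one $\xi_j$ lands in $W_a\mu$) to see that there is genuinely only one nonzero graded piece, so no extension problem arises. A secondary technical point is verifying the interchange $V\otimes R\Gamma(\GM)\cong R\Gamma(V\otimes\GM)$ at the level of derived categories for the completed algebras $\wGU^{\dot{k}}$ and $\wsD$—this is routine given that $\sD$ (hence $\GD$) is a coherent, locally free $\CO_\CV$-module and $V$ is finite-dimensional, but the completion requires the formal-neighborhood conventions of the Remark in Section~\ref{sec:back} to be invoked.
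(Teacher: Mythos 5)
Your proof follows the same route as the paper: replace $L_\zeta(\nu)$ by $V=\Delta_\zeta(\nu)$, use Proposition~\ref{prop:tensV} to filter $V\otimes\GM$ with subquotients $\GO[\xi_j]\otimes_\GO\GM$, apply $p_\mu\circ R\Gamma$, and show that exactly one subquotient survives. Two of the steps, however, are argued incorrectly.

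First, when you pass from the block decomposition to the claim that ``only those $j$ with $\lambda+\xi_j\in W_a\mu$ survive,'' you have skipped a necessary reduction. The central-character block condition from \eqref{eq:pU} a priori only gives $\lambda+\xi_j\in\tilde W_a\mu$: the Harish-Chandra characters $\xi_{[\dot t(\lambda+\xi_j)]}$ and $\xi_{[\dot t(\mu)]}$ agree exactly when $\lambda+\xi_j\equiv w\mu\pmod{\ell\Lambda}$ for some $w\in W$, i.e.\ $\lambda+\xi_j\in\tilde W_a\mu$. To invoke Lemma~\ref{lem:6J}, whose hypothesis is $\lambda+\xi\in W_a\mu$, you must observe that $\mu-\lambda-\xi_j\in Q$ (because $\xi_j$ is a weight of $\Delta_\zeta(\nu)$ and $\nu\equiv\mu-\lambda\pmod Q$) and $w\mu-\mu\in Q$, so the translation part $\ell\kappa$ lies in $\ell\Lambda\cap Q=\ell Q$; hence $\kappa\in Q$ and $\lambda+\xi_j\in W_a\mu$. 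Without this step the cited lemma does not apply.

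Second, your final identification of $\xi$ with $\mu-\lambda$ is confused. Once Lemma~\ref{lem:6J} produces $w_1\in W_a$ with $w_1\lambda=\lambda$ and $w_1\mu=\lambda+\xi$, the conclusion $\xi=\mu-\lambda$ is \emph{not} a matter of absorbing an ambiguity modulo $\ell\Lambda$, nor is it implied by the identification $\CV^{\dot k}_{\dot t(\lambda+\xi)}\cong\CV^{\dot k}_{\dot t(\mu)}$; it is an equality in $\Lambda$ that must be proved. The decisive point is exactly the unused hypothesis of the proposition: since $\mu$ lies in the closure of the facet containing $\lambda$, every reflecting hyperplane through $\lambda$ also passes through $\mu$, so the $W_a$-stabilizer of $\lambda$ is contained in that of $\mu$. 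Hence $w_1\lambda=\lambda$ forces $w_1\mu=\mu$, giving $\lambda+\xi=\mu$. This is what makes $\mu-\lambda$ an extremal weight of $V$ with a one-dimensional weight space, and hence the unique surviving $\xi_j$. Your ``main obstacle'' paragraph gestures at this issue but does not resolve it correctly.
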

\begin{proof}
Take $\nu\in\Lambda^+$ such that $\{\nu\}=W(\mu-\lambda)\cap \Lambda^+$.
Set $V=\Delta_\zeta(\nu)$, and choose its filtration as in \eqref{eq:filt1}.
By Proposition \ref{prop:tensV}
there exist
 $\GM_j\in D^b(\modu(\wGU^{\dot{k}}))$ for $j=1,\dots, m+1$ such that 
\[
\GM_1=V\otimes \GM,\qquad
\GM_{m+1}=0,
\]
and distinguished triangles
\[
\GM_{j+1}\to\GM_j
\to\GO[\xi_j]\otimes_\GO\GM
\xrightarrow{+1}.
\]
By applying $p_{\mu}\circ R\Gamma$ 
we obtain $M_j\in D^b(\modu(\wU_\zeta(\Gg)^{\dot{k}}_{[\dot{t}(\mu)]})$ 
for $j=1,\dots, m+1$ such that 
\[
M_1=V\otimes R\Gamma(\GM),\qquad
M_{m+1}=0,
\]
and distinguished triangles
\[
M_{j+1}\to M_j
\to
p_{\mu}
(R\Gamma
(\GO[\xi_j]\otimes_\GO\GM))
\xrightarrow{+1}.
\]
Assume $p_{\mu}
(R\Gamma
(\GO[\xi_j]\otimes_\GO\GM))\ne\{0\}$.
By $R\Gamma
(\GO[\xi_j]\otimes_\GO\GM)\in 
D^b(\modu(\wU_\zeta(\Gg)^{\dot{k}}_{[\dot{t}(\lambda+\xi_j)]}))$ 
we have 
$\lambda+\xi_j\in \tilde{W}_a\mu$.
By $\mu-\lambda-\xi_j\in Q$ and $\ell\Lambda\cap Q=\ell Q$ we obtain 
$\lambda+\xi_j\in {W}_a\mu$.
By Lemma \ref{lem:6J} this implies $\xi_j=\mu-\lambda$.
Since $\mu-\lambda$ is an extremal weight of $V$, such $j$ is unique, and for this $j$ we have
\[
T_{\mu\lambda}R\Gamma(\GM)
=
R\Gamma(\GO[\xi_j]\otimes_\GO\GM)
=
R\Gamma(\GO[\mu-\lambda]\otimes_\GO\GM).
\]
\end{proof}

Let $J\subset I$.
We set
\[
\CP_J^{\dot{x}}=
\{P_J^-g\in\CP_J\mid g\dot{x}g^{-1}\in P_J^-\},
\]
and denote by $\wCP_J^{\dot{x}}$ its completion in 
$\tilde{G}_J$.
Recall that for $\lambda\in\Lambda^J$ the algebra $\sD^{\dot{k}}_{J,\dot{t}(\lambda)}$
(resp.\ 
$\wsD^{\dot{k}}_{J,\dot{t}(\lambda)}$) 
is a split Azumaya algebra with
splitting bundle 
\begin{gather*}
\GK^{\dot{k}}_{J,\dot{t}}[\lambda]
=\GO_J[\lambda]\otimes_{\GO_J}
(\CO_{\CP_J^{\dot{x}}}
\otimes_{\BC}
K^{\dot{k}}_{[\dot{t}]}),
\\
(\text{resp.}
\quad
\wGK^{\dot{k}}_{J,\dot{t}}[\lambda]
=\GO_J[\lambda]\otimes_{\GO_J}
(\CO_{\wCP_J^{\dot{x}}}
\otimes_{\wCO(\CX)^{\dot{k}}_{[\dot{t}]}}
\wK^{\dot{k}}_{[\dot{t}]})),
\end{gather*}
where we identify 
$\sD^{\dot{k}}_{J,\dot{t}(\lambda)}$ and
$\wsD^{\dot{k}}_{J,\dot{t}(\lambda)}$ with 
an $\CO_{\CP_J^{\dot{x}}}$-algebra 
and an
$\CO_{\wCP_J^{\dot{x}}}$-algebra 
respectively (see \ref{subsec:locVJ}).
By definition we have
\begin{equation}
\label{eq:split-inv}
(\pi^{\dot{x}}_J)^*\GK^{\dot{k}}_{J,\dot{t}}[\lambda]
\cong
\GK^{\dot{k}}_{\dot{t}}[\lambda],
\qquad
(\wpi^{\dot{x}}_J)^*\wGK^{\dot{k}}_{J,\dot{t}}[\lambda]
\cong
\wGK^{\dot{k}}_{\dot{t}}[\lambda],
\end{equation}
where 
\[
\pi^{\dot{x}}_J:\CB^{\dot{x}}\to\CP^{\dot{x}}_J,
\qquad
{\wpi}^{\dot{x}}_J:\wCB^{\dot{x}}\to\wCP^{\dot{x}}_J
\]
are natural morphisms.
Set 
$W_J=\langle s_j\mid j\in J\rangle\subset W$, 
$Q_J=\sum_{j\in J}\BZ\alpha_j\subset Q$, and let 
$W_{J,a}$ be the subgroup of $W_a$ generated by $W_J$ and $\tau_\lambda$ for $\lambda\in Q_J$.
We denote by $\Lambda^{J,\reg}$ the set of $\lambda\in\Lambda^J$ satisfying the following equivalent conditions:
\begin{itemize}
\item
$\dot{t}(\lambda)$ is $J$-regular,
\item
if $wt_\lambda=t_\lambda$ for $w\in W$, then $w\in W_J$,
\item
if $y\lambda=\lambda$ for $y\in W_a$, then $y\in W_{J,a}$.
\end{itemize}

By Theorem \ref{thm:Morita}
and
Theorem \ref{thm:BB3} we obtain an equivalence
\begin{align}
\label{eq:de1bJ}
&D^b(\modu(\wU_\zeta(\Gg)^{\dot{k}}
_{[\dot{t}(\lambda)]}))
\cong
D^b(\modu(
\CO_{{\wCP_J^{\dot{x}}}}
))
\qquad(\lambda\in\Lambda^{J,\reg})
\end{align}
of triangulated categories.

\begin{proposition}
\label{prop:Tp}
Assume that $\lambda\in\Lambda$ is contained in the fundamental alcove $A_0$ and $\mu\in\Lambda$ lies on the facet $F\subset\overline{A}_0$ corresponding to $J\subset I$.
Then we have the following commutative diagram:
\[
\xymatrix@C=50pt{
D^b(\modu(
\CO_{\wCB^{\dot{x}}}
))
\ar[r]^{R\wpi^{\dot{x}}_{J*}}
\ar[d]_{\cong}
&
D^b(\modu(
\CO_{\wCP^{\dot{x}}_{J}}
))
\ar[d]^{\cong}
\\
D^b(\modu(\wU_\zeta(\Gg)^{\dot{k}}_{[\dot{t}(\lambda)]}))
\ar[r]_{T_{\mu\lambda}}
&
D^b(\modu(\wU_\zeta(\Gg)^{\dot{k}}_{[\dot{t}(\mu)]})).
}
\]
\end{proposition}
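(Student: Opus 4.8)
\emph{Plan of proof.} The strategy is to chase a single object $\GS\in D^b(\modu(\CO_{\wCB^{\dot{x}}}))$ around the square, using the explicit descriptions of the two vertical equivalences together with Proposition~\ref{prop:Dtran}. Recall that the left vertical arrow \eqref{eq:de1b} sends $\GS$ first, by the Morita equivalence of Theorem~\ref{thm:Morita} (via the splitting bundle $\wGK^{\dot{k}}_{\dot{t}}[\lambda]$ and the identification $\wCV^{\dot{k}}_{\dot{t}(\lambda)}\cong\wCB^{\dot{x}}$), to the complex $\wGK^{\dot{k}}_{\dot{t}}[\lambda]\otimes^{L}_{\CO_{\wCB^{\dot{x}}}}\GS\in D^b(\modu(\wsD^{\dot{k}}_{\dot{t}(\lambda)}))$, and then applies $R\Gamma$; pushing forward to a $\wGD^{\dot{k}}_{\dot{t}(\lambda)}$-module complex on $\CB$ this lands in the situation of Proposition~\ref{prop:Dtran}. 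Similarly the right vertical arrow \eqref{eq:de1bJ} first tensors with the splitting bundle $\wGK^{\dot{k}}_{J,\dot{t}}[\mu]$ over $\CO_{\wCP^{\dot{x}}_J}$ (Theorem~\ref{thm:MoritaJ}, using $\wCV^{\dot{k}}_{J,\dot{t}(\mu)}\cong\wCP^{\dot{x}}_J$) and then applies $R\Gamma$ on $\wCP^{\dot{x}}_J$ (Theorem~\ref{thm:J1}); here one notes that $\mu\in\Lambda^{J,\reg}$ because $\mu$ lies in the relative interior of the facet $F$, whose $W_a$-stabiliser is $W_J\subset W_{J,a}$.

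\emph{Down then right.} Applying Proposition~\ref{prop:Dtran} to $\GM=\wGK^{\dot{k}}_{\dot{t}}[\lambda]\otimes^{L}_{\CO_{\wCB^{\dot{x}}}}\GS$ — legitimate since $\lambda\in A_0$ is regular, so the facet containing $\lambda$ is $A_0$ and $\mu\in\overline{A}_0$ — we get
\[
T_{\mu\lambda}\,R\Gamma\bigl(\wGK^{\dot{k}}_{\dot{t}}[\lambda]\otimes^{L}_{\CO_{\wCB^{\dot{x}}}}\GS\bigr)
\;\cong\;
R\Gamma\bigl(\GO[\mu-\lambda]\otimes_{\GO}(\wGK^{\dot{k}}_{\dot{t}}[\lambda]\otimes^{L}_{\CO_{\wCB^{\dot{x}}}}\GS)\bigr).
\]
By Theorem~\ref{thm:Azumaya} we have $\wGK^{\dot{k}}_{\dot{t}}[\lambda]\cong\GO[\lambda]\otimes_{\GO}\wGK^{\dot{k}}_{\dot{t}}$ and $\GO[\mu-\lambda]\otimes_{\GO}\GO[\lambda]\cong\GO[\mu]$, and since the bimodule $\GO[\mu-\lambda]$ has coinciding left and right $\CO_{\CB}$-structures, $\GO[\mu-\lambda]\otimes_{\GO}(-)$ commutes with $(-)\otimes^{L}_{\CO_{\wCB^{\dot{x}}}}\GS$. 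Hence the right-hand side equals $R\Gamma(\wGK^{\dot{k}}_{\dot{t}}[\mu]\otimes^{L}_{\CO_{\wCB^{\dot{x}}}}\GS)$, where by Theorem~\ref{thm:Azumaya} (no regularity on $\mu$ is needed) $\wGK^{\dot{k}}_{\dot{t}}[\mu]$ is precisely the splitting bundle of $\wsD^{\dot{k}}_{\dot{t}(\mu)}$, and the $R\Gamma$ is now taken over $\wCB^{\dot{x}}\cong\wCV^{\dot{k}}_{\dot{t}(\mu)}$.

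\emph{Right then down.} We must compute $R\Gamma\bigl(\wGK^{\dot{k}}_{J,\dot{t}}[\mu]\otimes^{L}_{\CO_{\wCP^{\dot{x}}_J}}R\wpi^{\dot{x}}_{J*}\GS\bigr)$ over $\wCP^{\dot{x}}_J$. By Proposition~\ref{prop:split-inv}, $(\wpi^{\dot{x}}_J)^{*}\wGK^{\dot{k}}_{J,\dot{t}}[\mu]\cong\wGK^{\dot{k}}_{\dot{t}}[\mu]$; since $\wGK^{\dot{k}}_{J,\dot{t}}[\mu]$ is locally free and $\wpi^{\dot{x}}_J$ is projective (it is the base change of $\tilde{G}\to\tilde{G}_J$ along the completion $\widehat{\{\dot{x}\}}\to G$), the projection formula gives
\[
\wGK^{\dot{k}}_{J,\dot{t}}[\mu]\otimes^{L}_{\CO_{\wCP^{\dot{x}}_J}}R\wpi^{\dot{x}}_{J*}\GS
\;\cong\;
R\wpi^{\dot{x}}_{J*}\bigl(\wGK^{\dot{k}}_{\dot{t}}[\mu]\otimes^{L}_{\CO_{\wCB^{\dot{x}}}}\GS\bigr).
\]
Composing with $R\Gamma$ over $\wCP^{\dot{x}}_J$ and using the Leray identity $R\Gamma_{\wCP^{\dot{x}}_J}\circ R\wpi^{\dot{x}}_{J*}\cong R\Gamma_{\wCB^{\dot{x}}}$ (composition of derived functors, $\Gamma_{\CB}=\Gamma_{\CP_J}\circ(\pi_J)_*$), which moreover matches the $\wU_\zeta(\Gg)^{\dot{k}}_{[\dot{t}(\mu)]}$-module structures because $\GD_J=(\pi_J)_*\GD$, $\sD_J$ is the pushforward of $\sD$ along $\CV\to\CV_J$, and the algebra maps $U_\zeta(\Gg)\to\GD$, $U_\zeta(\Gg)\to\GD_J$ correspond under this pushforward, we obtain exactly the object produced by the down--then--right route. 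All isomorphisms invoked are functorial in $\GS$, so the square commutes.

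The only genuinely delicate point, besides tracking the grading and the shift of central character from $\dot{t}(\lambda)$ to $\dot{t}(\mu)$, is to carry out the projection formula and the Leray identity $R\Gamma_{\wCP^{\dot{x}}_J}\circ R\wpi^{\dot{x}}_{J*}\cong R\Gamma_{\wCB^{\dot{x}}}$ in the completed (formal-neighbourhood) setting and to verify their compatibility with the $\sD$- versus $\sD_J$-module structures and the homomorphisms from $U_\zeta(\Gg)$. This is handled exactly as in \cite{BM}, following the conventions on formal neighbourhoods recalled in Section~\ref{sec:back}.
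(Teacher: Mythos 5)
Your proof is correct and follows essentially the same route as the paper: chase an object $\CS$ around the square, apply Proposition~\ref{prop:Dtran} to get the twist $\GO[\mu-\lambda]\otimes_\GO(\bullet)$, absorb it into the splitting bundle to produce $\wGK^{\dot{k}}_{\dot{t}}[\mu]$, then use Proposition~\ref{prop:split-inv} together with the projection formula and the Leray identity $R\Gamma\circ R\wpi^{\dot{x}}_{J*}\cong R\Gamma$ to match the other path. The paper simply presents this as a single five-line chain of isomorphisms, whereas you compute the two composites separately and compare them; the extra remarks you include (why $\mu\in\Lambda^{J,\reg}$, commutation of the $\GO[\mu-\lambda]$-twist with $\otimes^L_{\CO_{\wCB^{\dot{x}}}}\CS$, compatibility of the $U_\zeta(\Gg)$-actions under pushforward) are all correct points the paper leaves implicit.
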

\begin{proof}
Let $\CS\in D^b(\modu(
\CO_{\wCB^{\dot{x}}}
))$.
By Proposition \ref{prop:Dtran} and \eqref{eq:split-inv} we have
\begin{align*}
T_{\mu\lambda}R\Gamma
(\GK^{\dot{k}}_{\dot{t}}[\lambda]\otimes_{\CO_{\CB^{\dot{x}}}}\CS)
\cong&
R\Gamma(\GO[\mu-\lambda]\otimes_\GO\GK^{\dot{k}}_{\dot{t}}[\lambda]\otimes_{\CO_{\CB^{\dot{x}}}}\CS)
\\
\cong&
R\Gamma(\GK^{\dot{k}}_{\dot{t}}[\mu]\otimes_{\CO_{\CB^{\dot{x}}}}\CS)
\\
\cong&
R\Gamma(
(\wpi^{\dot{x}}_J)^*(\GK^{\dot{k}}_{J,\dot{t}}[\mu])\otimes_{\CO_{\CB^{\dot{x}}}}\CS)
\\
\cong&
R\Gamma(R\wpi^{\dot{x}}_{J*}
((\wpi^{\dot{x}}_J)^*(\GK^{\dot{k}}_{J,\dot{t}}[\mu])\otimes_{\CO_{\CB^{\dot{x}}}}\CS))
\\
\cong&
R\Gamma(
\GK^{\dot{k}}_{J,\dot{t}}[\mu]\otimes_{\CO_{\CP_J^{\dot{x}}}} R\wpi^{\dot{x}}_{J*}\CS).
\end{align*}
\end{proof}
Since $(\wpi^{\dot{x}}_J)^*$
is left adjoint to 
$R\wpi^{\dot{x}}_{J*}$, we obtain from 
Lemma \ref{lem:8adX} and Proposition \ref{prop:Tp} the following.
\begin{proposition}
\label{prop:Tp2}
Let $\lambda$, $\mu$, $J$ be as in Proposition \ref{prop:Tp}.
Then we have the following commutative diagram:
\[
\xymatrix@C=50pt{
D^b(\modu(
\CO_{\wCP_J^{\dot{x}}}
))
\ar[r]^{(\wpi^{\dot{x}}_J)^*}
\ar[d]_{\cong}
&
D^b(\modu(
\CO_{\wCB^{\dot{x}}}
))
\ar[d]^{\cong}
\\
D^b(\modu(\wU_\zeta(\Gg)^{\dot{k}}_{[\dot{t}(\mu)]}))
\ar[r]_{T_{\lambda\mu}}
&
D^b(\modu(\wU_\zeta(\Gg)^{\dot{k}}_{[\dot{t}(\lambda)]})).
}
\]
\end{proposition}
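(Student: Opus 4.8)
The plan is to deduce Proposition~\ref{prop:Tp2} formally from Proposition~\ref{prop:Tp} by replacing each of the four functors in the square of Proposition~\ref{prop:Tp} with a left adjoint. The tool is the elementary $2$-categorical observation that a square of functors $\Psi\circ P\cong Q\circ\Phi$ whose vertical arrows $\Phi,\Psi$ are equivalences and whose horizontal arrows $P,Q$ admit left adjoints $P^{\flat},Q^{\flat}$ gives rise to a second commuting square $\Phi\circ P^{\flat}\cong Q^{\flat}\circ\Psi$: taking left adjoints of both sides of $\Psi P\cong Q\Phi$ and using that a quasi-inverse of an equivalence is simultaneously its left and right adjoint yields $P^{\flat}\Psi^{-1}\cong\Phi^{-1}Q^{\flat}$, whence $\Phi P^{\flat}\cong Q^{\flat}\Psi$. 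In our situation $P=R\wpi^{\dot{x}}_{J*}$, $Q=T_{\mu\lambda}$, while $\Phi$ is the equivalence \eqref{eq:de1b} attached to $\lambda$ and $\Psi$ is the equivalence \eqref{eq:de1bJ} attached to $\mu$; these are available because $\lambda\in A_0\cap\Lambda\subset\Lambda^{\reg}$ and because $\mu$, lying on the facet of $\overline{A}_0$ corresponding to $J$, belongs to $\Lambda^{J,\reg}$. Proposition~\ref{prop:Tp} is precisely the statement $\Psi P\cong Q\Phi$, so it remains only to identify $P^{\flat}$ and $Q^{\flat}$ with $(\wpi^{\dot{x}}_J)^{*}$ and $T_{\lambda\mu}$ respectively.

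The identification $Q^{\flat}=T_{\lambda\mu}$ is Lemma~\ref{lem:8adX}, which asserts that $T_{\lambda\mu}$ is left (and right) adjoint to $T_{\mu\lambda}$; since both translation functors are exact, this adjunction persists on the bounded derived categories. For $P^{\flat}=(\wpi^{\dot{x}}_J)^{*}$ one uses that $\pi_J\colon\CB\to\CP_J$ of \eqref{eq:piJ} is projective, so that the induced morphism $\wpi_J^{\dot{x}}\colon\wCB^{\dot{x}}\to\wCP_J^{\dot{x}}$ is projective and of finite Tor-dimension (it factors as a closed embedding into a smooth scheme followed by a smooth projective morphism); hence $R\wpi^{\dot{x}}_{J*}$ and the derived pullback $(\wpi^{\dot{x}}_J)^{*}$ are both defined on $D^b(\modu(\CO_{\wCB^{\dot{x}}}))$ and $D^b(\modu(\CO_{\wCP_J^{\dot{x}}}))$ and form an adjoint pair $(\wpi^{\dot{x}}_J)^{*}\dashv R\wpi^{\dot{x}}_{J*}$. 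Feeding these into the $2$-categorical observation produces exactly the commuting diagram of Proposition~\ref{prop:Tp2}; note that the two vertical equivalences displayed there are the equivalences of Proposition~\ref{prop:Tp} with the two columns interchanged, which is what the relation $\Phi P^{\flat}\cong Q^{\flat}\Psi$ requires.

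I do not expect an essential difficulty: the content is entirely formal once Proposition~\ref{prop:Tp} and Lemma~\ref{lem:8adX} are in hand. The only points deserving a line of verification — and thus the closest thing to an obstacle — are that all four functors in the target square really land in, and are adjoint on, the \emph{bounded} derived categories of coherent, respectively finitely generated, modules (which follows from projectivity of $\pi_J$ together with the finite Tor-dimension of $\wpi^{\dot x}_J$ and the exactness of the translation functors), and that the equivalences \eqref{eq:de1b}, \eqref{eq:de1bJ} invoked here are literally the ones occurring in Proposition~\ref{prop:Tp}, so that the isomorphism $\Psi P\cong Q\Phi$ may legitimately be transported to left adjoints; both are immediate from the constructions underlying Theorem~\ref{thm:BB3}, Theorem~\ref{thm:Morita}, Theorem~\ref{thm:MoritaJ} and Theorem~\ref{thm:J1}.
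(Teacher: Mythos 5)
Your proof is correct and takes essentially the same route as the paper: the paper's entire proof reads ``Since $(\wpi^{\dot{x}}_J)^*$ is left adjoint to $R\wpi^{\dot{x}}_{J*}$, we obtain from Lemma~\ref{lem:8adX} and Proposition~\ref{prop:Tp} the following,'' which is exactly the pass-to-left-adjoints argument you spell out, with your paragraph on the boundedness and Tor-finiteness of $\wpi^{\dot x}_J$ merely making explicit what the paper leaves tacit.
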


\subsection{Wall crossing functor}
Let $i\in I_a$.
For $\lambda_0\in\Lambda\cap A_0$,  
$\mu_0\in \Lambda\cap F_0^i$
the exact functor
\begin{align}
R_{\lambda_0/\mu_0}
=T_{\lambda_0\mu_0}\circ T_{\mu_0\lambda_0}
:
\modu(\wU_\zeta(\Gg)^{\dot{k}}_{[\dot{t}(\lambda_0)]})
\to
\modu(\wU_\zeta(\Gg)^{\dot{k}}_{[\dot{t}(\lambda_0)]})
\end{align}
is called 
the wall crossing functor with respect to the wall $\GH_i$ of $A_0$.
By Lemma \ref{lem:8adX} it is self adjoint and we have natural morphisms
$\Id\to R_{\lambda_0/\mu_0}$
and 
$R_{\lambda_0/\mu_0}\to \Id$
of functors.
We define functors
\begin{align}
\BFI^!_{\lambda_0/\mu_0}, 
\;
\BFI^*_{\lambda_0/\mu_0}:
D^b(\modu(\wU_\zeta(\Gg)^{\dot{k}}_{[\dot{t}(\lambda_0)]}))
\to
D^b(\modu(\wU_\zeta(\Gg)^{\dot{k}}_{[\dot{t}(\lambda_0)]}))
\end{align}
by 
\[
\BFI^!_{\lambda_0/\mu_0}
=
\mathrm{cone}(R_{\lambda_0/\mu_0}\to\Id)[-1],
\qquad
\BFI^*_{\lambda_0/\mu_0}
=
\mathrm{cone}(\Id\to R_{\lambda_0/\mu_0}).
\]

\begin{proposition}
\label{prop:UDi}
Let $i\in I_a$ and let $\lambda_0\in\Lambda\cap A_0$,  
$\mu_0\in \Lambda\cap F_0^i$.
Assume $w\in\tilde{W}_a$ satisfies $ws_i\lambda_0-w\lambda_0\in Q^+$.
Then we have the following commutative diagrams of functors:
\[
\xymatrix@C=80pt{
D^b(\modu(\wGD^{\dot{k}}_{\dot{t}(w\lambda_0)}))
\ar[r]^{\GO[ws_i\lambda_0-w\lambda_0]\otimes_\GO(\bullet)}
\ar[d]_{R\Gamma}
&
D^b(\modu(\wGD^{\dot{k}}_{\dot{t}(ws_i\lambda_0)}))
\ar[d]^{R\Gamma}
\\
D^b(\modu
(\wU_\zeta(\Gg)^{\dot{k}}_{[\dot{t}(\lambda_0)]})
)
\ar[r]_{\BFI^*_{\lambda_0/\mu_0}}
&
D^b(\modu
(\wU_\zeta(\Gg)^{\dot{k}}_{[\dot{t}(\lambda_0)]})
),
}
\]
\[
\xymatrix@C=80pt{
D^b(\modu(\wGD^{\dot{k}}_{\dot{t}(ws_i\lambda_0)}))
\ar[r]^{\GO[w\lambda_0-ws_i\lambda_0]\otimes_\GO(\bullet)}
\ar[d]_{R\Gamma}
&
D^b(\modu(\wGD^{\dot{k}}_{\dot{t}(w\lambda_0)}))
\ar[d]^{R\Gamma}
\\
D^b(\modu
(\wU_\zeta(\Gg)^{\dot{k}}_{[\dot{t}(\lambda_0)]})
)
\ar[r]_{\BFI^!_{\lambda_0/\mu_0}}
&
D^b(\modu
(\wU_\zeta(\Gg)^{\dot{k}}_{[\dot{t}(\lambda_0)]})
).
}
\]
\end{proposition}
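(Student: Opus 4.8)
The plan is to compute the wall--crossing composite $R_{\lambda_0/\mu_0}\circ R\Gamma$ directly, using the translation--to--shift dictionary of Proposition \ref{prop:Dtran} for translating \emph{onto} the wall and the tensor filtration of Proposition \ref{prop:tensV} (together with the alcove combinatorics of Lemma \ref{lem:6J}) for translating \emph{off} the wall, and then matching the adjunction unit and counit of Lemma \ref{lem:8adX} with the two arrows of the resulting two--step filtration. This mirrors the argument of \cite[1.6]{BM} in the Lie algebra case.

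First I would fix $\GM\in D^b(\modu(\wGD^{\dot{k}}_{\dot{t}(w\lambda_0)}))$ and compute the inner translation. Since $s_i\mu_0=\mu_0$, the invariance $T_{\mu\lambda}=T_{w\mu,w\lambda}$ gives $T_{\mu_0\lambda_0}=T_{w\mu_0,w\lambda_0}$, and as $w\mu_0$ lies in the closure of the alcove $w(A_0)$ that contains $w\lambda_0$, Proposition \ref{prop:Dtran} yields $T_{\mu_0\lambda_0}R\Gamma(\GM)\cong R\Gamma(\GN)$ with $\GN=\GO[w\mu_0-w\lambda_0]\otimes_\GO\GM\in D^b(\modu(\wGD^{\dot{k}}_{\dot{t}(w\mu_0)}))$. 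For the outer translation I would use $T_{\lambda_0\mu_0}=T_{w\lambda_0,w\mu_0}$ and the Remark after the definition of $T_{\mu\lambda}$ to write $T_{\lambda_0\mu_0}R\Gamma(\GN)\cong p_{\lambda_0}(\Delta_\zeta(\nu)\otimes R\Gamma(\GN))$ with $\{\nu\}=W(w\lambda_0-w\mu_0)\cap\Lambda^+$. Choosing a $U^L_\zeta(\Gb^-)$--stable filtration of $\Delta_\zeta(\nu)$ as in \eqref{eq:filt1} and applying Proposition \ref{prop:tensV}, the object $\Delta_\zeta(\nu)\otimes\GN\in D^b(\modu(\wGU^{\dot{k}}))$ acquires a functorial filtration with subquotients $\GO[\xi_j]\otimes_\GO\GN\in D^b(\modu(\wGD^{\dot{k}}_{\dot{t}(w\mu_0+\xi_j)}))$. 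After applying $R\Gamma$ and the block projection $p_{\lambda_0}$, Lemma \ref{lem:6J} (applied to $w\mu_0,w\lambda_0$ in the closure of the common alcove $w(A_0)$, with $\mathrm{Stab}_{W_a}(w\mu_0)=\{1,ws_iw^{-1}\}$ since $w\mu_0$ lies on the single wall $w(\GH_i)$) leaves exactly the two subquotients with $\xi_j\in\{w\lambda_0-w\mu_0,\ ws_i\lambda_0-w\mu_0\}$. Hence $R_{\lambda_0/\mu_0}R\Gamma(\GM)\cong T_{\lambda_0\mu_0}R\Gamma(\GN)$ sits in a functorial distinguished triangle whose two remaining terms are $R\Gamma(\GO[w\lambda_0-w\mu_0]\otimes_\GO\GN)\cong R\Gamma(\GM)$ and $R\Gamma(\GO[ws_i\lambda_0-w\mu_0]\otimes_\GO\GN)\cong R\Gamma(\GO[ws_i\lambda_0-w\lambda_0]\otimes_\GO\GM)$.

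The last step is to identify the arrows of this triangle with the adjunction morphisms. The unit $\Id\to R_{\lambda_0/\mu_0}$, being assembled from the highest--weight coevaluation $\BC_{w\mu_0}\hookrightarrow\Delta_\zeta(\nu)^\ast\otimes\Delta_\zeta(\nu)$, factors through the subobject carved out by the extremal weight adjacent to $w\mu_0$ on the side of $A_0$; the hypothesis $ws_i\lambda_0-w\lambda_0\in Q^+$ forces this to be the $w\lambda_0$--term, so the triangle reads $R\Gamma(\GM)\to R_{\lambda_0/\mu_0}R\Gamma(\GM)\to R\Gamma(\GO[ws_i\lambda_0-w\lambda_0]\otimes_\GO\GM)\xrightarrow{+1}$ with first arrow the unit, whence $\BFI^\ast_{\lambda_0/\mu_0}(R\Gamma(\GM))=\mathrm{cone}(\mathrm{unit})\cong R\Gamma(\GO[ws_i\lambda_0-w\lambda_0]\otimes_\GO\GM)$; this is the first diagram. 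Running the same computation with $\GM$ replaced by $\GN'\in D^b(\modu(\wGD^{\dot{k}}_{\dot{t}(ws_i\lambda_0)}))$ and again using $s_i\mu_0=\mu_0$ produces the triangle $R\Gamma(\GO[w\lambda_0-ws_i\lambda_0]\otimes_\GO\GN')\to R_{\lambda_0/\mu_0}R\Gamma(\GN')\to R\Gamma(\GN')\xrightarrow{+1}$ in which the dual argument (the evaluation exhibits the $ws_i\lambda_0$--term as a quotient) shows the second arrow is the counit $R_{\lambda_0/\mu_0}\to\Id$; hence $\BFI^!_{\lambda_0/\mu_0}(R\Gamma(\GN'))=\mathrm{cone}(\mathrm{counit})[-1]\cong R\Gamma(\GO[w\lambda_0-ws_i\lambda_0]\otimes_\GO\GN')$, which is the second diagram. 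Functoriality of Proposition \ref{prop:Dtran}, Proposition \ref{prop:tensV} and the projections $p_\lambda$ upgrades all these isomorphisms to isomorphisms of functors.

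I expect the main obstacle to be exactly the matching in the last step: showing that the abstract adjunction unit and counit of Lemma \ref{lem:8adX} coincide with the structure maps of the filtration produced by Proposition \ref{prop:tensV}, and in particular that the positivity hypothesis $ws_i\lambda_0-w\lambda_0\in Q^+$ is precisely what distinguishes which extremal--weight term is the sub and which the quotient. This requires tracing the explicit form of the unit of the biadjoint pair $(T_{\mu_0\lambda_0},T_{\lambda_0\mu_0})$ — built from coevaluation for $L_\zeta(\nu_1)$, $L_\zeta(\nu_2)$ and the identification $L_\zeta(\nu_1)\cong L_\zeta(\nu_2)^\ast$ — through the $(V\otimes M)^\dagger$--construction underlying Proposition \ref{prop:tensVA}, and checking its compatibility with the $R$--matrix twist; this is the quantum counterpart of the verification in \cite{BM}. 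A secondary point is to confirm that the hypothesis is non--vacuous and covers all walls, e.g.\ it holds for $w=s_i$ when $i\in I$, and a suitable $w\in\tilde{W}_a$ handles the affine wall $\GH_0$, using $\ell\Lambda\cap Q=\ell Q$ to pass between $\tilde{W}_a$-- and $W_a$--orbits when applying Lemma \ref{lem:6J}.
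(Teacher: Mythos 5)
Your proposal follows the paper's proof essentially verbatim: use Proposition \ref{prop:Dtran} to shift onto the wall, then re-run the filtration argument of Proposition \ref{prop:tensV} together with Lemma \ref{lem:6J} to produce the two-term distinguished triangle when shifting off the wall, with the positivity hypothesis $ws_i\lambda_0-w\lambda_0\in Q^+$ fixing which extremal-weight subquotient is the sub and which is the quotient. The one step you flag as the remaining obstacle --- identifying the arrows of that triangle with the adjunction unit and counit of Lemma \ref{lem:8adX} --- is exactly the step the paper delegates to the argument of \cite[Lemma 2.2.3(c)]{BMR2}.
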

\begin{proof}
For
$\GM\in D^b(\modu(\wGD^{\dot{k}}_{\dot{t}(w\lambda_0)}))$ we have
\[
R_{\lambda_0/\mu_0}(R\Gamma(\GM))
=
T_{w\lambda_0, w\mu_0}T_{w\mu_0, w\lambda_0}(R\Gamma(\GM))
=T_{w\lambda_0, w\mu_0}R\Gamma(\GO[w\mu_0-w\lambda_0]\otimes_\GO\GM)
\]
by Proposition \ref{prop:Dtran}.
Moreover, similarly to the proof of Proposition \ref{prop:Dtran} we can show that 
for 
$\GN\in D^b(\modu(\wGD^{\dot{k}}_{\dot{t}(w\mu_0)}))$ 
there exists a distinguished triangle 
\[
R\Gamma(\GO[w\lambda_0-w\mu_0]\otimes_\GO\GN)
\to
T_{w\lambda_0,w\mu_0}R\Gamma(\GN)
\to R\Gamma(\GO[ws_i\lambda_0-w\mu_0]\otimes_\GO\GN)
\xrightarrow{+1}
\]
in $D^b(\modu
(\wU_\zeta(\Gg)^{\dot{k}}_{[\dot{t}(\lambda_0)]})
)$.
Hence we obtain a distinguished triangle 
\[
R\Gamma(\GM
)
\to
R_{\lambda_0/\mu_0}(R\Gamma(\GM))
\to
R\Gamma(
\GO[ws_i\lambda_0-w\lambda_0]\otimes_\GO\GM)
\xrightarrow{+1}.
\]
The commutativity of the first diagram follows from this using the argument in the proof of \cite[Lemma 2.2.3 (c)]{BMR2}.
The proof of the second diagram is similar.
\end{proof}
\begin{corollary}
The functors $\BFI^!_{\lambda_0/\mu_0}$ and $\BFI^*_{\lambda_0/\mu_0}$ do not depend on the choice of $\mu_0$.
Moreover, they are mutually inverse equivalences.
\end{corollary}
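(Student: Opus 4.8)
The plan is to deduce the statement formally from Proposition \ref{prop:UDi}: after transport through the localization equivalences $R\Gamma$ of Theorem \ref{thm:BB3}, that proposition identifies both $\BFI^*_{\lambda_0/\mu_0}$ and $\BFI^!_{\lambda_0/\mu_0}$ with functors of the shape $\GO[\mu]\otimes_\GO(\bullet)$, i.e.\ twisting by an invertible $\GO$-bimodule, and these manifestly do not involve $\mu_0$ and are manifestly invertible. So essentially all the content is already in Proposition \ref{prop:UDi}, and the Corollary is a bookkeeping exercise.

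First I would fix, depending only on $i$ and $\lambda_0$ (not on $\mu_0$), an element $w\in\tW_a$ with $ws_i\lambda_0-w\lambda_0\in Q^+$. For $i\in I$ take $w=s_i$: then $ws_i\lambda_0-w\lambda_0=\langle\lambda_0,\alpha_i^\vee\rangle\alpha_i\in Q^+$ since $\langle\lambda_0,\alpha_i^\vee\rangle>0$ because $\lambda_0\in A_0$. For $i=0$ take $w=e$: then $s_0\lambda_0-\lambda_0=(\ell-\langle\theta^\vee,\lambda_0\rangle)\theta\in Q^+$, again by $\lambda_0\in A_0$. In both cases $w\in W_a$, so $w\lambda_0$ and $ws_i\lambda_0$ lie in $W_a\lambda_0$; in particular they are regular in the sense of \eqref{eq:regularity} and $[\dot{t}(w\lambda_0)]=[\dot{t}(ws_i\lambda_0)]=[\dot{t}(\lambda_0)]$, so every category and every $R\Gamma$ occurring in Proposition \ref{prop:UDi} is defined and the relevant $R\Gamma$ are equivalences by Theorem \ref{thm:BB3}. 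Then the first, resp.\ the second, diagram of Proposition \ref{prop:UDi} yields natural isomorphisms
\[
\BFI^*_{\lambda_0/\mu_0}\cong R\Gamma\circ\bigl(\GO[ws_i\lambda_0-w\lambda_0]\otimes_\GO(\bullet)\bigr)\circ R\Gamma^{-1}
\]
and
\[
\BFI^!_{\lambda_0/\mu_0}\cong R\Gamma\circ\bigl(\GO[w\lambda_0-ws_i\lambda_0]\otimes_\GO(\bullet)\bigr)\circ R\Gamma^{-1},
\]
where on each side the two copies of $R\Gamma$ are quasi-inverse equivalences attached to the blocks $\dot{t}(w\lambda_0)$ and $\dot{t}(ws_i\lambda_0)$. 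Since $\GO[\mu]$ is a locally free (hence flat) invertible $\GO$-bimodule with $\GO[\mu]\otimes_\GO\GO[\nu]\cong\GO[\mu+\nu]$ and $\GO[0]=\GO$ (see Proposition \ref{prop:tensV} and the properties of $\GO[\lambda]$), the functor $\GO[\mu]\otimes_\GO(\bullet)\colon\modu(\wGD^{\dot{k}}_{\dot{t}(\nu)})\xrightarrow{\sim}\modu(\wGD^{\dot{k}}_{\dot{t}(\nu+\mu)})$ is an equivalence with quasi-inverse $\GO[-\mu]\otimes_\GO(\bullet)$. It follows at once that $\BFI^*_{\lambda_0/\mu_0}$ and $\BFI^!_{\lambda_0/\mu_0}$ are equivalences, and the displayed right-hand sides make no reference to $\mu_0$, which gives the asserted independence of $\mu_0\in\Lambda\cap F_0^i$.

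Finally, composing the two displayed descriptions using the \emph{same} $w$, and using $\GO[w\lambda_0-ws_i\lambda_0]\otimes_\GO\bigl(\GO[ws_i\lambda_0-w\lambda_0]\otimes_\GO(\bullet)\bigr)\cong\GO[0]\otimes_\GO(\bullet)=(\bullet)$ together with $R\Gamma^{-1}\circ R\Gamma\cong\Id$, one obtains $\BFI^!_{\lambda_0/\mu_0}\circ\BFI^*_{\lambda_0/\mu_0}\cong\Id$ and, symmetrically, $\BFI^*_{\lambda_0/\mu_0}\circ\BFI^!_{\lambda_0/\mu_0}\cong\Id$; thus the two functors are mutually inverse equivalences.

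I do not expect a genuine obstacle here: the only thing requiring care is the bookkeeping of central-character blocks — one must legitimately use a single $w$ in both diagrams of Proposition \ref{prop:UDi} (permissible, since both are asserted under the single hypothesis $ws_i\lambda_0-w\lambda_0\in Q^+$), and must check that all intermediate weights $w\lambda_0$, $ws_i\lambda_0$ remain regular and in one $W_a$-orbit so that each $R\Gamma$ in sight is an equivalence. No input beyond Proposition \ref{prop:UDi} and Theorem \ref{thm:BB3} is needed.
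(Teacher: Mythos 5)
Your proof is correct and is essentially the (unstated) argument the paper has in mind: the Corollary is a formal consequence of Proposition \ref{prop:UDi} once one picks a single $w$ (depending only on $i$ and $\lambda_0$) that works in both diagrams simultaneously, so that $\BFI^*_{\lambda_0/\mu_0}$ and $\BFI^!_{\lambda_0/\mu_0}$ are conjugated by the same pair of quasi-inverse equivalences $R\Gamma$ to twists by the inverse pair $\GO[\pm(ws_i\lambda_0-w\lambda_0)]$. Your choices $w=s_i$ for $i\in I$ and $w=e$ for $i=0$ are exactly the natural ones, and the verification that the relevant shifted weights $w\lambda_0$, $ws_i\lambda_0$ stay regular and in the same $W_a$-orbit (so the vertical $R\Gamma$'s are genuinely equivalences by Theorem \ref{thm:BB3}) is the only real bookkeeping required.
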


For $i\in I_a$ and $\lambda_0\in\Lambda\cap A_0$
we define functors 
\begin{equation}
\BFI^!_{\lambda_0/i}, 
\;
\BFI^*_{\lambda_0/i}:
D^b(\modu(\wU_\zeta(\Gg)^{\dot{k}}_{[\dot{t}(\lambda_0)]}))
\to
D^b(\modu(\wU_\zeta(\Gg)^{\dot{k}}_{[\dot{t}(\lambda_0)]}))
\end{equation}
by 
$
\BFI^!_{\lambda_0/\mu_0}
=
\BFI^!_{\lambda_0/i}$, 
$\BFI^*_{\lambda_0/i}
=
\BFI^*_{\lambda_0/\mu_0}
$
for 
$\mu_0\in \Lambda\cap F_0^i$.

Using Proposition \ref{prop:UDi}
we can show the following as in \cite{BMR2}.
\begin{proposition}
For $\lambda_0\in \Lambda\cap A_0$ we have a weak action 
\begin{equation}
\label{eq:I}
\BFI_b:
D^b(\modu(\wU_\zeta(\Gg)^{\dot{k}}_{[\dot{t}(\lambda_0)]}))
\to
D^b(\modu(\wU_\zeta(\Gg)^{\dot{k}}_{[\dot{t}(\lambda_0)]}))
\qquad(b\in\tilde{\BB}_a)
\end{equation}
of $\tilde{\BB}_a$ on 
$D^b(\modu(\wU_\zeta(\Gg)^{\dot{k}}_{[\dot{t}(\lambda_0)]}))$
satisfying 
$\BFI_{b_{s_i}}=\BFI^*_{\lambda_0/i}$ for $i\in I_a$ and 
$\BFI_{b_{\omega}}=T_{\omega\lambda_0\lambda_0}$ for $\omega\in\Omega$.
Moreover, for any $\nu\in\Lambda^+$ we have the following commutative diagram of functors:
\[
\xymatrix@C=80pt{
D^b(\modu(\wGD^{\dot{k}}_{\dot{t}(\lambda_0)}))
\ar[r]^{\CO_\CB[\nu]\otimes_{\CO_\CB}(\bullet)}
\ar[d]_{R\Gamma}
&
D^b(\modu(\wGD^{\dot{k}}_{\dot{t}(\lambda_0+\ell\nu)}))
\ar[d]^{R\Gamma}
\\
D^b(\modu(\wU_\zeta(\Gg)^{\dot{k}}_{[\dot{t}(\lambda_0)]}))
\ar[r]_{\BFI_{\tau_\nu}}
&
D^b(\modu(\wU_\zeta(\Gg)^{\dot{k}}_{[\dot{t}(\lambda_0)]})).
}
\]
\end{proposition}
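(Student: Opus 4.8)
The plan is to follow \cite{BMR2} closely. The functors $\BFI^{*}_{\lambda_0/i}$ ($i\in I_a$) are already known to be equivalences; since $\Omega$ stabilizes the fundamental alcove $A_0$, both $\lambda_0$ and $\omega\lambda_0$ lie in $\overline{A}_0$ for $\omega\in\Omega$, so the translation functors $T_{\omega\lambda_0\lambda_0}$ are defined and, being translations between regular blocks, are equivalences with inverses $T_{\lambda_0,\omega\lambda_0}$. To obtain the weak $\tilde{\BB}_a$-action it then suffices to verify (a) the affine braid relations among the $\BFI^{*}_{\lambda_0/i}$, (b) the multiplicativity $T_{\omega\lambda_0\lambda_0}\circ T_{\omega'\lambda_0\lambda_0}\cong T_{\omega\omega'\lambda_0\lambda_0}$, and (c) the compatibility $T_{\omega\lambda_0\lambda_0}\circ\BFI^{*}_{\lambda_0/i}\cong\BFI^{*}_{\lambda_0/\omega(i)}\circ T_{\omega\lambda_0\lambda_0}$; a standard lemma then extends this datum on $\tilde{\BB}_a^{+}$ to a weak action of $\tilde{\BB}_a$, and uniqueness fixes the stated values on the $b_{s_i}$ and the $b_\omega$.

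The verifications are carried out on the $\GD$-side. For $w\in\tilde{W}_a$ the weight $w\lambda_0$ is regular, so Theorem \ref{thm:BB3} together with Theorem \ref{thm:Morita} provides an equivalence $\Phi_w\colon D^b(\modu(\CO_{\wCB^{\dot{x}}}))\xrightarrow{\sim}D^b(\modu(\wU_\zeta(\Gg)^{\dot{k}}_{[\dot{t}(\lambda_0)]}))$, $\CS\mapsto R\Gamma(\wGK^{\dot{k}}_{\dot{t}}[w\lambda_0]\otimes_{\CO_{\wCB^{\dot{x}}}}\CS)$ (note $[\dot{t}(w\lambda_0)]=[\dot{t}(\lambda_0)]$). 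The basic observation is that $\GO[w s_i\lambda_0-w\lambda_0]\otimes_\GO(\bullet)$ carries the splitting bundle $\wGK^{\dot{k}}_{\dot{t}}[w\lambda_0]$ to $\wGK^{\dot{k}}_{\dot{t}}[w s_i\lambda_0]$ (using $\GO[\xi]\otimes_\GO\GO[\eta]\cong\GO[\xi+\eta]$), hence becomes the identity of $D^b(\modu(\CO_{\wCB^{\dot{x}}}))$ under the Morita equivalences. Feeding this into Proposition \ref{prop:UDi} (and the corollary following it, which makes the choice of $\mu_0$ immaterial) gives $\Phi_{w s_i}\cong\BFI^{*}_{\lambda_0/i}\circ\Phi_w$ or $\Phi_{w s_i}\cong\BFI^{!}_{\lambda_0/i}\circ\Phi_w$ according as $w s_i\lambda_0-w\lambda_0$ lies in $Q^{+}$ or in $-Q^{+}$. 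For the walls $\GH_i$ of $A_0$ (so $\mu_0\in F_0^i$, $J=\{i\}$) Propositions \ref{prop:Tp}, \ref{prop:Tp2} and \ref{prop:split-inv} moreover identify $T_{\mu_0\lambda_0}$ and $T_{\lambda_0\mu_0}$, after transport through the $\Phi$'s, with $R\wpi^{\dot{x}}_{J*}$ and $(\wpi^{\dot{x}}_J)^{*}$, hence $R_{\lambda_0/\mu_0}$ with $(\wpi^{\dot{x}}_J)^{*}\circ R\wpi^{\dot{x}}_{J*}$ and, upon taking cones of the (co)unit, $\BFI^{*}_{\lambda_0/i}$ and $\BFI^{!}_{\lambda_0/i}$ with the generators $\BFJ_{b_{s_i}}^{\pm1}$ of \eqref{eq:Jb} (up to the normalizations of \cite{BM}).

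Granting these identifications, relation (a) follows because the composites in question, transported to $D^b(\modu(\CO_{\wCB^{\dot{x}}}))$, are the generators of the weak $\tilde{\BB}_a$-action $\BFJ_b$ of \cite{BM} and hence satisfy the braid relations; equivalently, a composite of reflection functors $\GO[\delta_k]\otimes_\GO\cdots\otimes_\GO\GO[\delta_1]\otimes_\GO(\bullet)$ along a gallery from $\lambda_0$ to $w\lambda_0$ depends only on $w\lambda_0$ by telescoping, so the two sides of a braid relation, being both intertwined by the $\Phi$'s with $\GO[w\lambda_0-\lambda_0]\otimes_\GO(\bullet)$, agree. Relations (b) and (c) are obtained in the same way using $\BFJ_{b_\omega}$. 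Finally, for $\nu\in\Lambda^{+}$ one has $\BFJ_{b_{\tau_\nu}}(\CS)=\CO_\CB[\nu]\otimes_{\CO_\CB}\CS$ by \eqref{eq:Jb}; transporting through $\Phi_1$ and combining $\GO[\ell\nu]\otimes_\GO(\bullet)\cong\CO_\CB[\nu]\otimes_{\CO_\CB}(\bullet)$ with $\CO_\CB[\nu]\otimes_{\CO_\CB}\wGK^{\dot{k}}_{\dot{t}}[\lambda_0]\cong\wGK^{\dot{k}}_{\dot{t}}[\lambda_0+\ell\nu]$ produces precisely the asserted commutative square for $\BFI_{\tau_\nu}$.

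The step I expect to be the main obstacle is the uniform treatment of the affine node $i=0$. There the $Q^{+}$-positivity in Proposition \ref{prop:UDi} behaves oppositely (one has $s_0\lambda_0-\lambda_0\in Q^{+}$ because $s_0$ pushes $\lambda_0$ across $\GH_0$ ``upward'', whereas $s_i\lambda_0-\lambda_0\in-Q^{+}$ for $i\in I$), the relevant partial flag variety is an affine one, and the cone description of $\BFI^{*}_{\lambda_0/0}$ must be reconciled with the correspondence description of $\BFJ_{b_{s_0}}$; as in \cite{BMR2} this is handled either through the geometry of the partial affine flag variety for $J=\{0\}$ or by conjugating $s_0$ into $I$ by a suitable element of $\Omega$ and invoking the finite-type analysis of the preceding paragraphs.
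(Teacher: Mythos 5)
Your proof is essentially the paper's intended argument (the paper itself only says ``Using Proposition \ref{prop:UDi} we can show the following as in \cite{BMR2}''), and your detailed unpacking of the $\GD$-side telescoping is the correct content of that reference. A few comments on where the write-up could be sharpened.

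The core of the verification is exactly your ``equivalently'' clause in the third paragraph: for a reduced expression of $w\in\tilde{W}_a$, the corresponding composite of $\BFI^{*}_{\lambda_0/i}$'s and $\BFI_{b_\omega}$'s is, by Propositions \ref{prop:Dtran} and \ref{prop:UDi}, intertwined (via the equivalences $R\Gamma$ on the blocks $D^b(\modu(\wGD^{\dot{k}}_{\dot{t}(\cdot)}))$) with $\GO[w\lambda_0-\lambda_0]\otimes_\GO(\bullet)$; since this shift depends only on $w\lambda_0$, the affine braid relations, the multiplicativity of $T_{\omega\lambda_0\lambda_0}$, and the $\Omega$-compatibility all follow. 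This is the right argument and matches the strategy of \cite[Lemma 2.2.3]{BMR2}.

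Two caveats. First, the appeals to $\BFJ_b$ (in ``relation (a) follows because the composites in question \ldots are the generators of the weak $\tilde{\BB}_a$-action $\BFJ_b$ of \cite{BM}'' and again in the last step for the $\tau_\nu$-square) are premature at this point in the paper: the matching $\BFI_b\leftrightarrow\BFJ_b$ is precisely Proposition \ref{prop:IJ}, which comes \emph{after} this Proposition and itself uses it as input. The braid relations must be verified intrinsically on the $U_\zeta(\Gg)$/$\GD$ side, which the telescoping does; you should drop the $\BFJ$-references and keep only the telescoping. The commutative square for $\BFI_{\tau_\nu}$ then follows without mention of $\BFJ$ from $\GO[\ell\nu]\otimes_\GO(\bullet)\cong\CO_\CB[\nu]\otimes_{\CO_\CB}(\bullet)$ combined with the telescoping identification $\BFI_{\tau_\nu}\leftrightarrow\GO[\tau_\nu\lambda_0-\lambda_0]\otimes_\GO(\bullet)$.

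Second, the anticipated ``main obstacle'' in your last paragraph is somewhat misplaced. The sign asymmetry at the affine node is already absorbed by the freedom of $w$ in Proposition \ref{prop:UDi} (one chooses $w$ so that $ws_i\lambda_0-w\lambda_0\in Q^+$); no affine partial flag variety enters the present Proposition, and no reconciliation of the cone description of $\BFI^{*}_{\lambda_0/0}$ with a correspondence description is needed here. Those concerns are genuinely relevant — but for Proposition \ref{prop:IJ} (following \cite{Ri}), not for the verification of the weak braid action and the $\tau_\nu$-square.
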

\subsection{Proof of Theorem \ref{thm:ab}}
Using Proposition \ref{prop:Tp}, Proposition \ref{prop:Tp2} for $J=\{i\}$ with $i\in I$ we can show the following as in \cite{Ri}.
\begin{proposition}
\label{prop:IJ}
For $\lambda_0\in \Lambda\cap A_0$
and $b\in \tBB_a$
we have the following commutative diagram of functors:
\[
\xymatrix@C=30pt{
D^b(\modu(
\CO_{\wCB^{\dot{x}}}
))
\ar[r]^{\BFJ_b}
\ar[d]_{\cong}
&
D^b(\modu(
\CO_{\wCB^{\dot{x}}}
))
\ar[d]^{\cong}
\\
D^b(
\modu(
\wU_\zeta(\Gg)^{\dot{k}}_{[\dot{t}(\lambda_0)]}
))
\ar[r]_{\BFI_{b}}
&
D^b(
\modu(
\wU_\zeta(\Gg)^{\dot{k}}_{[\dot{t}(\lambda_0)]}
)).
}
\]
\end{proposition}

Our main result Theorem \ref{thm:ab} is shown using the argument of \cite[1.6.5]{BM} as follows.
Let $\lambda_o\in A_0\cap\Lambda$.
By Proposition \ref{prop:Tp} for $J=I$ we have the following commutative diagram:
\[
\xymatrix@C=50pt{
D^b(\modu(
\CO_{\wCB^{\dot{x}}}
))
\ar[r]^{R\varrho^{\dot{x}}_{*}}
\ar[d]_{\cong}
&
D^b(\modu(
\CO_{\widehat{\{\dot{x}\}}}
))
\ar[d]^{\cong}
\\
D^b(\modu(\wU_\zeta(\Gg)^{\dot{k}}_{[\dot{t}(\lambda_0)]}))
\ar[r]_{T_{0\lambda_0}}
&
D^b(\modu(\wU_\zeta(\Gg)^{\dot{k}}_{[\dot{t}]})).
}
\]
Here, the right vertical equivalence is induced by the equivalence 
\[
\modu(
\CO_{\widehat{\{\dot{x}\}}})
\cong
\modu(\wU_\zeta(\Gg)^{\dot{k}}_{[\dot{t}]})
\]
of abelian categories (see Proposition \ref{prop:BrG}).
Hence by transferring the exotic $t$-structure of 
$D^b(\modu(
\CO_{\wCB^{\dot{x}}}))$
to 
$D^b(
\modu(
\wU_\zeta(\Gg)^{\dot{k}}_{[\dot{t}(\lambda_0)]}))
$ 
through \eqref{eq:de1b} we obtain a 
$t$-structure $(D_{\ex}^{\leqq0}, D_\ex^{\geqq0})$ of 
$D=D^b(
\modu(
\wU_\zeta(\Gg)^{\dot{k}}_{[\dot{t}(\lambda_0)]}))
$ 
given by
\begin{align*}
D_{\ex}^{\leqq0}
=&
\{
M
\in
D\mid
T_{0\lambda_0}({\BFI}_{b}M)\in D^{\leqq0}(\modu(\wU_\zeta(\Gg)^{\dot{k}}_{[\dot{t}]}))
\;\;(b\in\tBB_a^+)
\},
\\
D_{\ex}^{\geqq0}
=&
\{
M
\in
D
\mid
T_{0\lambda_0}(({\BFI}_b)^{-1}M)\in D^{\geqq0}(\modu(\wU_\zeta(\Gg)^{\dot{k}}_{[\dot{t}]}))
\;\;(b\in\tBB_a^+)
\}.
\end{align*}
It is sufficient to show that this coincides with the standard $t$-structure  $(D_{\st}^{\leqq0}, D_\st^{\geqq0})$ of $D$ given by
\begin{align*}
D_{\st}^{\leqq0}
=&
\{
M
\in
D\mid
H^i(M)=0\;(i>0)
\},
\\
D_{\st}^{\geqq0}
=&
\{
M
\in
D
\mid
H^i(M)=0\;(i<0)
\}.
\end{align*}
Note that the translation functor is exact.
Hence for $b\in\tBB_a^+$ the functor $\BFI_b$ 
(resp.\ $(\BFI_b)^{-1}$) is right (resp.\ left) exact by the definition of 
$\BFI_b$.
Hence we obtain 
$
D_{\st}^{\leqq0}\subset D_{\ex}^{\leqq0}$ and
$
D_{\st}^{\geqq0}\subset D_{\ex}^{\geqq0}$.
It follows  that 
$
D_{\st}^{\leqq0}= D_{\ex}^{\leqq0}$ and
$
D_{\st}^{\geqq0}= D_{\ex}^{\geqq0}$
by the definition of $t$-structures.

The proof of Theorem \ref{thm:ab} is now complete.
\section{{Lusztig's conjecture}}
\label{sec:Lusztig}
\subsection{}
For $\lambda\in\Lambda$ we denote by 
$\modu_{[\dot{t}(\lambda)]}(U_\zeta(\Gg)^{\dot{k}})$ the category of finitely generated $U_\zeta(\Gg)$-modules on which 
$z-\xi^{\dot{k}}(z)$ for $z\in Z_\Fr(U_\zeta(\Gg))$ acts as zero and 
$z-\xi_{[\dot{t}(\lambda)]}(z)$ for $z\in Z_\Har(U_\zeta(\Gg))$ acts nilpotently.
We also denote by 
$\modu_{[\dot{t}(\lambda)]}(U_\zeta(\Gg)^{\dot{k}};C)$ 
the subcategory of 
$\modu(U_\zeta(\Gg)^{\dot{k}};C)$ consisting of objects which belong to 
$\modu_{[\dot{t}(\lambda)]}(U_\zeta(\Gg)^{\dot{k}})$ 
as a $U_\zeta(\Gg)^{\dot{k}}$-module.
We have
\begin{align}
\label{eq:Uemb1}
\modu(U_\zeta(\Gg)_{[\dot{t}(\lambda)]}^{\dot{k}})
\subset&
\modu_{[\dot{t}(\lambda)]}(U_\zeta(\Gg)^{\dot{k}})
\subset
\modu_{[\dot{t}(\lambda)]}^{\dot{k}}(U_\zeta(\Gg))
\subset
\modu(\wU_\zeta(\Gg)_{[\dot{t}(\lambda)]}^{\dot{k}}),
\end{align}
\begin{align}
\label{eq:Uemb2}
\modu(U_\zeta(\Gg)_{[\dot{t}(\lambda)]}^{\dot{k}};C)
\subset
\modu_{[\dot{t}(\lambda)]}(U_\zeta(\Gg)^{\dot{k}};C)
\subset
\modu_{[\dot{t}(\lambda)]}^{\dot{k}}(U_\zeta(\Gg);C)
\\
\nonumber
\subset
\modu(\wU_\zeta(\Gg)_{[\dot{t}(\lambda)]}^{\dot{k}};C).
\end{align}
It is known by Jantzen \cite{JanM} that any irreducible $U_\zeta(\Gg)_{[\dot{t}(\lambda)]}^{\dot{k}}$-module $L$ admits a unique (up to shift of grading) $\Lambda_C$-grading by which $L\in\modu_{[\dot{t}(\lambda)]}^{\dot{k}}(U_\zeta(\Gg);C)$.
We denote by 
$\{L_\sigma\}_{\sigma\in\Theta}$
the collection of irreducible $U_\zeta(\Gg)_{[\dot{t}(\lambda)]}^{\dot{k}}$-modules with 
$\Lambda_C$-gradings
contained in $\modu_{[\dot{t}(\lambda)]}^{\dot{k}}(U_\zeta(\Gg);C)$. 
The group $\Lambda_C$ acts on $\Theta$ freely by
\[
L_{\gamma\sigma}=L_\sigma[\gamma]
\qquad(\gamma\in\Lambda_C, \;\sigma\in\Theta),
\]
where $[\gamma]$ denotes the shift of grading, 
and the set of the irreducible $U_\zeta(\Gg)_{[\dot{t}(\lambda)]}^{\dot{k}}$-modules is parametrized by $\Theta/\Lambda_C$.
For $\sigma\in\Theta$ 
we define $E_\sigma$ (resp.\ $\wE_\sigma$) to be the projective cover of $L_\sigma$ in 
$\modu_{[\dot{t}(\lambda)]}(U_\zeta(\Gg)^{\dot{k}};C)$
(resp.\ 
$\modu(\wU_\zeta(\Gg)_{[\dot{t}(\lambda)]}^{\dot{k}};C)$.
We have
\begin{equation}
\label{eq:wFF}
E_\sigma=\BC
\otimes_{Z_\Fr(U_\zeta(\Gg))}
\wE_\sigma
\end{equation}
with respect to $\xi^{\dot{k}}:Z_\Fr(U_\zeta(\Gg))\to\BC$.

If $\GA$ is an abelian category or a triangulated category,  we denote its Grothendieck group  by $K(\GA)$.

By \eqref{eq:Uemb2} we obtain
\begin{align}
\label{eq:KUemb2}
K(\modu(U_\zeta(\Gg)_{[\dot{t}(\lambda)]}^{\dot{k}};C))
\cong
K(\modu_{[\dot{t}(\lambda)]}(U_\zeta(\Gg)^{\dot{k}};C))
\cong
K(\modu_{[\dot{t}(\lambda)]}^{\dot{k}}(U_\zeta(\Gg);C))
\\
\nonumber
\subset
K(\modu(\wU_\zeta(\Gg)_{[\dot{t}(\lambda)]}^{\dot{k}};C)).
\end{align}
Note that 
$K(\modu_{[\dot{t}(\lambda)]}(U_\zeta(\Gg)^{\dot{k}};C))$ is a free $\BZ$-module with  basis 
$\{[L_\sigma]\}_{\sigma\in\Theta}$.
\subsection{}
Recall that we have equivalences
\begin{align}
D^b(\modu^{\dot{k}}_{[\dot{t}(\lambda)]}(U_\zeta(\Gg);C))
\cong
D^b(\modu_{\CB^{\dot{x}}}(\CO_{\widetilde{G}};C))
\qquad(\lambda\in\Lambda^\reg),
\\
D^b(\modu(\wU_\zeta(\Gg)^{\dot{k}}_{[\dot{t}(\lambda)]};C))
\cong
D^b(\modu(\CO_{\wCB^{\dot{x}}};C))
\qquad(\lambda\in\Lambda^\reg)
\end{align}
of triangulated categories and an equivalence 
\[
\modu(\wU_\zeta(\Gg)^{\dot{k}}_{[\dot{t}(\lambda)]};C)
\cong
\modu^\ex(\CO_{\wCB^{\dot{x}}};C)
\qquad(\lambda\in\Lambda\cap A_0)
\]
of abelian categories.
We have the following identifications of the Grothendieck groups:
\begin{align}
K(\modu(\CO_{\CB^{\dot{x}}};C))
=&
K(\modu_{\CB^{\dot{x}}}(\CO_{\widetilde{G}};C)),
\\
K(\modu(\CO_{\wCB^{\dot{x}}};C))
=&
K(D^b(\modu(\CO_{\wCB^{\dot{x}}};C)))
=
K(\modu^\ex(\CO_{\wCB^{\dot{x}}};C)),
\end{align}
and the following commutative diagram:
\begin{equation}
\xymatrix@C=15pt{
K(\modu(U_\zeta(\Gg)^{\dot{k}}_{[\dot{t}(\lambda)]};C))
\ar@{^{(}->}[r]^{}
\ar[d]_{\cong}
&
K(\modu(\wU_\zeta(\Gg)^{\dot{k}}_{[\dot{t}(\lambda)]};C))
\ar[d]^{\cong}
\\
K(\modu(\CO_{\CB^{\dot{x}}};C))
\ar@{^{(}->}[r]^{}
&
K(\modu(\CO_{\wCB^{\dot{x}}};C)).
}
\end{equation}

For $\sigma\in\Theta$ we denote by 
$\CL_\sigma$, $\CE_\sigma$, $\wCE_\sigma$ the objects of 
$\modu^\ex(\CO_{\wCB^{\dot{x}}};C)$ corresponding to 
the objects $L_\sigma$, $E_\sigma$, $\wE_\sigma$ 
of $\modu(\wU_\zeta(\Gg)^{\dot{k}}_{[\dot{t}(\lambda)]};C)$ respectively.
We have the following description of 
$\CL_\sigma$, $\CE_\sigma$, $\wCE_\sigma$.
\begin{proposition}
\label{prop:LE}
\begin{itemize}
\item[(i)]
The collection 
$\{\CL_\sigma\}_{\sigma\in\Theta}$ is exactly the set of irreducible objects of 
$\modu^\ex(\CO_{\wCB^{\dot{x}}};C)$ contained in 
$D^b(\modu_{\CB^{\dot{x}}}(\CO_{\widetilde{G}};C))$.
\item[(ii)]
For $\sigma\in\Theta$
the exotic sheaf 
$\wCE_\sigma$ is a projective cover of $\CL_\sigma$ in 
$\modu^\ex(\CO_{\wCB^{\dot{x}}};C)$.
\item[(iii)]
We have 
$\CE_\sigma=\CO_{\{\dot{x}\}}\otimes^L_{\CO_{G}}\wCE_\sigma$ for $\sigma\in\Theta$.
\end{itemize}
\end{proposition}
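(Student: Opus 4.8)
The plan is to transport the three statements across the equivalences established earlier in the paper and to read off each assertion from the corresponding statement on the quantum side. The basic dictionary is the equivalence of abelian categories $\modu(\wU_\zeta(\Gg)^{\dot{k}}_{[\dot{t}(\lambda_0)]};C)\cong\modu^\ex(\CO_{\wCB^{\dot{x}}};C)$ of Theorem \ref{thm:ab} (taking $\lambda=\lambda_0\in A_0\cap\Lambda$), together with the commutative square relating $K$-groups and the commutative diagram of derived categories in Section \ref{sec:Lusztig} identifying $D^b(\modu^{\dot{k}}_{[\dot{t}(\lambda)]}(U_\zeta(\Gg);C))$ with $D^b(\modu_{\CB^{\dot{x}}}(\CO_{\widetilde{G}};C))$, compatibly with the inclusions into the completed categories.

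First I would prove (i). By Jantzen \cite{JanM}, $\{L_\sigma\}_{\sigma\in\Theta}$ is exactly the collection of irreducible objects of $\modu(\wU_\zeta(\Gg)^{\dot{k}}_{[\dot{t}(\lambda)]};C)$; these are precisely the irreducibles that already lie in the smaller subcategory $\modu^{\dot{k}}_{[\dot{t}(\lambda)]}(U_\zeta(\Gg);C)$, since each $L_\sigma$ is finite-dimensional and hence a genuine $U_\zeta(\Gg)$-module. Under the equivalence of Theorem \ref{thm:ab} the irreducibles of $\modu^\ex(\CO_{\wCB^{\dot{x}}};C)$ correspond bijectively to the $L_\sigma$, and those irreducibles coming from $\modu^{\dot{k}}_{[\dot{t}(\lambda)]}(U_\zeta(\Gg);C)$ correspond, via the commutative diagram of Section \ref{sec:Lusztig} (whose vertical arrows are the equivalences and whose horizontal arrows are the fully faithful inclusions), to the objects of $\modu^\ex(\CO_{\wCB^{\dot{x}}};C)$ that lie in the subcategory $D^b(\modu_{\CB^{\dot{x}}}(\CO_{\widetilde{G}};C))$. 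This gives (i). Statement (ii) is immediate: equivalences of abelian categories preserve projective covers, so since $\wE_\sigma$ is by definition the projective cover of $L_\sigma$ in $\modu(\wU_\zeta(\Gg)^{\dot{k}}_{[\dot{t}(\lambda)]};C)$, its image $\wCE_\sigma$ is the projective cover of $\CL_\sigma$ in $\modu^\ex(\CO_{\wCB^{\dot{x}}};C)$.

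For (iii) the point is to identify the geometric counterpart of the base-change formula \eqref{eq:wFF}, namely $E_\sigma=\BC\otimes_{Z_\Fr(U_\zeta(\Gg))}\wE_\sigma$ with respect to $\xi^{\dot{k}}$, which (since $\wE_\sigma$ is projective, hence flat over $Z_\Fr(U_\zeta(\Gg))$ locally at $\Ker(\xi^{\dot k})$) may also be written $E_\sigma=\BC\otimes^L_{Z_\Fr(U_\zeta(\Gg))}\wE_\sigma$. Under the equivalences, the passage from $\modu(\wU_\zeta(\Gg)^{\dot{k}}_{[\dot{t}(\lambda)]};C)$ to $\modu^{\dot{k}}_{[\dot{t}(\lambda)]}(U_\zeta(\Gg);C)\cong\modu_{\CB^{\dot{x}}}(\CO_{\widetilde{G}};C)$ corresponds, via the isomorphism $Z_\Fr(U_\zeta(\Gg))\cong\CO(K)$ and the projection $\wCB^{\dot{x}}\to G$ induced by $\varrho_G:\tilde G\to G$, to the derived restriction $\CO_{\{\dot{x}\}}\otimes^L_{\CO_G}(\bullet)$ along the inclusion of the point $\{\dot x\}$ (recall $\dot x=\eta(\dot k)$, and that $\xi^{\dot k}$ restricted appropriately corresponds to evaluation at $\dot x$). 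Applying this to $\wCE_\sigma$ and using that it is the transport of $\wE_\sigma$ yields $\CE_\sigma=\CO_{\{\dot{x}\}}\otimes^L_{\CO_{G}}\wCE_\sigma$. The one point requiring care — and the main obstacle — is to check that the equivalence of Theorem \ref{thm:ab}, which was built through the chain Theorem \ref{thm:BB3}, Theorem \ref{thm:Morita}, Propositions \ref{prop:BBC2}, \ref{prop:MorC2}, is genuinely $Z_\Fr(U_\zeta(\Gg))$-linear in the sense that the $\CO(K)$-action on the quantum side matches the $\CO_G$-action pulled back along $\wCB^{\dot x}\to\tilde G\to G$; this is built into the construction of the splitting bundles in Section \ref{subsec:Azumaya} via \eqref{eq:BrG}, and the base change commutes with the Morita functors since these are given by tensoring with locally free sheaves. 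Granting this compatibility, (iii) follows, and the proposition is proved. $\qed$
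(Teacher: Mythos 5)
Your proof is correct and follows essentially the same route as the paper: the paper simply notes that (i) and (ii) are immediate from the equivalence of abelian categories in Theorem \ref{thm:ab}, and that (iii) follows from \eqref{eq:wFF}; you have merely filled in the details (tracking the irreducibles through the commutative diagram in Section \ref{sec:Lusztig}, noting equivalences preserve projective covers, and checking that the derived base change over $Z_\Fr(U_\zeta(\Gg))\cong\CO(K)$ matches $\CO_{\{\dot x\}}\otimes^L_{\CO_G}(\bullet)$ via the construction of the splitting bundles). Your observation that $\BC\otimes_{Z_\Fr}\wE_\sigma=\BC\otimes^L_{Z_\Fr}\wE_\sigma$ because $\wE_\sigma$ is projective, hence flat over the relevant completion, is exactly the point implicitly used in the paper's introduction.
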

\begin{proof}
The assertions (i) and (ii) are obvious from the equivalence of categories.
We can easily show (iii) from \eqref{eq:wFF}.
\end{proof}
We have the following result due to Bezrukavnikov-Mirkovi\'{c} 
(see \cite[Proposition 5.1.4]{BM}).
\begin{theorem}[\cite{BM}]
\label{thm:ELF}
For any $\sigma\in\Theta$ the exotic sheaf 
$\wCE_\sigma$ is a locally free $\CO_{\wCB^{\dot{x}}}$-module.
\end{theorem}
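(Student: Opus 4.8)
\textbf{Proof proposal for Theorem \ref{thm:ELF}.}
The plan is to reduce the statement to the corresponding fact proved in \cite{BM} for the positive-characteristic Springer fiber, using the comparison between the quantum and modular pictures that has been set up in the previous sections. First I would recall that, by the construction of the exotic $t$-structure and by Theorem \ref{thm:ab}, the category $\modu^\ex(\CO_{\wCB^{\dot{x}}};C)$ is equivalent to $\modu(\wU_\zeta(\Gg)^{\dot{k}}_{[\dot{t}(\lambda_0)]};C)$ for $\lambda_0\in A_0\cap\Lambda$, with $\wCE_\sigma$ corresponding to the projective cover $\wE_\sigma$ of $L_\sigma$. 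So the content of the theorem is a statement about the $\CO_{\wCB^{\dot{x}}}$-module structure of the object on the geometric side that corresponds to a projective module on the representation-theoretic side; by Proposition \ref{prop:LE}(ii) this is the projective cover of an irreducible exotic sheaf, and the claim is that such objects are locally free over the structure sheaf of the formal neighborhood.

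The key steps I would carry out in order are as follows. Step one: observe that the exotic $t$-structure on $D^b(\modu(\CO_{\wCB^{\dot{x}}}))$ is transported, via \eqref{eq:Bax}, from the exotic $t$-structure on $D^b(\modu(\CO_{\wCB^{\dot{a}}}))$, the Springer-fiber neighborhood inside $\tilde{\Gg}$; thus the local-freeness assertion over $\wCB^{\dot{x}}$ is equivalent to the analogous assertion over $\wCB^{\dot{a}}$, which lives in exactly the geometric framework of \cite{BM}. Step two: recall that $\wCB^{\dot{a}}$ is obtained by a base change from $\tilde{\Gg}\to\Gg$ along the completion at $\dot{a}\in\CN$, precisely the situation treated in \cite[Section 5]{BM}; the projective cover of an irreducible exotic coherent sheaf in that setting is shown in \cite[Proposition 5.1.4]{BM} to be a vector bundle (locally free of finite rank) on the formal neighborhood. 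Step three: transport this conclusion back through the equivalences \eqref{eq:Bax} and Theorem \ref{thm:ab} to obtain the statement for $\wCE_\sigma$. The only genuinely new point to check is that the equivalence of \cite[Proposition 5.1.4]{BM} is compatible with our identifications, i.e.\ that the projective cover in the quantum heart $\modu(\wU_\zeta(\Gg)^{\dot{k}}_{[\dot{t}(\lambda_0)]};C)$ is carried to the projective cover in the exotic heart on $\wCB^{\dot{a}}$; this follows because Theorem \ref{thm:ab} is an equivalence of abelian categories and hence preserves projective objects and projective covers, and because $\wCE_\sigma$ is by definition the image of $\wE_\sigma$.

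The main obstacle, and the step requiring the most care, is Step two: verifying that the geometric input of \cite{BM} applies verbatim to $\wCB^{\dot{a}}$ in our normalization. One has to match the conventions: the affine braid group action \eqref{eq:Jb}, the $t$-structure defining $\modu^\ex$, and the role of $\dot{x}$ (resp.\ $\dot{a}$) as a nilpotent element with a fixed maximal torus $C$ of its centralizer inside $H$, must all be the ones used in \cite{BM}. In particular one should confirm that the statement ``the projective cover of an irreducible exotic sheaf is a vector bundle on the formal neighborhood'' in \cite{BM} is proved at the level of the formal (rather than the full affine) Springer situation, which is exactly the setting forced on us by the Remark on formal neighborhoods in Section \ref{sec:back}. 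Once this compatibility is in place, the rest is bookkeeping through the chain of equivalences already established, and no further calculation is needed.
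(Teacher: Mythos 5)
Your proposal correctly reconstructs what the paper leaves implicit: Theorem \ref{thm:ELF} is cited directly from \cite[Proposition 5.1.4]{BM}, and the applicability of that citation rests on the identification $\wCB^{\dot{x}}\cong\wCB^{\dot{a}}$ via $\widetilde{\exp}$ in \eqref{eq:Bax}, under which the paper's exotic $t$-structure and the objects $\wCE_\sigma$ (identified as projective covers by Proposition \ref{prop:LE}(ii)) match those of \cite{BM}. Your detour through Theorem \ref{thm:ab} is unnecessary once Proposition \ref{prop:LE}(ii) is in hand, since the statement is then purely geometric, but this does not affect correctness.
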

\subsection{}
Let us identify $\CB^{\dot{x}}$ and $\wCB^{\dot{x}}$ with 
$\CB^{\dot{a}}$ and $\wCB^{\dot{a}}$ 
respectively via \eqref{eq:Bax}.
Here, $\dot{a}\in\Gg$ is the nilpotent element satisfying $\exp(\dot{a})=\dot{x}$.
Take $\dot{b},\dot{c}\in\Gg$ such that 
$[\dot{c},\dot{a}]=2\dot{a}$, 
$[\dot{c},\dot{b}]=-2\dot{b}$, 
$[\dot{a},\dot{b}]=\dot{c}$, and set
\[
\tS^{\dot{a}}
=
\{(B^-g,a)\in\tilde{\Gg}\mid
a\in\Gg_\nil\cap(\dot{a}+\Gz_\Gg(\dot{b}))\},
\]
where $\Gg_\nil$ denotes the set of nilpotent elements in $\Gg$ and 
$\Gz_\Gg(\dot{b})$ is the centralizer of $\dot{b}$ in $\Gg$.
This variety is called the Slodowy slice.
We may assume that $C$ is a maximal torus of the simultaneous centralizer of $\dot{a}$, $\dot{b}$, $\dot{c}$.
The right action of $C$ on $\tilde{\Gg}$ given by 
\[
(B^-g,a)h=(B^-gh,\Ad(h^{-1})(a))
\qquad((B^-g,a)\in\tilde{\Gg},\; h\in C)
\]
preserves $\CB^{\dot{a}}$, 
$\wCB^{\dot{a}}$ and $\tS^{\dot{a}}$.
We have the identifications
\[
\modu_{\CB^{\dot{x}}}(\CO_{\widetilde{G}};{C})
=
\modu_{\CB^{\dot{a}}}(\CO_{\widetilde{\Gg}};{C}),
\qquad
\modu(\CO_{\wCB^{\dot{x}}};{C})
=
\modu(\CO_{\wCB^{\dot{a}}};{C}).
\]

We define a right action of ${C}'=C\times\BC^\times$ on $\Gg$ and $\tilde{\Gg}$ by 
\[
a\cdot(h,z)=
z^2\Ad(h^{-1}\phi(z)^{-1})(a)
\quad
(a\in\Gg, \; (h,z)\in C'),
\]
\[
(B^-g,a)(h,z)=
(B^-g\phi(z)h,a\cdot(h,z))
\quad
((B^-g,a)\in\tilde{\Gg}, \; (h,z)\in C'),
\]
where the homomorphism $\phi:\BC^\times\to G$ is given by
$\phi(\exp(u))=\exp(u\dot{c})$ for $u\in\BC$.
We have natural lifts 
$\CL'_\sigma$, $\CE'_\sigma$, 
$\widehat{\CE}'_\sigma\in D^b(\modu(\CO_{\wCB^{\dot{a}}};C'))$
of
$\CL_\sigma$, $\CE_\sigma$, $\widehat{\CE}_\sigma\in D^b(\modu(\CO_{\wCB^{\dot{a}}};C))$
(see \cite[5.3.2]{BM}).
Moreover, for each $\sigma\in \Theta$ there exists uniquely a $C'$-equivariant 
locally free $\CO_{\tS^{\dot{a}}}$-module $\tCE'_\sigma$ whose restriction to the formal neighborhood of $\CB^{\dot{a}}$ in $\tS^{\dot{a}}$ coincides with the restriction of $\wCE'_\sigma$ (see \cite[5.3.1]{BM}).
By Proposition \ref{prop:LE} and Theorem \ref{thm:ELF} 
we have
\begin{equation}
\CE'_\sigma
=
\CO_{\{\dot{x}\}}\otimes^L_{\CO_{\Gg}}\tCE'_\sigma.
\end{equation}

Let $R_{C'}$ be the representation ring of $C'$, 
and let $\CR_{C'}$ be its quotient field.
The direct image with respect the closed embedding 
$\CB^{\dot{a}}\hookrightarrow\tS^{\dot{a}}$ gives an embedding
\begin{equation}
K(\modu(\CO_{\CB^{\dot{a}}};C'))
\subset
K(\modu(\CO_{\tS^{\dot{a}}};C'))
\end{equation}
of free $R_{C'}$-modules of the same rank 
$\dim_\BQ H^*(\CB^{\dot{a}};\BQ)$
(see \cite[Theorem 1.14]{LK}).
This induces the identification 
\begin{equation}
\CR_{C'}\otimes_{R_{C'}}K(\modu(\CO_{\CB^{\dot{a}}};C'))
=
\CR_{C'}\otimes_{R_{C'}}
K(\modu(\CO_{\tS^{\dot{a}}};C'))
\end{equation}
of $\CR_{C'}$-modules.
We denote by $f\mapsto f^\mathsf{v}$ the automorphism of $\CR_{C'}$ induced by $C'\ni c\mapsto c^{-1}\in C'$.
Set
\[
\nabla_{\dot{a}}
=\left(
\sum_r(-1)^r[\bigwedge^r\Gz_\Gg(\dot{b})]
\right)
\left(
\sum_r(-1)^r[\bigwedge^r\Gh]
\right)^{-1}
\in \CR_{C'}.
\]
In \cite[Conjecture 5.12, Conjecture 5.16]{LK} Lusztig conjectured the existence of certain canonical 
$\BZ[v,v^{-1}]$-bases 
$\mathbf{B}_{\CB^{\dot{a}}}$ and 
$\mathbf{B}_{S^{\dot{a}}}$ 
of 
$K(\modu(\CO_{\CB^{\dot{a}}};{C}'))$ and 
$K(\modu(\CO_{S^{\dot{a}}};{C}'))$
respectively.
Here $\BZ[v,v^{-1}]$ is identified with the representation ring of $\BC^\times$.
This conjecture was proved by 
Bezrukavnikov-Mirkovi\'{c} \cite{BM} as follows.
\begin{theorem}[\cite{BM}]
\label{thm:basis}
\begin{itemize}
\item[(i)] 
The set
$\{[\CL'_\sigma]\}_{\sigma\in\Theta}$ turns out to be Lusztig's conjectural canonical base $\mathbf{B}_{\CB^{\dot{a}}}$ of $K(\modu(\CO_{\CB^{\dot{a}}};C'))$.
\item[(ii)] 
The set 
$\{[\tCE'_\sigma]\}_{\sigma\in\Theta}$ 
 turns out to be Lusztig's conjectural canonical base 
 $\mathbf{B}_{\tS^{\dot{a}}}$ 
 of $K(\modu(\CO_{\tS^{\dot{a}}};C'))$.
\item[(iii)] 
We have
\[
[\CE'_\sigma]=\nabla_{\dot{a}}^\mathsf{v}[\tCE'_\sigma]
\qquad(\sigma\in\Theta)
\]
in
$K(\modu(\CO_{\CB^{\dot{a}}};C'))
\subset K(\modu(\CO_{\tS^{\dot{a}}};C'))$.
\end{itemize}
\end{theorem}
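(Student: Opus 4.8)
The plan is to deduce all three assertions from the results of Bezrukavnikov-Mirkovi\'{c} on the geometric side, using the equivalences proved above to supply the representation-theoretic objects with their coherent-sheaf incarnations; the only genuinely new input is Theorem \ref{thm:ab} (together with Proposition \ref{prop:LE}), which identifies $\CL_\sigma$, $\CE_\sigma$, $\wCE_\sigma$ with honest objects of $\modu^\ex(\CO_{\wCB^{\dot{x}}};C)$ and, after the identifications $\CB^{\dot{x}}\cong\CB^{\dot{a}}$, $\wCB^{\dot{x}}\cong\wCB^{\dot{a}}$ of \eqref{eq:Bax} and the passage to the $C'$-equivariant lifts of \cite[5.3.1]{BM}, \cite[5.3.2]{BM}, produces $\CL'_\sigma$, $\CE'_\sigma$, $\wCE'_\sigma$ and the locally free $\CO_{\tS^{\dot{a}}}$-modules $\tCE'_\sigma$. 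First I would recall that Lusztig's conjectural bases $\mathbf{B}_{\CB^{\dot{a}}}$ and $\mathbf{B}_{\tS^{\dot{a}}}$ of \cite[Conjecture 5.12, Conjecture 5.16]{LK} are uniquely determined inside the relevant equivariant $K$-groups by an upper-triangularity condition with respect to the standard basis together with a bar-invariance (purity) condition, and that \cite{BM} verifies exactly these two conditions for the classes of the irreducible exotic sheaves, respectively of their locally free extensions to the Slodowy slice. Since the exotic $t$-structure used here is defined by literally the same $\tBB_a$-action \eqref{eq:Jb} and the same convolution data $\Gamma_i^{\dot{x}}$ as in \cite{BM}, the categories of exotic sheaves coincide, so parts (i) and (ii) reduce to unwinding Proposition \ref{prop:LE}.

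Concretely, for (i): by Proposition \ref{prop:LE}(i) the collection $\{\CL_\sigma\}_{\sigma\in\Theta}$ is precisely the set of simple objects of $\modu^\ex(\CO_{\wCB^{\dot{x}}};C)$ supported on $\CB^{\dot{x}}$; these are the simple exotic sheaves whose $C'$-equivariant lifts $\CL'_\sigma$ are shown in \cite[Proposition 5.1.4]{BM} to form $\mathbf{B}_{\CB^{\dot{a}}}$, the factor $\BC^\times$ of $C'=C\times\BC^\times$ acting through the cocharacter $\phi$ attached to the $\Gsl_2$-triple $(\dot{a},\dot{b},\dot{c})$. For (ii): by Proposition \ref{prop:LE}(ii) the object $\wCE_\sigma$ (corresponding to $\wE_\sigma$) is the projective cover of $\CL_\sigma$ in $\modu^\ex(\CO_{\wCB^{\dot{x}}};C)$, and by Theorem \ref{thm:ELF} it is locally free; hence by \cite[5.3.1]{BM} it extends uniquely to a $C'$-equivariant locally free $\CO_{\tS^{\dot{a}}}$-module $\tCE'_\sigma$, and \cite{BM} identifies $\{[\tCE'_\sigma]\}_{\sigma\in\Theta}$ with $\mathbf{B}_{\tS^{\dot{a}}}$. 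In both cases the substantive point has already been isolated: the relevant category of coherent sheaves is the one studied in \cite{BM}, and Proposition \ref{prop:LE} says which exotic sheaf is attached to which $L_\sigma$.

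For (iii), the starting point is the identity $\CE'_\sigma=\CO_{\{\dot{x}\}}\otimes^L_{\CO_\Gg}\tCE'_\sigma$ already recorded after Theorem \ref{thm:ELF}, where $\tCE'_\sigma$ is regarded as a sheaf on $\tS^{\dot{a}}$ via the restriction $\tS^{\dot{a}}\to\Gg$ of $\varrho_\Gg$, whose fiber over $\dot{a}$ is $\CB^{\dot{a}}$; this uses Theorem \ref{thm:ELF} so that the derived restriction is represented by a genuine Koszul complex. I would then compute the class of the endofunctor $\CO_{\{\dot{x}\}}\otimes^L_{\CO_\Gg}(-)$ on $K(\modu(\CO_{\tS^{\dot{a}}};C'))$: resolving $\CO_{\{\dot{a}\}}$ $C'$-equivariantly over $\CO_\Gg$ by the Koszul complex built on $\Gg^*$ (twisted by the $\BC^\times$-weight of $\dot{a}$) and restricting to $\tS^{\dot{a}}$, the fact that $\tS^{\dot{a}}$ lies over the Slodowy slice $\dot{a}+\Gz_\Gg(\dot{b})$ intersected with the nilpotent cone makes the resulting complex compute, in $K$-theory, multiplication by $\nabla_{\dot{a}}$, hence by $\nabla_{\dot{a}}^\mathsf{v}$ after accounting for the $\dot{a}$-fiber; this is exactly the computation of \cite[Section 5.3]{BM}. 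Because $\CE'_\sigma$ is manifestly the class of an honest complex of coherent sheaves, the resulting identity $[\CE'_\sigma]=\nabla_{\dot{a}}^\mathsf{v}[\tCE'_\sigma]$ automatically holds in the integral group $K(\modu(\CO_{\CB^{\dot{a}}};C'))\subset K(\modu(\CO_{\tS^{\dot{a}}};C'))$, even though $\nabla_{\dot{a}}^\mathsf{v}$ only lives a priori in $\CR_{C'}$.

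The main obstacle is, I expect, not conceptual but organizational: one must pin down the $C'$-equivariant structures correctly---that the $\BC^\times$-action used to build $\tS^{\dot{a}}$, the lifts $\CL'_\sigma$, $\tCE'_\sigma$, and the twist on the Koszul resolution of $\CO_{\{\dot{x}\}}$ are all the ones determined by the cocharacter $\phi$---and check that the decomposition of $\Gg$ as a $C'$-module produces exactly the formula defining $\nabla_{\dot{a}}$ above rather than a shifted version. All of this is carried out in \cite{BM}, and the geometric computations there are insensitive to the switch from positive characteristic to the present roots-of-unity setting; the single new ingredient is that Theorem \ref{thm:ab} furnishes the dictionary between $\modu(\wU_\zeta(\Gg)^{\dot{k}}_{[\dot{t}(\lambda_0)]};C)$ and $\modu^\ex(\CO_{\wCB^{\dot{x}}};C)$, so that $L_\sigma$, $E_\sigma$, $\wE_\sigma$---and with them the Grothendieck-group assertions (i)--(iii)---acquire their geometric content.
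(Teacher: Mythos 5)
The paper presents this theorem purely as a citation of \cite{BM} and supplies no proof of its own. Your sketch correctly reduces parts (i)--(iii) to the geometric results of \cite{BM}---the exotic $t$-structure, the $C'$-equivariant lifts of \cite[5.3.1, 5.3.2]{BM}, and the Koszul/Slodowy-slice computation giving $\nabla_{\dot{a}}^\mathsf{v}$ are literally those of \cite{BM}, and the only input from the present paper is the dictionary furnished by Theorem \ref{thm:ab}, Proposition \ref{prop:LE} and Theorem \ref{thm:ELF}---so your approach coincides with the paper's reliance on that reference.
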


We define $n_{\tau,\sigma}\in\BZ[v,v^{-1}]$ 
($\sigma, \tau\in\Theta$) by
\begin{equation}
\label{eq:KLp}
[\CE'_\sigma]=\sum_{\tau\in\Theta}
n_{\tau,\sigma}[\CL'_\tau]
\qquad(\sigma\in\Theta)
\end{equation}
in $K(\modu(\CO_{\CB^{\dot{a}}};C'))$.
We note that there exists  a description of $n_{\tau,\sigma}$ in terms of $\mathbf{B}_{\tS^{\dot{a}}}$ and a certain geometrically defined bilinear form on
$K(\modu(\CO_{\tS^{\dot{a}}};C'))$, which we omit here
(see \cite{LK}, \cite{BM} for more details).
We obtain the following result
conjectured by Lusztig \cite[17.2]{LK}.
\begin{theorem}
\label{thm:character}
For $\sigma\in\Theta$ we have
\[
[E_\sigma]=\sum_{\tau\in\Theta}
n_{\tau,\sigma}(1)[L_\tau]
\qquad(\sigma\in\Theta)
\]
in $K(\modu_{[\dot{t}(\lambda)]}(U_\zeta(\Gg)^{\dot{k}};C))$.
\end{theorem}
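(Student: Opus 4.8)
The plan is to transport the statement entirely to the geometric side, where it becomes the defining relation \eqref{eq:KLp} specialized at $v=1$. First I would invoke the equivalences of abelian categories of Theorem \ref{thm:ab} together with the chain of identifications in \eqref{eq:KUemb2} and the commutative diagram of Grothendieck groups relating the $U_\zeta(\Gg)$-side and the $\CO$-side. Under these, $K(\modu_{[\dot{t}(\lambda)]}(U_\zeta(\Gg)^{\dot{k}};C))$ is identified with $K(\modu(\CO_{\CB^{\dot{x}}};C))=K(\modu_{\CB^{\dot{x}}}(\CO_{\tilde{G}};C))$, the class $[L_\sigma]$ corresponds to $[\CL_\sigma]$, and $[E_\sigma]$ corresponds to $[\CE_\sigma]$ (using $E_\sigma,L_\sigma\in\modu_{[\dot{t}(\lambda)]}^{\dot{k}}(U_\zeta(\Gg);C)$ and the embeddings \eqref{eq:Uemb2}, together with Proposition \ref{prop:LE}). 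Thus Theorem \ref{thm:character} is reduced to proving
\[
[\CE_\sigma]=\sum_{\tau\in\Theta}n_{\tau,\sigma}(1)\,[\CL_\tau]
\]
in $K(\modu(\CO_{\CB^{\dot{x}}};C))$.

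Next I would pass to the $C'=C\times\BC^\times$-equivariant picture on $\CB^{\dot{a}}$, identified with $\CB^{\dot{x}}$ via \eqref{eq:Bax}. One has the canonical $C'$-equivariant lifts $\CL'_\sigma$ and $\CE'_\sigma$ of $\CL_\sigma$ and $\CE_\sigma$ (see \cite[5.3.2]{BM}), and by the very definition \eqref{eq:KLp} of $n_{\tau,\sigma}\in\BZ[v,v^{-1}]$,
\[
[\CE'_\sigma]=\sum_{\tau\in\Theta}n_{\tau,\sigma}\,[\CL'_\tau]
\]
in $K(\modu(\CO_{\CB^{\dot{a}}};C'))$. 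That the $n_{\tau,\sigma}$ thus defined are precisely Lusztig's conjectural multiplicities (expressed through $\mathbf{B}_{\tS^{\dot{a}}}$ and the geometric bilinear form) is the content of Theorem \ref{thm:basis}, imported wholesale from \cite{BM}, \cite{LK}; I would only use it as a black box naming the $n_{\tau,\sigma}$.

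Then I would specialize $v\mapsto 1$. Concretely, the forgetful functor $\modu(\CO_{\CB^{\dot{a}}};C')\to\modu(\CO_{\CB^{\dot{a}}};C)$ dropping the $\BC^\times$-equivariant structure induces on Grothendieck groups the base change $\BZ\otimes_{\BZ[v,v^{-1}]}(-)$ along the ring homomorphism $v\mapsto 1$, since $R_{C'}=R_C[v,v^{-1}]$ and the underlying $C$-equivariant sheaf of a $C'$-equivariant sheaf is obtained by restriction of groups; here one uses that $K(\modu(\CO_{\CB^{\dot{a}}};C'))$ and $K(\modu(\CO_{\CB^{\dot{a}}};C))$ are free of the common rank $\dim_\BQ H^*(\CB^{\dot{a}};\BQ)$ over $R_{C'}$ and $R_C$ respectively. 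This functor sends $[\CL'_\sigma]\mapsto[\CL_\sigma]$ and $[\CE'_\sigma]\mapsto[\CE_\sigma]$, so applying it to the displayed relation gives $[\CE_\sigma]=\sum_{\tau}n_{\tau,\sigma}(1)[\CL_\tau]$, and translating back through the equivalences of the first step completes the proof.

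I expect the main obstacle to be bookkeeping rather than genuinely new mathematics. One must check carefully that the equivalence of Theorem \ref{thm:ab}, restricted to the relevant finitely generated and graded subcategories, really matches $[E_\sigma]$ with $[\CE_\sigma]$ and $[L_\sigma]$ with $[\CL_\sigma]$ compatibly with all the embeddings of \eqref{eq:Uemb2} and \eqref{eq:KUemb2} (this is where \eqref{eq:wFF} and Proposition \ref{prop:LE}(iii) enter, matching $E_\sigma=\BC\otimes_{Z_\Fr}\wE_\sigma$ with $\CE_\sigma=\CO_{\{\dot{x}\}}\otimes^L_{\CO_G}\wCE_\sigma$), and that ``forgetting the $\BC^\times$-action'' indeed implements the specialization $v=1$ on these $K$-groups. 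All of this follows the pattern of \cite[1.6.5, 5.3--5.4]{BM}; the only nontrivial external input is the identification of $n_{\tau,\sigma}$ with Lusztig's canonical-basis multiplicities, used via Theorem \ref{thm:basis}.
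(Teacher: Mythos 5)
Your proposal is correct and takes essentially the same route as the paper: the paper's proof consists of exactly the two steps you describe — deduce $[\CE_\sigma]=\sum_\tau n_{\tau,\sigma}(1)[\CL_\tau]$ in $K(\modu(\CO_{\CB^{\dot{a}}};C))$ by specializing \eqref{eq:KLp} at $v=1$, and then transfer back to $K(\modu_{[\dot{t}(\lambda)]}(U_\zeta(\Gg)^{\dot{k}};C))$ via the established equivalences of categories. The paper is simply terser (it leaves the $v\mapsto 1$ specialization and the identifications $[L_\sigma]\leftrightarrow[\CL_\sigma]$, $[E_\sigma]\leftrightarrow[\CE_\sigma]$ implicit), whereas you spell out the forgetful functor from $C'$- to $C$-equivariant $K$-theory and the bookkeeping through \eqref{eq:Uemb2}, \eqref{eq:KUemb2} and Proposition \ref{prop:LE}.
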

\begin{proof}
By \eqref{eq:KLp} we have
\[
[\CE_\sigma]=\sum_{\tau\in\Theta}
n_{\tau,\sigma}(1)[\CL_\tau]
\qquad(\sigma\in\Theta)
\]
in $K(\modu(\CO_{\CB^{\dot{a}}};C))$.
The desired formula follows from this by the equivalence of categories.
\end{proof}


\bibliographystyle{unsrt}

\end{document}